\documentclass[11pt,a4paper,leqno]{article}
\usepackage{a4wide}
\setlength{\textheight}{23cm}
\setlength{\textwidth}{16cm}
\usepackage{latexsym}
\usepackage{amsmath}
\usepackage{amssymb}
\newtheorem{defin}{Definition}
\newtheorem{lemma}{Lemma}
\newtheorem{prop}{Proposition}
\newtheorem{theo}{Theorem}

\pagestyle{myheadings}
\newenvironment{proof}{\medskip\par\noindent{\bf Proof}}{\hfill $\Box$
\medskip\par}
\begin{document}
\title{On parametric multisummable formal solutions to some nonlinear initial value Cauchy problems}
\author{{\bf A. Lastra\footnote{The author is partially supported by the project MTM2012-31439 of Ministerio de Ciencia e
Innovacion, Spain}, S. Malek\footnote{The author is partially supported by the french ANR-10-JCJC 0105 project and the PHC
Polonium 2013 project No. 28217SG.}}\\
University of Alcal\'{a}, Departamento de F\'{i}sica y Matem\'{a}ticas,\\
Ap. de Correos 20, E-28871 Alcal\'{a} de Henares (Madrid), Spain,\\
University of Lille 1, Laboratoire Paul Painlev\'e,\\
59655 Villeneuve d'Ascq cedex, France,\\
{\tt alberto.lastra@uah.es}\\
{\tt Stephane.Malek@math.univ-lille1.fr }}
\date{January, 14 2014}
\maketitle
\thispagestyle{empty}
{ \small \begin{center}
{\bf Abstract}
\end{center}
We study a nonlinear initial value Cauchy problem depending upon a complex perturbation parameter
$\epsilon$ whose coefficients depend holomorphically on $(\epsilon,t)$ near the origin in $\mathbb{C}^{2}$
and are bounded holomorphic on some horizontal strip in $\mathbb{C}$ w.r.t the space variable. In our previous
contribution \cite{lama}, we assumed the forcing term of the Cauchy problem to be analytic near 0. Presently, we
consider a family of forcing terms that are holomorphic on a common sector in time $t$ and on sectors w.r.t the
parameter $\epsilon$ whose union form a covering of some neighborhood of 0 in $\mathbb{C}^{\ast}$, which are
asked to share a common formal power series asymptotic expansion of some Gevrey order as $\epsilon$ tends
to 0. We construct a family of actual holomorphic solutions to our Cauchy problem defined on the
sector in time and on the sectors in $\epsilon$ mentioned above. These solutions are achieved by means
of a version of the so-called accelero-summation method in the time variable and by Fourier inverse transform in
space. It appears that these functions share a common formal asymptotic expansion in the perturbation parameter.
Furthermore, this formal series expansion can be written as a sum of two formal series with a corresponding
decomposition for the actual solutions which possess two different asymptotic Gevrey orders, one steming
from the shape of the equation and the other originating from the forcing terms. The special case of multisummability in
$\epsilon$ is also analyzed thoroughly. The proof leans on a version of the so-called Ramis-Sibuya theorem which entails
two distinct Gevrey orders. Finally, we give an application to the study of parametric multi-level Gevrey solutions
for some nonlinear initial value Cauchy problems with holomorphic coefficients and forcing term in $(\epsilon,t)$ near 0 and
bounded holomorphic on a strip in the complex space variable.\medskip

\noindent Key words: asymptotic expansion, Borel-Laplace transform, Fourier transform, Cauchy problem, formal power series,
nonlinear integro-differential equation, nonlinear partial differential equation, singular perturbation. 2000 MSC: 35C10, 35C20.}
\bigskip \bigskip

\section{Introduction}

We consider a family of parameter depending nonlinear initial value Cauchy problems of the form
\begin{multline}
Q(\partial_{z})(\partial_{t}u^{\mathfrak{d}_{p}}(t,z,\epsilon)) =
c_{1,2}(\epsilon)(Q_{1}(\partial_{z})u^{\mathfrak{d}_{p}}(t,z,\epsilon))(Q_{2}(\partial_{z})u^{\mathfrak{d}_{p}}(t,z,\epsilon))\\
+ \epsilon^{(\delta_{D}-1)(k_{2}+1) - \delta_{D} + 1}t^{(\delta_{D}-1)(k_{2}+1)}
\partial_{t}^{\delta_D}R_{D}(\partial_{z})u^{\mathfrak{d}_{p}}(t,z,\epsilon)
+ \sum_{l=1}^{D-1} \epsilon^{\Delta_{l}}t^{d_l}\partial_{t}^{\delta_l}R_{l}(\partial_{z})u^{\mathfrak{d}_{p}}(t,z,\epsilon)\\
+ c_{0}(t,z,\epsilon)R_{0}(\partial_{z})u^{\mathfrak{d}_{p}}(t,z,\epsilon) +
c_{F}(\epsilon)f^{\mathfrak{d}_{p}}(t,z,\epsilon) \label{ICP_main_p_intro}
\end{multline}
for given vanishing initial data $u^{\mathfrak{d}_{p}}(0,z,\epsilon) \equiv 0$, where $D \geq 2$ and
$\delta_{D},k_{2},\Delta_{l},d_{l},\delta_{l}$, $1 \leq l \leq D-1$ are integers which satisfy the inequalities
(\ref{delta_constraints_main_CP}), (\ref{constrain_d_l_delta_l_Delta_l_main_CP}) and
(\ref{constraints_k1_k2_Borel_equation_for_u_p}). Moreover, $Q(X)$,$Q_{1}(X)$,$Q_{2}(X)$,\\$R_{l}(X)$, $0 \leq l \leq D$ are
polynomials submitted to the constraints (\ref{first_constraints_polynomials_Q_R_main_CP}) and which possess the crucial
analytic properties (\ref{quotient_Q_Rl_larger_than_radius_main_CP}), (\ref{quotient_Q_RD_in_S_main_CP}). The coefficient
$c_{0}(t,z,\epsilon)$ is a bounded holomorphic function on a product $D(0,r) \times H_{\beta} \times D(0,\epsilon_{0})$,
where $D(0,r)$ (resp. $D(0,\epsilon_{0})$) denotes a disc centered at 0 with small radius $r>0$ (resp. $\epsilon_{0}>0$) and
$H_{\beta} \subset \mathbb{C}$ is some strip of width $\beta>0$, see (\ref{defin_c_0}). The coefficients
$c_{1,2}(\epsilon)$ and $c_{F}(\epsilon)$ define bounded holomorphic functions on $D(0,\epsilon_{0})$ vanishing
at $\epsilon=0$. The forcing terms $f^{\mathfrak{d}_{p}}(t,z,\epsilon)$, $0 \leq p \leq \varsigma - 1$, form a
family of bounded holomorphic functions on products $\mathcal{T} \times H_{\beta} \times \mathcal{E}_{p}$,
where $\mathcal{T}$ is a small sector centered at 0 contained in $D(0,r)$ and
$\{ \mathcal{E}_{p} \}_{0 \leq p \leq \varsigma - 1}$ is a set of bounded sectors with aperture slightly larger than
$\pi/k_{2}$ covering some neighborhood of 0 in $\mathbb{C}^{\ast}$. We make assumptions in order that all the functions
$\epsilon \mapsto f^{\mathfrak{d}_{p}}(t,z,\epsilon)$, seen as functions from $\mathcal{E}_{p}$ into the Banach
space $\mathbb{F}$ of bounded holomorphic functions on $\mathcal{T} \times H_{\beta}$ endowed with the supremum norm, share a
common asymptotic expansion $\hat{f}(t,z,\epsilon) = \sum_{m \geq 0} f_{m}(t,z) \epsilon^{m}/m! \in \mathbb{F}[[\epsilon]]$
of Gevrey order $1/k_{1}$ on $\mathcal{E}_{p}$, for some integer $1 \leq k_{1} < k_{2}$, see Lemma 11.

Our main purpose is the construction of actual holomorphic solutions $u^{\mathfrak{d}_{p}}(t,z,\epsilon)$ to the
problem (\ref{ICP_main_p_intro}) on the domains $\mathcal{T} \times H_{\beta} \times \mathcal{E}_{p}$ and to analyse their
asymptotic expansions as $\epsilon$ tends to 0.

This work is a continuation of the study initiated in \cite{lama} where the authors have studied initial value problems with
quadratic nonlinearity of the form
\begin{multline}
Q(\partial_{z})(\partial_{t}u(t,z,\epsilon)) =
(Q_{1}(\partial_{z})u(t,z,\epsilon))(Q_{2}(\partial_{z})u(t,z,\epsilon))\\
+ \epsilon^{(\delta_{D}-1)(k+1) - \delta_{D} + 1}t^{(\delta_{D}-1)(k+1)}
\partial_{t}^{\delta_D}R_{D}(\partial_{z})u(t,z,\epsilon)
+ \sum_{l=1}^{D-1} \epsilon^{\Delta_{l}}t^{d_l}\partial_{t}^{\delta_l}R_{l}(\partial_{z})u(t,z,\epsilon)\\
+ c_{0}(t,z,\epsilon)R_{0}(\partial_{z})u(t,z,\epsilon) +
f(t,z,\epsilon) \label{ICP_main_former_work_intro}
\end{multline}
for given vanishing initial data $u(0,z,\epsilon) \equiv 0$, where $D,\Delta_{l},d_{l},\delta_{l}$ are positive integers
and $Q(X),Q_{1}(X),Q_{2}(X),R_{l}(X)$, $0 \leq l \leq D$, are polynomials satisfying similar constraints to the ones envisaged
for the problem (\ref{ICP_main_p_intro}). Under the assumption that the coefficients $c_{0}(t,z,\epsilon)$ and
the forcing term $f(t,z,\epsilon)$ are bounded holomorphic functions on $D(0,r) \times H_{\beta} \times D(0,\epsilon_{0})$, one can
build, using some Borel-Laplace procedure and Fourier inverse transform, a family of holomorphic bounded functions
$u_{p}(t,z,\epsilon)$, $0 \leq p \leq \varsigma - 1$, solutions of (\ref{ICP_main_former_work_intro}),
defined on the products $\mathcal{T} \times H_{\beta} \times \mathcal{E}_{p}$, where $\mathcal{E}_{p}$ has an
aperture slightly larger than $\pi/k$. Moreover, the functions $\epsilon \mapsto u_{p}(t,z,\epsilon)$ share a common
formal power series $\hat{u}(t,z,\epsilon) = \sum_{m \geq 0} h_{m}(t,z) \epsilon^{m}/m!$ as asymptotic expansion of Gevrey
order $1/k$ on $\mathcal{E}_{p}$. In other words, $u_{p}(t,z,\epsilon)$ is the $k-$sum of $\hat{u}(t,z,\epsilon)$ on
$\mathcal{E}_{p}$, see Definition 9.

In this paper, we observe that the asymptotic expansion of the solutions $u^{\mathfrak{d}_{p}}(t,z,\epsilon)$ of
(\ref{ICP_main_p_intro}) w.r.t $\epsilon$ on $\mathcal{E}_{p}$, defined as
$\hat{u}(t,z,\epsilon) = \sum_{m \geq 0} h_{m}(t,z) \epsilon^{m}/m! \in \mathbb{F}[[\epsilon]]$, inherites a finer structure
which involves the two Gevrey orders $1/k_{1}$ and $1/k_{2}$. Namely, the order $1/k_{2}$ originates from the equation
(\ref{ICP_main_p_intro}) itself and its highest order term $\epsilon^{(\delta_{D}-1)(k_{2}+1) - \delta_{D}+1}
t^{(\delta_{D}-1)(k_{2}+1)} \partial_{t}^{\delta_{D}}R_{D}(\partial_{z})$ as it was the case in the work
\cite{lama} mentioned above and the scale $1/k_{1}$ arises, as a new feature, from the asymptotic expansion $\hat{f}$ of the
forcing terms $f^{\mathfrak{d}_{p}}(t,z,\epsilon)$. We can also describe conditions for which
$u^{\mathfrak{d}_{p}}(t,z,\epsilon)$ is the $(k_{2},k_{1})-$sum of $\hat{u}(t,z,\epsilon)$ on $\mathcal{E}_{p}$ for some
$0 \leq p \leq \varsigma - 1$, see Definition 10. More specifically, we can present our two main statements and
its application as follows.\medskip

\noindent {\bf Main results} {\itshape Let $k_{2} > k_{1} \geq 1$ be integers. We choose a family
$\{ \mathcal{E}_{p} \}_{0 \leq p \leq \varsigma - 1}$ of bounded sectors with aperture
slightly larger than $\pi/k_{2}$ which defines a good covering in $\mathbb{C}^{\ast}$ (see Definition 7) and a set of
adequate directions $\mathfrak{d}_{p} \in \mathbb{R}$, $0 \leq p \leq \varsigma - 1$ for which the constraints
(\ref{root_cond_1_in_defin_main_CP}) and (\ref{root_cond_2_in_defin_main_CP}) hold. We also take an open bounded sector
$\mathcal{T}$ centered at 0 such that for every $0 \leq p \leq \varsigma - 1$, the product
$\epsilon t$ belongs to a sector with direction $\mathfrak{d}_{p}$ and aperture slightly larger than $\pi/k_{2}$, for all
$\epsilon\in \mathcal{E}_{p}$, all $t \in \mathcal{T}$. We make the assumption that the coefficient $c_{0}(t,z,\epsilon)$ can be
written as a convergent series of the special form
$$ c_{0}(t,z,\epsilon) = c_{0}(\epsilon) \sum_{n \geq 0}c_{0,n}(z,\epsilon) (\epsilon t)^{n} $$
on a domain $D(0,r) \times H_{\beta'} \times D(0,\epsilon_{0})$, where $H_{\beta'}$ is a strip of width $\beta'$, such that
$\mathcal{T} \subset D(0,r)$, $\cup_{0 \leq p \leq \varsigma - 1} \mathcal{E}_{p} \subset D(0,\epsilon_{0})$ and
$0 < \beta' < \beta$ are given positive real numbers. The coefficients $c_{0,n}(z,\epsilon)$, $n \geq 0$, are supposed to be
inverse Fourier transform of functions $m \mapsto C_{0,n}(m,\epsilon)$ that belong to the Banach space $E_{(\beta,\mu)}$
(see Definition 2) for some $\mu > \max( \mathrm{deg}(Q_{1})+1, \mathrm{deg}(Q_{2})+1 )$ and depend holomorphically on
$\epsilon$ in $D(0,\epsilon_{0})$ and $c_{0}(\epsilon)$ is a holomorphic function on $D(0,\epsilon_{0})$ vanishing at 0.
Since we have in view our principal application (Theorem 3), we choose the forcing term $f^{\mathfrak{d}_{p}}(t,z,\epsilon)$ as
a $m_{k_2}-$Fourier-Laplace transform
$$ f^{\mathfrak{d}_{p}}(t,z,\epsilon) = \frac{k_2}{(2\pi)^{1/2}}
\int_{-\infty}^{+\infty} \int_{L_{\gamma_p}} \psi_{k_2}^{\mathfrak{d}_{p}}(u,m,\epsilon) e^{-(\frac{u}{\epsilon t})^{k_2}}
e^{izm} \frac{du}{u} dm,$$
where the inner integration is made along some halfline $L_{\gamma_p} \subset S_{\mathfrak{d}_{p}}$ and
$S_{\mathfrak{d}_{p}}$ is an unbounded sector with bisecting direction $\mathfrak{d}_{p}$, with small aperture and where
$\psi_{k_2}^{\mathfrak{d}_{p}}(u,m,\epsilon)$ is a holomorphic function w.r.t $u$ on
$S_{\mathfrak{d}_{p}}$, defined as an integral transform called acceleration operator with indices $m_{k_2}$ and $m_{k_1}$,
$$ \psi_{k_2}^{\mathfrak{d}_{p}}(u,m,\epsilon) = \int_{L_{\gamma_{p}^{1}}}
\psi_{k_1}^{\mathfrak{d}_{p}}(h,m,\epsilon) G(u,h) \frac{dh}{h} $$
where $G(u,h)$ is a kernel function with exponential decay of order $\kappa = (\frac{1}{k_1} - \frac{1}{k_{2}})^{-1}$, see
(\ref{G_xi_h_exp_growth_order_kappa}). The integration path $L_{\gamma_{p}^{1}}$ is a halfline in an unbounded sector
$U_{\mathfrak{d}_{p}}$ with bisecting direction $\mathfrak{d}_{p}$ and $\psi_{k_1}^{\mathfrak{d}_{p}}(h,m,\epsilon)$ is a
function with exponential growth of order $k_1$ w.r.t $h$ on $U_{\mathfrak{d}_{p}} \cup D(0,\rho)$ and exponential decay
w.r.t $m$ on $\mathbb{R}$, satisfying the bounds (\ref{psi_k1_bounded_norm_k1_k1_main_CP}). The function
$f^{\mathfrak{d}_{p}}(t,z,\epsilon)$ represents a bounded holomorphic function on
$\mathcal{T} \times H_{\beta'} \times \mathcal{E}_{p}$. Actually, it turns out that $f^{\mathfrak{d}_{p}}(t,z,\epsilon)$ can
be simply written as a $m_{k_1}-$Fourier-Laplace transform of $\psi_{k_1}^{\mathfrak{d}_p}(h,m,\epsilon)$,
$$ f^{\mathfrak{d}_{p}}(t,z,\epsilon) = \frac{k_1}{(2\pi)^{1/2}} \int_{-\infty}^{+\infty}
\int_{L_{\gamma_{p}}} \psi_{k_1}^{\mathfrak{d}_{p}}(u,m,\epsilon) e^{-(\frac{u}{\epsilon t})^{k_1}} e^{izm} \frac{du}{u} dm,$$
see Lemma 13.

Our first result stated in Theorem 1 claims that if the sup norms of the coefficients $c_{1,2}(\epsilon)/\epsilon$,
$c_{0}(\epsilon)/\epsilon$ and $c_{F}(\epsilon)/\epsilon$ on $D(0,\epsilon_{0})$ are chosen small enough, then we can construct
a family of holomorphic bounded functions $u^{\mathfrak{d}_{p}}(t,z,\epsilon)$, $0 \leq p \leq \varsigma -1$, defined on the
products $\mathcal{T} \times H_{\beta'} \times \mathcal{E}_{p}$, which solves the problem
(\ref{ICP_main_p_intro}) with initial data $u^{\mathfrak{d}_{p}}(0,z,\epsilon) \equiv 0$. Similarly to the forcing term,
$u^{\mathfrak{d}_{p}}(t,z,\epsilon)$ can be written as a $m_{k_2}-$Fourier-Laplace transform
$$ u^{\mathfrak{d}_{p}}(t,z,\epsilon) = \frac{k_2}{(2\pi)^{1/2}} \int_{-\infty}^{+\infty} \int_{L_{\gamma_p}}
\omega_{k_2}^{\mathfrak{d}_{p}}(u,m,\epsilon) e^{-(\frac{u}{\epsilon t})^{k_2}} e^{izm} \frac{du}{u} dm $$
where $\omega_{k_2}^{\mathfrak{d}_{p}}(u,m,\epsilon)$ denotes a function with at most exponential growth of order $k_{2}$ in
$u$ on $S_{\mathfrak{d}_{p}}$ and exponential decay in $m \in \mathbb{R}$, satisfying (\ref{omega_k2_frak_d_p_exp_growth}).
The function $\omega_{k_2}^{\mathfrak{d}_{p}}(u,m,\epsilon)$ is shown to be the analytic continuation of a function
$\mathrm{Acc}_{k_{2},k_{1}}^{\mathfrak{d}_{p}}(\omega_{k_1}^{\mathfrak{d}_{p}})(u,m,\epsilon)$ defined only on a
bounded sector $S_{\mathfrak{d}_{p}}^{b}$ with aperture slightly larger than $\pi/\kappa$ w.r.t $u$, for all $m \in \mathbb{R}$,
with the help of an acceleration operator with indices $m_{k_2}$ and $m_{k_1}$,
$$ \mathrm{Acc}_{k_{2},k_{1}}^{\mathfrak{d}_{p}}(\omega_{k_1}^{\mathfrak{d}_{p}})(u,m,\epsilon) =
\int_{L_{\gamma_{p}^{1}}} \omega_{k_1}^{\mathfrak{d}_{p}}(h,m,\epsilon) G(u,h) \frac{dh}{h} .$$
We show that, in general, $\omega_{k_1}^{\mathfrak{d}_{p}}(h,m,\epsilon)$ suffers an exponential growth of order larger than
$k_{1}$ (and actually less than $\kappa$) w.r.t $h$ on $U_{\mathfrak{d}_{p}} \cup D(0,\rho)$, and obeys the estimates
(\ref{omega_k1_frak_d_p_exp_growth}). At this point $u^{\mathfrak{d}_{p}}(t,z,\epsilon)$ cannot be merely expressed as a
$m_{k_1}-$Fourier-Laplace transform of $\omega_{k_1}^{\mathfrak{d}_p}$ and is obtained by a version of the so-called
accelero-summation procedure, as described in \cite{ba}, Chapter 5.

Our second main result, described in Theorem 2, asserts that the functions $u^{\mathfrak{d}_{p}}$, seen as maps
from $\mathcal{E}_{p}$ into $\mathbb{F}$, for $0 \leq p \leq \varsigma - 1$, turn out to share on $\mathcal{E}_{p}$ a common
formal power series $\hat{u}(\epsilon) = \sum_{m \geq 0} h_{m} \epsilon^{m}/m! \in \mathbb{F}[[\epsilon]]$ as asymptotic
expansion of Gevrey order $1/k_{1}$. The formal series $\hat{u}(\epsilon)$ formally solves the equation
(\ref{ICP_main_p_intro}) where the analytic forcing term $f^{\mathfrak{d}_{p}}(t,z,\epsilon)$ is replaced by its
asymptotic expansion $\hat{f}(t,z,\epsilon) \in \mathbb{F}[[\epsilon]]$ of Gevrey order $1/k_{1}$ (see Lemma 11).
Furthermore, the functions $u^{\mathfrak{d}_{p}}$ and the formal series $\hat{u}$ own a fine structure which actually involves
two different Gevrey orders of asymptotics. Namely, $u^{\mathfrak{d}_{p}}$ and $\hat{u}$ can be written as sums
$$ \hat{u}(\epsilon) = a(\epsilon) + \hat{u}_{1}(\epsilon) + \hat{u}_{2}(\epsilon) \ \ , \ \
u^{\mathfrak{d}_{p}}(t,z,\epsilon) = a(\epsilon) + u_{1}^{\mathfrak{d}_{p}}(\epsilon) + u_{2}^{\mathfrak{d}_{p}}(\epsilon) $$
where $a(\epsilon)$ is a convergent series near $\epsilon=0$ with coefficients in $\mathbb{F}$ and
$\hat{u}_{1}(\epsilon)$ (resp. $\hat{u}_{2}(\epsilon)$) belongs to $\mathbb{F}[[\epsilon]]$ and is the asymptotic expansion
of Gevrey order $1/k_{1}$ (resp. $1/k_{2}$) of the $\mathbb{F}-$valued function $u_{1}^{\mathfrak{d}_{p}}(\epsilon)$
(resp. $u_{2}^{\mathfrak{d}_{p}}(\epsilon)$) on $\mathcal{E}_{p}$. Besides, under a more restrictive assumption on the
covering $\{ \mathcal{E}_{p} \}_{0 \leq p \leq \varsigma -1}$ and the unbounded sectors
$\{U_{\mathfrak{d}_{p}} \}_{0 \leq p \leq \varsigma -1}$ (see Assumption 5 in Theorem 2), one gets that
$u^{\mathfrak{d}_{p_0}}(t,z,\epsilon)$ is even the $(k_{2},k_{1})-$sum of $\hat{u}(\epsilon)$ on some sector
$\mathcal{E}_{p_0}$, with $0 \leq p_{0} \leq \varsigma - 1$, meaning that $u_{1}^{\mathfrak{d}_{p_0}}(\epsilon)$ can
be analytically continued on a larger sector $S_{\pi/k_{1}}$, containing $\mathcal{E}_{p_0}$, with aperture
slightly larger than $\pi/k_{1}$ where it becomes the $k_{1}-$sum of $\hat{u}_{1}(\epsilon)$ and by construction
$u_{2}^{\mathfrak{d}_{p_0}}(\epsilon)$ is already the $k_{2}-$sum of $\hat{u}_{2}(\epsilon)$ on $\mathcal{E}_{p_0}$, see
Definition 10.

As an important application (Theorem 3), we deal with the special case when the forcing terms
$f^{\mathfrak{d}_{p}}(t,z,\epsilon)$ themselves solve a linear partial differential equation with a similar shape
as (\ref{ICP_main_former_work_intro}), see (\ref{ICP_main_fp_bf}), whose coefficients are holomorphic functions
on $D(0,r) \times H_{\beta} \times D(0,\epsilon_{0})$. When this holds, it turns out that
$u^{\mathfrak{d}_{p}}(t,z,\epsilon)$ and its asymptotic expansion $\hat{u}(t,z,\epsilon)$ solves a nonlinear
singularly perturbed PDE with analytic coefficients and forcing term on $D(0,r) \times H_{\beta} \times D(0,\epsilon_{0})$,
see (\ref{nlpde_holcoef_near_origin_u_dp}).}\medskip

We stress the fact that our application (Theorem 3) relies on the factorization of some nonlinear differential operator
which is an approach that belongs to an active domain of research in symbolic computation these last years,
see for instance \cite{beka}, \cite{ber}, \cite{hosi}, \cite{schw}, \cite{shwi}, \cite{ts}.

We mention that a similar result has been recently obtained by H. Tahara and H. Yamazawa, see \cite{taya},
for the multisummability
of formal series $\hat{u}(t,x) = \sum_{n \geq 0} u_{n}(x)t^{n} \in \mathcal{O}(\mathbb{C}^{N})[[t]]$ with
entire coefficients on $\mathbb{C}^{N}$, $N \geq 1$, solutions of general
non-homogeneous time depending linear PDEs of the form
$$ \partial_{t}^{m}u + \sum_{j + |\alpha| \leq L} a_{j,\alpha}(t) \partial_{t}^{j}\partial_{x}^{\alpha}u = f(t,x) $$
for given initial data $(\partial_{t}^{j}u)(0,x) = \varphi_{j}(x)$, $0 \leq j \leq m-1$ (where $1 \leq m \leq L$),
provided that the coefficients $a_{j,\alpha}(t)$ together with $t \mapsto f(t,x)$ are analytic near 0 and that
$\varphi_{j}(x)$ with the forcing term $x \mapsto f(t,x)$ belong to a suitable class of
entire functions of finite exponential order on $\mathbb{C}^{N}$. The different levels of multisummability are related
to the slopes of a Newton polygon attached to the main equation and analytic acceleration procedures as described above
are heavily used in their proof.

It is worthwhile noticing that the multisummable structure of formal solutions to linear and nonlinear meromorphic
ODEs has been discovered two decades ago, see for instance \cite{ba1},
\cite{babrrasi}, \cite{br}, \cite{lori}, \cite{malra}, \cite{rasi}, but in the framework of PDEs very few results are known.
In the linear case in two complex variables with constant coefficients, we mention the important contributions
of W. Balser, \cite{ba4} and S. Michalik, \cite{mi}, \cite{mi1}. Their strategy consists in the construction of
a multisummable formal solution written as a sum of formal series, each of them associated to a root of
the symbol attached to the PDE using the so-called Puiseux expansion for the roots of polynomial with holomorphic coefficients.
In the linear and nonlinear context of PDEs that come from a perturbation of ordinary differential equations, we refer to
the works of S. Ouchi, \cite{ou}, \cite{ou1}, which are based on a Newton polygon approach and
accelero-summation technics as in \cite{taya}. Our result concerns more peculiarly multisummability and multiple scale
analysis in the complex parameter $\epsilon$. Also from this point of view, only few advances have been performed. Among them,
we must mention two recent works by K. Suzuki and Y. Takei, \cite{sutak} and Y. Takei, \cite{tak}, for WKB solutions of the
Schr\"{o}dinger equation
$$ \epsilon^{2}\psi''(z) = (z - \epsilon^{2}z^{2})\psi(z) $$
which possesses 0 as fixed turning point and $z_{\epsilon}=\epsilon^{-2}$ as movable turning point tending to infinity
as $\epsilon$ tends to 0.\medskip

In the sequel, we describe our main intermediate results and the sketch of the arguments needed in their proofs. In a first part,
we depart from an auxiliary parameter depending initial value differential and convolution equation which is regularly
perturbed in its parameter $\epsilon$, see (\ref{SCP}). This equation is formally constructed by making the change of variable
$T=\epsilon t$ in the equation (\ref{ICP_main_p_intro}) and by taking the Fourier transform w.r.t the variable $z$
(as done in our previous contribution \cite{lama}). We construct a formal power series
$\hat{U}(T,m,\epsilon)=\sum_{n \geq 1}U_{n}(m,\epsilon)T^{n}$ solution of (\ref{SCP}) whose coefficients
$m \mapsto U_{n}(m,\epsilon)$ depend holomorphically on $\epsilon$ near 0 and belong to a Banach space $E_{(\beta,\mu)}$
of continuous functions with exponential decay on $\mathbb{R}$ introduced by O. Costin and S. Tanveer in \cite{cota2}.

As a first step, we follow the strategy recently developped by H. Tahara and H. Yamazawa in \cite{taya}, namely we
multiply each hand side of (\ref{SCP}) by the power $T^{k_{1}+1}$ which transforms it into an equation
(\ref{SCP_irregular_k1}) which involves only differential operators in $T$ of irregular type at $T=0$ of the form
$T^{\beta}\partial_{T}$ with $\beta \geq k_{1}+1$ due to the assumption (\ref{constraint_dl_deltal_k1}) on the shape of
the equation (\ref{SCP}). Then, we apply a formal Borel transform of order $k_{1}$, that we call $m_{k_1}-$Borel transform
in Definition 4, to the formal series $\hat{U}$ with respect to $T$, denoted
$$ \omega_{k_1}(\tau,m,\epsilon) = \sum_{n \geq 1} U_{n}(m,\epsilon) \frac{\tau^{n}}{\Gamma(n/k_{1})}. $$
Then, we show that $\omega_{k_1}(\tau,m,\epsilon)$ formally solves a convolution equation in both variables $\tau$ and $m$,
see (\ref{k_1_Borel_equation}). Under some size constraints on the sup norm of the coefficients
$c_{1,2}(\epsilon)/\epsilon$, $c_{0}(\epsilon)/\epsilon$ and $c_{F}(\epsilon)/\epsilon$ near 0, we show that
$\omega_{k_1}(\tau,m,\epsilon)$ is actually convergent for $\tau$ on some fixed neighborhood of 0 and can be extended
to a holomorphic function $\omega_{k_1}^{d}(\tau,m,\epsilon)$ on unbounded sectors $U_{d}$ centered at 0 with
bisecting direction $d$ and tiny aperture, provided that the $m_{k_1}-$Borel transform of the formal forcing term
$F(T,m,\epsilon)$, denoted $\psi_{k_1}(\tau,m,\epsilon)$ is convergent near $\tau=0$ and can be extended on $U_{d}$ w.r.t $\tau$
as a holomorphic function $\psi_{k_1}^{d}(\tau,m,\epsilon)$ with exponential growth of order less than $k_{1}$. Besides,
the function $\omega_{k_1}^{d}(\tau,m,\epsilon)$ satisfies estimates of the form: there exist constants $\nu>0$ and
$\varpi_{d}>0$ with
$$ |\omega_{k_1}^{d}(\tau,m,\epsilon)| \leq \varpi_{d}(1+|m|)^{-\mu}e^{-\beta |m|} \frac{|\tau|}{1 + |\tau|^{2k_{1}}}
e^{\nu |\tau|^{\kappa}} $$
for all $\tau \in U_{d}$, all $m \in \mathbb{R}$, all $\epsilon \in D(0,\epsilon_{0})$, see Proposition 11. The proof leans on
a fixed point argument in a Banach space of holomorphic functions $F_{(\nu,\beta,\mu,k_{1},\kappa)}^{d}$ studied in
Section 2.1. Since the exponential growth order $\kappa$ of $\omega_{k_1}^{d}$ is larger than $k_{1}$, we cannot take
a $m_{k_1}-$Laplace transform of it in direction $d$. We need to use a version of what is called an
accelero-summation procedure as described in \cite{ba}, Chapter 5, which is explained in Section 4.3.

In a second step, we go back to our seminal convolution equation (\ref{SCP}) and we multiply each handside by the power
$T^{k_{2}+1}$ which transforms it into the equation (\ref{SCP_irregular_k2}). Then, we apply a $m_{k_2}-$Borel
transform to the formal series $\hat{U}$ w.r.t $T$, denoted $\hat{\omega}_{k_2}(\tau,m,\epsilon)$. We show that
$\hat{\omega}_{k_2}(\tau,m,\epsilon)$ formally solves a convolution equation in both variables $\tau$ and $m$, see
(\ref{k2_Borel_equation}), where the formal $m_{k_2}-$Borel transform of the forcing term is set as
$\hat{\psi}_{k_2}(\tau,m,\epsilon)$. Now, we observe that a version of the analytic acceleration transform
with indices $k_{2}$ and $k_{1}$ constructed in Proposition 13 applied to $\psi_{k_1}^{d}(\tau,m,\epsilon)$, standing for
$\psi_{k_2}^{d}(\tau,m,\epsilon)$, is the $\kappa-$sum of $\hat{\psi}_{k_2}(\tau,m,\epsilon)$ w.r.t $\tau$ on
some bounded sector $S_{d,\kappa}^{b}$ with aperture slightly larger than $\pi/\kappa$, viewed as a function with values in
$E_{(\beta,\mu)}$. Furthermore, $\psi_{k_2}^{d}(\tau,m,\epsilon)$ can be extended as an analytic function on an
unbounded sector $S_{d,\kappa}$ with aperture slightly larger than $\pi/\kappa$ where it possesses an exponential growth of
order less than $k_{2}$, see Lemma 4. In the sequel, we focus on the solution $\omega_{k_2}^{d}(\tau,m,\epsilon)$ of the
convolution problem (\ref{k2_Borel_equation_analytic}) which is similar to (\ref{k2_Borel_equation}) but with the formal
forcing term $\hat{\psi}_{k_2}(\tau,m,\epsilon)$ replaced by $\psi_{k_2}^{d}(\tau,m,\epsilon)$. Under some size restriction
on the sup norm of the coefficients $c_{1,2}(\epsilon)/\epsilon$, $c_{0}(\epsilon)/\epsilon$ and
$c_{F}(\epsilon)/\epsilon$ near 0, we show that $\omega_{k_2}^{d}(\tau,m,\epsilon)$ defines a bounded holomorphic function
for $\tau$ on the bounded sector $S_{d,\kappa}^{b}$ and can be extended to a holomorphic function on unbounded sectors
$S_{d}$ with direction $d$ and tiny aperture, provided that $S_{d}$ stays away from the roots of some
polynomial $P_{m}(\tau)$ constructed with the help of $Q(X)$ and $R_{D}(X)$ in (\ref{ICP_main_p_intro}),
see (\ref{factor_P_m}). Moreover, the function $\omega_{k_2}^{d}(\tau,m,\epsilon)$ satisfies estimates of the form: there
exist constants $\nu'>0$ and $\upsilon_{d}>0$ with
$$ |\omega_{k_2}^{d}(\tau,m,\epsilon)| \leq \upsilon_{d}(1+|m|)^{-\mu}e^{-\beta|m|}
\frac{|\tau|}{1 + |\tau|^{2k_{2}}} e^{\nu' |\tau|^{k_2}} $$
for all $\tau \in S_{d}$, all $m \in \mathbb{R}$, all $\epsilon \in D(0,\epsilon_{0})$, see Proposition 14. Again, the
proof rests on a fixed point argument in a Banach space of holomorphic functions $F_{(\nu',\beta,\mu,k_{2})}^{d}$ outlined in
Section 2.2. In Proposition 15, we show that $\omega_{k_2}^{d}(\tau,m,\epsilon)$ actually coincides with the
analytic acceleration transform with indices $m_{k_2}$ and $m_{k_1}$ applied to $\omega_{k_1}^{d}(\tau,m,\epsilon)$,
denoted $\mathrm{Acc}_{k_{2},k_{1}}^{d}(\omega_{k_1}^{d})(\tau,m,\epsilon)$, as long as $\tau$ lies in the
bounded sector $S_{d,\kappa}^{b}$. As a result, some $m_{k_2}-$Laplace transform of the analytic continuation of
$\mathrm{Acc}_{k_{2},k_{1}}^{d}(\omega_{k_1}^{d})(\tau,m,\epsilon)$, set as $U^{d}(T,m,\epsilon)$, can be considered for all
$T$ belonging to a sector $S_{d,\theta_{k_2},h}$ with bisecting direction $d$, aperture $\theta_{k_2}$ slightly larger than
$\pi/k_{2}$ and radius $h>0$. Following the terminology of \cite{ba}, Section 6.1, $U^{d}(T,m,\epsilon)$ can be called
the $(m_{k_2},m_{k_1})-$sum of the formal series $\hat{U}(T,m,\epsilon)$ in direction $d$. Additionally,
$U^{d}(T,m,\epsilon)$ solves our primary convolution equation (\ref{SCP}), where the formal forcing term
$\hat{F}(T,m,\epsilon)$ is interchanged with $F^{d}(T,m,\epsilon)$ which denotes the $(m_{k_2},m_{k_1})-$sum of
$\hat{F}$ in direction $d$.

In Theorem 1, we construct a family of actual bounded holomorphic solutions
$u^{\mathfrak{d}_{p}}(t,z,\epsilon)$, $0 \leq p \leq \varsigma - 1$, of our original problem (\ref{ICP_main_p_intro}) on
domains of the form $\mathcal{T} \times H_{\beta'} \times \mathcal{E}_{p}$ described in the main results above. Namely, the
functions $u^{\mathfrak{d}_{p}}(t,z,\epsilon)$ (resp. $f^{\mathfrak{d}_{p}}(t,z,\epsilon)$) are set as Fourier inverse
transforms of $U^{\mathfrak{d}_{p}}$,
$$ u^{\mathfrak{d}_{p}}(t,z,\epsilon) = \mathcal{F}^{-1}( m \mapsto U^{\mathfrak{d}_{p}}(\epsilon t,m,\epsilon) )(z) \ \ , \ \
f^{\mathfrak{d}_{p}}(t,z,\epsilon) = \mathcal{F}^{-1}( m \mapsto F^{\mathfrak{d}_{p}}(\epsilon t,m,\epsilon) )(z) $$
where the definition of $\mathcal{F}^{-1}$ is pointed out in Proposition 9. One proves the crucial property that the difference
of any two neighboring functions
$u^{\mathfrak{d}_{p+1}}(t,z,\epsilon) - u^{\mathfrak{d}_{p}}(t,z,\epsilon)$ tends to zero as $\epsilon \rightarrow 0$ on
$\mathcal{E}_{p+1} \cap \mathcal{E}_{p}$ faster than a function with exponential decay of order $k$, uniformly w.r.t
$t \in \mathcal{T}$, $z \in H_{\beta'}$, with $k=k_{2}$ when the intersection
$U_{\mathfrak{d}_{p+1}} \cap U_{\mathfrak{d}_{p}}$ is not empty and with $k=k_{1}$, when this intersection is empty. The same
estimates hold for the difference $f^{\mathfrak{d}_{p+1}}(t,z,\epsilon) - f^{\mathfrak{d}_{p}}(t,z,\epsilon)$.

The whole section 6 is devoted to the study of the asymptotic behaviour of $u^{\mathfrak{d}_{p}}(t,z,\epsilon)$ as
$\epsilon$ tends to zero. Using the decay estimates on the differences of the functions $u^{\mathfrak{d}_{p}}$ and
$f^{\mathfrak{d}_{p}}$, we show the existence of a common asymptotic expansion
$\hat{u}(\epsilon) = \sum_{m \geq 0} h_{m} \epsilon^{m}/m! \in \mathbb{F}[[\epsilon]]$ (resp.
$\hat{f}(\epsilon) = \sum_{m \geq 0} f_{m} \epsilon^{m}/m! \in \mathbb{F}[[\epsilon]]$) of Gevrey order
$1/k_{1}$ for all functions $u^{\mathfrak{d}_{p}}(t,z,\epsilon)$ (resp. $f^{\mathfrak{d}_{p}}(t,z,\epsilon)$)
as $\epsilon$ tends to 0 on $\mathcal{E}_{p}$. We obtain also a double scale asymptotics for
$u^{\mathfrak{d}_{p}}$ as explained in the main results above. The key tool in proving the result is a version of the
Ramis-Sibuya theorem which entails two Gevrey asymptotics orders, described in Section 6.1. It is worthwhile noting that a
similar version was recently brought into play by Y. Takei and K. Suzuki in \cite{sutak}, \cite{tak}, in order to study
parametric multisummability for the complex Schr\"{o}dinger equation.

In the last section, we study the particular situation when the formal forcing term $F(T,m,\epsilon)$ solves a linear
differential and convolution initial value problem, see (\ref{SCP_bf}). We multiply each handside of this equation by the
power $T^{k_{1}+1}$ which transforms it into the equation (\ref{SCP_irregular_bf}). Then, we show that the
$m_{k_1}-$Borel transform $\psi_{k_1}(\tau,m,\epsilon)$ formally solves a convolution equation in both variables
$\tau$ and $m$, see (\ref{k1_Borel_equation_analytic_bf}). Under a size control of the sup norm of the coefficients
$\mathbf{c}_{0}(\epsilon)/\epsilon$ and $\mathbf{c}_{\mathbf{F}}(\epsilon)/\epsilon$ near 0, we show that
$\psi_{k_1}(\tau,m,\epsilon)$ is actually convergent near 0 w.r.t $\tau$ and can be holomorphically extended as a function
$\psi_{k_1}^{\mathfrak{d}_{p}}(\tau,m,\epsilon)$ on any unbounded sectors $U_{\mathfrak{d}_{p}}$ with direction
$\mathfrak{d}_{p}$ and small aperture, provided that $U_{\mathfrak{d}_{p}}$ stays away from the roots of some polynomial
$\mathbf{P}_{m}(\tau)$ constructed with the help of $\mathbf{Q}(X)$ and $\mathbf{R}_{\mathbf{D}}(X)$ in (\ref{SCP_bf}).
Additionally, the function $\psi_{k_1}^{\mathfrak{d}_{p}}(\tau,m,\epsilon)$ satisfies estimates of the form: there exists
a constant $\boldsymbol{\upsilon}>0$ with
$$ |\psi_{k_1}^{\mathfrak{d}_{p}}(\tau,m,\epsilon)| \leq
\boldsymbol{\upsilon} (1+|m|)^{-\mu}e^{-\beta |m|} \frac{|\tau|}{1 + |\tau|^{2k_{1}}} e^{\nu |\tau|^{k_1}} $$
for all $\tau \in U_{\mathfrak{d}_{p}}$, all $m \in \mathbb{R}$, all $\epsilon \in D(0,\epsilon_{0})$, see Proposition 18. The
proof is once more based upon a fixed point argument in a Banach space of holomorphic functions
$F_{(\nu,\beta,\mu,k_{1},k_{1})}^{d}$ defined in Section 2.1. These latter properties on
$\psi_{k_1}^{\mathfrak{d}_{p}}(\tau,m,\epsilon)$ legitimize all the assumptions made above on the forcing term $F(T,m,\epsilon)$.
Now, we can take the $m_{k_1}-$Laplace transform
$\mathcal{L}_{m_{k_1}}^{\mathfrak{d}_{p}}(\psi_{k_1}^{\mathfrak{d}_{p}})(T)$ of $\psi_{k_1}^{\mathfrak{d}_{p}}(\tau,m,\epsilon)$
w.r.t $\tau$ in direction $\mathfrak{d}_{p}$, which yields an analytic solution of the initial linear equation (\ref{SCP_bf})
on some bounded sector $S_{\mathfrak{d}_{p},\theta_{k_1},h}$ with aperture $\theta_{k_1}$ slightly larger than $\pi/k_{1}$.
It comes to light in Lemma 13, that $\mathcal{L}_{m_{k_1}}^{\mathfrak{d}_{p}}(\psi_{k_1}^{\mathfrak{d}_{p}})(T)$ coincides with
the analytic $(m_{k_{2}},m_{k_{1}})-$sum $F^{\mathfrak{d}_{p}}(T,m,\epsilon)$ of $\hat{F}$ in
direction $\mathfrak{d}_{p}$ on the smaller sector $S_{\mathfrak{d}_{p},\theta_{k_2},h}$ with aperture slightly larger than
$\pi/k_{2}$. We deduce consequently that the analytic forcing term $f^{\mathfrak{d}_{p}}(t,z,\epsilon)$ solves the linear
PDE (\ref{ICP_main_fp_bf}) with analytic coefficients on $D(0,r) \times H_{\beta'} \times D(0,\epsilon_{0})$, for all
$t \in \mathcal{T}$, $z \in H_{\beta'}$, $\epsilon \in \mathcal{E}_{p}$. In our last main result (Theorem 3), we see that this
latter issue implies that $u^{\mathfrak{d}_{p}}(t,z,\epsilon)$ itself solves a nonlinear PDE
(\ref{nlpde_holcoef_near_origin_u_dp}) with analytic coefficients and forcing term on
$D(0,r) \times H_{\beta'} \times D(0,\epsilon_{0})$,
for all $t \in \mathcal{T}$, $z \in H_{\beta'}$, $\epsilon \in \mathcal{E}_{p}$.\medskip

\noindent The paper is organized as follows.\\
In Section 2, we define some weighted Banach spaces of continuous functions on $(D(0,\rho) \cup U) \times \mathbb{R}$ with exponential
growths of different orders on unbounded sectors $U$ w.r.t the first variable and exponential decay on $\mathbb{R}$ w.r.t the second one.
We study the continuity properties of several kind of linear and nonlinear operators acting on these spaces that will be useful in
Sections 4.2, 4.4 and 7.2.\\
In Section 3, we recall the definition and the main analytic and algebraic properties of the $m_{k}-$summability.\\
In Section 4.1, we introduce an auxiliary differential and convolution problem (\ref{SCP}) for which we construct a formal
solution.\\
In Section 4.2, we show that the $m_{k_1}-$Borel transform of this formal solution satisfies a convolution problem
(\ref{k_1_Borel_equation}) that we can uniquely solve within the Banach spaces described in Section 2.\\
In Section 4.3, we describe the properties of a variant of the formal and analytic acceleration operators associated to the
$m_{k}-$Borel and $m_{k}-$Laplace transforms.\\
In Section 4.4, we see that the $m_{k_2}-$Borel transform of the formal solution of (\ref{SCP}) satisfies a convolution
problem (\ref{k2_Borel_equation}). We show that its formal forcing term is $\kappa-$summable and that its $\kappa-$sum is
an acceleration of the $m_{k_1}-$Borel transform of the above formal forcing term. Then, we construct an actual solution
to the corresponding problem with the analytic continuation of this $\kappa-$sum as nonhomogeneous term, within the
Banach spaces defined in Section 2. We recognize that this actual solution is the analytic continuation of the acceleration of
the $m_{k_1}-$Borel transform of the formal solution of (\ref{SCP}). Finally, we take its $m_{k_2}-$Laplace transform in order
to get an actual solution of (\ref{SCP_analytic_d}).\\
In Section 5, with the help of Section 4, we build a family of actual holomorphic solutions to our initial Cauchy problem
(\ref{ICP_main_p_intro}). We show that the difference of any two neighboring solutions is exponentially flat for some
integer order in $\epsilon$ (Theorem 1).\\
In Section 6, we show that the actual solutions constructed in Section 5 share a common formal series as Gevrey asymptotic
expansion as $\epsilon$ tends to 0 on sectors (Theorem 2). The result is buit on a version of the Ramis-Sibuya theorem with
two Gevrey orders stated in Section 6.1.\\
In Section 7, we inspect the special case when the forcing term itself solves a linear PDE. Then, we notice that the solutions
of (\ref{ICP_main_p_intro}) constructed in Section 5 actually solve a nonlinear PDE with holomorphic coefficients and forcing term
near the origin (Theorem 3).

\section{Banach spaces of functions with exponential growth and decay}

The Banach spaces introduced in the next subsection 2.1 (resp. subsection 2.2) will be crucial in the construction of analytic solutions of a
convolution problem investigated in the forthcoming subsection 4.2 (resp. subsection 4.4).

\subsection{Banach spaces of functions with exponential growth $\kappa$ and decay of exponential order 1}

We denote $D(0,r)$ the open disc centered at $0$ with radius $r>0$ in $\mathbb{C}$ and $\bar{D}(0,r)$ its closure. Let 
$U_{d}$ be an open unbounded sector in direction $d \in \mathbb{R}$ centered at $0$ in $\mathbb{C}$. By
convention, the sectors we consider do not contain the origin in $\mathbb{C}$.

\begin{defin} Let $\nu,\beta,\mu>0$ and $\rho>0$ be positive real numbers. Let $k \geq 1$, $\kappa \geq 1$ be integers and
$d \in \mathbb{R}$. We denote
$F_{(\nu,\beta,\mu,k,\kappa)}^{d}$ the vector space of continuous functions $(\tau,m) \mapsto h(\tau,m)$ on
$(\bar{D}(0,\rho) \cup U_{d}) \times \mathbb{R}$, which are holomorphic with respect to $\tau$ on $D(0,\rho) \cup U_{d}$ and such
that
$$ ||h(\tau,m)||_{(\nu,\beta,\mu,k,\kappa)} =
\sup_{\tau \in \bar{D}(0,\rho) \cup U_{d},m \in \mathbb{R}} (1+|m|)^{\mu}
\frac{1 + |\tau|^{2k}}{|\tau|}\exp( \beta|m| - \nu|\tau|^{\kappa} ) |h(\tau,m)|$$
is finite. One can check that the normed space
$(F_{(\nu,\beta,\mu,k,\kappa)}^{d},||.||_{(\nu,\beta,\mu,k,\kappa)})$ is a Banach space.
\end{defin}
{\bf Remark:} These norms are appropriate modifications of those introduced in the work \cite{lama}, Section 2.\medskip

Throughout the whole subsection, we assume $\mu,\beta,\nu,\rho>0$, $k,\kappa \geq 1$ and $d \in \mathbb{R}$ are fixed. In the next
lemma, we check the continuity property under multiplication operation with bounded functions.

\begin{lemma} Let $(\tau,m) \mapsto a(\tau,m)$ be a bounded continuous function on
$(\bar{D}(0,\rho) \cup U_{d}) \times \mathbb{R}$ by a constant $C_{1}>0$. We assume that $a(\tau,m)$ is holomorphic with respect
to $\tau$ on $D(0,\rho) \cup U_{d}$. Then, we have
\begin{equation}
|| a(\tau,m) h(\tau,m) ||_{(\nu,\beta,\mu,k,\kappa)} \leq
C_{1} ||h(\tau,m)||_{(\nu,\beta,\mu,k,\kappa)}
\end{equation}
for all $h(\tau,m) \in F_{(\nu,\beta,\mu,k,\kappa)}^{d}$.
\end{lemma}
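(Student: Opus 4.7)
The plan is a direct verification from the definition. I will start from the explicit expression
\[
\|a(\tau,m) h(\tau,m)\|_{(\nu,\beta,\mu,k,\kappa)} = \sup_{\tau,m}\, (1+|m|)^{\mu}\, \frac{1+|\tau|^{2k}}{|\tau|}\, \exp(\beta|m| - \nu|\tau|^{\kappa})\, |a(\tau,m)|\, |h(\tau,m)|,
\]
the supremum being taken over $\tau \in \bar{D}(0,\rho) \cup U_{d}$ and $m \in \mathbb{R}$. Since $a$ is holomorphic on $D(0,\rho)\cup U_{d}$, continuous on the closure and jointly continuous with $h$, the product $a\cdot h$ still lies in the class of continuous functions on $(\bar D(0,\rho)\cup U_d)\times \mathbb R$ that are holomorphic in $\tau$, so the only point to check is the norm bound.

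The key (and only) step is to pull out the uniform bound $|a(\tau,m)|\le C_{1}$ pointwise inside the supremum. This gives
\[
(1+|m|)^{\mu}\, \frac{1+|\tau|^{2k}}{|\tau|}\, \exp(\beta|m| - \nu|\tau|^{\kappa})\, |a(\tau,m)|\, |h(\tau,m)| \le C_{1}\, (1+|m|)^{\mu}\, \frac{1+|\tau|^{2k}}{|\tau|}\, \exp(\beta|m| - \nu|\tau|^{\kappa})\, |h(\tau,m)|
\]
at every point $(\tau,m)$ in the admissible domain. Taking the supremum on both sides then yields
\[
\|a(\tau,m) h(\tau,m)\|_{(\nu,\beta,\mu,k,\kappa)} \le C_{1}\, \|h(\tau,m)\|_{(\nu,\beta,\mu,k,\kappa)},
\]
which is the claim. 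There is no real obstacle here: the weight function $(1+|m|)^{\mu}\,(1+|\tau|^{2k})/|\tau|\,\exp(\beta|m|-\nu|\tau|^{\kappa})$ is nonnegative, depends only on $(\tau,m)$, and so factors out transparently, and the hypothesis on $a$ plays no role beyond supplying the pointwise bound $C_1$ and the regularity needed to stay inside the space $F_{(\nu,\beta,\mu,k,\kappa)}^{d}$.
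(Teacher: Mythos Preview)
Your proof is correct and is exactly the direct verification one expects; the paper itself states this lemma without proof, treating it as immediate from the definition of the norm, which is precisely what your argument makes explicit.
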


In the next proposition, we study the continuity property of some convolution operators acting on the latter Banach spaces.

\begin{prop} Let $\chi_{2}>-1$ be a real number. Let $\nu_{2} \geq -1$ be an integer. We assume that
$1 + \chi_{2} + \nu_{2} \geq 0$.\\

\noindent If $\kappa \geq k(\frac{\nu_{2}}{\chi_{2}+1} + 1)$, then there exists a constant $C_{2}>0$ (depending on
$\nu,\nu_{2},\chi_{2}$) such that
\begin{equation}
|| \int_{0}^{\tau^k} (\tau^{k}-s)^{\chi_2}s^{\nu_{2}}f(s^{1/k},m) ds ||_{(\nu,\beta,\mu,k,\kappa)}
\leq C_{2}
||f(\tau,m)||_{(\nu,\beta,\mu,k,\kappa)}
\label{conv_op_prod_s_continuity_1}
\end{equation}
for all $f(\tau,m) \in F_{(\nu,\beta,\mu,k,\kappa)}^{d}$.\medskip
\end{prop}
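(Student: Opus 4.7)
The plan is to start from the pointwise bound
$$|f(\tau,m)| \le \|f\|_{(\nu,\beta,\mu,k,\kappa)}\, (1+|m|)^{-\mu}\,\frac{|\tau|}{1+|\tau|^{2k}}\,e^{-\beta|m|+\nu|\tau|^{\kappa}}$$
built into the norm, parametrize the convolution by the straight segment $s=\tau^{k}t$, $t\in[0,1]$, and then change variables $t=u^{k}$ so that the argument of $f$ becomes $\tau u$, matching the weight defining the Banach space. After plugging this bound in and multiplying by $(1+|m|)^{\mu}\tfrac{1+|\tau|^{2k}}{|\tau|}e^{\beta|m|-\nu|\tau|^{\kappa}}$, the $m$-dependent factors cancel and the claim reduces to showing that the auxiliary quantity
$$B(x)\;=\;x^{k(1+\chi_{2}+\nu_{2})}\,(1+x^{2k})\,e^{-\nu x^{\kappa}}\int_{0}^{1}(1-u^{k})^{\chi_{2}}\,\frac{u^{k(\nu_{2}+1)}}{1+x^{2k}u^{2k}}\,e^{\nu x^{\kappa}u^{\kappa}}\,du$$
is bounded uniformly in $x=|\tau|\ge 0$.

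For $x$ in a bounded range I would obtain $B(x)\le \mathrm{const}$ directly: the singularity at $u=1$ is integrable since $\chi_{2}>-1$, the factor $u^{k(\nu_{2}+1)}$ is bounded on $[0,1]$ since $\nu_{2}\ge -1$, and the prefactor $x^{k(1+\chi_{2}+\nu_{2})}$ stays bounded near $x=0$ because $1+\chi_{2}+\nu_{2}\ge 0$. The substantial work lies in the regime $x\to+\infty$, where the integrand is strongly concentrated near $u=1$ by the factor $e^{\nu x^{\kappa}(u^{\kappa}-1)}$.

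In that regime I would split the integral at $u=1/2$. On $[0,1/2]$ the exponential is at most $e^{\nu x^{\kappa}2^{-\kappa}}$, so after multiplication by $e^{-\nu x^{\kappa}}$ the contribution to $B(x)$ is bounded by $x^{k(1+\chi_{2}+\nu_{2})}(1+x^{2k})e^{-\nu x^{\kappa}(1-2^{-\kappa})}$ times a fixed beta-type constant, which tends to zero. On $[1/2,1]$ I would substitute $y=\nu x^{\kappa}(1-u^{\kappa})$, which turns the exponential factor into $e^{\nu x^{\kappa}}e^{-y}$, use the elementary comparison $c\,(1-u^{\kappa})\le 1-u^{k}\le C\,(1-u^{\kappa})$ on $[1/2,1]$ (with $c,C>0$ depending only on $k,\kappa$) to dispose of $(1-u^{k})^{\chi_{2}}$, and bound $(1+x^{2k}u^{2k})^{-1}$ by a constant times $x^{-2k}$ since $u\ge 1/2$. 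This produces a majorant of the form $C'\,e^{\nu x^{\kappa}}\,x^{-\kappa(\chi_{2}+1)-2k}\int_{0}^{+\infty}y^{\chi_{2}}e^{-y}\,dy$, which is finite since $\chi_{2}>-1$.

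Multiplying this by the prefactor $x^{k(1+\chi_{2}+\nu_{2})}(1+x^{2k})e^{-\nu x^{\kappa}}$, the two exponentials cancel, the factor $x^{2k}$ absorbs the $x^{-2k}$, and one is left with $x^{k(1+\chi_{2}+\nu_{2})-\kappa(\chi_{2}+1)}$. The hypothesis $\kappa\ge k\bigl(\nu_{2}/(\chi_{2}+1)+1\bigr)$ is exactly $k(1+\chi_{2}+\nu_{2})-\kappa(\chi_{2}+1)\le 0$, so this remaining power is bounded as $x\to+\infty$, completing the argument. The main obstacle, which the above splitting and substitution address, is converting the informal Laplace-type asymptotics near $u=1$ into an explicit inequality whose constant $C_{2}$ depends only on $\nu,\nu_{2},\chi_{2},k,\kappa$ and not on $\tau$; the elementary comparison between $1-u^{k}$ and $1-u^{\kappa}$ is the key technical device that makes this quantitative.
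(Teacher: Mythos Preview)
Your proposal is correct. The reduction to the quantity $B(x)$ is the same as in the paper (after the cosmetic change $x=|\tau|$ versus the paper's $x=|\tau|^{k}$), and the easy half of the split---your interval $[0,1/2]$, the paper's $B_{1}(x)$---is handled identically, by bounding the exponential at the midpoint and letting the remaining exponential decay kill the polynomial prefactor.

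The substantive difference is in the hard half. The paper treats its $B_{2}(x)$ by expanding $e^{\nu h^{\kappa/k}}$ as a power series, evaluating each term via the Beta integral, invoking the asymptotics $\Gamma(x)/\Gamma(x+a)\sim x^{-a}$, and then recognising the resulting series as a Wiman (generalised Mittag--Leffler) function $E_{1,\chi_{2}+2}(\nu x^{\kappa/k})$ whose known asymptotics yield the required bound $x^{\frac{1}{k}+\nu_{2}}e^{\nu x^{\kappa/k}}$. Your route bypasses all of this special-function machinery: the substitution $y=\nu x^{\kappa}(1-u^{\kappa})$ together with the elementary comparison $1-u^{k}\asymp 1-u^{\kappa}$ on $[1/2,1]$ converts the integral directly into $\Gamma(\chi_{2}+1)$ times the correct power of $x$. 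Both arguments land on the same exponent balance $k(1+\chi_{2}+\nu_{2})\le \kappa(\chi_{2}+1)$, which is exactly the hypothesis $\kappa\ge k(\nu_{2}/(\chi_{2}+1)+1)$. Your approach is shorter and more self-contained; the paper's approach has the advantage of making the connection with Mittag--Leffler asymptotics explicit, which may be useful if one wants sharper constants or a two-sided estimate.
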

\begin{proof} Let
$f(\tau,m) \in F_{(\nu,\beta,\mu,k,\kappa)}^{d}$. By definition, we have
\begin{multline}
||\int_{0}^{\tau^k} (\tau^{k}-s)^{\chi_2}s^{\nu_2}f(s^{1/k},m) ds
||_{(\nu,\beta,\mu,k,\kappa)} \\ = \sup_{\tau \in \bar{D}(0,\rho) \cup U_{d},m \in \mathbb{R}} (1+|m|)^{\mu}
\frac{1 + |\tau|^{2k}}{|\tau|}\exp( \beta|m| - \nu|\tau|^{\kappa} )\\
\times |\int_{0}^{\tau^k} \{ (1+|m|)^{\mu} e^{\beta |m|}
\exp( -\nu |s|^{\kappa/k} )
\frac{1 + |s|^2}{ |s|^{1/k} }f(s^{1/k},m) \}\\
\times \mathcal{B}(\tau,s,m) ds|
\end{multline}
where
$$
\mathcal{B}(\tau,s,m) = \frac{1}{(1+|m|)^{\mu}} e^{-\beta |m|}
\frac{\exp( \nu |s|^{\kappa/k} )}{1 + |s|^2} |s|^{1/k}
(\tau^{k}-s)^{\chi_2}s^{\nu_2}.
$$
Therefore,
\begin{equation}
||\int_{0}^{\tau^k} (\tau^{k}-s)^{\chi_2}s^{\nu_2}f(s^{1/k},m) ds
||_{(\nu,\beta,\mu,k,\kappa)} \leq C_{2} ||f(\tau,m)||_{(\nu,\beta,\mu,k,\kappa)} \label{continuity_op_prod_s}
\end{equation}
where
\begin{equation}
 C_{2} = \sup_{\tau \in \bar{D}(0,\rho) \cup U_{d}}
\frac{1 + |\tau|^{2k}}{|\tau|} \exp( -\nu |\tau|^{\kappa} ) \int_{0}^{|\tau|^k}
\frac{\exp( \nu h^{\kappa/k} )}{1+h^2}
h^{\frac{1}{k}} (|\tau|^{k}-h)^{\chi_2} h^{\nu_2} dh
= \sup_{x \geq 0} B(x) \label{C_2}
\end{equation}
where
$$ B(x) = \frac{1 + x^2}{x^{1/k}} \exp( -\nu x^{\kappa/k} )
\int_{0}^{x} \frac{\exp( \nu h^{\kappa/k} )}{1 + h^2} h^{\frac{1}{k} + \nu_{2}} (x-h)^{\chi_2} dh.
$$
We write $B(x)=B_{1}(x) + B_{2}(x)$, where
\begin{multline*}
B_{1}(x) = \frac{1+x^2}{x^{1/k}}
\exp(-\nu x^{\kappa/k}) \int_{0}^{x/2} \frac{\exp( \nu h^{\kappa/k} )}{1 + h^2} h^{\frac{1}{k}+\nu_2} (x-h)^{\chi_{2}} dh,\\
B_{2}(x) = 
\frac{1+x^2}{x^{1/k}}
\exp(-\nu x^{\kappa/k}) \int_{x/2}^{x} \frac{\exp( \nu h^{\kappa/k} )}{1 + h^2} h^{\frac{1}{k}+\nu_2} (x-h)^{\chi_{2}} dh
\end{multline*}
Now, we study the function $B_{1}(x)$. We first assume that $-1 < \chi_{2} < 0$. In that case, we have that
$(x-h)^{\chi_2} \leq (x/2)^{\chi_2}$ for all $0 \leq h \leq x/2$ with $x>0$. Since $\nu_{2} \geq -1$, we deduce that
\begin{multline}
B_{1}(x) \leq \frac{1 + x^2}{x^{1/k}} (\frac{x}{2})^{\chi_2}e^{-\nu x^{\kappa/k}} \int_{0}^{x/2}
\frac{e^{\nu h^{\kappa/k}}}{1+h^2}
h^{\frac{1}{k}+\nu_2} dh\\
\leq (1 + x^2)\frac{1}{2^{1/k}(\frac{1}{k}+\nu_{2}+1)} (\frac{x}{2})^{1 + \chi_{2} + \nu_{2}}
\exp( -\nu (1 - \frac{1}{2^{\kappa/k}}) x^{\kappa/k} )
\end{multline}
for all $x > 0$. Since $\kappa \geq k$ and $1 + \chi_{2} + \nu_{2} \geq 0$, we deduce that there exists a constant $K_{1}>0$ with 
\begin{equation}
\sup_{x \geq 0} B_{1}(x) \leq K_{1}. \label{B_1_bounded_chi2_negative}
\end{equation}
We assume now that $\chi_{2} \geq 0$. In this situation, we know that $(x-h)^{\chi_2} \leq x^{\chi_2}$ for all $0 \leq h \leq x/2$, with
$x \geq 0$. Hence, since $\nu_{2} \geq -1$,
\begin{equation}
B_{1}(x) \leq (1+x^2)\frac{1}{2^{1/k}(\frac{1}{k}+\nu_{2}+1)}
x^{\chi_2}(x/2)^{\nu_{2}+1} \exp( -\nu (1 - \frac{1}{2^{\kappa/k}}) x^{\kappa/k} )
\end{equation}
for all $x \geq 0$. Again, we deduce that there exists a constant $K_{1.1}>0$ with 
\begin{equation}
\sup_{x \geq 0} B_{1}(x) \leq K_{1.1}. \label{B_1_bounded_chi2_positive}
\end{equation}
In the next step, we focus on the function $B_{2}(x)$. First, we observe that $1 + h^{2} \geq 1 + (x/2)^2$ for all
$x/2 \leq h \leq x$. Therefore, there exists a constant $K_{2}>0$ such that
\begin{multline}
B_{2}(x) \leq \frac{1+x^2}{1 + (\frac{x}{2})^2}\frac{1}{x^{1/k}}
\exp(-\nu x^{\kappa/k}) \int_{x/2}^{x} \exp( \nu h^{\kappa/k} ) h^{\frac{1}{k}+\nu_2} (x-h)^{\chi_{2}} dh\\
\leq K_{2} \frac{1}{x^{1/k}}\exp(-\nu x^{\kappa/k}) \int_{0}^{x} \exp( \nu h^{\kappa/k} )
h^{\frac{1}{k}+\nu_2} (x-h)^{\chi_{2}} dh
\label{B_2_maj}
\end{multline}
for all $x > 0$. It remains to study the function
$$ B_{2.1}(x) = \int_{0}^{x} \exp( \nu h^{\kappa/k} ) h^{\frac{1}{k}+\nu_2} (x-h)^{\chi_{2}} dh $$
for $x \geq 0$. By the uniform expansion $e^{\nu h^{\kappa/k}} = \sum_{n \geq 0} (\nu h^{\kappa/k})^{n}/n!$ on every compact
interval $[0,x]$, $x \geq 0$, we can write
\begin{equation}
B_{2.1}(x) = \sum_{n \geq 0} \frac{\nu^{n}}{n!} \int_{0}^{x} h^{\frac{n \kappa}{k} + \frac{1}{k}+\nu_{2}} (x-h)^{\chi_{2}} dh
\end{equation}
Using the Beta integral formula (see \cite{ba2}, Appendix B3) and since $\chi_{2} > -1$, $\frac{1}{k}+\nu_{2} > -1$, we can write
\begin{equation}
B_{2.1}(x) = \sum_{n \geq 0} \frac{\nu^{n}}{n!}
\frac{\Gamma(\chi_{2}+1)\Gamma( \frac{n \kappa}{k} + \frac{1}{k}+\nu_{2}+1 )}{\Gamma( \frac{n \kappa}{k} +
\frac{1}{k}+\nu_{2} + \chi_{2} + 2)}
x^{\frac{n \kappa}{k} + \frac{1}{k}+\nu_{2} + \chi_{2} + 1} \label{B_21_gamma}
\end{equation}
for all $x \geq 0$. Bearing in mind that
\begin{equation}
\Gamma(x)/\Gamma(x+a) \sim 1/x^{a} \label{quotient_gamma}
\end{equation}
as $x \rightarrow +\infty$, for any $a>0$ (see for instance, \cite{ba2}, Appendix B3), from (\ref{B_21_gamma}), we get
a constant $K_{2.1}>0$ such that
\begin{equation}
B_{2.1}(x) \leq K_{2.1} x^{\frac{1}{k}+\nu_{2}+\chi_{2}+1} \sum_{n \geq 0} \frac{1}{(n+1)^{\chi_{2}+1}n!}
(\nu x^{\kappa/k})^{n} \label{B_21_n_power}
\end{equation}
for all $x \geq 0$. Using again (\ref{quotient_gamma}), we know that $1/(n+1)^{\chi_{2}+1} \sim \Gamma(n+1)/\Gamma(n+\chi_{2}+2)$
as $n \rightarrow +\infty$. Hence, from (\ref{B_21_n_power}), there exists a constant $K_{2.2}>0$ such that
\begin{equation}
B_{2.1}(x) \leq K_{2.2}x^{\frac{1}{k}+\nu_{2} + \chi_{2} + 1} \sum_{n \geq 0}
\frac{1}{\Gamma(n + \chi_{2} +2)} (\nu x^{\kappa/k})^{n} \label{B_21_wiman}
\end{equation}
for all $x \geq 0$.

Remembering the asymptotic properties of the generalized Mittag-Leffler function (known as Wiman function in the literature)
$E_{\alpha,\beta}(z) = \sum_{n \geq 0} z^{n}/\Gamma(\beta + \alpha n)$, for any $\alpha,\beta >0$ (see \cite{ba2}, Appendix B4
or \cite{erd}, expansion (22) p. 210), we get from (\ref{B_21_wiman}) a constant $K_{2.3}>0$ such that
\begin{equation}
B_{2.1}(x) \leq K_{2.3}x^{\frac{1}{k}+\nu_{2} + \chi_{2}+1}x^{-\frac{\kappa}{k}(\chi_{2}+1)}e^{\nu x^{\kappa/k}}
\label{B_21_exp_growth}
\end{equation}
for all $x \geq 1$. Under the assumption that $\nu_{2} + \chi_{2} + 1 \leq \frac{\kappa}{k}(\chi_{2}+1)$ and gathering
(\ref{B_2_maj}), (\ref{B_21_exp_growth}), we get a constant $K_{2.4}>0$ such that
\begin{equation}
\sup_{x \geq 0} B_{2}(x) \leq K_{2.4}. \label{B_2_bounded_on_R}
\end{equation}
Finally, taking into account the estimates (\ref{continuity_op_prod_s}), (\ref{C_2}),
(\ref{B_1_bounded_chi2_negative}), (\ref{B_1_bounded_chi2_positive}), (\ref{B_2_bounded_on_R}), the
inequality (\ref{conv_op_prod_s_continuity_1}) follows.
\end{proof}
 
\begin{prop} Let $k,\kappa \geq 1$ be integers such that $\kappa \geq k$. Let $Q_{1}(X),Q_{2}(X),R(X) \in \mathbb{C}[X]$ such that
\begin{equation}
\mathrm{deg}(R) \geq \mathrm{deg}(Q_{1}) \ \ , \ \ \mathrm{deg}(R) \geq \mathrm{deg}(Q_{2}) \ \ , \ \ R(im) \neq 0
\label{R>Q1_R>Q2_R_nonzero_first}
\end{equation}
for all $m \in \mathbb{R}$. Assume that $\mu > \max( \mathrm{deg}(Q_{1})+1, \mathrm{deg}(Q_{2})+1 )$. Let
$m \mapsto b(m)$ be a continuous function on $\mathbb{R}$ such that
$$
|b(m)| \leq \frac{1}{|R(im)|}
$$
for all $m \in \mathbb{R}$. Then, there exists a constant $C_{3}>0$ (depending on $Q_{1},Q_{2},R,\mu,k,\kappa,\nu$) such that
\begin{multline}
|| b(m) \int_{0}^{\tau^{k}} (\tau^{k}-s)^{\frac{1}{k}} ( \int_{0}^{s} \int_{-\infty}^{+\infty} Q_{1}(i(m-m_{1}))
f((s-x)^{1/k},m-m_{1}) \\
\times Q_{2}(im_{1}) g(x^{1/k},m_{1}) \frac{1}{(s-x)x} dx dm_{1} ) ds ||_{(\nu,\beta,\mu,k,\kappa)} \\
\leq C_{3} ||f(\tau,m)||_{(\nu,\beta,\mu,k,\kappa)} ||g(\tau,m)||_{(\nu,\beta,\mu,k,\kappa)}
\label{norm_conv_f_g<norm_f_times_norm_g}
\end{multline}
for all $f(\tau,m), g(\tau,m) \in F_{(\nu,\beta,\mu,k,\kappa)}^{d}$.
\end{prop}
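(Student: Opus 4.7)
The plan is to reduce this bilinear estimate to two essentially independent sub-problems: a convolution estimate in the Fourier variable $m$ (involving $b$, $Q_1$, $Q_2$ and the polynomial weights $(1+|m|)^{-\mu}$), and a real-variable integral estimate in $|\tau|$ that mirrors the analysis of the function $B(x)$ carried out in the proof of Proposition 3.

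The first step is to unpack the norms. For $f \in F^{d}_{(\nu,\beta,\mu,k,\kappa)}$ one has the pointwise bound
$$|f(\tau,m)| \leq \|f\|_{(\nu,\beta,\mu,k,\kappa)} (1+|m|)^{-\mu} e^{-\beta|m|} \frac{|\tau|}{1+|\tau|^{2k}} e^{\nu|\tau|^\kappa},$$
and similarly for $g$. Inserting these estimates into the integrand of the displayed expression, using $|b(m)| \leq |R(im)|^{-1}$, and applying the triangle inequality $e^{-\beta|m-m_1|}e^{-\beta|m_1|} \leq e^{-\beta|m|}$, the quantity $(1+|m|)^{\mu} e^{\beta|m|}(1+|\tau|^{2k})|\tau|^{-1}e^{-\nu|\tau|^{\kappa}}$ times the integrand factors (after parametrizing $s$ along the segment $[0,\tau^{k}]$ and substituting $s=|\tau|^{k}u$, $x=|\tau|^{k}v$) into $\|f\|\,\|g\|\cdot\Phi(m)\cdot\Psi(|\tau|)$, where $\Phi$ depends only on $m$ and $\Psi$ only on $|\tau|$.

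Next, I would handle the Fourier-weight piece $\Phi$. Since $Q_{j}$ are polynomials, $|Q_{j}(im)| \leq C_{j}(1+|m|)^{\deg Q_{j}}$; since $R(im)\neq 0$ on $\mathbb{R}$, one has $|R(im)|^{-1} \leq C'(1+|m|)^{-\deg R}$; and the classical convolution estimate
$$\int_{\mathbb{R}} (1+|m-m_{1}|)^{-A}(1+|m_{1}|)^{-B}\,dm_{1} \leq K(1+|m|)^{-\min(A,B)}, \quad A,B>1,$$
applied with $A=\mu-\deg Q_{1}$ and $B=\mu-\deg Q_{2}$ (both exceeding $1$ by the hypothesis on $\mu$), yields $\Phi(m)\leq C(1+|m|)^{\max(\deg Q_{1},\deg Q_{2})-\deg R}$, which is bounded on $\mathbb{R}$ since $\deg R\geq \max(\deg Q_{1},\deg Q_{2})$.

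The remaining factor $\Psi(|\tau|)$ is, up to multiplicative constants,
$$\frac{1+|\tau|^{2k}}{|\tau|} e^{-\nu|\tau|^\kappa} \int_{0}^{|\tau|^{k}}(|\tau|^{k}-s)^{1/k} \left(\int_{0}^{s} \frac{e^{\nu(s-x)^{\kappa/k}+\nu x^{\kappa/k}}\,dx}{(1+(s-x)^{2})(1+x^{2})(s-x)^{1-1/k}x^{1-1/k}}\right)ds.$$
I would bound $\Psi$ on $[0,+\infty)$ by first treating the inner $x$-integral: splitting at $s/2$, on $[0,s/2]$ one uses $(s-x)^{1-1/k}\geq(s/2)^{1-1/k}$ together with the integrability of $x^{1/k-1}$ near $0$ and a direct bound on $e^{\nu x^{\kappa/k}}$ compared against $e^{\nu s^{\kappa/k}}$, while on $[s/2,s]$ one expands the exponential as a power series, applies the Beta integral formula (since $1/k>-1$), and invokes the Wiman-type asymptotics $E_{\alpha,\beta}(z)\sim z^{\gamma}e^{z}$ to identify an upper bound of the form $I_{1}(s)\leq K s^{\alpha}e^{\nu s^{\kappa/k}}$. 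The outer $s$-integral is then treated by the exact same splitting/Beta/Wiman procedure as in the proof of Proposition 3, the cancellation with the prefactor $e^{-\nu|\tau|^\kappa}$ being guaranteed by $\kappa\geq k$.

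The principal obstacle is this last step: unlike Proposition 3, which has a single singularity and a single exponential weight, here a double-convolution structure with two pairs of singularity/exponential weight appears, so the Beta-integral/Wiman-asymptotics argument must be iterated. However, no genuinely new technique beyond those of Proposition 3 is required, and the condition $\kappa\geq k$ is exactly what makes both iterations succeed.
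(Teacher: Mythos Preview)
Your treatment of the Fourier-convolution piece in $m$ is correct and matches the paper's argument for $C_{3.1}$. The problem lies in the $\tau$-variable estimate. Your splitting strategy on the inner $x$-integral does not close: on $[0,s/2]$ you bound $e^{\nu x^{\kappa/k}}$ by $e^{\nu(s/2)^{\kappa/k}}$, but you still carry the factor $e^{\nu(s-x)^{\kappa/k}}\leq e^{\nu s^{\kappa/k}}$, so the product is at best $e^{\nu s^{\kappa/k}(1+2^{-\kappa/k})}$. When this is fed into the outer $s$-integral up to $|\tau|^{k}$ and confronted with the prefactor $e^{-\nu|\tau|^{\kappa}}$, you are left with an uncontrolled factor $e^{\nu 2^{-\kappa/k}|\tau|^{\kappa}}\to\infty$. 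The Beta/Wiman iteration you propose on $[s/2,s]$ has the same defect, since it treats only one of the two exponentials. In short, the double exponential $e^{\nu(s-x)^{\kappa/k}}e^{\nu x^{\kappa/k}}$ cannot be collapsed to a single $e^{\nu s^{\kappa/k}}$ by splitting alone.

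The missing idea is the superadditivity inequality
\[
(s-x)^{\kappa/k}+x^{\kappa/k}\leq s^{\kappa/k}\qquad(0\leq x\leq s),
\]
which holds precisely because $\kappa\geq k$ (this is where that hypothesis really enters). With it, the product of exponentials is bounded by $e^{\nu s^{\kappa/k}}$ and can be pulled entirely out of the inner $x$-integral. What remains inside is the purely algebraic integral
\[
J_{k}(s)=\int_{0}^{s}\frac{dx}{(1+(s-x)^{2})(1+x^{2})(s-x)^{1-1/k}x^{1-1/k}},
\]
which the paper bounds by a partial-fraction decomposition (and, for $k=1$, a known estimate of Costin--Tanveer) to get $J_{k}(s)\lesssim s^{2/k-1}/(s^{2}+4)$. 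The outer $s$-integral against $(|\tau|^{k}-s)^{1/k}e^{\nu s^{\kappa/k}}$ is then shown to be bounded after multiplication by $\frac{1+|\tau|^{2k}}{|\tau|}e^{-\nu|\tau|^{\kappa}}$ via L'H\^opital's rule, not by another Beta/Wiman iteration. So the architecture of the $\tau$-estimate here is quite different from that of Proposition~1.
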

\begin{proof} Let $f(\tau,m), g(\tau,m) \in F_{(\nu,\beta,\mu,k,\kappa)}^{d}$. For any $\tau \in \bar{D}(0,\rho) \cup U_{d}$, the
segment $[0,\tau^{k}]$ is such that for any $s \in [0,\tau^{k}]$, any $x \in [0,s]$, the expressions $f((s-x)^{1/k},m-m_{1})$
and $g(x^{1/k},m_{1})$ are well defined, provided that $m,m_{1} \in \mathbb{R}$. By definition, we can write
\begin{multline*}
|| b(m) \int_{0}^{\tau^{k}} (\tau^{k}-s)^{\frac{1}{k}} ( \int_{0}^{s} \int_{-\infty}^{+\infty} Q_{1}(i(m-m_{1}))
f((s-x)^{1/k},m-m_{1}) \\
\times Q_{2}(im_{1}) g(x^{1/k},m_{1}) \frac{1}{(s-x)x} dx dm_{1} ) ds ||_{(\nu,\beta,\mu,k,\kappa)}\\
= \sup_{\tau \in \bar{D}(0,\rho) \cup U_{d},m \in \mathbb{R}} (1+|m|)^{\mu}
\frac{1 + |\tau|^{2k}}{|\tau|}\exp( \beta|m| - \nu|\tau|^{\kappa} )\\
\times | \int_{0}^{\tau^{k}} (\tau^{k}-s)^{1/k} ( \int_{0}^{s}
\int_{-\infty}^{+\infty} \{ (1+|m-m_{1}|)^{\mu} e^{\beta|m-m_{1}|}
\frac{1 + |s-x|^{2}}{|s-x|^{1/k}}
\exp( -\nu |s-x|^{\kappa/k} )\\
\times f( (s-x)^{1/k},m-m_{1}) \} \times \{ (1 + |m_{1}|)^{\mu}e^{\beta|m_{1}|}
\frac{1 + |x|^2 }{|x|^{1/k} } \exp( -\nu |x|^{\kappa/k} )
g(x^{1/k},m_{1}) \}\\
\times \mathcal{C}(s,x,m,m_{1}) dxdm_{1} ) ds |
\end{multline*}
where
\begin{multline*}
\mathcal{C}(s,x,m,m_{1}) = \frac{ \exp(-\beta|m_1|) \exp(-\beta|m-m_{1}|) }{(1 + |m-m_{1}|)^{\mu}(1+|m_{1}|)^{\mu}}
b(m)Q_{1}(i(m-m_{1}))Q_{2}(im_{1})\\
\times \frac{ |s-x|^{1/k}|x|^{1/k} }{(1 + |s-x|^2)(1 + |x|^2)}
\times 
\exp( \nu |s-x|^{\kappa/k} ) \exp( \nu |x|^{\kappa/k} ) \frac{1}{(s-x)x}
\end{multline*}
Now, we know that there exist $\mathfrak{Q}_{1},\mathfrak{Q}_{2},\mathfrak{R}>0$ with
\begin{multline}
|Q_{1}(i(m-m_{1}))| \leq \mathfrak{Q}_{1}(1 + |m-m_{1}|)^{\mathrm{deg}(Q_{1})} \ \ , \ \
|Q_{2}(im_{1})| \leq \mathfrak{Q}_{2}(1 + |m_{1}|)^{\mathrm{deg}(Q_{2})},\\
|R(im)| \geq \mathfrak{R}(1+|m|)^{\mathrm{deg}(R)} \label{Q1_Q2_R_deg_order}
\end{multline}
for all $m,m_{1} \in \mathbb{R}$. Therefore,
\begin{multline}
|| b(m) \int_{0}^{\tau^{k}} (\tau^{k}-s)^{\frac{1}{k}} ( \int_{0}^{s} \int_{-\infty}^{+\infty} Q_{1}(i(m-m_{1}))
f((s-x)^{1/k},m-m_{1})\\
\times Q_{2}(im_{1})g(x^{1/k},m_{1}) \frac{1}{(s-x)x} dx dm_{1} ) ds ||_{(\nu,\beta,\mu,k,\kappa)}\\
\leq C_{3}
||f(\tau,m)||_{(\nu,\beta,\mu,k,\kappa)} ||g(\tau,m)||_{(\nu,\beta,\mu,k,\kappa)}
\end{multline}
where
\begin{multline}
C_{3} = \sup_{\tau \in \bar{D}(0,\rho) \cup U_{d},m \in \mathbb{R}} (1+|m|)^{\mu}
\frac{1 + |\tau|^{2k}}{|\tau|}\exp( \beta|m| - \nu|\tau|^{\kappa} )
\frac{1}{\mathfrak{R}(1+|m|)^{\mathrm{deg}(R)}}\\
\times \int_{0}^{|\tau|^{k}} (|\tau|^{k}-h)^{1/k} (\int_{0}^{h}
\int_{-\infty}^{+\infty} \frac{ \exp(-\beta|m_1|) \exp(-\beta|m-m_{1}|) }{(1 + |m-m_{1}|)^{\mu}(1+|m_{1}|)^{\mu}}\\
\times
\mathfrak{Q}_{1}\mathfrak{Q}_{2}(1+|m-m_{1}|)^{\mathrm{deg}(Q_{1})}(1+|m_{1}|)^{\mathrm{deg}(Q_{2})}
\frac{ (h-x)^{1/k}x^{1/k} }{(1 + (h-x)^2)(1 + x^2)}\\
\times 
\exp( \nu (h-x)^{\kappa/k} ) \exp( \nu x^{\kappa/k} ) \frac{1}{(h-x)x} dx dm_{1}) dh
\label{defin_C_3}
\end{multline}
Now, since $\kappa \geq k$, we have that
\begin{equation}
h^{\kappa/k} \geq (h-x)^{\kappa/k} + x^{\kappa/k} \label{convex_power_h}
\end{equation}
for all $h \geq 0$, all $x \in [0,h]$. Indeed, let $x=hu$ where $u \in [0,1]$. Then, the inequality
(\ref{convex_power_h}) is equivalent to show that
\begin{equation}
1 \geq (1-u)^{\kappa/k} + u^{\kappa/k} \label{convex_power_u_unit}
\end{equation}
for all $u \in [0,1]$. Let $\varphi(u) = (1-u)^{\kappa/k} + u^{\kappa/k}$ on $[0,1]$. We have
$\varphi'(u) = \frac{\kappa}{k}( u^{\frac{\kappa}{k}-1} - (1-u)^{\frac{\kappa}{k}-1})$. Since, $\kappa \geq k$, we know that
the function $\psi(z) = z^{\frac{\kappa}{k}-1}$ is increasing on $[0,1]$, and therefore we get that $\varphi'(u) < 0$ if
$0 \leq u < 1/2$, $\varphi'(u)=0$, if $u=1/2$ and $\varphi'(u) > 0$ if $1/2 < u \leq 1$. Since $\varphi(0)=\varphi(1)=1$, we get
that $\varphi(u) \leq 1$ for all $u \in [0,1]$. Therefore, (\ref{convex_power_u_unit}) holds and (\ref{convex_power_h}) is proved.\medskip

Using the triangular inequality $|m| \leq |m_{1}| + |m-m_{1}|$, for all $m,m_{1} \in \mathbb{R}$, we get that
$C_{3} \leq C_{3.1}C_{3.2}$ where
\begin{equation}
C_{3.1} = \frac{\mathfrak{Q}_{1}\mathfrak{Q}_{2}}{\mathfrak{R}}
\sup_{m \in \mathbb{R}} (1+|m|)^{\mu - \mathrm{deg}(R)} \int_{-\infty}^{+\infty}
\frac{1}{(1+|m-m_{1}|)^{\mu - \mathrm{deg}(Q_{1})}(1+|m_{1}|)^{\mu - \mathrm{deg}(Q_{2})}} dm_{1} \label{defin_C_3.1} 
\end{equation}
which is finite whenever $\mu>\max( \mathrm{deg}(Q_{1})+1, \mathrm{deg}(Q_{2})+1)$ under the assumption
(\ref{R>Q1_R>Q2_R_nonzero_first}) using the same estimates as in Lemma 4 of \cite{ma2} (see also Lemma 2.2 from \cite{cota2}),
and where
\begin{multline}
C_{3.2} = \sup_{\tau \in \bar{D}(0,\rho) \cup U_{d}}
\frac{1 + |\tau|^{2k}}{|\tau|} \exp( -\nu |\tau|^{\kappa} )\\
\times 
\int_{0}^{|\tau|^k} (|\tau|^{k}-h)^{1/k} \exp( \nu h^{\kappa/k} ) \int_{0}^{h}
\frac{ (h-x)^{1/k}x^{1/k} }{(1 + (h-x)^2)(1 + x^2)}
\frac{1}{(h-x)x} dx dh. \label{defin_C3.2}
\end{multline}
From (\ref{defin_C3.2}) we get that $C_{3.2} \leq C_{3.3}$, where
\begin{multline}
C_{3.3} = \sup_{x \geq 0} \frac{1+x^2}{x^{1/k}} \exp( -\nu x^{\kappa/k} )
\int_{0}^{x} (x-h')^{1/k} \exp( \nu h'^{\kappa/k} )\\
\times (\int_{0}^{h'} \frac{1}{(1 + (h'-x')^2)(1+x'^{2})} \frac{1}{(h'-x')^{1-\frac{1}{k}}x'^{1-\frac{1}{k}}} dx') dh'
\label{defin_C3.3}
\end{multline}
By the change of variable $x'=h'u$, for $u \in [0,1]$, we can write
\begin{multline}
\int_{0}^{h'} \frac{1}{(1 + (h'-x')^2)(1+x'^{2})} \frac{1}{(h'-x')^{1-\frac{1}{k}}x'^{1-\frac{1}{k}}} dx'\\
= \frac{1}{h'^{1 - \frac{2}{k}}} \int_{0}^{1}
\frac{1}{(1 + h'^{2}(1-u)^{2})(1 + h'^{2}u^2)(1 - u)^{1 - \frac{1}{k}}u^{1 - \frac{1}{k}}} du = J_{k}(h')
\end{multline}
Using a partial fraction decomposition, we can split $J_{k}(h') = J_{1,k}(h') + J_{2,k}(h')$, where
\begin{multline}
J_{1,k}(h') = \frac{1}{h'^{1 - \frac{2}{k}}(h'^{2}+4)} \int_{0}^{1}
\frac{3-2u}{(1 + h'^{2}(1-u)^{2})(1-u)^{1-\frac{1}{k}}u^{1-\frac{1}{k}}} du \\
J_{2,k}(h') = \frac{1}{h'^{1 - \frac{2}{k}}(h'^{2}+4)} \int_{0}^{1}
\frac{2u+1}{(1 + h'^{2}u^{2})(1-u)^{1-\frac{1}{k}}u^{1-\frac{1}{k}}} du
\end{multline}
From now on, we assume that $k \geq 2$. By construction of $J_{1,k}(h')$ and $J_{2,k}(h')$, we see that there exists a constant
$j_{k}>0$ such that
\begin{equation}
J_{k}(h') \leq \frac{j_{k}}{h'^{1 - \frac{2}{k}}(h'^{2}+4)} \label{J_k_maj_frac}
\end{equation}
for all $h' > 0$. From (\ref{defin_C3.3}) and (\ref{J_k_maj_frac}), we deduce that $C_{3.3} \leq \sup_{x \geq 0} \tilde{C}_{3.3}(x)$,
where
\begin{equation}
\tilde{C}_{3.3}(x) = (1+x^2) \exp( -\nu x^{\kappa/k} )\int_{0}^{x} \frac{j_{k}
\exp( \nu h'^{\kappa/k} )}{h'^{1 - \frac{2}{k}}(h'^{2}+4)} dh'. \label{defin_tilde_C3.3}
\end{equation}
From L'Hospital rule, we know that
$$
\lim_{x \rightarrow +\infty} \tilde{C}_{3.3}(x) = \lim_{x \rightarrow +\infty} \frac{j_{k}}{x^{1-\frac{2}{k}}}
\frac{ \frac{(1+x^2)^2}{x^2+4}}{\nu \frac{\kappa}{k}x^{\frac{\kappa}{k}-1}(1+x^2) - 2x }
$$
which is finite if $\kappa \geq k$ and when $k \geq 2$. Therefore, we get a constant $\tilde{C}_{3.3}>0$ such that
\begin{equation}
\sup_{x \geq 0} \tilde{C}_{3.3}(x) \leq \tilde{C}_{3.3}. \label{maj_C_3.3}
\end{equation}
Taking into account the estimates for (\ref{defin_C_3}), (\ref{defin_C_3.1}), (\ref{defin_C3.2}), (\ref{defin_C3.3}),
(\ref{defin_tilde_C3.3}) and (\ref{maj_C_3.3}), we obtain the result (\ref{norm_conv_f_g<norm_f_times_norm_g}).\medskip

\noindent It remains to consider the case $k=1$. In that case, we know from Corollary 4.9 of \cite{cota} that there exists a
constant $j_{1}>0$ such that
\begin{equation}
J_{1}(h') \leq \frac{j_{1}}{h'^{2}+1} \label{J_1_maj_frac}
\end{equation}
for all $h' \geq 0$. From (\ref{defin_C3.3}) and (\ref{J_1_maj_frac}), we deduce that $C_{3.3} \leq \sup_{x \geq 0} \tilde{C}_{3.3.1}(x)$,
where
\begin{equation}
\tilde{C}_{3.3.1}(x) = (1+x^2) \exp( -\nu x^{\kappa} )\int_{0}^{x} \frac{j_{1}
\exp( \nu h'^{\kappa} )}{h'^{2}+1} dh'. \label{defin_tilde_C3.3.1}
\end{equation}
From L'Hospital rule, we know that
$$
\lim_{x \rightarrow +\infty}\tilde{C}_{3.3.1}(x) =
\lim_{x \rightarrow +\infty} \frac{ (1+x^2)j_{1} }{ \nu \kappa x^{\kappa-1}(1+x^2) -2x }
$$
which is finite whenever $\kappa \geq 1$. Therefore, we get a constant $\tilde{C}_{3.3.1}>0$ such that
\begin{equation}
\sup_{x \geq 0} \tilde{C}_{3.3.1}(x) \leq \tilde{C}_{3.3.1}. \label{maj_C_3.3.1}
\end{equation}
Taking into account the estimates for (\ref{defin_C_3}), (\ref{defin_C_3.1}), (\ref{defin_C3.2}), (\ref{defin_C3.3}),
(\ref{defin_tilde_C3.3.1}) and (\ref{maj_C_3.3.1}), we obtain the result (\ref{norm_conv_f_g<norm_f_times_norm_g}) for $k=1$.
\end{proof}

\begin{defin} Let $\beta, \mu \in \mathbb{R}$. We denote
$E_{(\beta,\mu)}$ the vector space of continuous functions $h : \mathbb{R} \rightarrow \mathbb{C}$ such that
$$ ||h(m)||_{(\beta,\mu)} = \sup_{m \in \mathbb{R}} (1+|m|)^{\mu} \exp( \beta |m|) |h(m)| $$
is finite. The space $E_{(\beta,\mu)}$ equipped with the norm $||.||_{(\beta,\mu)}$ is a Banach space.
\end{defin}

\begin{prop} Let $k,\kappa \geq 1$ be integers such that $\kappa \geq k$. Let $Q(X),R(X) \in \mathbb{C}[X]$ be polynomials such that
\begin{equation}
\mathrm{deg}(R) \geq \mathrm{deg}(Q) \ \ , \ \ R(im) \neq 0 \label{cond_R_Q_first}
\end{equation}
for all $m \in \mathbb{R}$. Assume that $\mu > \mathrm{deg}(Q) + 1$. Let $m \mapsto b(m)$ be a continuous function such that
$$
|b(m)| \leq \frac{1}{|R(im)|}
$$
for all $m \in \mathbb{R}$. Then, there exists a constant $C_{4}>0$ (depending on $Q,R,\mu,k,\kappa,\nu$) such that
\begin{multline}
|| b(m) \int_{0}^{\tau^{k}} (\tau^{k}-s)^{\frac{1}{k}}\int_{-\infty}^{+\infty} f(m-m_{1})Q(im_{1})g(s^{1/k},m_{1})dm_{1}
\frac{ds}{s}||_{(\nu,\beta,\mu,k,\kappa)}\\
\leq C_{4} ||f(m)||_{(\beta,\mu)} ||g(\tau,m)||_{(\nu,\beta,\mu,k,\kappa)}
\label{norm_conv_f_g<norm_f_beta_mu_times_norm_g}
\end{multline}
for all $f(m) \in E_{(\beta,\mu)}$, all $g(\tau,m) \in F_{(\nu,\beta,\mu,k,\kappa)}^{d}$.
\end{prop}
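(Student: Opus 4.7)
The strategy mirrors that of Proposition 1. The integrand here is structurally simpler since the $s$-dependence is a pointwise product rather than a convolution in $s$, and one of the two factors depends only on $m$; this lets me decouple the estimate into an $m$-part and a $\tau$-part.

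First I would unfold the definition of $\|\cdot\|_{(\nu,\beta,\mu,k,\kappa)}$ and insert the reciprocal weights of the $E_{(\beta,\mu)}$-norm for $f$ and of the $F_{(\nu,\beta,\mu,k,\kappa)}^{d}$-norm for $g$. Parametrizing the complex segment $[0,\tau^{k}]$ by $s = h e^{ik\arg \tau}$ (so that $ds/s = dh/h$ and $|\tau^{k} - s|^{1/k} = (|\tau|^{k} - h)^{1/k}$) decouples the resulting quantity into a product $C_{4.1}\, C_{4.2}\, \|f\|_{(\beta,\mu)}\, \|g\|_{(\nu,\beta,\mu,k,\kappa)}$, where $C_{4.1}$ is a supremum of an $(m,m_{1})$-integral and $C_{4.2}$ is a supremum of a $\tau$-integral.

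To control $C_{4.1}$ I would use the triangular inequality $|m| \leq |m-m_{1}| + |m_{1}|$ to absorb the exponential weights in $m$, apply the polynomial bounds $|Q(im_{1})| \leq \mathfrak{Q}(1+|m_{1}|)^{\mathrm{deg}(Q)}$ and $|R(im)| \geq \mathfrak{R}(1+|m|)^{\mathrm{deg}(R)}$ analogous to (\ref{Q1_Q2_R_deg_order}), and then invoke the standard convolution estimate from \cite{ma2}, Lemma 4 (or \cite{cota2}, Lemma 2.2) to conclude that
$$
\sup_{m \in \mathbb{R}} (1+|m|)^{\mu - \mathrm{deg}(R)} \int_{-\infty}^{+\infty} \frac{(1+|m_{1}|)^{\mathrm{deg}(Q)}}{(1+|m-m_{1}|)^{\mu}(1+|m_{1}|)^{\mu}}\, dm_{1} < +\infty
$$
holds under the hypotheses $\mathrm{deg}(R) \geq \mathrm{deg}(Q)$ and $\mu > \mathrm{deg}(Q) + 1$.

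The factor $C_{4.2}$, after the substitution $x = |\tau|^{k}$, reduces to
$$
\sup_{x \geq 0} \frac{1+x^{2}}{x^{1/k}}\, e^{-\nu x^{\kappa/k}} \int_{0}^{x} (x-h)^{1/k}\, \frac{e^{\nu h^{\kappa/k}}}{(1+h^{2})\, h^{1-1/k}}\, dh,
$$
which is precisely the function $B(x)$ estimated in Proposition 1 with parameter choice $\chi_{2} = 1/k$ and $\nu_{2} = -1$. These satisfy $1 + \chi_{2} + \nu_{2} = 1/k \geq 0$ and $\kappa \geq k \geq k/(k+1) = k\bigl(\nu_{2}/(\chi_{2}+1) + 1\bigr)$, so the estimates of Proposition 1 deliver a finite bound. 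The main delicate point is the integrability of the inner kernel near $h=0$, where it behaves like $h^{1/k - 1}$, and the compensation between the polynomial factor $(1+x^{2})/x^{1/k}$ and the exponential $e^{-\nu x^{\kappa/k}}$ as $x \to \infty$; both are already handled by the $B_{1}/B_{2}$ splitting carried out in the proof of Proposition 1, so no new estimates are required. Combining the two bounds yields (\ref{norm_conv_f_g<norm_f_beta_mu_times_norm_g}).
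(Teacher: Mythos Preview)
Your proposal is correct and follows essentially the same argument as the paper: unfold the norm, use the triangular inequality and the polynomial bounds on $Q,R$ together with the convolution estimate from \cite{ma2} to control the $m$-integral, and reduce the $\tau$-integral to the case $\chi_{2}=1/k$, $\nu_{2}=-1$ of Proposition~1. The only cosmetic difference is that the paper uses the labels $C_{4.1}$ and $C_{4.2}$ in the opposite order (with $C_{4.1}$ for the $\tau$-part and $C_{4.2}$ for the $m$-part).
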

\begin{proof} The proof follows the same lines of arguments as those of Propositions 1 and 2. Let
$f(m) \in E_{(\beta,\mu)}$, $g(\tau,m) \in F_{(\nu,\beta,\mu,k,\kappa)}^{d}$. We can write
\begin{multline}
N_{2} := || b(m) \int_{0}^{\tau^{k}} (\tau^{k}-s)^{\frac{1}{k}}\int_{-\infty}^{+\infty} f(m-m_{1})Q(im_{1})g(s^{1/k},m_{1})dm_{1}
\frac{ds}{s}||_{(\nu,\beta,\mu,k,\kappa)} \\
= \sup_{\tau \in \bar{D}(0,\rho) \cup U_{d},m \in \mathbb{R}} (1+|m|)^{\mu}
\frac{1 + |\tau|^{2k}}{|\tau|}\exp( \beta|m| - \nu|\tau|^{\kappa} )\\
\times|b(m) \int_{0}^{\tau^k}\int_{-\infty}^{+\infty} \{ (1 + |m-m_{1}|)^{\mu}
\exp( \beta|m-m_{1}| )f(m-m_{1}) \} \\
\times \{ (1+|m_{1}|)^{\mu} \exp( \beta |m_{1}|) 
\exp( -\nu|s|^{\kappa/k} ) \frac{1 + |s|^2}{|s|^{1/k}}
g(s^{1/k},m_{1}) \} \times \mathcal{D}(\tau,s,m,m_{1}) dm_{1}ds |
\end{multline}
where
$$
\mathcal{D}(\tau,s,m,m_{1}) =
\frac{ Q(im_{1})e^{-\beta |m_{1}|} e ^{-\beta|m-m_{1}|} }{(1 + |m-m_{1}|)^{\mu} (1 + |m_{1}|)^{\mu}} \times
\frac{ \exp( \nu |s|^{\kappa/k} ) }{1 + |s|^2} |s|^{1/k}
(\tau^{k}-s)^{1/k} \frac{1}{s}
$$
Again, we know that there exist constants $\mathfrak{Q},\mathfrak{R}>0$ such that
$$ |Q(im_{1})| \leq \mathfrak{Q}(1+|m_{1}|)^{\mathrm{deg}(Q)} \ \ , \ \ |R(im)| \geq \mathfrak{R}(1+|m|)^{\mathrm{deg}(R)} $$
for all $m,m_{1} \in \mathbb{R}$. By means of the triangular inequality $|m| \leq |m_{1}|+|m-m_{1}|$, we get that
\begin{equation}
N_{2} \leq C_{4.1}C_{4.2}||f(m)||_{(\beta,\mu)}||g(\tau,m)||_{(\nu,\beta,\mu,k,\kappa)} \label{norm_b_int_conv_f_Q_g<}
\end{equation}
where
$$ C_{4.1} = \sup_{\tau \in \bar{D}(0,\rho) \cup U_{d}} \frac{1 + |\tau|^{2k}}{|\tau|}
\exp( - \nu|\tau|^{\kappa} ) \int_{0}^{|\tau|^k}
\frac{ \exp( \nu h^{\kappa/k}) }{1 + h^2}
h^{\frac{1}{k}-1} (|\tau|^{k} - h)^{1/k} dh
$$
and
$$
C_{4.2} = \frac{\mathfrak{Q}}{\mathfrak{R}} \sup_{m \in \mathbb{R}} (1 + |m|)^{\mu - \mathrm{deg}(R)}
\int_{-\infty}^{+\infty} \frac{1}{(1 + |m-m_{1}|)^{\mu}(1+|m_{1}|)^{\mu - \mathrm{deg}(Q)}} dm_{1}.
$$
Under the hypothesis $\kappa \geq k$ and from the estimates
(\ref{C_2}), (\ref{B_1_bounded_chi2_positive}) and (\ref{B_2_bounded_on_R}) in the special case
$\chi_{2}=1/k$ and $\nu_{2}=-1$, we know that $C_{4.1}$ is finite.

From the estimates for (\ref{defin_C_3.1}), we know that $C_{4.2}$ is finite under the assumption (\ref{cond_R_Q_first})
provided that
$\mu > \mathrm{deg}(Q)+1$. Finally, gathering these latter bound estimates together with (\ref{norm_b_int_conv_f_Q_g<})
yields the result (\ref{norm_conv_f_g<norm_f_beta_mu_times_norm_g}).
\end{proof}

In the next proposition, we recall from \cite{lama}, Proposition 5, that $(E_{(\beta,\mu)},||.||_{(\beta,\mu)})$ is a
Banach algebra for some noncommutative product $\star$ introduced below.

\begin{prop} Let $Q_{1}(X),Q_{2}(X),R(X) \in \mathbb{C}[X]$ be polynomials such that
\begin{equation}
\mathrm{deg}(R) \geq \mathrm{deg}(Q_{1}) \ \ , \ \ \mathrm{deg}(R) \geq \mathrm{deg}(Q_{2}) \ \ , \ \ R(im) \neq 0,
\label{cond_R_Q1_Q2}
\end{equation}
for all $m \in \mathbb{R}$. Assume that $\mu > \max( \mathrm{deg}(Q_{1})+1, \mathrm{deg}(Q_{2})+1 )$. Then, there exists a
constant $C_{5}>0$ (depending on $Q_{1},Q_{2},R,\mu$) such that
\begin{multline}
|| \frac{1}{R(im)} \int_{-\infty}^{+\infty} Q_{1}(i(m-m_{1})) f(m-m_{1}) Q_{2}(im_{1})g(m_{1}) dm_{1} ||_{(\beta,\mu)}\\
\leq C_{5} ||f(m)||_{(\beta,\mu)}||g(m)||_{(\beta,\mu)}
\end{multline}
for all $f(m),g(m) \in E_{(\beta,\mu)}$. Therefore, $(E_{(\beta,\mu)},||.||_{(\beta,\mu)})$ becomes a Banach algebra for the product
$\star$ defined by
$$ f \star g (m) = \frac{1}{R(im)} \int_{-\infty}^{+\infty} Q_{1}(i(m-m_{1})) f(m-m_{1}) Q_{2}(im_{1})g(m_{1}) dm_{1}.$$
As a particular case, when $f,g \in E_{(\beta,\mu)}$ with $\beta>0$ and $\mu>1$, the classical convolution
product
$$ f \ast g (m) = \int_{-\infty}^{+\infty} f(m-m_{1})g(m_{1}) dm_{1} $$
belongs to $E_{(\beta,\mu)}$.
\end{prop}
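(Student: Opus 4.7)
The plan is to imitate, in a much simpler setting, the argument used in the proofs of Propositions~1, 2 and 3: insert the definition of the norm, pull out the $\tau$-dependent factors (here there are none), and then extract two integrals, one in $m_1$ and one that is trivial since there is no convolution in $\tau$. Concretely, starting from
\[
\Bigl\|\tfrac{1}{R(im)}\!\!\int_{-\infty}^{+\infty}\! Q_1(i(m-m_1))f(m-m_1)Q_2(im_1)g(m_1)\,dm_1\Bigr\|_{(\beta,\mu)},
\]
I would rewrite the integrand as
\[
\bigl\{(1+|m-m_1|)^{\mu}e^{\beta|m-m_1|}f(m-m_1)\bigr\}\bigl\{(1+|m_1|)^{\mu}e^{\beta|m_1|}g(m_1)\bigr\}\cdot\mathcal{E}(m,m_1),
\]
where $\mathcal{E}(m,m_1)$ collects the two polynomial factors, the inverse of $R(im)$, the exponential weights, and the $(1+|\cdot|)^{-\mu}$ factors. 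Taking absolute values, each brace is bounded by $\|f\|_{(\beta,\mu)}$, respectively $\|g\|_{(\beta,\mu)}$, and the norm reduces to proving boundedness of
\[
C_{5}=\sup_{m\in\mathbb{R}}(1+|m|)^{\mu}e^{\beta|m|}\int_{-\infty}^{+\infty}|\mathcal{E}(m,m_1)|\,dm_1.
\]

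The key technical step is then exactly the one used in the derivation of $C_{3.1}$ in the proof of Proposition~2. Using the polynomial estimates $|Q_j(im)|\leq \mathfrak{Q}_j(1+|m|)^{\deg Q_j}$ and $|R(im)|\geq \mathfrak{R}(1+|m|)^{\deg R}$ from (\ref{Q1_Q2_R_deg_order}), valid under (\ref{cond_R_Q1_Q2}), together with the triangular inequality $e^{\beta|m|}\leq e^{\beta|m-m_1|}e^{\beta|m_1|}$ to cancel the exponential weights, I would reduce $C_{5}$ to
\[
C_{5}\leq\frac{\mathfrak{Q}_1\mathfrak{Q}_2}{\mathfrak{R}}\sup_{m\in\mathbb{R}}(1+|m|)^{\mu-\deg R}\int_{-\infty}^{+\infty}\frac{dm_1}{(1+|m-m_1|)^{\mu-\deg Q_1}(1+|m_1|)^{\mu-\deg Q_2}}.
\]
Under the hypothesis $\mu>\max(\deg Q_1+1,\deg Q_2+1)$ and $\deg R\geq \deg Q_j$, the finiteness of this supremum is a purely one-dimensional estimate that is proved verbatim as in Lemma~4 of \cite{ma2} (alternatively Lemma~2.2 of \cite{cota2}): one splits $\mathbb{R}=\{|m_1|\leq|m|/2\}\cup\{|m_1|>|m|/2\}$ and uses the elementary bounds $1+|m-m_1|\geq 1+|m|/2$ on the first piece and $1+|m_1|\geq 1+|m|/2$ on the second. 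This is the main (and only real) obstacle, but it is the same estimate that appeared earlier and hence can simply be invoked.

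Once the bilinear estimate is established, the Banach algebra property follows immediately: the product $\star$ is bilinear and continuous with $\|f\star g\|_{(\beta,\mu)}\leq C_{5}\|f\|_{(\beta,\mu)}\|g\|_{(\beta,\mu)}$, and $E_{(\beta,\mu)}$ is already known to be a Banach space from Definition~2. For the particular case of the ordinary convolution $f\ast g$, one specialises to $Q_1(X)=Q_2(X)=R(X)=1$, so that $\deg Q_j=\deg R=0$ and the degree hypotheses reduce exactly to $\mu>1$; the bound on $C_{5}$ then also requires $\beta>0$ in order for the factorisation $e^{\beta|m|}\leq e^{\beta|m-m_1|}e^{\beta|m_1|}$ to usefully dominate the exponential weight in $\mathcal{E}$, which is why the hypotheses of the particular case are stated as $\beta>0$ and $\mu>1$.
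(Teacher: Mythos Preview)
Your proposal is correct and is precisely the natural argument. Note that the paper itself does not give a proof of this proposition: it is stated as a recall from \cite{lama}, Proposition~5, so there is no ``paper's own proof'' to compare against. Your sketch reconstructs exactly the argument one would expect there, and indeed it is nothing more than the $m$-variable part of the proof of Proposition~2 in this paper --- the computation leading to the constant $C_{3.1}$ in (\ref{defin_C_3.1}) --- stripped of the $\tau$-integration. Your invocation of the estimates (\ref{Q1_Q2_R_deg_order}), the triangular inequality for $e^{\beta|m|}$, and Lemma~4 of \cite{ma2} for the finiteness of the resulting supremum matches that computation exactly.
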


\subsection{Banach spaces of functions with exponential growth $k$ and decay of exponential order 1}

In this subsection, we mainly recall some functional properties of the Banach spaces already introduced in the work
\cite{lama}, Section 2. The Banach spaces we consider here coincide with those introduced in \cite{lama} except the fact
that they are not depending on a complex parameter $\epsilon$ and that the functions living in these spaces are not
holomorphic on a disc centered at 0 but only on a bounded
sector centered at 0. For this reason, all the propositions are given without proof except Proposition 5 which is an
improved version of Propositions 1 and 2 of \cite{lama}.

We denote $S_{d}^{b}$ an open bounded sector centered at $0$ in direction $d \in \mathbb{R}$ and $\bar{S}_{d}^{b}$
its closure. Let $S_{d}$ be an open unbounded sector in direction $d$. By convention, we recall that the sectors
we consider throughout the paper do not contain the origin in $\mathbb{C}$.

\begin{defin} Let $\nu,\beta,\mu>0$ be positive real numbers. Let $k \geq 1$ be an integer and let $d \in \mathbb{R}$. We denote
$F_{(\nu,\beta,\mu,k)}^{d}$ the vector space of continuous functions $(\tau,m) \mapsto h(\tau,m)$ on
$(\bar{S}_{d}^{b} \cup S_{d}) \times \mathbb{R}$, which are holomorphic with respect to $\tau$ on $S_{d}^{b} \cup S_{d}$ and
such that
$$ ||h(\tau,m)||_{(\nu,\beta,\mu,k)} =
\sup_{\tau \in \bar{S}_{d}^{b} \cup S_{d},m \in \mathbb{R}} (1+|m|)^{\mu}
\frac{1 + |\tau|^{2k}}{|\tau|}\exp( \beta|m| - \nu|\tau|^{k} ) |h(\tau,m)|$$
is finite. One can check that the normed space
$(F_{(\nu,\beta,\mu,k)}^{d},||.||_{(\nu,\beta,\mu,k)})$ is a Banach space.
\end{defin}

Throughout the whole subsection, we assume that $\mu,\beta,\nu>0$ and $k \geq 1$, $d \in \mathbb{R}$ are fixed. In the next lemma, we check
the continuity property by multiplication operation with bounded functions.

\begin{lemma} Let $(\tau,m) \mapsto a(\tau,m)$ be a bounded continuous function on
$(\bar{S}_{d}^{b} \cup S_{d}) \times \mathbb{R}$, which is holomorphic with respect to $\tau$ on $S_{d}^{b} \cup S_{d}$. Then, we have
\begin{equation}
|| a(\tau,m) h(\tau,m) ||_{(\nu,\beta,\mu,k)} \leq
\left( \sup_{\tau \in \bar{S}_{d}^{b} \cup S_{d},m \in \mathbb{R}} |a(\tau,m)| \right)
||h(\tau,m)||_{(\nu,\beta,\mu,k)}
\end{equation}
for all $h(\tau,m) \in F_{(\nu,\beta,\mu,k)}^{d}$.
\end{lemma}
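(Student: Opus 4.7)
The statement is the direct analogue, in the Banach space $F^{d}_{(\nu,\beta,\mu,k)}$ of Section 2.2, of Lemma 1 from Section 2.1, with the weight $\exp(-\nu|\tau|^{\kappa})$ replaced by $\exp(-\nu|\tau|^{k})$ and the domain $\bar D(0,\rho)\cup U_{d}$ replaced by $\bar S_{d}^{b}\cup S_{d}$. The plan is simply to unwind the definition of the norm $||\cdot||_{(\nu,\beta,\mu,k)}$, pull out the pointwise bound on $a(\tau,m)$, and bound what remains by the norm of $h$.

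Concretely, I would start from
\begin{equation*}
||a(\tau,m)h(\tau,m)||_{(\nu,\beta,\mu,k)}
=\sup_{\tau\in\bar S_{d}^{b}\cup S_{d},\,m\in\mathbb{R}}(1+|m|)^{\mu}\frac{1+|\tau|^{2k}}{|\tau|}\exp(\beta|m|-\nu|\tau|^{k})\,|a(\tau,m)|\,|h(\tau,m)|.
\end{equation*}
Since $|a(\tau,m)|\le M:=\sup_{(\tau,m)\in(\bar S_{d}^{b}\cup S_{d})\times\mathbb{R}}|a(\tau,m)|<\infty$ by hypothesis, I factor $M$ out of the supremum, and the remaining expression is bounded above by $||h(\tau,m)||_{(\nu,\beta,\mu,k)}$ by definition. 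One should also observe that the function $a(\tau,m)h(\tau,m)$ does belong to $F^{d}_{(\nu,\beta,\mu,k)}$: it is continuous on $(\bar S_{d}^{b}\cup S_{d})\times\mathbb{R}$ and holomorphic in $\tau$ on $S_{d}^{b}\cup S_{d}$ as a product of two such functions, so the norm estimate is meaningful.

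There is no genuine obstacle here: no convolution, no Gamma-function asymptotics, no splitting of the integration range as in Propositions 1, 2 or 3. The argument is a one-line rearrangement of the supremum, and it matches verbatim the proof of Lemma 1 up to the trivial substitutions $\kappa\rightsquigarrow k$ and $\bar D(0,\rho)\cup U_{d}\rightsquigarrow\bar S_{d}^{b}\cup S_{d}$. For this reason the paper states it without a separate proof, consistently with the remark preceding Definition 3 that the propositions in Section 2.2 are analogues of those in \cite{lama} and are given without proof (save Proposition 5).
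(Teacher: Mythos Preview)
Your proposal is correct and matches the paper's approach: the lemma is stated without proof in the paper, precisely because it follows by the one-line supremum rearrangement you describe, as a direct analogue of Lemma~1.
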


In the next proposition, we study the continuity property of some convolution operators acting on the latter Banach spaces.

\begin{prop} Let $\gamma_{1} \geq 0$ and $\chi_{2}>-1$ be real numbers. Let $\nu_{2} \geq -1$ be an integer.
We consider a holomorphic function $a_{\gamma_{1},k}(\tau)$ on $S_{d}^{b} \cup S_{d}$, continuous on
$\bar{S}_{d}^{b} \cup S_{d}$, such that
$$ |a_{\gamma_{1},k}(\tau)| \leq \frac{1}{(1+|\tau|^{k})^{\gamma_1}} $$
for all $\tau \in S_{d}^{b} \cup S_{d}$.\medskip

\noindent If $1 + \chi_{2} + \nu_{2} \geq 0$ and $\gamma_{1} \geq \nu_{2}$, then there exists a constant $C_{6}>0$ (depending on
$\nu,\nu_{2},\chi_{2},\gamma_{1}$) such that
\begin{equation}
|| a_{\gamma_{1},k}(\tau) \int_{0}^{\tau^k} (\tau^{k}-s)^{\chi_2}s^{\nu_{2}}f(s^{1/k},m) ds ||_{(\nu,\beta,\mu,k)} \\
\leq C_{6}||f(\tau,m)||_{(\nu,\beta,\mu,k)}
\label{conv_op_prod_s_continuity_1_a_gamma_1}
\end{equation}
for all $f(\tau,m) \in F_{(\nu,\beta,\mu,k)}^{d}$.\medskip
\end{prop}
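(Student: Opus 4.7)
The strategy is to mimic the proof of Proposition 1, specializing it to the case $\kappa=k$ of the integral kernel in the exponential weight, while tracking the extra decay provided by the factor $a_{\gamma_{1},k}(\tau)$.

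First, I would apply Lemma 2 to the multiplier $a_{\gamma_{1},k}(\tau)$ so as to absorb its contribution as the pointwise bound $(1+|\tau|^{k})^{-\gamma_{1}}$. Next, in the sup defining $||\cdot||_{(\nu,\beta,\mu,k)}$ of the left hand side of (\ref{conv_op_prod_s_continuity_1_a_gamma_1}), I would insert and factor out the quantities
\[(1+|m|)^{\mu}e^{\beta|m|}\frac{1+|s|^{2}}{|s|^{1/k}}e^{-\nu|s|}\]
acting on $f(s^{1/k},m)$, so that the remaining factor is bounded by $||f||_{(\nu,\beta,\mu,k)}$ times a purely scalar quantity. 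Setting $x=|\tau|^{k}$ and performing the change of variable $s=h$ in the integral, this reduces the problem to showing the uniform bound $\sup_{x\geq 0} B(x)<+\infty$ for
\[B(x)=\frac{1+x^{2}}{x^{1/k}(1+x)^{\gamma_{1}}}e^{-\nu x}\int_{0}^{x}\frac{e^{\nu h}}{1+h^{2}}\,h^{\frac{1}{k}+\nu_{2}}(x-h)^{\chi_{2}}\,dh.\]

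I would then split $B=B_{1}+B_{2}$ according to whether $h\in[0,x/2]$ or $h\in[x/2,x]$. On $B_{1}$, the argument of Proposition 1 applies verbatim with $\kappa/k=1$: depending on the sign of $\chi_{2}$, one bounds $(x-h)^{\chi_{2}}$ by either $(x/2)^{\chi_{2}}$ or $x^{\chi_{2}}$ and uses $\nu_{2}\geq -1$ together with the exponential gain $e^{-\nu(1-2^{-1})x}=e^{-\nu x/2}$, which together with $1+\chi_{2}+\nu_{2}\geq 0$ makes $B_{1}$ uniformly bounded for all $x\geq 0$ (even without exploiting the $a_{\gamma_{1},k}$ factor). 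For $B_{2}$, I would use $1+h^{2}\geq 1+(x/2)^{2}$ on $[x/2,x]$ to pull this factor out and reduce matters to controlling
\[B_{2.1}(x)=\int_{0}^{x}e^{\nu h}\,h^{\frac{1}{k}+\nu_{2}}(x-h)^{\chi_{2}}\,dh,\]
which, by expansion of $e^{\nu h}$, the Beta integral and the asymptotics (\ref{quotient_gamma}), is dominated by a constant multiple of a Wiman function; its growth estimate yields $B_{2.1}(x)\leq K x^{1/k+\nu_{2}}e^{\nu x}$ (since now $\chi_{2}+1$ cancels with $-\kappa(\chi_{2}+1)/k=-(\chi_{2}+1)$).

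Putting the two pieces together, the contribution of $B_{2}$ to $B(x)$ is bounded by a constant multiple of $x^{\nu_{2}}(1+x)^{-\gamma_{1}}$, and this is exactly the place where the hypothesis $\gamma_{1}\geq\nu_{2}$ is needed: it precisely guarantees the uniform bound $\sup_{x\geq 0}x^{\nu_{2}}(1+x)^{-\gamma_{1}}<+\infty$. Combining with the estimate on $B_{1}$, we obtain a constant $C_{6}>0$ for which (\ref{conv_op_prod_s_continuity_1_a_gamma_1}) holds. The main obstacle is the delicate asymptotic analysis of $B_{2.1}$ via Wiman functions, exactly as in Proposition 1; the novelty here is simply that, because of the special $\kappa=k$ case, no exponential gain survives in $B_{2}$ and one must absorb a polynomial in $x$ using the new multiplier $a_{\gamma_{1},k}(\tau)$.
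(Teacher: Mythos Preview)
Your approach is essentially identical to the paper's: the same reduction to the scalar function (the paper calls it $F(x)$ rather than $B(x)$), the same split at $x/2$, the same treatment of the first half, and the same use of the Wiman asymptotics for $B_{2.1}$.

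There is, however, a small slip in your final step. Your claim that $\sup_{x\geq 0}x^{\nu_{2}}(1+x)^{-\gamma_{1}}<+\infty$ is false when $\nu_{2}=-1$, which the hypothesis $\nu_{2}\geq -1$ allows. The point is that the Wiman estimate $B_{2.1}(x)\leq Kx^{1/k+\nu_{2}}e^{\nu x}$ is an \emph{asymptotic} bound, valid only for $x\geq 1$ (cf.\ the paper's (\ref{B_21_exp_growth})); near $x=0$ it is not the right thing to use. The paper handles this by treating $x\in[0,1]$ separately: on that compact interval $F_{2}$ is continuous (the integrand is integrable since $\frac{1}{k}+\nu_{2}>-1$ and $\chi_{2}>-1$) and hence bounded, while for $x\geq 1$ one combines the Wiman bound with $1+x\geq x$ and $\gamma_{1}\geq\nu_{2}$ exactly as you do. With that small repair your argument coincides with the paper's.
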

\begin{proof} The proof follows similar arguments to those in Proposition 1. Indeed, let
$f(\tau,m) \in F_{(\nu,\beta,\mu,k)}^{d}$. By definition, we have
\begin{multline}
|| a_{\gamma_{1},k}(\tau) \int_{0}^{\tau^k} (\tau^{k}-s)^{\chi_2}s^{\nu_2}f(s^{1/k},m) ds
||_{(\nu,\beta,\mu,k)} \\ = \sup_{\tau \in \bar{S}_{d}^{b} \cup S_{d},m \in \mathbb{R}} (1+|m|)^{\mu}
\frac{1 + |\tau|^{2k}}{|\tau|}\exp( \beta|m| - \nu|\tau|^{k} )\\
\times | a_{\gamma_{1},k}(\tau) \int_{0}^{\tau^k} \{ (1+|m|)^{\mu} e^{\beta |m|}
\exp( -\nu |s| )
\frac{ 1 + |s|^2}{ |s|^{1/k} }f(s^{1/k},m) \}\\
\times \mathcal{F}(\tau,s,m) ds|
\end{multline}
where
$$
\mathcal{F}(\tau,s,m) = \frac{1}{(1+|m|)^{\mu}} e^{-\beta |m|}
\frac{\exp( \nu |s| )}{1 + |s|^2 } |s|^{1/k}
(\tau^{k}-s)^{\chi_2}s^{\nu_2}.
$$
Therefore,
\begin{equation}
|| a_{\gamma_{1},k}(\tau) \int_{0}^{\tau^k} (\tau^{k}-s)^{\chi_2}s^{\nu_2}f(s^{1/k},m) ds
||_{(\nu,\beta,\mu,k)} \leq C_{6} ||f(\tau,m)||_{(\nu,\beta,\mu,k)} \label{continuity_op_prod_s_gamma_1}
\end{equation}
where
\begin{multline*}
 C_{6} = \sup_{\tau \in \bar{S}_{d}^{b} \cup S_{d}}
\frac{1 + |\tau|^{2k}}{|\tau|} \exp( -\nu |\tau|^{k} )\\
\times \frac{1}{(1+|\tau|^{k})^{\gamma_1}} \int_{0}^{|\tau|^k}
\frac{\exp( \nu h )}{1+h^2}
h^{\frac{1}{k}} (|\tau|^{k}-h)^{\chi_2} h^{\nu_2} dh = \sup_{x \geq 0} F(x)
\end{multline*}
where
$$ F(x) = \frac{1 + x^2}{x^{1/k}} \exp( -\nu x ) \frac{1}{(1+x)^{\gamma_1}}
\int_{0}^{x} \frac{\exp( \nu h )}{1 + h^2} h^{\frac{1}{k} + \nu_{2}} (x-h)^{\chi_2} dh.
$$
We write $F(x)=F_{1}(x) + F_{2}(x)$, where
\begin{multline*}
F_{1}(x) = \frac{1+x^2}{x^{1/k}}
\exp(-\nu x) \frac{1}{(1+x)^{\gamma_1}}
\int_{0}^{x/2} \frac{\exp( \nu h )}{1 + h^2} h^{\frac{1}{k}+\nu_2} (x-h)^{\chi_{2}} dh,\\
F_{2}(x) = 
\frac{1+x^2}{x^{1/k}}
\exp(-\nu x) \frac{1}{(1+x)^{\gamma_1}}
\int_{x/2}^{x} \frac{\exp( \nu h )}{1 + h^2} h^{\frac{1}{k}+\nu_2} (x-h)^{\chi_{2}} dh
\end{multline*}
Now, we study the function $F_{1}(x)$. We first assume that $-1 < \chi_{2} < 0$. In that case, we have that
$(x-h)^{\chi_2} \leq (x/2)^{\chi_2}$ for all $0 \leq h \leq x/2$ with $x>0$. We deduce that
\begin{multline}
F_{1}(x) \leq \frac{1 + x^2}{x^{1/k}} (\frac{x}{2})^{\chi_2} e^{-\nu x} \frac{1}{(1+x)^{\gamma_1}} \int_{0}^{x/2}
\frac{e^{\nu h}}{1+h^2}
h^{\frac{1}{k}+\nu_2} dh\\
\leq (1 + x^2)\frac{1}{2^{1/k}(\frac{1}{k}+\nu_{2}+1)} (\frac{x}{2})^{1 + \chi_{2} + \nu_{2}}
\frac{1}{(1+x)^{\gamma_1}} \exp(-\frac{\nu x}{2})
\end{multline}
for all $x > 0$. Bearing in mind that $1 + \chi_{2} + \nu_{2} \geq 0$ and since
$1 + x \geq 1$ for all $x \geq 0$, we deduce that there exists a constant $K_{1}>0$ with 
\begin{equation}
\sup_{x \geq 0} F_{1}(x) \leq K_{1}. \label{F_1_bounded_chi2_negative}
\end{equation}
We assume now that $\chi_{2} \geq 0$. In this situation, we know that $(x-h)^{\chi_2} \leq x^{\chi_2}$ for all $0 \leq h \leq x/2$,
with $x \geq 0$. Hence,
\begin{equation}
F_{1}(x) \leq (1+x^2)\frac{1}{2^{1/k}(\frac{1}{k}+\nu_{2}+1)}x^{\chi_2}
(x/2)^{\nu_{2}+1} \frac{1}{(1+x)^{\gamma_1}} \exp( -\frac{\nu x}{2} )
\end{equation}
for all $x \geq 0$. Again, we deduce that there exists a constant $K_{1.1}>0$ with 
\begin{equation}
\sup_{x \geq 0} F_{1}(x) \leq K_{1.1}. \label{F_1_bounded_chi2_positive}
\end{equation}
In the next step, we focus on the function $F_{2}(x)$. First, we observe that $1 + h^{2} \geq 1 + (x/2)^2$ for all
$x/2 \leq h \leq x$. Therefore, there exists a constant $K_{2}>0$ such that
\begin{multline}
F_{2}(x) \leq \frac{1+x^2}{1 + (\frac{x}{2})^2}\frac{1}{x^{1/k}}
\exp(-\nu x) \frac{1}{(1+x)^{\gamma_1}}
\int_{x/2}^{x} \exp( \nu h ) h^{\frac{1}{k}+\nu_2} (x-h)^{\chi_{2}} dh\\
\leq K_{2} \frac{1}{x^{1/k}} \frac{1}{(1+x)^{\gamma_1}} \exp(-\nu x) \int_{0}^{x} \exp( \nu h )
h^{\frac{1}{k}+\nu_2} (x-h)^{\chi_{2}} dh
\label{F_2_maj}
\end{multline}
for all $x > 0$. Now, from the estimates (\ref{B_21_exp_growth}), we know that there exists a constant $K_{2.3}>0$ such that
\begin{equation}
F_{2.1}(x) = \int_{0}^{x} \exp( \nu h )
h^{\frac{1}{k}+\nu_2} (x-h)^{\chi_{2}} dh \leq K_{2.3}x^{\frac{1}{k} + \nu_{2}} e^{\nu x} \label{F_2_1_maj}
\end{equation}
for all $x \geq 1$. From (\ref{F_2_maj}) we get the existence of a constant $\tilde{F}_{2}>0$ with
\begin{equation}
\sup_{x \in [0,1]} F_{2}(x) \leq \tilde{F}_{2}. \label{F_2_maj_1}
\end{equation}
On the other hand, we also have that $1 + x \geq x$ for all $x \geq 1$. Since $\gamma_{1} \geq \nu_{2}$ and due
to (\ref{F_2_maj}) with (\ref{F_2_1_maj}), we get a constant $\check{F}_{2}>0$ with
\begin{equation}
\sup_{x \geq 1} F_{2}(x) \leq \check{F}_{2} \label{F_2_maj_2}
\end{equation}
Gathering the estimates (\ref{continuity_op_prod_s_gamma_1}), (\ref{F_1_bounded_chi2_negative}),
(\ref{F_1_bounded_chi2_positive}), (\ref{F_2_maj_1}) and (\ref{F_2_maj_2}), we finally obtain
(\ref{conv_op_prod_s_continuity_1_a_gamma_1}).
\end{proof}
The next two propositions are already stated as Propositions 3 and 4 in \cite{lama}.
\begin{prop} Let $k \geq 1$ be an integer. Let $Q_{1}(X),Q_{2}(X),R(X) \in \mathbb{C}[X]$ such that
\begin{equation}
\mathrm{deg}(R) \geq \mathrm{deg}(Q_{1}) \ \ , \ \ \mathrm{deg}(R) \geq \mathrm{deg}(Q_{2}) \ \ , \ \ R(im) \neq 0
\label{R>Q1_R>Q2_R_nonzero}
\end{equation}
for all $m \in \mathbb{R}$. Assume that $\mu > \max( \mathrm{deg}(Q_{1})+1, \mathrm{deg}(Q_{2})+1 )$. Let
$m \mapsto b(m)$ be a continuous function on $\mathbb{R}$ such that
$$
|b(m)| \leq \frac{1}{|R(im)|}
$$
for all $m \in \mathbb{R}$. Then, there exists a constant $C_{7}>0$ (depending on $Q_{1},Q_{2},R,\mu,k,\nu$) such that
\begin{multline}
|| b(m) \int_{0}^{\tau^{k}} (\tau^{k}-s)^{\frac{1}{k}} ( \int_{0}^{s} \int_{-\infty}^{+\infty} Q_{1}(i(m-m_{1}))
f((s-x)^{1/k},m-m_{1}) \\
\times Q_{2}(im_{1}) g(x^{1/k},m_{1}) \frac{1}{(s-x)x} dx dm_{1} ) ds ||_{(\nu,\beta,\mu,k)} \\
\leq C_{7} ||f(\tau,m)||_{(\nu,\beta,\mu,k)} ||g(\tau,m)||_{(\nu,\beta,\mu,k)}
\label{norm_k_conv_f_g<norm_k_f_times_norm_k_g}
\end{multline}
for all $f(\tau,m), g(\tau,m) \in F_{(\nu,\beta,\mu,k)}^{d}$.
\end{prop}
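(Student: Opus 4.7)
The plan is to mimic the proof of Proposition 2 above, specialized to the case $\kappa = k$; indeed, this statement is essentially Proposition 3 of \cite{lama}, and the whole argument carries over with only cosmetic changes once one replaces the disc-union-sector $\bar{D}(0,\rho)\cup U_d$ by the sector $\bar{S}_d^{b}\cup S_d$. So my sketch will be brief.

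First I would unpack the norm $\|\cdot\|_{(\nu,\beta,\mu,k)}$ and, inside the triple integral, multiply and divide by the weight $(1+|m-m_1|)^\mu e^{\beta|m-m_1|}(1+|s-x|^2)|s-x|^{-1/k}\exp(-\nu|s-x|)$ attached to the $f$-factor, and by the analogous weight attached to the $g$-factor. This extracts $\|f\|_{(\nu,\beta,\mu,k)}\|g\|_{(\nu,\beta,\mu,k)}$ from the integrand and leaves a kernel $\mathcal{C}(s,x,m,m_1)$ made up of the polynomials $b(m)Q_1(i(m-m_1))Q_2(im_1)$, the exponentials $\exp(\nu|s-x|)\exp(\nu|x|)$, the rational factors $|s-x|^{1/k}|x|^{1/k}/((1+|s-x|^2)(1+|x|^2)(s-x)x)$, and the weights $e^{-\beta|m_1|}e^{-\beta|m-m_1|}/((1+|m-m_1|)^\mu(1+|m_1|)^\mu)$.

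Second, I would apply the polynomial estimates $|Q_j(im)|\leq\mathfrak{Q}_j(1+|m|)^{\deg Q_j}$ and $|R(im)|\geq\mathfrak{R}(1+|m|)^{\deg R}$ together with the triangle inequality $|m|\leq|m_1|+|m-m_1|$, to separate the remaining supremum into a product $C_7\leq C_{7.1}\cdot C_{7.2}$. Here $C_{7.1}$ is a pure momentum integral, finite under $\mu>\max(\deg Q_1+1,\deg Q_2+1)$ and $\deg R\geq\max(\deg Q_1,\deg Q_2)$ by the classical estimate recalled in Lemma 4 of \cite{ma2} (or Lemma 2.2 of \cite{cota2}), exactly as for $C_{3.1}$ in Proposition 2. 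The factor $C_{7.2}$ is a pure radial integral in $(|\tau|,s,x)$, and this is where the one real difference appears: the convexity inequality $h^{\kappa/k}\geq (h-x)^{\kappa/k}+x^{\kappa/k}$ used in Proposition 2 degenerates here to the trivial equality $h=(h-x)+x$ since $\kappa=k$, which actually simplifies the bookkeeping of the exponentials.

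Third, following the same pattern as for $C_{3.2}$--$C_{3.3}$ of Proposition 2, I would perform the change of variable $x=hu$ inside the inner integral, decompose the resulting integrand $J_k(h')$ by partial fractions into $J_{1,k}(h')+J_{2,k}(h')$, and bound $J_k(h')\leq j_k/(h'^{1-2/k}(h'^2+4))$ for $k\geq 2$. The remaining one-dimensional bound $\tilde{C}_{3.3}(x)=(1+x^2)\exp(-\nu x^{\kappa/k})\int_0^x j_k\exp(\nu h'^{\kappa/k}) h'^{-1+2/k}(h'^2+4)^{-1}dh'$ is handled by L'Hospital's rule and is finite provided $\kappa\geq k$ and $k\geq 2$. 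The case $k=1$ is treated separately using Corollary 4.9 of \cite{cota}, which provides the sharper bound $J_1(h')\leq j_1/(h'^2+1)$; a second L'Hospital argument, again valid because $\kappa\geq 1$, closes that case. The main obstacle is really just this separate treatment of $k=1$ versus $k\geq 2$, but since both variants have already been carried out in Proposition 2, the present statement follows with no new input.
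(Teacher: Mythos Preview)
Your proposal is correct. The paper does not actually supply a proof of this proposition: it explicitly states at the top of Section~2.2 that ``all the propositions are given without proof except Proposition~5'' and, just before the statement, notes that it is Proposition~3 of \cite{lama}. Your reconstruction---specializing the proof of Proposition~2 to $\kappa=k$ on the domain $\bar{S}_d^{b}\cup S_d$---is exactly the argument that reference contains, so there is nothing to add.
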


\begin{prop} Let $k \geq 1$ be an integer. Let $Q(X),R(X) \in \mathbb{C}[X]$ be polynomials such that
\begin{equation}
\mathrm{deg}(R) \geq \mathrm{deg}(Q) \ \ , \ \ R(im) \neq 0 \label{cond_R_Q}
\end{equation}
for all $m \in \mathbb{R}$. Assume that $\mu > \mathrm{deg}(Q) + 1$. Let $m \mapsto b(m)$ be a continuous function such that
$$
|b(m)| \leq \frac{1}{|R(im)|}
$$
for all $m \in \mathbb{R}$. Then, there exists a constant $C_{8}>0$ (depending on $Q,R,\mu,k,\nu$) such that
\begin{multline}
|| b(m) \int_{0}^{\tau^{k}} (\tau^{k}-s)^{\frac{1}{k}}\int_{-\infty}^{+\infty} f(m-m_{1})Q(im_{1})g(s^{1/k},m_{1})dm_{1}
\frac{ds}{s}||_{(\nu,\beta,\mu,k)}\\
\leq C_{8} ||f(m)||_{(\beta,\mu)} ||g(\tau,m)||_{(\nu,\beta,\mu,k)}
\label{norm_k_conv_f_g<norm_f_beta_mu_times_norm_k_g}
\end{multline}
for all $f(m) \in E_{(\beta,\mu)}$, all $g(\tau,m) \in F_{(\nu,\beta,\mu,k)}^{d}$.
\end{prop}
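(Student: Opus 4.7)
The plan is to mimic the proof of Proposition 3 from the previous subsection, adapting it to the sectorial setting of $\bar{S}_{d}^{b}\cup S_{d}$ and replacing the exponent $\kappa$ by $k$. Structurally, the proof is essentially a reduction to Proposition 4 applied in the special parameter case $\chi_{2}=1/k$, $\nu_{2}=-1$, combined with a standard convolution estimate on $\mathbb{R}$ that already appears in Lemma 4 of \cite{ma2}.

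First I would unwrap the definition of $||\cdot||_{(\nu,\beta,\mu,k)}$ and rewrite the integrand by inserting and removing the weights $(1+|m-m_{1}|)^{\mu}e^{\beta|m-m_{1}|}$ for the factor $f(m-m_{1})$, and $(1+|m_{1}|)^{\mu}e^{\beta|m_{1}|}\exp(-\nu|s|)\tfrac{1+|s|^{2}}{|s|^{1/k}}$ for $g(s^{1/k},m_{1})$. After pulling out the supremum norms $||f||_{(\beta,\mu)}$ and $||g||_{(\nu,\beta,\mu,k)}$, bounding $|Q(im_{1})|\le\mathfrak{Q}(1+|m_{1}|)^{\deg(Q)}$ and $|R(im)|\ge\mathfrak{R}(1+|m|)^{\deg(R)}$, and using the triangle inequality $|m|\le|m-m_{1}|+|m_{1}|$, I obtain a splitting $C_{8}\le C_{8.1}\cdot C_{8.2}$ where
\[
C_{8.1}=\sup_{\tau\in\bar{S}_{d}^{b}\cup S_{d}}\frac{1+|\tau|^{2k}}{|\tau|}e^{-\nu|\tau|^{k}}\int_{0}^{|\tau|^{k}}\frac{e^{\nu h}}{1+h^{2}}h^{\frac{1}{k}-1}(|\tau|^{k}-h)^{1/k}\,dh
\]
and $C_{8.2}$ is the $m$-integral
\[
C_{8.2}=\frac{\mathfrak{Q}}{\mathfrak{R}}\sup_{m\in\mathbb{R}}(1+|m|)^{\mu-\deg(R)}\int_{-\infty}^{+\infty}\frac{dm_{1}}{(1+|m-m_{1}|)^{\mu}(1+|m_{1}|)^{\mu-\deg(Q)}}.
\]

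Next, I would show $C_{8.2}<\infty$ invoking the standard bound of Lemma 4 of \cite{ma2} (or Lemma 2.2 of \cite{cota2}), which applies precisely under the hypotheses $\deg(R)\ge\deg(Q)$, $R(im)\neq 0$, and $\mu>\deg(Q)+1$. For $C_{8.1}$ I would appeal directly to Proposition 4 of this subsection specialized to $\chi_{2}=1/k\ge 0$, $\nu_{2}=-1$, and $\gamma_{1}=0$ (so the weight $a_{\gamma_{1},k}\equiv 1$ is trivially bounded by $1$); the conditions $1+\chi_{2}+\nu_{2}=1/k\ge 0$ and $\gamma_{1}=0\ge\nu_{2}=-1$ are satisfied, and the finiteness of the supremum $F(x)$ produced there is exactly the statement $C_{8.1}<\infty$.

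The only mildly delicate point, and what I expect to be the main routine obstacle, is making sure that the estimates established in Proposition 4 (which were derived on the domain $\bar{D}(0,\rho)\cup U_{d}$ with the growth $e^{\nu|\tau|^{\kappa}}$ replaced by $e^{\nu|\tau|^{k}}$) transfer cleanly to the present Banach space $F_{(\nu,\beta,\mu,k)}^{d}$ over $\bar{S}_{d}^{b}\cup S_{d}$; however, since the sup is taken over $|\tau|\in[0,+\infty)$ in either case and no holomorphy is used in the scalar estimate $F(x)$, this transfer is immediate. Combining the two bounds yields the desired inequality with $C_{8}=C_{8.1}C_{8.2}$, which concludes the proof.
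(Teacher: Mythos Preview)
Your proposal is correct and follows exactly the approach one would expect: the paper itself does not give a proof for this proposition, noting only that it is ``already stated as Propositions 3 and 4 in \cite{lama}'', and your argument is precisely the natural adaptation of the proof of Proposition 3 (the analogous statement in $F_{(\nu,\beta,\mu,k,\kappa)}^{d}$) to the space $F_{(\nu,\beta,\mu,k)}^{d}$, invoking Proposition 5 with $\chi_{2}=1/k$, $\nu_{2}=-1$, $\gamma_{1}=0$ for the $\tau$-integral and the standard convolution bound for the $m$-integral. One small remark: what you call ``Proposition 4 of this subsection'' is Proposition 5 in the paper's numbering, and it is already stated on the domain $\bar{S}_{d}^{b}\cup S_{d}$, so your worry about transferring domains is moot.
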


\section{Laplace transform, asymptotic expansions and Fourier transform}

We recall a definition of $k-$Borel summability of formal series with coefficients in a Banach space which is a slightly modified version of
the one given in \cite{ba}, Section 3.2, that was introduced in \cite{lama}. All the properties stated in this section are
already contained in our previous work \cite{lama}.

\begin{defin} Let $k \geq 1$ be an integer. Let $m_{k}(n)$ be the sequence defined by
$$ m_{k}(n) = \Gamma(\frac{n}{k}) =  \int_{0}^{+\infty} t^{\frac{n}{k}-1} e^{-t} dt $$
for all $n \geq 1$. A formal series
$$\hat{X}(T) = \sum_{n=1}^{\infty}  a_{n}T^{n} \in T\mathbb{E}[[T]]$$
with coefficients in a Banach space $( \mathbb{E}, ||.||_{\mathbb{E}} )$ is said to be $m_{k}-$summable
with respect to $T$ in the direction $d \in [0,2\pi)$ if \medskip

{\bf i)} there exists $\rho \in \mathbb{R}_{+}$ such that the following formal series, called a formal $m_{k}-$Borel transform of
$\hat{X}$ 
$$ \mathcal{B}_{m_k}(\hat{X})(\tau) = \sum_{n=1}^{\infty} \frac{ a_{n} }{ \Gamma(\frac{n}{k}) } \tau^{n}
\in \tau\mathbb{E}[[\tau]],$$
is absolutely convergent for $|\tau| < \rho$. \medskip

{\bf ii)} there exists $\delta > 0$ such that the series $\mathcal{B}_{m_k}(\hat{X})(\tau)$ can be analytically continued with
respect to $\tau$ in a sector
$S_{d,\delta} = \{ \tau \in \mathbb{C}^{\ast} : |d - \mathrm{arg}(\tau) | < \delta \} $. Moreover, there exist $C >0$ and $K >0$
such that
$$ ||\mathcal{B}_{m_k}(\hat{X})(\tau)||_{\mathbb{E}}
\leq C e^{ K|\tau|^{k} } $$
for all $\tau \in S_{d, \delta}$.
\end{defin}
If this is so, the vector valued $m_{k}-$Laplace transform of $\mathcal{B}_{m_{k}}(\hat{X})(\tau)$ in the direction $d$ is
defined by
$$ \mathcal{L}^{d}_{m_k}(\mathcal{B}_{m_k}(\hat{X}))(T) = k \int_{L_{\gamma}}
\mathcal{B}_{m_k}(\hat{X})(u) e^{ - ( u/T )^{k} } \frac{d u}{u},$$
along a half-line $L_{\gamma} = \mathbb{R}_{+}e^{i\gamma} \subset S_{d,\delta} \cup \{ 0 \}$, where $\gamma$ depends on
$T$ and is chosen in such a way that $\cos(k(\gamma - \mathrm{arg}(T))) \geq \delta_{1} > 0$, for some fixed $\delta_{1}$.
The function $\mathcal{L}^{d}_{m_k}(\mathcal{B}_{m_{k}}(\hat{X}))(T)$ is well defined, holomorphic and bounded in any sector
$$ S_{d,\theta,R^{1/k}} = \{ T \in \mathbb{C}^{\ast} : |T| < R^{1/k} \ \ , \ \ |d - \mathrm{arg}(T) | < \theta/2 \},$$
where $\frac{\pi}{k} < \theta < \frac{\pi}{k} + 2\delta$ and
$0 < R < \delta_{1}/K$. This function is called the $m_{k}-$sum of the formal series $\hat{X}(T)$ in the direction $d$.\medskip

\noindent We now state some elementary properties concerning the $m_{k}-$sums of formal power series.\\

\noindent 1) The function $\mathcal{L}^{d}_{m_k}(\mathcal{B}_{m_k}(\hat{X}))(T)$ has the formal series $\hat{X}(T)$ as
Gevrey asymptotic
expansion of order $1/k$ with respect to $t$ on $S_{d,\theta,R^{1/k}}$. This means that for all
$\frac{\pi}{k} < \theta_{1} < \theta$, there exist $C,M > 0$
such that
\begin{equation}
 ||\mathcal{L}^{d}_{m_k}(\mathcal{B}_{m_k}(\hat{X}))(T) - \sum_{p=1}^{n-1} a_p T^{p}||_{\mathbb{E}} \leq
CM^{n}\Gamma(1+\frac{n}{k})|T|^{n} \label{Laplace_k_Gevrey_ae}
\end{equation}
for all $n \geq 2$, all $T \in S_{d,\theta_{1},R^{1/k}}$. Moreover, from Watson's lemma (see Proposition 11 p. 75 in \cite{ba2}), we get
that $\mathcal{L}^{d}_{m_k}(\mathcal{B}_{m_k}(\hat{X}))(T)$ is the unique holomorphic function that satisfies the estimates
(\ref{Laplace_k_Gevrey_ae}) on the sectors $S_{d,\theta_{1},R^{1/k}}$ with large aperture $\theta_{1} > \frac{\pi}{k}$.\medskip

\noindent 2) Let us assume that $( \mathbb{E}, ||.||_{\mathbb{E}} )$ also has the structure of a Banach algebra for a product $\star$.
Let $\hat{X}_{1}(T),\hat{X}_{2}(T) \in T\mathbb{E}[[T]]$ be $m_{k}-$summable formal power series in direction
$d$. Let $q_{1} \geq q_{2} \geq 1$ be integers. We assume that 
$\hat{X}_{1}(T)+\hat{X}_{2}(T)$, $\hat{X}_{1}(T) \star \hat{X}_{2}(T)$ and
$T^{q_1}\partial_{T}^{q_2}\hat{X}_{1}(T)$, which are elements of $T\mathbb{E}[[T]]$, are $m_{k}-$summable in direction $d$.
Then, the following equalities
\begin{multline}
\mathcal{L}^{d}_{m_k}(\mathcal{B}_{m_k}(\hat{X}_{1}))(T) +
\mathcal{L}^{d}_{m_k}(\mathcal{B}_{m_k}(\hat{X}_{2}))(T) =
\mathcal{L}^{d}_{m_k}(\mathcal{B}_{m_k}(\hat{X}_{1} + \hat{X}_{2}))(T),\\
\mathcal{L}^{d}_{m_k}(\mathcal{B}_{m_k}(\hat{X}_{1}))(T) \star
\mathcal{L}^{d}_{m_k}(\mathcal{B}_{m_k}(\hat{X}_{2}))(T) =
\mathcal{L}^{d}_{m_k}(\mathcal{B}_{m_k}(\hat{X}_{1} \star \hat{X}_{2}))(T)\\
T^{q_1}\partial_{T}^{q_2}\mathcal{L}^{d}_{m_k}(\mathcal{B}_{m_k}(\hat{X}_{1}))(T) =
\mathcal{L}^{d}_{m_k}(\mathcal{B}_{m_k}(T^{q_1}\partial_{T}^{q_2}\hat{X}_{1}))(T) \label{sum_prod_deriv_m_k_sum}
\end{multline}
hold for all $T \in S_{d,\theta,R^{1/k}}$. These equalities are consequence of the unicity of the function having a given
Gevrey expansion of order $1/k$ in large sectors as stated above in 1) and from the fact that the set of holomorphic functions having
Gevrey asymptotic expansion of order $1/k$ on a sector with values in the Banach algebra $\mathbb{E}$ form a
differential algebra (meaning that this set is stable with respect to the sum and product of functions and derivation in the variable
$T$) (see Theorems 18,19 and 20 in \cite{ba2}).\medskip

In the next proposition, we give some identities for the $m_{k}-$Borel transform that will be useful in the sequel.
\begin{prop} Let $\hat{f}(t) = \sum_{ n \geq 1} f_{n}t^{n}$, $\hat{g}(t) = \sum_{n \geq 1} g_{n}t^{n}$ be formal series
whose coefficients $f_{n},g_{n}$ belong to some Banach space $(\mathbb{E},||.||_{\mathbb{E}})$. We assume that
$(\mathbb{E},||.||_{\mathbb{E}})$ is a Banach algebra for some product $\star$. Let $k,m \geq 1$ be integers.
The following formal identities hold.
\begin{equation}
\mathcal{B}_{m_k}(t^{k+1}\partial_{t}\hat{f}(t))(\tau) = k \tau^{k} \mathcal{B}_{m_k}(\hat{f}(t))(\tau) \label{Borel_diff}
\end{equation}
\begin{equation}
\mathcal{B}_{m_k}(t^{m}\hat{f}(t))(\tau) = \frac{\tau^{k}}{\Gamma(\frac{m}{k})}
\int_{0}^{\tau^{k}} (\tau^{k} - s)^{\frac{m}{k}-1} \mathcal{B}_{m_k}(\hat{f}(t))(s^{1/k}) \frac{ds}{s} \label{Borel_mult_monom}
\end{equation}
and
\begin{equation}
\mathcal{B}_{m_k}( \hat{f}(t) \star \hat{g}(t) )(\tau) = \tau^{k}\int_{0}^{\tau^{k}}
\mathcal{B}_{m_k}(\hat{f}(t))((\tau^{k}-s)^{1/k}) \star \mathcal{B}_{m_k}(\hat{g}(t))(s^{1/k}) \frac{1}{(\tau^{k}-s)s} ds
\label{Borel_product}
\end{equation}
\end{prop}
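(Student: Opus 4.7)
The plan is to verify all three identities by direct series computation, since both sides are formal power series in $\tau$ and the $m_k$-Borel transform is defined termwise. The only non-trivial analytic input is the Beta integral, which links the Gamma factors appearing on each side.

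For identity \eqref{Borel_diff}, I would start from $t^{k+1}\partial_t \hat{f}(t) = \sum_{n\geq 1} n f_n t^{n+k}$, apply $\mathcal{B}_{m_k}$ termwise to obtain $\sum_{n\geq 1} \frac{n f_n}{\Gamma((n+k)/k)}\tau^{n+k}$, and simplify using the functional equation $\Gamma(n/k + 1) = (n/k)\Gamma(n/k)$. This yields $n/\Gamma((n+k)/k) = k/\Gamma(n/k)$, from which $k\tau^k \mathcal{B}_{m_k}(\hat{f})(\tau)$ pops out immediately.

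For identity \eqref{Borel_mult_monom}, I would write $t^m \hat{f}(t) = \sum_{n\geq 1} f_n t^{n+m}$ so the left-hand side equals $\sum_{n\geq 1}\frac{f_n}{\Gamma((n+m)/k)}\tau^{n+m}$. On the right-hand side, I expand $\mathcal{B}_{m_k}(\hat{f})(s^{1/k}) = \sum_{n\geq 1}\frac{f_n}{\Gamma(n/k)} s^{n/k}$, interchange sum and integral, and evaluate
\[
\int_0^{\tau^k}(\tau^k-s)^{m/k-1} s^{n/k-1}\,ds = \tau^{m+n-k}\,\frac{\Gamma(m/k)\Gamma(n/k)}{\Gamma((m+n)/k)}
\]
by the Beta integral formula (cited in the paper via \cite{ba2}, Appendix B3). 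The $\Gamma(m/k)$ and $\Gamma(n/k)$ factors cancel against those in the denominators, and the prefactor $\tau^k$ gives $\tau^{m+n}$, matching the left-hand side coefficientwise.

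Identity \eqref{Borel_product} follows by the same recipe applied to the Cauchy-type product $\hat{f}\star\hat{g}(t) = \sum_{n\geq 2}\bigl(\sum_{p+q=n,\,p,q\geq 1} f_p\star g_q\bigr)t^n$, whose $m_k$-Borel transform is $\sum_{n\geq 2}\frac{1}{\Gamma(n/k)}\bigl(\sum_{p+q=n} f_p\star g_q\bigr)\tau^n$. Expanding both factors on the right-hand side as power series in $(\tau^k-s)^{1/k}$ and $s^{1/k}$, exchanging sum and integral, and invoking the Beta integral with exponents $p/k-1$ and $q/k-1$ again produces $\tau^{p+q-k}\Gamma(p/k)\Gamma(q/k)/\Gamma((p+q)/k)$, which after multiplication by the outer $\tau^k$ reorganises into $\sum_{n\geq 2}\tau^n/\Gamma(n/k)\cdot\sum_{p+q=n} f_p\star g_q$, as required.

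Since everything is done at the level of formal series, no convergence issues arise; the interchange of summation and integration is purely formal. The only real step is recognising the Beta integral in the right shape, and taking care with the exponents $p/k-1$, $q/k-1$, $m/k-1$, $n/k-1$ so that the factors $1/s$, $1/(\tau^k-s)$ absorb correctly into $s^{n/k-1}$ etc.; this is the one spot where an arithmetic slip is most likely, but it is purely bookkeeping rather than a conceptual obstacle.
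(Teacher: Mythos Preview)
Your proof is correct and is precisely the standard termwise verification via the Beta integral; the paper itself does not give a proof of this proposition but refers to the authors' earlier work \cite{lama}, where the same computation is carried out. There is nothing to add.
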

In the following proposition, we recall some properties of the inverse Fourier transform
\begin{prop}
Let $f \in E_{(\beta,\mu)}$ with $\beta > 0$, $\mu > 1$. The inverse Fourier transform of $f$ is defined by
$$ \mathcal{F}^{-1}(f)(x) = \frac{1}{ (2\pi)^{1/2} } \int_{-\infty}^{+\infty} f(m) \exp( ixm ) dm $$
for all $x \in \mathbb{R}$. The function $\mathcal{F}^{-1}(f)$ extends to an analytic function on the strip
\begin{equation}
H_{\beta} = \{ z \in \mathbb{C} / |\mathrm{Im}(z)| < \beta \}. \label{strip_H_beta}
\end{equation}
Let $\phi(m) = im f(m) \in E_{(\beta,\mu - 1)}$. Then, we have
\begin{equation}
\partial_{z} \mathcal{F}^{-1}(f)(z) = \mathcal{F}^{-1}(\phi)(z) \label{dz_fourier}
\end{equation}
for all $z \in H_{\beta}$.\\
Let $g \in E_{(\beta,\mu)}$ and let $\psi(m) = \frac{1}{(2\pi)^{1/2}}f \ast g(m)$, the convolution product of $f$ and
$g$, for all $m \in \mathbb{R}$.
From Proposition 4, we know that $\psi \in E_{(\beta,\mu)}$. Moreover, we have
\begin{equation}
\mathcal{F}^{-1}(f)(z)\mathcal{F}^{-1}(g)(z) = \mathcal{F}^{-1}(\psi)(z) \label{prod_fourier}
\end{equation}
for all $z \in H_{\beta}$.
\end{prop}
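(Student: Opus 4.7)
The plan is to establish the three assertions of Proposition~9 successively by integral domination, differentiation under the integral sign, and Fubini's theorem. The single estimate driving everything is the pointwise bound
\[
|f(m) e^{izm}| \leq \|f\|_{(\beta,\mu)} (1+|m|)^{-\mu} e^{-(\beta - |\mathrm{Im}(z)|)|m|},
\]
obtained by combining the defining inequality on $E_{(\beta,\mu)}$ with $|e^{izm}| = e^{-m\,\mathrm{Im}(z)} \leq e^{|\mathrm{Im}(z)||m|}$. This majorant is integrable on $\mathbb{R}$, and uniformly so on every closed substrip $|\mathrm{Im}(z)| \leq \beta' < \beta$.

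First I would verify that $\mathcal{F}^{-1}(f)$ extends analytically to $H_\beta$ by Morera's theorem: for any closed triangle $T \subset H_\beta$, the uniform domination above permits Fubini to exchange $\oint_T$ with the integral over $m$, and $\oint_T e^{izm}\,dz = 0$ for each fixed $m$ by Cauchy's theorem. Continuity of $z \mapsto \mathcal{F}^{-1}(f)(z)$ on $H_\beta$ follows from dominated convergence with the same majorant.

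For (\ref{dz_fourier}), observe that $\phi(m) = im\,f(m)$ satisfies $|\phi(m)| \leq \|f\|_{(\beta,\mu)}(1+|m|)^{-(\mu-1)} e^{-\beta|m|}$, placing $\phi$ in $E_{(\beta,\mu-1)}$; the analogous majorant for $\phi(m)e^{izm}$ remains integrable on each substrip, which justifies differentiation under the integral sign. Since $\partial_z e^{izm} = im\,e^{izm}$, the identity (\ref{dz_fourier}) follows at once.

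The main computation is the product identity (\ref{prod_fourier}). Writing the product of the two inverse Fourier transforms as a double integral,
\[
\mathcal{F}^{-1}(f)(z)\mathcal{F}^{-1}(g)(z) = \frac{1}{2\pi} \int_{-\infty}^{+\infty}\int_{-\infty}^{+\infty} f(m_1) g(m_2) e^{iz(m_1+m_2)}\,dm_1\,dm_2,
\]
the modulus of the integrand factors into two integrable one-dimensional majorants, so Fubini applies. The substitution $(m_1,m_2) \mapsto (m_1, m = m_1+m_2)$ then rewrites the expression as $\frac{1}{2\pi} \int e^{izm}\bigl(\int f(m-m_1)g(m_1)\,dm_1\bigr) dm$, whose inner integral equals $(2\pi)^{1/2}\psi(m)$ by definition of $\psi$; the outer integral is then $\mathcal{F}^{-1}(\psi)(z)$, using that $\psi \in E_{(\beta,\mu)}$ by Proposition~4. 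The only step requiring a moment of care is the Fubini justification here, but the required absolute integrability is automatic from the exponential decay built into $E_{(\beta,\mu)}$, so no genuine obstacle is anticipated.
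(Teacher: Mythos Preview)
Your proof is correct and follows the standard classical route for these Fourier-analytic facts. Note, however, that the paper does not actually supply a proof of this proposition: it is stated without argument, with the remark at the start of Section~3 that ``all the properties stated in this section are already contained in our previous work~\cite{lama}.'' Your argument thus fills in what the paper omits, and does so in the natural way.
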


\section{Formal and analytic solutions of convolution initial value problems with complex parameters}

\subsection{Formal solutions of the main convolution initial value problem}

Let $k_{1},k_{2} \geq 1$, $D \geq 2$ be integers such that $k_{2} > k_{1}$. Let $\delta_{l} \geq 1$ be integers such that
\begin{equation}
1 = \delta_{1} \ \ , \ \ \delta_{l} < \delta_{l+1}, \label{delta_constraints}
\end{equation}
for all $1 \leq l \leq D-1$. For all $1 \leq l \leq D-1$, let
$d_{l},\Delta_{l} \geq 0$ be nonnegative integers such that
\begin{equation}
d_{l} > \delta_{l} \ \ , \ \ \Delta_{l} - d_{l} + \delta_{l} - 1 \geq 0. \label{constrain_d_l_delta_l_Delta_l}
\end{equation}
Let
$Q(X),Q_{1}(X),Q_{2}(X),R_{l}(X) \in \mathbb{C}[X]$, $0 \leq l \leq D$, be polynomials such that
\begin{multline}
\mathrm{deg}(Q) \geq \mathrm{deg}(R_{D}) \geq \mathrm{deg}(R_{l}) \ \ , \ \ \mathrm{deg}(R_{D}) \geq \mathrm{deg}(Q_{1}) \ \ , \ \
\mathrm{deg}(R_{D}) \geq \mathrm{deg}(Q_{2}), \\
Q(im) \neq 0 \ \ , \ \ R_{l}(im) \neq 0 \ \ , \ \ R_{D}(im) \neq 0 \label{first_constraints_polynomials_Q_R}
\end{multline}
for all $m \in \mathbb{R}$, all $0 \leq l \leq D-1$. We consider sequences of functions
$m \mapsto C_{0,n}(m,\epsilon)$, for all $n \geq 0$ and $m \mapsto F_{n}(m,\epsilon)$, for all $n \geq 1$,
that belong to the Banach space $E_{(\beta,\mu)}$ for some $\beta > 0$ and
$\mu > \max( \mathrm{deg}(Q_{1})+1, \mathrm{deg}(Q_{2})+1)$ and which
depend holomorphically on $\epsilon \in D(0,\epsilon_{0})$ for some $\epsilon_{0}>0$. We assume that there exist
constants $K_{0},T_{0}>0$ such that
\begin{equation}
||C_{0,n}(m,\epsilon)|| _{(\beta,\mu)} \leq K_{0} (\frac{1}{T_{0}})^{n}  \label{norm_beta_mu_C0_n}
\end{equation}
for all $n \geq 1$, for all $\epsilon \in D(0,\epsilon_{0})$. We define $C_{0}(T,m,\epsilon) =
\sum_{n \geq 1} C_{0,n}(m,\epsilon) T^{n}$
which is a convergent series on $D(0,T_{0}/2)$ with values in $E_{(\beta,\mu)}$ and
$F(T,m,\epsilon) = \sum_{n \geq 1}F_{n}(m,\epsilon)T^{n}$ which is a formal series with coefficients in $E_{(\beta,\mu)}$.
Let $c_{1,2}(\epsilon),c_{0}(\epsilon),c_{0,0}(\epsilon)$ and $c_{F}(\epsilon)$ be bounded holomorphic functions
on $D(0,\epsilon_{0})$ which vanish at the origin $\epsilon=0$. We consider the following initial value problem
\begin{multline}
Q(im)(\partial_{T}U(T,m,\epsilon) ) - T^{(\delta_{D}-1)(k_{2}+1)}\partial_{T}^{\delta_{D}}R_{D}(im)U(T,m,\epsilon) \\
= \epsilon^{-1}\frac{c_{1,2}(\epsilon)}{(2\pi)^{1/2}}\int_{-\infty}^{+\infty}Q_{1}(i(m-m_{1}))U(T,m-m_{1},\epsilon)
Q_{2}(im_{1})U(T,m_{1},\epsilon) dm_{1}\\
+ \sum_{l=1}^{D-1} R_{l}(im) \epsilon^{\Delta_{l} - d_{l} + \delta_{l} - 1} T^{d_{l}} \partial_{T}^{\delta_l}U(T,m,\epsilon)\\
+ \epsilon^{-1}\frac{c_{0}(\epsilon)}{(2\pi)^{1/2}}\int_{-\infty}^{+\infty}
C_{0}(T,m-m_{1},\epsilon)R_{0}(im_{1})U(T,m_{1},\epsilon) dm_{1}\\
+  \epsilon^{-1}\frac{c_{0,0}(\epsilon)}{(2\pi)^{1/2}}\int_{-\infty}^{+\infty}C_{0,0}(m-m_{1},\epsilon)
R_{0}(im_{1})U(T,m_{1},\epsilon) dm_{1}
+ \epsilon^{-1}c_{F}(\epsilon)F(T,m,\epsilon)
\label{SCP}
\end{multline}
for given initial data $U(0,m,\epsilon) \equiv 0$.\medskip

\begin{prop} There exists a unique formal series
$$ \hat{U}(T,m,\epsilon) = \sum_{n \geq 1} U_{n}(m,\epsilon) T^{n} $$
solution of (\ref{SCP}) with initial data $U(0,m,\epsilon) \equiv 0$, where the coefficients
$m \mapsto U_{n}(m,\epsilon)$ belong to $E_{(\beta,\mu)}$ for $\beta>0$ and
$\mu>\max( \mathrm{deg}(Q_{1}) + 1, \mathrm{deg}(Q_{2})+1)$ given above and depend
holomorphically on $\epsilon$ in $D(0,\epsilon_{0})$. 
\end{prop}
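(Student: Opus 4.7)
The plan is to determine the coefficients $U_n(m,\epsilon)$ inductively by plugging the ansatz $\hat{U}(T,m,\epsilon) = \sum_{n \geq 1} U_n(m,\epsilon) T^n$ into (\ref{SCP}) and identifying coefficients of like powers of $T$. Formal differentiation yields $\partial_T \hat{U} = \sum_{N \geq 0} (N+1) U_{N+1}(m,\epsilon) T^N$, so at each level $T^N$, $N \geq 0$, the dominant contribution on the left is $(N+1) Q(im) U_{N+1}(m,\epsilon)$. The remaining term on the left, $T^{(\delta_D-1)(k_2+1)}\partial_T^{\delta_D}R_D(im)U$, contributes at level $T^N$ only through $U_n$ with $n = N + \delta_D - (\delta_D-1)(k_2+1)$; since $\delta_D \geq 2$ and $k_2 \geq 1$, one has $(\delta_D-1)(k_2+1) \geq \delta_D$, hence $n \leq N$. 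On the right, each $T^{d_l}\partial_T^{\delta_l}U$ brings in $U_{N+\delta_l-d_l}$ with $n < N$ by (\ref{constrain_d_l_delta_l_Delta_l}); the quadratic term couples only $U_j, U_{N-j}$ with $j \leq N-1$; and the linear convolutions tied to $C_0$, $C_{0,0}$ and the forcing $c_F F$ at level $T^N$ involve only $U_j, C_{0,j}, F_j$ with $j \leq N$. The outcome is a recurrence
$$
(N+1) Q(im) U_{N+1}(m,\epsilon) \;=\; \Phi_N\bigl(m,\epsilon;\,U_1,\ldots,U_N\bigr),
$$
which uniquely determines $U_{N+1}$ from the previously built coefficients.

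The second step is an induction on $N$ showing that $U_N(\cdot,\epsilon) \in E_{(\beta,\mu)}$ and that $U_N$ depends holomorphically on $\epsilon \in D(0,\epsilon_0)$. Since $|Q(im)| \geq \mathfrak{Q}(1+|m|)^{\deg Q}$ for some $\mathfrak{Q}>0$ and $\deg Q \geq \deg R_l$ for $0 \leq l \leq D$ by (\ref{first_constraints_polynomials_Q_R}), division of elements of $E_{(\beta,\mu)}$ by $(N+1)Q(im)$ stays in $E_{(\beta,\mu)}$ (and in fact gains polynomial weight in $m$). The quadratic convolution appearing in $\Phi_N$ is controlled by the Banach-algebra estimate of Proposition~4, using the assumption $\mu > \max(\deg Q_1 + 1, \deg Q_2 + 1)$, and the linear $m$-convolutions involving $C_0, C_{0,0}, R_0$ are handled by the same proposition. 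All operations entering $\Phi_N$ — polynomial multiplication in $m$, integration against kernels depending holomorphically on $\epsilon$, division by the $\epsilon$-independent quantity $(N+1)Q(im)$ — preserve holomorphic dependence on $\epsilon$, so that $U_{N+1}(\cdot,\epsilon)$ is $\epsilon$-holomorphic on $D(0,\epsilon_0)$ at each step.

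The only genuine obstacle is the combinatorial bookkeeping in the first step, namely confirming that no $U_{N'}$ with $N' > N+1$ can appear at level $T^N$. Once the inequalities $(\delta_D-1)(k_2+1) \geq \delta_D$ and $d_l > \delta_l$ from the shape constraints (\ref{delta_constraints}), (\ref{constrain_d_l_delta_l_Delta_l}) are used to pin down the indices, the remainder of the argument is a routine induction grounded in the closure properties of $E_{(\beta,\mu)}$ recorded in Section~2. Uniqueness of $\hat{U}$ is immediate from the triangular nature of the recurrence.
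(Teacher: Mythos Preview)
Your proof is correct and follows essentially the same approach as the paper's: derive the recursion for the coefficients $U_{N+1}$ by matching powers of $T$ in (\ref{SCP}), and then invoke Proposition~4 together with the degree constraints (\ref{first_constraints_polynomials_Q_R}) to check that each $U_{N+1}(\cdot,\epsilon)$ lands in $E_{(\beta,\mu)}$ and depends holomorphically on $\epsilon$. The paper simply writes down the explicit recursion and cites Proposition~4; your version adds the index analysis (verifying that the shape assumptions force only $U_1,\dots,U_N$ to appear on the right at level $T^N$), which the paper leaves implicit.
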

\begin{proof} From Proposition 4 and the conditions stated above, we get that the coefficients
$U_{n}(m,\epsilon)$ of $\hat{U}(T,m,\epsilon)$ are well defined, belong to $E_{(\beta,\mu)}$ for all
$\epsilon \in D(0,\epsilon_{0})$, all $n \geq 1$ and satisfy the following recursion relation
\begin{multline}
(n+1)U_{n+1}(m,\epsilon)\\
= \frac{R_{D}(im)}{Q(im)} \Pi_{j=0}^{\delta_{D}-1} (n + \delta_{D} - (\delta_{D}-1)(k_{2}+1) - j)
U_{n + \delta_{D} - (\delta_{D}-1)(k_{2}+1)}(m,\epsilon)\\
+ \frac{\epsilon^{-1}}{Q(im)}
\sum_{n_{1}+n_{2}=n,n_{1} \geq 1,n_{2} \geq 1}
\frac{c_{1,2}(\epsilon)}{(2\pi)^{1/2}} \int_{-\infty}^{+\infty} Q_{1}(i(m-m_{1}))U_{n_1}(m-m_{1},\epsilon)
Q_{2}(im_{1})U_{n_2}(m_{1},\epsilon) dm_{1}\\
+ \sum_{l=1}^{D-1} \frac{R_{l}(im)}{Q(im)}\left( \epsilon^{\Delta_{l} - d_{l} + \delta_{l} - 1}
\Pi_{j=0}^{\delta_{l}-1} (n+\delta_{l}-d_{l}-j) \right)
U_{n+\delta_{l}-d_{l}}(m,\epsilon)\\
+ \frac{\epsilon^{-1}}{Q(im)}
\sum_{n_{1}+n_{2}=n,n_{1} \geq 1,n_{2} \geq 1} \frac{c_{0}(\epsilon)}{(2\pi)^{1/2}} \int_{-\infty}^{+\infty}
C_{0,n_{1}}(m-m_{1},\epsilon)R_{0}(im_{1})U_{n_2}(m_{1},\epsilon) dm_{1}\\
+ \frac{\epsilon^{-1}c_{0,0}(\epsilon)}{(2\pi)^{1/2}Q(im)} \int_{-\infty}^{+\infty}
C_{0,0}(m-m_{1},\epsilon)R_{0}(im_{1})U_{n}(m_{1},\epsilon) dm_{1}
+ \frac{\epsilon^{-1}c_{F}(\epsilon)}{Q(im)}F_{n}(m,\epsilon)
\end{multline}
for all $n \geq \max( \max_{1 \leq l \leq D-1}d_{l}, (\delta_{D}-1)(k_{2}+1) )$.
\end{proof}

\subsection{Analytic solutions for an auxiliary convolution problem resulting from a $m_{k_{1}}-$Borel transform applied
to the main convolution initial value problem}

We make the additional assumption that
\begin{equation}
d_{l} > (\delta_{l}-1)(k_{1}+1) \label{constraint_dl_deltal_k1}
\end{equation}
for all $1 \leq l \leq D-1$. Using the formula (8.7) from \cite{taya}, p. 3630, we can expand the operators
$T^{\delta_{l}(k_{1}+1)} \partial_{T}^{\delta_l}$ in the form
\begin{equation}
T^{\delta_{l}(k_{1}+1)} \partial_{T}^{\delta_l} = (T^{k_{1}+1}\partial_{T})^{\delta_l} +
\sum_{1 \leq p \leq \delta_{l}-1} A_{\delta_{l},p} T^{k_{1}(\delta_{l}-p)} (T^{k_{1}+1}\partial_{T})^{p} \label{expand_op_diff_k1}
\end{equation}
where $A_{\delta_{l},p}$, $p=1,\ldots,\delta_{l}-1$ are real numbers, for all $1 \leq l \leq D$.
We define integers $d_{l,k_{1}}^{1} > 0$ in order to satisfy
\begin{equation}
d_{l} + k_{1} + 1 = \delta_{l}(k_{1}+1) + d_{l,k_{1}}^{1} \label{defin_d_l_k1_1}
\end{equation}
for all $1 \leq l \leq D-1$. We also rewrite $(\delta_{D}-1)(k_{2}+1) = (\delta_{D}-1)(k_{1}+1) + (\delta_{D}-1)(k_{2}-k_{1})$.

Multiplying the equation (\ref{SCP}) by $T^{k_{1}+1}$ and using
(\ref{expand_op_diff_k1}), we can rewrite the equation (\ref{SCP}) in the form
\begin{multline}
Q(im)( T^{k_{1}+1}\partial_{T}U(T,m,\epsilon) ) \\
= R_{D}(im)T^{(\delta_{D}-1)(k_{2}-k_{1})}(T^{k_{1}+1}\partial_{T})^{\delta_{D}}U(T,m,\epsilon)\\
+ R_{D}(im)\sum_{1 \leq p \leq \delta_{D}-1} A_{\delta_{D},p} T^{(\delta_{D}-1)(k_{2}-k_{1})} T^{k_{1}(\delta_{D}-p)}
(T^{k_{1}+1}\partial_{T})^{p}U(T,m,\epsilon)\\
 + \epsilon^{-1}T^{k_{1}+1}
\frac{c_{1,2}(\epsilon)}{(2\pi)^{1/2}}\int_{-\infty}^{+\infty} Q_{1}(i(m-m_{1}))U(T,m-m_{1},\epsilon)
Q_{2}(im_{1})U(T,m_{1},\epsilon) dm_{1} \\
+ \sum_{l=1}^{D-1} R_{l}(im)\left( \epsilon^{\Delta_{l} - d_{l} + \delta_{l} - 1}
T^{d_{l,k_{1}}^{1}}(T^{k_{1}+1}\partial_{T})^{\delta_l}
U(T,m,\epsilon) \right.
\\+ \sum_{1 \leq p \leq \delta_{l}-1} A_{\delta_{l},p}
\left. \epsilon^{\Delta_{l}-d_{l}+\delta_{l}-1} T^{k_{1}(\delta_{l}-p) + d_{l,k_{1}}^{1}}
(T^{k_{1}+1}\partial_{T})^{p}U(T,m,\epsilon) \right)\\
+ \epsilon^{-1}T^{k_{1}+1}
\frac{c_{0}(\epsilon)}{(2\pi)^{1/2}}\int_{-\infty}^{+\infty} C_{0}(T,m-m_{1},\epsilon) R_{0}(im_{1})U(T,m_{1},\epsilon) dm_{1}\\
+ \epsilon^{-1}T^{k_{1}+1}
\frac{c_{0,0}(\epsilon)}{(2\pi)^{1/2}}\int_{-\infty}^{+\infty} C_{0,0}(m-m_{1},\epsilon) R_{0}(im_{1})U(T,m_{1},\epsilon) dm_{1}
+ \epsilon^{-1}c_{F}(\epsilon)T^{k_{1}+1}F(T,m,\epsilon)
\label{SCP_irregular_k1}
\end{multline}
We denote
$\omega_{k_1}(\tau,m,\epsilon)$ the formal $m_{k_1}-$Borel transform of
$\hat{U}(T,m,\epsilon)$ with respect to $T$, $\varphi_{k_1}(\tau,m,\epsilon)$ the formal $m_{k_1}-$Borel transform of
$C_{0}(T,m,\epsilon)$ with respect to $T$ and $\psi_{k_1}(\tau,m,\epsilon)$ the formal $m_{k_1}-$Borel transform of
$F(T,m,\epsilon)$ with respect to $T$. More precisely,
\begin{multline*}
 \omega_{k_1}(\tau,m,\epsilon) = \sum_{n \geq 1} U_{n}(m,\epsilon) \frac{\tau^n}{\Gamma(\frac{n}{k_1})} \ \ , \ \
\varphi_{k_1}(\tau,m,\epsilon) = \sum_{n \geq 1} C_{0,n}(m,\epsilon) \frac{\tau^n}{\Gamma(\frac{n}{k_1})}\\
\psi_{k_1}(\tau,m,\epsilon) = \sum_{n \geq 1} F_{n}(m,\epsilon) \frac{\tau^n}{\Gamma(\frac{n}{k_1})}
\end{multline*}
Using (\ref{norm_beta_mu_C0_n}) we get that for any $\kappa \geq k_{1}$, the function
$\varphi_{k_1}(\tau,m,\epsilon)$ belongs to $F_{(\nu,\beta,\mu,k_{1},\kappa)}^{d}$ for
all $\epsilon \in D(0,\epsilon_{0})$, any unbounded sector $U_{d}$ centered at 0 with bisecting direction
$d \in \mathbb{R}$, for some $\nu>0$. Indeed, we have that
\begin{multline}
||\varphi_{k_1}(\tau,m,\epsilon)||_{(\nu,\beta,\mu,k_{1},\kappa)} \\
\leq \sum_{n \geq 1}
||C_{0,n}(m,\epsilon)||_{(\beta,\mu)} (\sup_{\tau \in \bar{D}(0,\rho) \cup U_{d}}
\frac{1 + |\tau|^{2k_1}}{|\tau|} \exp(-\nu |\tau|^{\kappa})
\frac{|\tau|^n}{\Gamma(\frac{n}{k_{1}})}) \label{maj_norm_varphi_k_1}
\end{multline}
By using the classical estimates
\begin{equation}
\sup_{x \geq 0} x^{m_1}\exp(-m_{2}x) = (\frac{m_1}{m_2})^{m_1}e^{-m_1} \label{x_m_exp_x<}
\end{equation}
for any real numbers $m_{1} \geq 0$, $m_{2}>0$ and Stirling formula
$\Gamma(n/k_1) \sim (2\pi)^{1/2}(n/k_1)^{\frac{n}{k_1}-\frac{1}{2}}e^{-n/k_1}$ as $n$ tends to $+\infty$, we get two constants
$A_{1},A_{2}>0$ depending on $\nu,k_{1},\kappa$ such that
\begin{multline}
\sup_{\tau \in \bar{D}(0,\rho) \cup U_{d}}
\frac{1 + |\tau|^{2k_1}}{|\tau|} \exp(-\nu |\tau|^{\kappa})
\frac{|\tau|^n}{\Gamma(\frac{n}{k_1})}
= \sup_{x \geq 0} (1+x^{2k_{1}/\kappa})x^{\frac{n-1}{\kappa}}
\frac{e^{-\nu x}}{\Gamma(\frac{n}{k_1})} \\
\leq \left( (\frac{n-1}{\nu \kappa})^{\frac{n-1}{\kappa}}
e^{-\frac{n-1}{\kappa}} +
( \frac{n-1}{\nu \kappa} + \frac{2k_{1}}{\nu \kappa})^{\frac{n-1}{\kappa}+\frac{2k_{1}}{\kappa}}
e^{-(\frac{n-1}{\kappa}+\frac{2k_{1}}{\kappa})} \right)/ \Gamma(n/k_{1})\\
\leq A_{1}(A_{2})^{n} \label{sup_Stirling}
\end{multline}
for all $n \geq 1$. Therefore, if the inequality
$A_{2} < T_{0}$ holds, we get the estimates
\begin{equation}
||\varphi_{k_1}(\tau,m,\epsilon)||_{(\nu,\beta,\mu,k_{1},\kappa)} \leq A_{1} \sum_{n \geq 1}
||C_{0,n}(m,\epsilon)||_{(\beta,\mu)}
(A_{2})^{n} \leq \frac{A_{1}A_{2}K_{0}}{T_0} \frac{1}{ 1 - \frac{A_{2}}{T_0} }
\label{norm_F_varphi_k1_epsilon_0}
\end{equation}

On the other hand, we make the assumption that $\psi_{k_1}(\tau,m,\epsilon) \in F_{(\nu,\beta,\mu,k_{1},k_{1})}^{d}$, for
all $\epsilon \in D(0,\epsilon_{0})$, for some unbounded sector $U_{d}$ with bisecting direction
$d \in \mathbb{R}$, where $\nu$ is chosen above. We will make the convention to denote $\psi_{k_1}^{d}$ the analytic continuation of
the convergent power series $\psi_{k_1}$ on the domain $U_{d} \cup D(0,\rho)$. In particular, we get that
$\psi_{k_1}^{d}(\tau,m,\epsilon) \in F_{(\nu,\beta,\mu,k_{1},\kappa)}^{d}$ for any $\kappa \geq k_{1}$. We also assume that
there exists a constant $\zeta_{\psi_{k_1}}>0$ such that
\begin{equation}
||\psi_{k_1}^{d}(\tau,m,\epsilon) ||_{(\nu,\beta,\mu,k_{1},k_{1})} \leq \zeta_{\psi_{k_1}} \label{psi_k1_bounded_norm_k1_k1}
\end{equation}
for all $\epsilon \in D(0,\epsilon_{0})$. In particular, we notice that
\begin{equation}
||\psi_{k_1}^{d}(\tau,m,\epsilon) ||_{(\nu,\beta,\mu,k_{1},\kappa)} \leq \zeta_{\psi_{k_1}} \label{psi_k1_bounded_norm_k1_kappa}
\end{equation}
for any $\kappa \geq k_{1}$. We require that there exists a constant $r_{Q,R_{l}}>0$ such that
\begin{equation}
|\frac{Q(im)}{R_{l}(im)}| \geq r_{Q,R_{l}} \label{quotient_Q_Rl_larger_than_radius}
\end{equation} 
for all $m \in \mathbb{R}$, all $1 \leq l \leq D$.

Using the computation rules for the formal $m_{k_1}-$Borel transform in
Proposition 8, we deduce the following equation satisfied by $\omega_{k_1}(\tau,m,\epsilon)$,
\begin{multline}
Q(im)( k_{1} \tau^{k_1} \omega_{k_1}(\tau,m,\epsilon) )\\
= R_{D}(im) \frac{\tau^{k_1}}{\Gamma( \frac{(\delta_{D}-1)(k_{2}-k_{1})}{k_1} )} \int_{0}^{\tau^{k_1}}
(\tau^{k_1}-s)^{ \frac{(\delta_{D}-1)(k_{2}-k_{1})}{k_1} - 1 } k_{1}^{\delta_D} s^{\delta_D}
\omega_{k_1}(s^{1/k_{1}},m,\epsilon) \frac{ds}{s}\\
+ R_{D}(im) \sum_{1 \leq p \leq \delta_{D}-1} A_{\delta_{D},p}
\frac{\tau^{k_1}}{\Gamma( \frac{(\delta_{D}-1)(k_{2}-k_{1}) + k_{1}(\delta_{D}-p)}{k_1} )}\\
\times \int_{0}^{\tau^{k_1}}
(\tau^{k_1}-s)^{ \frac{(\delta_{D}-1)(k_{2}-k_{1})+k_{1}(\delta_{D}-p)}{k_1} - 1 } k_{1}^{p} s^{p}
\omega_{k_1}(s^{1/k_{1}},m,\epsilon) \frac{ds}{s}\\
+ \epsilon^{-1}
\frac{\tau^{k_1}}{\Gamma(1 + \frac{1}{k_{1}})} \int_{0}^{\tau^{k_1}}
(\tau^{k_1}-s)^{1/k_1}\\
\times \left( \frac{c_{1,2}(\epsilon)}{(2\pi)^{1/2}} s\int_{0}^{s} \int_{-\infty}^{+\infty} \right.
Q_{1}(i(m-m_{1}))\omega_{k_1}((s-x)^{1/k_1},m-m_{1},\epsilon)\\
\left. \times  Q_{2}(im_{1})
\omega_{k_1}(x^{1/k_1},m_{1},\epsilon) \frac{1}{(s-x)x} dxdm_{1} \right) \frac{ds}{s}\\
+ \sum_{l=1}^{D-1} R_{l}(im) \left( \epsilon^{\Delta_{l}-d_{l}+\delta_{l}-1}
\frac{\tau^{k_1}}{\Gamma( \frac{d_{l,k_1}^1}{k_1} )} \right.
\int_{0}^{\tau^{k_1}} (\tau^{k_1}-s)^{\frac{d_{l,k_1}^{1}}{k_1}-1}({k_1}^{\delta_l}s^{\delta_l}
\omega_{k_1}(s^{1/k_1},m,\epsilon)) \frac{ds}{s}\\
+ \sum_{1 \leq p \leq \delta_{l}-1} A_{\delta_{l},p}\epsilon^{\Delta_{l}-d_{l}+\delta_{l}-1}
\frac{\tau^{k_1}}{\Gamma( \frac{d_{l,k_{1}}^{1}}{k_1} + \delta_{l}-p)} \int_{0}^{\tau^{k_1}}
\left. (\tau^{k_1}-s)^{\frac{d_{l,k_{1}}^{1}}{k_1}+\delta_{l}-p-1}(k_{1}^{p}s^{p}\omega_{k_1}(s^{1/k_1},m,\epsilon))
\frac{ds}{s} \right)\\
 + \epsilon^{-1}
\frac{\tau^{k_1}}{\Gamma(1 + \frac{1}{k_1})} \int_{0}^{\tau^{k_1}}
(\tau^{k_1}-s)^{1/k_1}\\
\times \left( \frac{c_{0}(\epsilon)}{(2\pi)^{1/2}} s\int_{0}^{s} \int_{-\infty}^{+\infty} \right.
\left. \varphi_{k_1}((s-x)^{1/k_1},m-m_{1},\epsilon) R_{0}(im_{1}) \omega_{k_1}(x^{1/k_1},m_{1},\epsilon) \frac{1}{(s-x)x}
dxdm_{1} \right) \frac{ds}{s}\\
+ \epsilon^{-1}\frac{\tau^{k_1}}{\Gamma(1 + \frac{1}{k_1})} \int_{0}^{\tau^{k_1}}
(\tau^{k_1}-s)^{1/k_1} \frac{c_{0,0}(\epsilon)}{(2\pi)^{1/2}} ( \int_{-\infty}^{+\infty} C_{0,0}(m-m_{1},\epsilon)R_{0}(im_{1})
\omega_{k_1}(s^{1/k_1},m_{1},\epsilon) dm_{1} )\frac{ds}{s}\\
+ \epsilon^{-1}c_{F}(\epsilon) \frac{\tau^{k_1}}{\Gamma(1 + \frac{1}{k_1})}\int_{0}^{\tau^{k_1}}
(\tau^{k_1}-s)^{1/k_1} \psi_{k_1}^{d}(s^{1/k_1},m,\epsilon) \frac{ds}{s} \label{k_1_Borel_equation}
\end{multline}
In the next proposition, we give sufficient conditions under which the equation (\ref{k_1_Borel_equation}) has a solution
$\omega_{k_1}^{d}(\tau,m,\epsilon)$ in the Banach space $F_{(\nu,\beta,\mu,k_{1},\kappa)}^{d}$ where $\beta,\mu$ are
defined above and for well chosen $\kappa > k_{1}$.

\begin{prop} Under the assumption that
\begin{equation}
\frac{1}{\kappa} = \frac{1}{k_{1}} - \frac{1}{k_{2}} \ \ , \ \ \frac{k_2}{k_{2} - k_{1}} \geq
\frac{d_{l} + (1-\delta_{l})}{d_{l} + (1-\delta_{l})(k_{1}+1)}   \label{constraints_k_1_Borel_equation}
\end{equation}
for all $1 \leq l \leq D-1$, there exist radii $r_{Q,R_{l}}>0$, $1 \leq l \leq D$, a constant $\varpi>0$ and constants
$\zeta_{1,2},\zeta_{0,0},\zeta_{0},\zeta_{1},\zeta_{1,0},\zeta_{F},\zeta_{2}>0$ (depending on
$Q_{1},Q_{2},k_{1},\mu,\nu,\epsilon_{0},R_{l},\Delta_{l},\delta_{l},d_{l}$ for
$1 \leq l \leq D-1$) such that if
\begin{multline}
\sup_{\epsilon \in D(0,\epsilon_{0})}|\frac{c_{1,2}(\epsilon)}{\epsilon}| \leq \zeta_{1,2} \ \ , \ \
\sup_{\epsilon \in D(0,\epsilon_{0})}|\frac{c_{0}(\epsilon)}{\epsilon}| \leq \zeta_{1,0} \ \ , \ \
||\varphi_{k_1}(\tau,m,\epsilon)||_{(\nu,\beta,\mu,k_{1},\kappa)} \leq \zeta_{1},\\
\sup_{\epsilon \in D(0,\epsilon_{0})}|\frac{c_{0,0}(\epsilon)}{\epsilon}| \leq \zeta_{0,0} \ \ , \ \
||C_{0,0}(m,\epsilon)||_{(\beta,\mu)} \leq \zeta_{0},\\
\sup_{\epsilon \in D(0,\epsilon_{0})}|\frac{c_{F}(\epsilon)}{\epsilon}| \leq \zeta_{F} \ \ , \ \
||\psi_{k_1}^{d}(\tau,m,\epsilon)||_{(\nu,\beta,\mu,k_{1},\kappa)} \leq \zeta_{2} \label{norm_F_varphi_k_psi_k_1_small}
\end{multline}
for all $\epsilon \in D(0,\epsilon_{0})$, the equation (\ref{k_1_Borel_equation}) has a unique solution
$\omega_{k_1}^{d}(\tau,m,\epsilon)$ in the space $F_{(\nu,\beta,\mu,k_{1},\kappa)}^{d}$ where $\beta,\mu>0$ are defined in
Proposition 10 which verifies $||\omega_{k_1}^{d}(\tau,m,\epsilon)||_{(\nu,\beta,\mu,k_{1},\kappa)} \leq \varpi$, for all
$\epsilon \in D(0,\epsilon_{0})$.
\end{prop}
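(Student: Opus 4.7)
The plan is to recast equation (\ref{k_1_Borel_equation}) as a fixed point problem in the Banach space $F_{(\nu,\beta,\mu,k_{1},\kappa)}^{d}$ and then to invoke the Banach fixed point theorem on a suitably chosen closed ball $\bar{B}(0,\varpi)$. To this end, I divide both sides of (\ref{k_1_Borel_equation}) by $k_{1}\tau^{k_{1}}Q(im)$, which is legitimate since every summand on the right carries a factor $\tau^{k_{1}}$, and I define the resulting operator $\mathcal{H}_{\epsilon}: \omega \mapsto \mathcal{H}_{\epsilon}(\omega)$ whose fixed points are exactly the solutions of (\ref{k_1_Borel_equation}). The factor $1/Q(im)$ is a bounded continuous function on $\mathbb{R}$ by the last inequality in (\ref{first_constraints_polynomials_Q_R}), and is handled uniformly thanks to the lower bound (\ref{quotient_Q_Rl_larger_than_radius}) on $|Q(im)/R_{l}(im)|$.

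Next, I estimate each of the eight types of terms arising in $\mathcal{H}_{\epsilon}(\omega)$ using the continuity results of Section 2.1. The linear convolution terms built from the operators $T^{(\delta_{D}-1)(k_{2}-k_{1})}T^{k_{1}(\delta_{D}-p)}(T^{k_{1}+1}\partial_{T})^{p}$ and $T^{d_{l,k_{1}}^{1}+k_{1}(\delta_{l}-p)}(T^{k_{1}+1}\partial_{T})^{p}$ fall under Proposition 1, composed with Lemma 1 applied to the bounded function $R_{D}(im)/Q(im)$ or $R_{l}(im)/Q(im)$. The crucial check is that the parameters $\chi_{2}$ and $\nu_{2}$ associated with each term satisfy the hypothesis $\kappa \geq k_{1}(\nu_{2}/(\chi_{2}+1)+1)$; for the leading $R_{D}$-term this reads $\kappa \geq k_{1}k_{2}/(k_{2}-k_{1})$ and is achieved with equality from the first relation in (\ref{constraints_k_1_Borel_equation}), whereas for the $R_{l}$-terms (with $\chi_{2}=d_{l,k_{1}}^{1}/k_{1}-1$, $\nu_{2}=\delta_{l}-1$), using $d_{l,k_{1}}^{1}=d_{l}-(\delta_{l}-1)(k_{1}+1)$ from (\ref{defin_d_l_k1_1}), it reduces precisely to the second inequality in (\ref{constraints_k_1_Borel_equation}). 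The quadratic term involving $c_{1,2}(\epsilon)$ and the bilinear term in $\varphi_{k_{1}}\,\omega_{k_{1}}$ are estimated by Proposition 2 (taking $R=Q$, which satisfies (\ref{R>Q1_R>Q2_R_nonzero_first}) by (\ref{first_constraints_polynomials_Q_R})), the term in $C_{0,0}$ is controlled by Proposition 3, and the forcing term is estimated by Proposition 1 with $\chi_{2}=1/k_{1}$, $\nu_{2}=-1$ (for which the parameter condition is trivial) together with Lemma 1.

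Combining these bounds gives an inequality of the schematic form
\begin{equation*}
\|\mathcal{H}_{\epsilon}(\omega)\|_{(\nu,\beta,\mu,k_{1},\kappa)} \leq A_{D}\,\varpi + \bigl( B_{1,2}\zeta_{1,2}\varpi^{2} + B_{0}\zeta_{1,0}\zeta_{1}\varpi + B_{0,0}\zeta_{0,0}\zeta_{0}\varpi + \sum_{l=1}^{D-1}B_{l}\varpi + B_{F}\zeta_{F}\zeta_{2}\bigr),
\end{equation*}
where $A_{D}$ collects the constant coming from the $R_{D}$-term (proportional to $C_{2}$ of Proposition 1 divided by $r_{Q,R_{D}}$) and the $B_{\bullet}$ collect the remaining absolute constants. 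By shrinking the bounds $\zeta_{1,2},\zeta_{0,0},\zeta_{1,0},\zeta_{F}$ and by adjusting the radii $r_{Q,R_{l}}$ so that $A_{D}<1/2$ and $\sum_{l=1}^{D-1}B_{l}<1/4$, one can choose $\varpi>0$ small enough so that $\mathcal{H}_{\epsilon}$ sends $\bar{B}(0,\varpi)$ into itself. A completely analogous calculation, using the identity $ab-a'b'=(a-a')b+a'(b-b')$ for the quadratic piece, yields a Lipschitz estimate
\begin{equation*}
\|\mathcal{H}_{\epsilon}(\omega_{1})-\mathcal{H}_{\epsilon}(\omega_{2})\|_{(\nu,\beta,\mu,k_{1},\kappa)} \leq \tfrac{1}{2}\|\omega_{1}-\omega_{2}\|_{(\nu,\beta,\mu,k_{1},\kappa)}
\end{equation*}
on $\bar{B}(0,\varpi)$, uniformly in $\epsilon \in D(0,\epsilon_{0})$. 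The Banach fixed point theorem then delivers the unique $\omega_{k_{1}}^{d}(\tau,m,\epsilon)\in F_{(\nu,\beta,\mu,k_{1},\kappa)}^{d}$ of norm at most $\varpi$ solving (\ref{k_1_Borel_equation}), and holomorphy in $\epsilon$ follows from the fact that $\mathcal{H}_{\epsilon}$ depends holomorphically on $\epsilon$ and the fixed point is the uniform limit of the holomorphic Picard iterates. The main technical obstacle is the bookkeeping needed to certify that each of the pairs $(\chi_{2},\nu_{2})$ hidden in the $R_{l}$-sum meets the Proposition 1 hypothesis; this is where the precise choice $1/\kappa = 1/k_{1}-1/k_{2}$ is forced and where the constraint (\ref{constraints_k_1_Borel_equation}) on the shape of the equation is actually used.
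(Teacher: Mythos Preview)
Your proposal is correct and follows essentially the same route as the paper: you define the fixed-point map by dividing (\ref{k_1_Borel_equation}) through by $k_{1}\tau^{k_{1}}Q(im)$, estimate each of its terms via Lemma~1 together with Propositions~1--3 (verifying that the choice $1/\kappa=1/k_{1}-1/k_{2}$ and the second inequality in (\ref{constraints_k_1_Borel_equation}) are exactly what Proposition~1 requires for the $R_{D}$- and $R_{l}$-blocks), split the quadratic difference via $ab-a'b'=(a-a')b+a'(b-b')$ for the contraction estimate, and conclude by the Banach fixed point theorem. This matches the paper's Lemma~3 and its surrounding argument essentially line for line.
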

\begin{proof} We start the proof with a lemma which provides appropriate conditions in order to apply a fixed point theorem.
\begin{lemma} One can choose the constants $r_{Q,R_{l}}>0$, for $1 \leq l \leq D$, a small enough constant $\varpi$ and
constants $\zeta_{1,2},\zeta_{0},\zeta_{0,0},\zeta_{1},\zeta_{1,0},\zeta_{F},\zeta_{2}>0$
(depending on $Q_{1},Q_{2},k_{1},\mu,\nu,\epsilon_{0},R_{l},\Delta_{l},\delta_{l},d_{l}$ for
$1 \leq l \leq D-1$) such that if (\ref{norm_F_varphi_k_psi_k_1_small}) holds for all $\epsilon \in D(0,\epsilon_{0})$, the map $\mathcal{H}_{\epsilon}^{k_1}$ defined by
\begin{multline}
\mathcal{H}_{\epsilon}^{k_1}(w(\tau,m))\\
= \frac{R_{D}(im)}{Q(im)} \frac{1}{k_{1}\Gamma( \frac{(\delta_{D}-1)(k_{2}-k_{1})}{k_1} )} \int_{0}^{\tau^{k_1}}
(\tau^{k_1}-s)^{ \frac{(\delta_{D}-1)(k_{2}-k_{1})}{k_1} - 1 } k_{1}^{\delta_D} s^{\delta_D}
w(s^{1/k_{1}},m) \frac{ds}{s}\\
+ \frac{R_{D}(im)}{Q(im)} \sum_{1 \leq p \leq \delta_{D}-1} A_{\delta_{D},p}
\frac{1}{k_{1}\Gamma( \frac{(\delta_{D}-1)(k_{2}-k_{1}) + k_{1}(\delta_{D}-p)}{k_1} )}\\
\times \int_{0}^{\tau^{k_1}}
(\tau^{k_1}-s)^{ \frac{(\delta_{D}-1)(k_{2}-k_{1})+k_{1}(\delta_{D}-p)}{k_1} - 1 } k_{1}^{p} s^{p}
w(s^{1/k_{1}},m) \frac{ds}{s}\\
+ \epsilon^{-1}
\frac{1}{Q(im)k_{1}\Gamma(1 + \frac{1}{k_{1}})} \int_{0}^{\tau^{k_1}}
(\tau^{k_1}-s)^{1/k_1}\\
\times \left( \frac{c_{1,2}(\epsilon)}{(2\pi)^{1/2}} s\int_{0}^{s} \int_{-\infty}^{+\infty} \right.
Q_{1}(i(m-m_{1}))w((s-x)^{1/k_1},m-m_{1})\\
\left. \times  Q_{2}(im_{1})
w(x^{1/k_1},m_{1}) \frac{1}{(s-x)x} dxdm_{1} \right) \frac{ds}{s}\\
+ \sum_{l=1}^{D-1} \frac{R_{l}(im)}{Q(im)} \left( \epsilon^{\Delta_{l}-d_{l}+\delta_{l}-1}
\frac{1}{k_{1}\Gamma( \frac{d_{l,k_1}^1}{k_1} )} \right.
\int_{0}^{\tau^{k_1}} (\tau^{k_1}-s)^{\frac{d_{l,k_1}^{1}}{k_1}-1}({k_1}^{\delta_l}s^{\delta_l}
w(s^{1/k_1},m)) \frac{ds}{s}\\
+ \sum_{1 \leq p \leq \delta_{l}-1} A_{\delta_{l},p}\epsilon^{\Delta_{l}-d_{l}+\delta_{l}-1}
\frac{1}{k_{1}\Gamma( \frac{d_{l,k_{1}}^{1}}{k_1} + \delta_{l}-p)} \int_{0}^{\tau^{k_1}}
\left. (\tau^{k_1}-s)^{\frac{d_{l,k_{1}}^{1}}{k_1}+\delta_{l}-p-1}(k_{1}^{p}s^{p}w(s^{1/k_1},m))
\frac{ds}{s} \right)\\
 + \epsilon^{-1}
\frac{c_{0}(\epsilon)}{Q(im)k_{1}\Gamma(1 + \frac{1}{k_1})} \int_{0}^{\tau^{k_1}}
(\tau^{k_1}-s)^{1/k_1}\\
\times \left( \frac{1}{(2\pi)^{1/2}} s\int_{0}^{s} \int_{-\infty}^{+\infty} \right.
\left. \varphi_{k_1}((s-x)^{1/k_1},m-m_{1},\epsilon) R_{0}(im_{1}) w(x^{1/k_1},m_{1}) \frac{1}{(s-x)x}
dxdm_{1} \right) \frac{ds}{s}\\
+ \epsilon^{-1}\frac{c_{0,0}(\epsilon)}{Q(im)k_{1}\Gamma(1 + \frac{1}{k_1})} \int_{0}^{\tau^{k_1}}
(\tau^{k_1}-s)^{1/k_1} \frac{1}{(2\pi)^{1/2}} ( \int_{-\infty}^{+\infty} C_{0,0}(m-m_{1},\epsilon)R_{0}(im_{1})
w(s^{1/k_1},m_{1}) dm_{1} )\frac{ds}{s}\\
+ \epsilon^{-1} \frac{c_{F}(\epsilon)}{Q(im)k_{1}\Gamma(1 + \frac{1}{k_1})}\int_{0}^{\tau^{k_1}}
(\tau^{k_1}-s)^{1/k_1} \psi_{k_1}^{d}(s^{1/k_1},m,\epsilon) \frac{ds}{s}
\end{multline}
satisfies the next properties.\\
{\bf i)} The following inclusion holds
\begin{equation}
\mathcal{H}_{\epsilon}^{k_1}(\bar{B}(0,\varpi)) \subset \bar{B}(0,\varpi) \label{H_k1_inclusion}
\end{equation}
where $\bar{B}(0,\varpi)$ is the closed ball of radius $\varpi>0$ centered at 0 in $F_{(\nu,\beta,\mu,k_{1},\kappa)}^{d}$,
for all $\epsilon \in D(0,\epsilon_{0})$.\\
{\bf ii)} We have
\begin{equation}
|| \mathcal{H}_{\epsilon}^{k_1}(w_{1}) - \mathcal{H}_{\epsilon}^{k_1}(w_{2})||_{(\nu,\beta,\mu,k_{1},\kappa)}
\leq \frac{1}{2} ||w_{1} - w_{2}||_{(\nu,\beta,\mu,k_{1},\kappa)}
\label{H_k1_shrink}
\end{equation}
for all $w_{1},w_{2} \in \bar{B}(0,\varpi)$, for all $\epsilon \in D(0,\epsilon_{0})$.
\end{lemma}
\begin{proof}
We first check the property (\ref{H_k1_inclusion}). Let $\epsilon \in D(0,\epsilon_{0})$ and
$w(\tau,m)$ be in $F_{(\nu,\beta,\mu,k_{1},\kappa)}^{d}$. We take $\zeta_{1,2},\zeta_{0},\zeta_{0,0},\zeta_{1},\zeta_{1,0},
\zeta_{2},\zeta_{F},\varpi > 0$
such that if (\ref{norm_F_varphi_k_psi_k_1_small}) holds and $||w(\tau,m)||_{(\nu,\beta,\mu,k_{1},\kappa)} \leq \varpi$
for all $\epsilon \in D(0,\epsilon_{0})$.

Since $\kappa \geq k_{1}$ and (\ref{first_constraints_polynomials_Q_R}) hold, using Proposition 2, we get that
\begin{multline}
||\epsilon^{-1}
\frac{c_{1,2}(\epsilon)}{Q(im)k_{1}\Gamma(1 + \frac{1}{k_{1}})} \int_{0}^{\tau^{k_1}}
(\tau^{k_1}-s)^{1/k_1}\\
\times \left( \frac{1}{(2\pi)^{1/2}} s\int_{0}^{s} \int_{-\infty}^{+\infty} \right.
Q_{1}(i(m-m_{1}))w((s-x)^{1/k_1},m-m_{1})\\
\left. \times  Q_{2}(im_{1})
w(x^{1/k_1},m_{1}) \frac{1}{(s-x)x} dxdm_{1} \right) \frac{ds}{s}||_{(\nu,\beta,\mu,k_{1},\kappa)}\\
\leq \frac{C_{3} \zeta_{1,2}}{(2\pi)^{1/2}k_{1}\Gamma(1 + \frac{1}{k_1})} ||w(\tau,m)||_{(\nu,\beta,\mu,k_{1},\kappa)}^{2}
\leq \frac{C_{3}\zeta_{1,2}\varpi^{2}}{(2\pi)^{1/2}k_{1}\Gamma(1 + \frac{1}{k_1})}. \label{H_k1_incl_Q1_Q2}
\end{multline}

Due to the lower bound assumption (\ref{quotient_Q_Rl_larger_than_radius}) and taking into account the definition of $\kappa$
in (\ref{constraints_k_1_Borel_equation}), we get from Lemma 1 and Proposition 1
\begin{multline}
||\frac{R_{D}(im)}{Q(im)} \frac{1}{k_{1}\Gamma( \frac{(\delta_{D}-1)(k_{2}-k_{1})}{k_1} )} \int_{0}^{\tau^{k_1}}
(\tau^{k_1}-s)^{ \frac{(\delta_{D}-1)(k_{2}-k_{1})}{k_1} - 1 } k_{1}^{\delta_D} s^{\delta_D}
w(s^{1/k_{1}},m) \frac{ds}{s}||_{(\nu,\beta,\mu,k_{1},\kappa)}\\
\leq \frac{C_{2} k_{1}^{\delta_D}}{r_{Q,R_{D}} k_{1} \Gamma( \frac{(\delta_{D}-1)(k_{2}-k_{1})}{k_1} ) }
||w(\tau,m)||_{(\nu,\beta,\mu,k_{1},\kappa)}\\
\leq \frac{C_{2} k_{1}^{\delta_D}}{r_{Q,R_{D}} k_{1} \Gamma( \frac{(\delta_{D}-1)(k_{2}-k_{1})}{k_1} ) }
\varpi \label{H_k1_incl_deltaD_part1}
\end{multline}
and that
\begin{multline}
||\frac{R_{D}(im)}{Q(im)}A_{\delta_{D},p}
\frac{1}{k_{1}\Gamma( \frac{(\delta_{D}-1)(k_{2}-k_{1}) + k_{1}(\delta_{D}-p)}{k_1} )}\\
\times \int_{0}^{\tau^{k_1}}
(\tau^{k_1}-s)^{ \frac{(\delta_{D}-1)(k_{2}-k_{1})+k_{1}(\delta_{D}-p)}{k_1} - 1 } k_{1}^{p} s^{p}
w(s^{1/k_{1}},m) \frac{ds}{s}||_{(\nu,\beta,\mu,k_{1},\kappa)}\\
\leq \frac{|A_{\delta_{D},p}|C_{2}k_{1}^{p}}{r_{Q,R_{D}}k_{1}
\Gamma( \frac{(\delta_{D}-1)(k_{2}-k_{1}) + k_{1}(\delta_{D}-p)}{k_1} ) }
||w(\tau,m)||_{(\nu,\beta,\mu,k_{1},\kappa)}\\
\leq \frac{|A_{\delta_{D},p}|C_{2}k_{1}^{p}}{r_{Q,R_{D}}k_{1}
\Gamma( \frac{(\delta_{D}-1)(k_{2}-k_{1}) + k_{1}(\delta_{D}-p)}{k_1} ) } \varpi
\label{H_k1_incl_deltaD_part2}
\end{multline}
for all $1 \leq p \leq \delta_{D}-1$.

From assumption (\ref {first_constraints_polynomials_Q_R}) and due to the second constraint in
(\ref{constraints_k_1_Borel_equation}), we get from Lemma 1 and Proposition 1
\begin{multline}
|| \frac{R_{l}(im)}{Q(im)}\epsilon^{\Delta_{l}-d_{l}+\delta_{l}-1}
\frac{1}{k_{1}\Gamma( \frac{d_{l,k_1}^1}{k_1} )}
\int_{0}^{\tau^{k_1}} (\tau^{k_1}-s)^{\frac{d_{l,k_1}^{1}}{k_1}-1}({k}_{1}^{\delta_l}s^{\delta_l}
w(s^{1/k_1},m)) \frac{ds}{s}||_{(\nu,\beta,\mu,k_{1},\kappa)}\\
\leq |\epsilon|^{\Delta_{l} - d_{l} + \delta_{l} - 1}\frac{1}{r_{Q,R_{l}}}
\frac{C_{2}k_{1}^{\delta_{l}}}{k_{1}\Gamma( \frac{d^{1}_{l,k_{1}}}{k_1} ) }||w(\tau,m)||_{(\nu,\beta,\mu,k_{1},\kappa)} \leq
|\epsilon|^{\Delta_{l}-d_{l}+\delta_{l}-1} \frac{1}{r_{Q,R_{l}}}
\frac{C_{2}k_{1}^{\delta_{l}}}{k_{1}\Gamma( \frac{d^{1}_{l,k_{1}}}{k_1} ) } \varpi
\label{H_k1_incl_delta_l_part1}
\end{multline}
for all $1 \leq l \leq D-1$ and
\begin{multline}
|| \frac{R_{l}(im)}{Q(im)}A_{\delta_{l},p}\epsilon^{\Delta_{l}-d_{l}+\delta_{l}-1}
\frac{1}{k_{1}\Gamma( \frac{d_{l,k_{1}}^{1}}{k_1} + \delta_{l}-p)}\\
\times \int_{0}^{\tau^{k_1}}
 (\tau^{k_1}-s)^{\frac{d_{l,k_{1}}^{1}}{k_1}+\delta_{l}-p-1}(k_{1}^{p}s^{p}w(s^{1/k_1},m))
\frac{ds}{s}||_{(\nu,\beta,\mu,k_{1},\kappa)}\\
\leq |\epsilon|^{\Delta_{l}-d_{l}+\delta_{l}-1}\frac{1}{r_{Q,R_{l}}}|A_{\delta_{l},p}|
\frac{C_{2}k_{1}^{p}}{k_{1}\Gamma( \frac{d_{l,k_{1}}^{1}}{k_{1}} + \delta_{l} -p)}
||w(\tau,m)||_{(\nu,\beta,\mu,k_{1},\kappa)}\\
\leq |\epsilon|^{\Delta_{l}-d_{l}+\delta_{l}-1}\frac{1}{r_{Q,R_{l}}}|A_{\delta_{l},p}|
\frac{C_{2}k_{1}^{p}}{k_{1}\Gamma( \frac{d_{l,k_{1}}^{1}}{k_{1}} + \delta_{l} -p)} \varpi
\label{H_k1_incl_delta_l_part2}
\end{multline}
for all $1 \leq p \leq \delta_{l}-1$. Since $\kappa \geq k_{1}$ and (\ref {first_constraints_polynomials_Q_R})
we get from Proposition 2 that
\begin{multline}
||\epsilon^{-1}
\frac{c_{0}(\epsilon)}{Q(im)k_{1}\Gamma(1 + \frac{1}{k_1})} \int_{0}^{\tau^{k_1}}
(\tau^{k_1}-s)^{1/k_1} \left( \frac{1}{(2\pi)^{1/2}} s\int_{0}^{s} \int_{-\infty}^{+\infty}
 \varphi_{k_1}((s-x)^{1/k_1},m-m_{1},\epsilon) \right. \\
\times \left. R_{0}(im_{1}) w(x^{1/k_1},m_{1}) \frac{1}{(s-x)x}
dxdm_{1} \right) \frac{ds}{s}||_{(\nu,\beta,\mu,k_{1},\kappa)}\\
\leq \frac{C_{3}\zeta_{1,0}}{(2\pi)^{1/2}k_{1}\Gamma(1 + \frac{1}{k_1})}
||\varphi_{k_1}(\tau,m,\epsilon)||_{(\nu,\beta,\mu,k_{1},\kappa)}||w(\tau,m)||_{(\nu,\beta,\mu,k_{1},\kappa)} 
\leq \frac{C_{3}\zeta_{1,0}}{(2\pi)^{1/2}k_{1}\Gamma(1 + \frac{1}{k_1})} \zeta_{1} \varpi.
\label{H_k1_incl_varphik1_R0}
\end{multline}
Since $\kappa \geq k_{1}$ and (\ref {first_constraints_polynomials_Q_R}) we deduce from Proposition 3 that
\begin{multline}
||\epsilon^{-1}\frac{c_{0,0}(\epsilon)}{Q(im)k_{1}\Gamma(1 + \frac{1}{k_1})} \int_{0}^{\tau^{k_1}}
(\tau^{k_1}-s)^{1/k_1} \frac{1}{(2\pi)^{1/2}} ( \int_{-\infty}^{+\infty} C_{0,0}(m-m_{1},\epsilon)\\
\times R_{0}(im_{1})
w(s^{1/k_1},m_{1}) dm_{1} )\frac{ds}{s}||_{(\nu,\beta,\mu,k_{1},\kappa)} \leq
\frac{C_{4}\zeta_{0,0}}{(2\pi)^{1/2}k_{1}\Gamma(1 + \frac{1}{k_1})}
||C_{0,0}(m,\epsilon)||_{(\beta,\mu)}\\
\times ||w(\tau,m)||_{(\nu,\beta,\mu,k_{1},\kappa)} \leq
\frac{C_{4}\zeta_{0,0}}{(2\pi)^{1/2}k_{1}\Gamma(1 + \frac{1}{k_1})} \zeta_{0} \varpi.
\label{H_k1_incl_C00}
\end{multline}
and finally bearing in mind Proposition 1 we get that
\begin{multline}
|| \epsilon^{-1} \frac{c_{F}(\epsilon)}{Q(im)k_{1}\Gamma(1 + \frac{1}{k_1})}\int_{0}^{\tau^{k_1}}
(\tau^{k_1}-s)^{1/k_1} \psi_{k_1}^{d}(s^{1/k_1},m,\epsilon) \frac{ds}{s} ||_{(\nu,\beta,\mu,k_{1},\kappa)}\\
\leq \sup_{m \in \mathbb{R}} \frac{1}{|Q(im)|} \frac{C_{2}\zeta_{F}}{k_{1}\Gamma(1 + \frac{1}{k_1})}
||\psi_{k_1}^{d}(\tau,m,\epsilon)||_{(\nu,\beta,\mu,k_{1},\kappa)}
\leq \sup_{m \in \mathbb{R}} \frac{1}{|Q(im)|} \frac{C_{2}\zeta_{F}}{k_{1}\Gamma(1 + \frac{1}{k_1})} \zeta_{2}.
\label{H_k1_incl_psik1}
\end{multline}
Now, we choose $r_{Q,R_{l}}>0$, for $1 \leq l \leq D$, $\zeta_{1,2},\zeta_{0,0},\zeta_{0},\zeta_{F},
\zeta_{1,0},\zeta_{1},\zeta_{2}>0$ and $\varpi>0$ such that
\begin{multline}
\frac{C_{3}\zeta_{1,2}\varpi^{2}}{(2\pi)^{1/2}k_{1}\Gamma(1 + \frac{1}{k_1})} +
 \frac{C_{2} k_{1}^{\delta_D}}{r_{Q,R_{D}} k_{1} \Gamma( \frac{(\delta_{D}-1)(k_{2}-k_{1})}{k_1} ) }\varpi \\
+ \sum_{1 \leq p \leq \delta_{D}-1} \frac{|A_{\delta_{D},p}|C_{2}k_{1}^{p}}{r_{Q,R_{D}}k_{1}
\Gamma( \frac{(\delta_{D}-1)(k_{2}-k_{1}) + k_{1}(\delta_{D}-p)}{k_1} ) }\varpi
+ \sum_{l=1}^{D-1} \epsilon_{0}^{\Delta_{l}-d_{l}+\delta_{l}-1} \frac{1}{r_{Q,R_{l}}}
\frac{C_{2}k_{1}^{\delta_{l}}}{k_{1}\Gamma( \frac{d^{1}_{l,k_{1}}}{k_1} ) } \varpi\\
+ \sum_{1 \leq p \leq \delta_{l}-1} \epsilon_{0}^{\Delta_{l} - d_{l} + \delta_{l} - 1}\frac{1}{r_{Q,R_{l}}}|A_{\delta_{l},p}|
\frac{C_{2}k_{1}^{p}}{k_{1}\Gamma( \frac{d_{l,k_{1}}^{1}}{k_{1}} + \delta_{l} -p)} \varpi +
\frac{C_{3}\zeta_{1,0}}{(2\pi)^{1/2}k_{1}\Gamma(1 + \frac{1}{k_1})} \zeta_{1} \varpi \\
+ \frac{C_{4}\zeta_{0,0}}{(2\pi)^{1/2}k_{1}\Gamma(1 + \frac{1}{k_1})} \zeta_{0} \varpi +
\sup_{m \in \mathbb{R}} \frac{1}{|Q(im)|} \frac{C_{2}\zeta_{F}}{k_{1}\Gamma(1 + \frac{1}{k_1})} \zeta_{2} \leq \varpi
\label{H_k1_incl_constr}
\end{multline}
Gathering all the norm estimates (\ref{H_k1_incl_Q1_Q2}), (\ref{H_k1_incl_deltaD_part1}), (\ref{H_k1_incl_deltaD_part2}),
(\ref{H_k1_incl_delta_l_part1}), (\ref{H_k1_incl_delta_l_part2}), (\ref{H_k1_incl_varphik1_R0}), (\ref{H_k1_incl_C00}),
(\ref{H_k1_incl_psik1}) together with the constraint (\ref{H_k1_incl_constr}), one gets (\ref{H_k1_inclusion}).\bigskip

Now, we check the second property (\ref{H_k1_shrink}). Let $w_{1}(\tau,m),w_{2}(\tau,m)$ be in
$F^{d}_{(\nu,\beta,\mu,k_{1},\kappa)}$.
We take $\varpi > 0$ such that
$$ ||w_{l}(\tau,m)||_{(\nu,\beta,\mu,k_{1},\kappa)} \leq \varpi,$$
for $l=1,2$, for all $\epsilon \in D(0,\epsilon_{0})$. One can write
\begin{multline}
Q_{1}(i(m-m_{1}))w_{1}((s-x)^{1/k_{1}},m-m_{1})Q_{2}(im_{1})w_{1}(x^{1/k_{1}},m_{1})\\
-Q_{1}(i(m-m_{1}))w_{2}((s-x)^{1/k_{1}},m-m_{1})Q_{2}(im_{1})w_{2}(x^{1/k_{1}},m_{1})\\
= Q_{1}(i(m-m_{1}))\left(w_{1}((s-x)^{1/k_{1}},m-m_{1}) - w_{2}((s-x)^{1/k_{1}},m-m_{1})\right)Q_{2}(im_{1})
w_{1}(x^{1/k_{1}},m_{1})\\
+ Q_{1}(i(m-m_{1}))w_{2}((s-x)^{1/k_{1}},m-m_{1})Q_{2}(im_{1})\left(w_{1}(x^{1/k_{1}},m_{1}) - w_{2}(x^{1/k_{1}},m_{1})\right)
\label{conv_product_w1_w2}
\end{multline}
and taking into account that $\kappa \geq k_{1}$, (\ref{first_constraints_polynomials_Q_R}), (\ref{conv_product_w1_w2})
and using Proposition 2, we get that
\begin{multline}
||\epsilon^{-1}
\frac{c_{1,2}(\epsilon)}{Q(im)k_{1}\Gamma(1 + \frac{1}{k_{1}})} \int_{0}^{\tau^{k_1}}
(\tau^{k_1}-s)^{1/k_1}\\
\times \left( \frac{1}{(2\pi)^{1/2}} s\int_{0}^{s} \int_{-\infty}^{+\infty} \right.
(Q_{1}(i(m-m_{1}))w_{1}((s-x)^{1/k_1},m-m_{1})\\
\times  Q_{2}(im_{1})
w_{1}(x^{1/k_1},m_{1}) - Q_{1}(i(m-m_{1}))w_{2}((s-x)^{1/k_1},m-m_{1})\\
\left. \times  Q_{2}(im_{1})
w_{2}(x^{1/k_1},m_{1}))\frac{1}{(s-x)x} dxdm_{1} \right) \frac{ds}{s}||_{(\nu,\beta,\mu,k_{1},\kappa)}\\
\leq \frac{C_{3} \zeta_{1,2}}{(2\pi)^{1/2}k_{1}\Gamma(1 + \frac{1}{k_1})} 
(||w_{1}(\tau,m) - w_{2}(\tau,m)||_{(\nu,\beta,\mu,k_{1},\kappa)}(
||w_{1}(\tau,m)||_{(\nu,\beta,\mu,k_{1},\kappa)} \\
+ ||w_{2}(\tau,m)||_{(\nu,\beta,\mu,k_{1},\kappa)}  )
\leq \frac{C_{3}\zeta_{1,2}2\varpi}{(2\pi)^{1/2}k_{1}\Gamma(1 + \frac{1}{k_1})}
||w_{1}(\tau,m) - w_{2}(\tau,m)||_{(\nu,\beta,\mu,k_{1},\kappa)}. \label{H_k1_shrink_Q1_Q2}
\end{multline}
On the other hand, from the estimates (\ref{H_k1_incl_deltaD_part1}), (\ref{H_k1_incl_deltaD_part2}), (\ref{H_k1_incl_delta_l_part1}),
(\ref{H_k1_incl_delta_l_part2}),
(\ref{H_k1_incl_varphik1_R0}), (\ref{H_k1_incl_C00}) and under the constraints (\ref {first_constraints_polynomials_Q_R}),
(\ref{constraints_k_1_Borel_equation}) and the lower bound assumption (\ref{quotient_Q_Rl_larger_than_radius}),
we deduce that
\begin{multline}
||\frac{R_{D}(im)}{Q(im)} \frac{1}{k_{1}\Gamma( \frac{(\delta_{D}-1)(k_{2}-k_{1})}{k_1} )} \int_{0}^{\tau^{k_1}}
(\tau^{k_1}-s)^{ \frac{(\delta_{D}-1)(k_{2}-k_{1})}{k_1} - 1 } k_{1}^{\delta_D} s^{\delta_D}\\
\times 
(w_{1}(s^{1/k_{1}},m) - w_{2}(s^{1/k_{1}},m)) \frac{ds}{s}||_{(\nu,\beta,\mu,k_{1},\kappa)}\\
\leq \frac{C_{2} k_{1}^{\delta_D}}{r_{Q,R_{D}} k_{1} \Gamma( \frac{(\delta_{D}-1)(k_{2}-k_{1})}{k_1} ) }
||w_{1}(\tau,m) - w_{2}(\tau,m)||_{(\nu,\beta,\mu,k_{1},\kappa)}
\label{H_k1_shrink_deltaD_part1}
\end{multline}
and that
\begin{multline}
||\frac{R_{D}(im)}{Q(im)}A_{\delta_{D},p}
\frac{1}{k_{1}\Gamma( \frac{(\delta_{D}-1)(k_{2}-k_{1}) + k_{1}(\delta_{D}-p)}{k_1} )}\\
\times \int_{0}^{\tau^{k_1}}
(\tau^{k_1}-s)^{ \frac{(\delta_{D}-1)(k_{2}-k_{1})+k_{1}(\delta_{D}-p)}{k_1} - 1 } k_{1}^{p} s^{p}
(w_{1}(s^{1/k_{1}},m) - w_{2}(s^{1/k_{1}},m)) \frac{ds}{s}||_{(\nu,\beta,\mu,k_{1},\kappa)}\\
\leq \frac{|A_{\delta_{D},p}|C_{2}k_{1}^{p}}{r_{Q,R_{D}}k_{1}
\Gamma( \frac{(\delta_{D}-1)(k_{2}-k_{1}) + k_{1}(\delta_{D}-p)}{k_1} ) }
||w_{1}(\tau,m) - w_{2}(\tau,m)||_{(\nu,\beta,\mu,k_{1},\kappa)}
\label{H_k1_shrink_deltaD_part2}
\end{multline}
for all $1 \leq p \leq \delta_{D}-1$ and also
\begin{multline}
|| \frac{R_{l}(im)}{Q(im)}\epsilon^{\Delta_{l}-d_{l}+\delta_{l}-1}
\frac{1}{k_{1}\Gamma( \frac{d_{l,k_1}^1}{k_1} )}
\int_{0}^{\tau^{k_1}} (\tau^{k_1}-s)^{\frac{d_{l,k_1}^{1}}{k_1}-1}\\
\times ({k}_{1}^{\delta_l}s^{\delta_l}
(w_{1}(s^{1/k_1},m) - w_{2}(s^{1/k_1},m))) \frac{ds}{s}||_{(\nu,\beta,\mu,k_{1},\kappa)}\\
\leq |\epsilon|^{\Delta_{l}-d_{l}+\delta_{l}-1}\frac{1}{r_{Q,R_{l}}}
\frac{C_{2}k_{1}^{\delta_{l}}}{k_{1}\Gamma( \frac{d^{1}_{l,k_{1}}}{k_1} ) }
||w_{1}(\tau,m) - w_{2}(\tau,m)||_{(\nu,\beta,\mu,k_{1},\kappa)}
\label{H_k1_shrink_delta_l_part1}
\end{multline}
for all $1 \leq l \leq D-1$ together with
\begin{multline}
|| \frac{R_{l}(im)}{Q(im)}A_{\delta_{l},p}\epsilon^{\Delta_{l}-d_{l}+\delta_{l}-1}
\frac{1}{k_{1}\Gamma( \frac{d_{l,k_{1}}^{1}}{k_1} + \delta_{l}-p)}\\
\times \int_{0}^{\tau^{k_1}}
 (\tau^{k_1}-s)^{\frac{d_{l,k_{1}}^{1}}{k_1}+\delta_{l}-p-1}(k_{1}^{p}s^{p}(w_{1}(s^{1/k_1},m) - w_{2}(s^{1/k_1},m) ))
\frac{ds}{s}||_{(\nu,\beta,\mu,k_{1},\kappa)}\\
\leq |\epsilon|^{\Delta_{l}-d_{l}+\delta_{l}-1}\frac{1}{r_{Q,R_{l}}}|A_{\delta_{l},p}|
\frac{C_{2}k_{1}^{p}}{k_{1}\Gamma( \frac{d_{l,k_{1}}^{1}}{k_{1}} + \delta_{l} -p)}
||w_{1}(\tau,m) - w_{2}(\tau,m)||_{(\nu,\beta,\mu,k_{1},\kappa)}
\label{H_k1_shrink_delta_l_part2}
\end{multline}
for all $1 \leq p \leq \delta_{l}-1$. Finally, we also obtain
\begin{multline}
||\epsilon^{-1}
\frac{c_{0}(\epsilon)}{Q(im)k_{1}\Gamma(1 + \frac{1}{k_1})} \int_{0}^{\tau^{k_1}}
(\tau^{k_1}-s)^{1/k_1} \left( \frac{1}{(2\pi)^{1/2}} s\int_{0}^{s} \int_{-\infty}^{+\infty}
 \varphi_{k_1}((s-x)^{1/k_1},m-m_{1},\epsilon) \right. \\
\times \left. R_{0}(im_{1}) (w_{1}(x^{1/k_1},m_{1}) - w_{2}(x^{1/k_1},m_{1})) \frac{1}{(s-x)x}
dxdm_{1} \right) \frac{ds}{s}||_{(\nu,\beta,\mu,k_{1},\kappa)}\\
\leq \frac{C_{3}\zeta_{1,0}}{(2\pi)^{1/2}k_{1}\Gamma(1 + \frac{1}{k_1})} \zeta_{1}
||w_{1}(\tau,m) - w_{2}(\tau,m)||_{(\nu,\beta,\mu,k_{1},\kappa)}\label{H_k1_shrink_varphik1_R0}
\end{multline}
and
\begin{multline}
||\epsilon^{-1}\frac{c_{0,0}(\epsilon)}{Q(im)k_{1}\Gamma(1 + \frac{1}{k_1})} \int_{0}^{\tau^{k_1}}
(\tau^{k_1}-s)^{1/k_1} \frac{1}{(2\pi)^{1/2}} ( \int_{-\infty}^{+\infty} C_{0,0}(m-m_{1},\epsilon)\\
\times R_{0}(im_{1})
(w_{1}(s^{1/k_1},m_{1}) - w_{2}(s^{1/k_1},m_{1}))  dm_{1} )\frac{ds}{s}||_{(\nu,\beta,\mu,k_{1},\kappa)} \leq
\frac{C_{4}\zeta_{0,0}}{(2\pi)^{1/2}k_{1}\Gamma(1 + \frac{1}{k_1})}
\zeta_{0}\\
\times ||w_{1}(\tau,m) - w_{2}(\tau,m)||_{(\nu,\beta,\mu,k_{1},\kappa)} 
\label{H_k1_shrink_C00}
\end{multline}
Now, we take $\varpi$, $r_{Q,R_{l}}>0$, for $1 \leq l \leq D$ and $\zeta_{1,2},\zeta_{0,0},\zeta_{0},\zeta_{1,0},\zeta_{1}>0$
such that
\begin{multline}
 \frac{C_{3}\zeta_{1,2}2\varpi}{(2\pi)^{1/2}k_{1}\Gamma(1 + \frac{1}{k_1})} +
\frac{C_{2} k_{1}^{\delta_D}}{r_{Q,R_{D}} k_{1} \Gamma( \frac{(\delta_{D}-1)(k_{2}-k_{1})}{k_1} ) } + \\
\sum_{1 \leq p \leq \delta_{D}-1}  \frac{|A_{\delta_{D},p}|C_{2}k_{1}^{p}}{r_{Q,R_{D}}k_{1}
\Gamma( \frac{(\delta_{D}-1)(k_{2}-k_{1}) + k_{1}(\delta_{D}-p)}{k_1} ) }
 + \sum_{1 \leq l \leq D-1} \epsilon_{0}^{\Delta_{l}-d_{l}+\delta_{l}-1}\frac{1}{r_{Q,R_{l}}}
\frac{C_{2}k_{1}^{\delta_{l}}}{k_{1}\Gamma( \frac{d^{1}_{l,k_{1}}}{k_1} ) }\\
+ \sum_{1 \leq p \leq \delta_{l}-1} \epsilon_{0}^{\Delta_{l}-d_{l}+\delta_{l}-1}\frac{1}{r_{Q,R_{l}}}|A_{\delta_{l},p}|
\frac{C_{2}k_{1}^{p}}{k_{1}\Gamma( \frac{d_{l,k_{1}}^{1}}{k_{1}} + \delta_{l} -p)}
+  \frac{C_{3}\zeta_{1,0}}{(2\pi)^{1/2}k_{1}\Gamma(1 + \frac{1}{k_1})} \zeta_{1}\\
+ \frac{C_{4}\zeta_{0,0}}{(2\pi)^{1/2}k_{1}\Gamma(1 + \frac{1}{k_1})}
\zeta_{0} \leq 1/2. \label{H_k1_shrink_constr}
\end{multline}
Bearing in mind the estimates (\ref{H_k1_shrink_Q1_Q2}), (\ref{H_k1_shrink_deltaD_part1}), (\ref{H_k1_shrink_deltaD_part2}),
(\ref{H_k1_shrink_delta_l_part1}), (\ref{H_k1_shrink_delta_l_part2}), (\ref{H_k1_shrink_varphik1_R0}), (\ref{H_k1_shrink_C00}) with the
constraint (\ref{H_k1_shrink_constr}), one gets (\ref{H_k1_shrink}).
Finally, we choose $r_{Q,R_{l}}>0$, for $1 \leq l \leq D$,
$\zeta_{1,2},\zeta_{0,0},\zeta_{0},\zeta_{F},\zeta_{1,0},\zeta_{1},\zeta_{2}>0$ and $\varpi>0$ such that
both (\ref{H_k1_incl_constr}), (\ref{H_k1_shrink_constr}) are fulfilled. This yields our lemma. 
\end{proof}
We consider the ball $\bar{B}(0,\varpi) \subset F_{(\nu,\beta,\mu,k_{1},\kappa)}^{d}$ constructed in Lemma 3 which is a complete
metric space for the norm $||.||_{(\nu,\beta,\mu,k_{1},\kappa)}$. From the lemma above, we get that
$\mathcal{H}_{\epsilon}^{k_1}$ is a
contractive map from $\bar{B}(0,\varpi)$ into itself. Due to the classical contractive mapping theorem, we deduce that
the map $\mathcal{H}_{\epsilon}^{k_1}$ has a unique fixed point denoted by $\omega_{k_1}^{d}(\tau,m,\epsilon)$ (i.e
$\mathcal{H}_{\epsilon}^{k_1}(\omega_{k_1}^{d}(\tau,m,\epsilon))= \omega_{k_1}^{d}(\tau,m,\epsilon)$) in
$\bar{B}(0,\varpi)$, for all $\epsilon \in D(0,\epsilon_{0})$. Moreover, the function
$\omega_{k_1}^{d}(\tau,m,\epsilon)$ depends holomorphically on $\epsilon$ in $D(0,\epsilon_{0})$. By construction,
$\omega_{k_1}^{d}(\tau,m,\epsilon)$ defines a solution of the equation (\ref{k_1_Borel_equation}). This yields Proposition 11.
\end{proof}

\subsection{Formal and analytic acceleration operators}

In this section, we give a definition of the formal and analytic acceleration operator which is a slightly modified version of the one
given in \cite{ba}, Chapter 5, adapted to our definitions of $m_{k}-$Laplace and $m_{k}-$Borel transforms. First we
give a definition for the formal transform.

\begin{defin} Let $\tilde{k} > k > 0$ be real numbers. Let $\hat{f}(\tau) = \sum_{n \geq 1} f_{n}\tau^{n} \in \tau\mathbb{C}[[\tau]]$
be a formal series. We define the formal acceleration operator with indices $m_{\tilde{k}}$, $m_{k}$ by
$$ \hat{\mathcal{A}}_{m_{\tilde{k}},m_{k}} \hat{f} (\xi) = \sum_{n \geq 1} f_{n}
\frac{\Gamma( \frac{n}{k} )}{\Gamma( \frac{n}{\tilde{k}} )} \xi^{n} \in \xi \mathbb{C}[[ \xi ]]. $$
Notice that if one defines the formal $m_{k}-$Laplace transform $\hat{\mathcal{L}}_{m_k}(\hat{f})$ and the formal
$m_{\tilde{k}}-$Borel
transform $\hat{\mathcal{B}}_{m_{\tilde{k}}}(\hat{f})$ of $\hat{f}(\tau)$ by
$$\hat{\mathcal{L}}_{m_k}(\hat{f})(T) = \sum_{n \geq 1} f_{n} \Gamma( \frac{n}{k} ) T^{n} \ \ , \ \ 
\hat{\mathcal{B}}_{m_{\tilde{k}}}(\hat{f})(Z) = \sum_{n \geq 1} \frac{f_{n}}{\Gamma( \frac{n}{\tilde{k}} )} Z^{n},$$
then the formal acceleration operator $\hat{\mathcal{A}}_{m_{\tilde{k}},m_{k}}$ can also be defined as
$$ \hat{\mathcal{A}}_{m_{\tilde{k}},m_{k}} \hat{f} (\xi) = ( \hat{\mathcal{B}}_{m_{\tilde{k}}} \circ
\hat{\mathcal{L}}_{m_k} )(\hat{f}) (\xi).$$
\end{defin}

\noindent In the next definition, we define the analytic transforms.

\begin{prop} Let $\tilde{k} > k > 0$ be real numbers. Let $S(d,\frac{\pi}{\tilde{k}}+\delta,\rho)$ be a bounded sector
of radius $\rho$ with aperture $\frac{\pi}{\tilde{k}}+\delta$, for some $\delta>0$ and with direction $d$. Let
$F : S(d,\frac{\pi}{\tilde{k}}+\delta,\rho) \rightarrow \mathbb{C}$ be a bounded analytic function such that there exist a
formal series $\hat{F}(z) = \sum_{n \geq 1} F_{n}z^{n} \in \mathbb{C}[[z]]$ and two constants $C,K>0$ with
\begin{equation}
|F(z) - \sum_{n=1}^{N-1} F_{n}z^{n}| \leq CK^{N}\Gamma(1 + N/k)|z|^{N} \label{k_Gevrey_ae_F}
\end{equation}
for all $z \in S(d,\frac{\pi}{\tilde{k}}+\delta,\rho)$, all $N \geq 2$. The analytic $m_{\tilde{k}}-$Borel transform of $F$ in direction
$d$ is defined as
\begin{equation}
(\mathcal{B}_{m_{\tilde{k}}}^{d}F)(Z) = \frac{-\tilde{k}}{2i\pi} \int_{\gamma_{\tilde{k}}} F(u) 
\exp( (\frac{Z}{u})^{\tilde{k}} ) \frac{Z^{\tilde{k}}}{u^{\tilde{k}+1}} du \label{analytic_Borel_defin}
\end{equation}
where $\gamma_{\tilde{k}}$ is the closed Hankel path starting from the origin along the segment
$[0, (\rho/2)e^{i(d + \frac{\pi}{2\tilde{k}} + \frac{\delta'}{2})}]$, following the arc of circle
$[(\rho/2)e^{i(d + \frac{\pi}{2\tilde{k}} + \frac{\delta'}{2})}, (\rho/2)e^{i(d - \frac{\pi}{2\tilde{k}} - \frac{\delta'}{2})}]$ and going
back to the origin along the segment $[(\rho/2)e^{i(d - \frac{\pi}{2\tilde{k}} - \frac{\delta'}{2})},0]$ where
$0< \delta' < \delta$ that can be chosen as close to $\delta$ as needed. Then, the function
$(\mathcal{B}_{m_{\tilde{k}}}^{d}F)(Z)$ is analytic on the unbounded sector $S(d,\delta'')$ with direction $d$ and aperture
$\delta''$ where $0< \delta'' < \delta'$ which can be chosen as close to $\delta'$ as needed. Moreover, if
$(\hat{\mathcal{B}}_{m_{\tilde{k}}}\hat{F})(Z) = \sum_{n \geq 1} F_{n}Z^{n}/\Gamma(n/\tilde{k})$ denotes the
formal $m_{\tilde{k}}-$Borel transform
of $\hat{F}$, then for any given $\rho'>0$, there exists two constants $C,K>0$ with
\begin{equation}
|(\mathcal{B}_{m_{\tilde{k}}}^{d}F)(Z) - \sum_{n=1}^{N-1} \frac{F_n}{\Gamma(\frac{n}{\tilde{k}})}Z^{n}| \leq
CK^{N}\Gamma(1 + N/\kappa)|Z|^{N}  \label{kappa_Gevrey_ae_analytic_Borel}
\end{equation}
for all $Z \in S(d,\delta'') \cap D(0,\rho')$, all $N \geq 2$, where $\kappa$ is defined as $1/\kappa = 1/k - 1/\tilde{k}$.
Finally, the $m_{\tilde{k}}-$Borel operator $\mathcal{B}_{m_{\tilde{k}}}^{d}$ is the right inverse operator of the
$m_{\tilde{k}}-$Laplace transform, namely we have that
\begin{equation}
\mathcal{L}_{m_{\tilde{k}}}^{d}(v \mapsto (\mathcal{B}_{m_{\tilde{k}}}^{d}F)(v) )(T) = F(T), \label{Laplace_Borel_inverse} 
\end{equation}
for all $T \in S(d,\frac{\pi}{\tilde{k}} + \delta', \rho/2)$.
\end{prop}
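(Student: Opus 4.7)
The plan is to verify in order the three claims of Proposition 13: analyticity of $(\mathcal{B}_{m_{\tilde{k}}}^{d}F)(Z)$ on the unbounded sector $S(d,\delta'')$, the Gevrey $1/\kappa$ asymptotic expansion (\ref{kappa_Gevrey_ae_analytic_Borel}), and the right inverse identity (\ref{Laplace_Borel_inverse}). For analyticity, I would parametrize the two radial pieces of $\gamma_{\tilde{k}}$ as $u=se^{i\theta_{\pm}}$ with $\theta_{\pm}=d\pm(\frac{\pi}{2\tilde{k}}+\frac{\delta'}{2})$, $s\in(0,\rho/2]$. On these rays $|\exp((Z/u)^{\tilde{k}})|=\exp(\cos(\tilde{k}(\arg Z-\theta_{\pm}))(|Z|/s)^{\tilde{k}})$, and at $\arg Z=d$ the cosine equals $-\sin(\tilde{k}\delta'/2)<0$ (assuming $\delta$ small so that $\tilde{k}\delta'/2<\pi/2$). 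Shrinking $\delta''<\delta'$ slightly, the cosine stays bounded above by some $-c_{0}<0$ uniformly for $Z\in S(d,\delta'')$, which kills the $|u|^{-\tilde{k}-1}$ singularity at $u=0$ and gives an absolutely integrable integrand. Holomorphy in $Z$ then follows by differentiation under the integral sign and Morera; on the arc $|u|=\rho/2$ the integrand is simply bounded.

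The Gevrey estimate is the technical heart. Writing $F(u)=\sum_{n=1}^{N-1}F_{n}u^{n}+R_{N}(u)$ with $|R_{N}(u)|\leq CK^{N}\Gamma(1+N/k)|u|^{N}$, the Borel of each monomial $u^{n}$ evaluates to $Z^{n}/\Gamma(n/\tilde{k})$ via the substitution $v=(Z/u)^{\tilde{k}}$ in (\ref{analytic_Borel_defin}) and Hankel's formula $\int_{H}e^{v}v^{-s}dv=2i\pi/\Gamma(s)$. The remainder contribution
\[
\frac{-\tilde{k}}{2i\pi}\int_{\gamma_{\tilde{k}}}R_{N}(u)\,e^{(Z/u)^{\tilde{k}}}\frac{Z^{\tilde{k}}}{u^{\tilde{k}+1}}du
\]
is then bounded after deforming the Hankel path to have radius $r=\alpha|Z|$ (capped at $\rho/2$, inactive for $|Z|\leq\rho'$ small and $N$ large). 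Radially, the change of variables $w=c_{0}(|Z|/s)^{\tilde{k}}$ yields an incomplete Gamma bound comparable to $|Z|^{N}\alpha^{N-\tilde{k}}e^{-c_{0}\alpha^{-\tilde{k}}}/N$, while the arc contributes $|Z|^{N}\alpha^{N-\tilde{k}}e^{\alpha^{-\tilde{k}}}$. Optimizing yields $\alpha_{N}^{\tilde{k}}\sim\tilde{k}/N$, at which Stirling produces $\alpha_{N}^{N-\tilde{k}}e^{\pm c_{0}\alpha_{N}^{-\tilde{k}}}\sim(\mathrm{const})^{N}/\Gamma(1+N/\tilde{k})$. Multiplying by the prefactor $\Gamma(1+N/k)$ and using the Stirling ratio $\Gamma(1+N/k)/\Gamma(1+N/\tilde{k})\sim(\mathrm{const})^{N}\Gamma(1+N/\kappa)$, valid precisely because $1/k-1/\tilde{k}=1/\kappa$, gives the claimed bound (\ref{kappa_Gevrey_ae_analytic_Borel}) on $S(d,\delta'')\cap D(0,\rho')$.

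For the inversion (\ref{Laplace_Borel_inverse}), insert (\ref{analytic_Borel_defin}) into the $m_{\tilde{k}}$-Laplace transform defined in Section 3 and interchange the two integrations (justified by the exponential decay along $L_{\gamma}$ and the integrability secured above). The substitution $w=v^{\tilde{k}}$ collapses the inner integral to $\tilde{k}^{-1}\int_{0}^{\infty e^{i\tilde{k}\gamma}}e^{w(u^{-\tilde{k}}-T^{-\tilde{k}})}dw=\tilde{k}^{-1}/(T^{-\tilde{k}}-u^{-\tilde{k}})$, provided $\gamma$ is chosen so that $\mathrm{Re}(u^{-\tilde{k}}-T^{-\tilde{k}})\cos(\tilde{k}\gamma)$ has the correct sign on the whole Hankel path, which is an open nonempty condition for $T$ in the interior of $\gamma_{\tilde{k}}$. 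The remaining double integral reduces to $\frac{-\tilde{k}T^{\tilde{k}}}{2i\pi}\oint_{\gamma_{\tilde{k}}}F(u)/[u(u^{\tilde{k}}-T^{\tilde{k}})]\,du$; the residue theorem, applied with the clockwise orientation intrinsic to $\gamma_{\tilde{k}}$, picks out the unique root $u=T$ of $u^{\tilde{k}}=T^{\tilde{k}}$ enclosed by the contour (the other roots $Te^{2i\pi j/\tilde{k}}$, $j\neq 0$, fall outside since $2\pi/\tilde{k}>\pi/(2\tilde{k})+\delta'/2$), yielding exactly $F(T)$ once the signs cancel.

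The main obstacle is the Gevrey step: a naive bound on a fixed Hankel path only recovers the weaker input Gevrey order $1/k$ from (\ref{k_Gevrey_ae_F}), and the improvement to order $1/\kappa$ requires both the $N$-dependent rescaling $r=\alpha_{N}|Z|$ and the careful Stirling conversion that trades the factor $\Gamma(1+N/\tilde{k})^{-1}$ produced by the optimization against $\Gamma(1+N/k)$ to produce $\Gamma(1+N/\kappa)$ with $1/\kappa=1/k-1/\tilde{k}$.
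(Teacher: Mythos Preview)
Your proposal is correct and follows essentially the same route as the paper's proof: monomials are handled via the Hankel formula, the remainder $R_{N}F$ is bounded to produce $\Gamma(1+N/k)/\Gamma(1+N/\tilde{k})$ which Stirling converts to $(\mathrm{const})^{N}\Gamma(1+N/\kappa)$, and the inversion is obtained by Fubini followed by the residue theorem applied to $\frac{F(u)}{u}\frac{T^{\tilde{k}}}{T^{\tilde{k}}-u^{\tilde{k}}}$ over $\gamma_{\tilde{k}}$. The only difference is one of explicitness: where the paper simply cites the integral estimates of Theorem~2, Section~2.3 in \cite{ba} for the Gevrey bound on $\mathcal{B}_{m_{\tilde{k}}}^{d}(R_{N-1}F)$, you spell out the $N$-dependent rescaling of the Hankel path to radius $\alpha_{N}|Z|$ with $\alpha_{N}^{\tilde{k}}\sim\tilde{k}/N$ and the resulting $1/\Gamma(1+N/\tilde{k})$ gain --- this is exactly what that citation is hiding, so your added detail is welcome rather than divergent.
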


\begin{proof} The proof follows the same lines of arguments as Theorem 2, Section 2.3 in \cite{ba}. Namely, one can check that
if $F(z) = z^{n}$, for an integer $n \geq 0$, then
\begin{equation}
\mathcal{B}_{m_{\tilde{k}}}^{d}F(Z) = Z^{n}/\Gamma(n/\tilde{k}) \label{analytic_borel_monomial}
\end{equation}
for all $Z \in S(d,\delta'')$ by using the change of variable $u=z/w^{1/\tilde{k}}$ in the integral (\ref{analytic_Borel_defin}) and a path
deformation, bearing in mind the Hankel formula
$$ \frac{1}{\Gamma(\frac{n}{\tilde{k}})} = \frac{1}{2i\pi} \int_{\gamma} w^{-\frac{n}{\tilde{k}}} e^{w} dw $$
where $\gamma$ is the path of integration from infinity along the the ray $\mathrm{arg}(w)=-\pi$ to the unit disc, then around the
circle and back to infinity along the ray $\mathrm{arg}(w)=\pi$.
From the asymptotic expansion (\ref{k_Gevrey_ae_F}) and using the same integrals estimates as in Theorem 2, Section 2.3 in \cite{ba},
together with the Stirling formula, for any given $\rho'>0$, we get two constants $\check{C},\check{K}>0$ such that
$$
|\mathcal{B}_{m_{\tilde{k}}}^{d}F(Z) - \sum_{n=1}^{N-1} \frac{F_n}{\Gamma(\frac{n}{\tilde{k}})}Z^{n}| =
|\mathcal{B}_{m_{\tilde{k}}}^{d}(R_{N-1}F)(Z)| \leq \check{C} \check{K}^{N}
\frac{\Gamma(1 + N/k)}{\Gamma(1 + N/\tilde{k})}|Z|^{N}
$$
where $R_{N-1}F(u) = F(u) - \sum_{n=1}^{N-1}F_{n}u^{n}$, for all $N \geq 2$, for all
$Z \in S(d,\delta'') \cap D(0,\rho')$. Therefore
(\ref{kappa_Gevrey_ae_analytic_Borel}) follows.

In the last part of the proof, we show the identity (\ref{Laplace_Borel_inverse}). We follow the same lines of
arguments as Theorem 3 in Section 2.4 from \cite{ba}. Using Fubini's theorem, we can write
\begin{multline}
\mathcal{L}_{m_{\tilde{k}}}^{d}(v \mapsto (\mathcal{B}_{m_{\tilde{k}}}^{d}F)(v) )(T) =
\tilde{k}\int_{L_d} \left( -\frac{\tilde{k}}{2i \pi} \int_{\gamma_{\tilde{k}}} F(u) e^{(\frac{v}{u})^{\tilde{k}}}
\frac{v^{\tilde{k}}}{u^{\tilde{k}+1}} du \right) e^{-(\frac{v}{T})^{\tilde{k}}} \frac{dv}{v}\\
= -\frac{\tilde{k}}{2i \pi} \int_{\gamma_{\tilde{k}}} \frac{F(u)}{u^{\tilde{k}+1}}
\left( \int_{L_d} \exp( v^{\tilde{k}}( \frac{1}{u^{\tilde{k}}} - \frac{1}{T^{\tilde{k}}} ) ) \tilde{k} v^{\tilde{k}-1} dv \right) du
\end{multline}
Therefore, by direct integration, we deduce that
\begin{equation}
\mathcal{L}_{m_{\tilde{k}}}^{d}(v \mapsto (\mathcal{B}_{m_{\tilde{k}}}^{d}F)(v) )(T) =
\frac{\tilde{k}}{2i \pi} \int_{\gamma_{\tilde{k}}} \frac{F(u)}{u} \frac{T^{\tilde{k}}}{T^{\tilde{k}} - u^{\tilde{k}}} du.
\end{equation}
Now, the function $u \mapsto \frac{F(u)}{u} \frac{T^{\tilde{k}}}{T^{\tilde{k}} - u^{\tilde{k}}}$ has in the interior of
$\gamma_{\tilde{k}}$ exactly one singularity at $u=T$ (since $T$ is assumed to belong to
$S(d,\frac{\pi}{\tilde{k}} + \delta', \rho/2)$), this being a pole of order one, with residue $-F(T)/\tilde{k}$.
The residue theorem completes the proof of (\ref{Laplace_Borel_inverse}).
\end{proof}

\begin{prop} Let $S(d,\alpha)$ be an unbounded sector with direction $d \in \mathbb{R}$ and aperture $\alpha$. Let
$\tilde{k} > k > 0$ be given real numbers and let $\kappa>0$ be the real number defined by $1/\kappa = 1/k - 1/\tilde{k}$.
Let $f : S(d,\alpha) \cup D(0,r) \rightarrow \mathbb{C}$ be an analytic function with $f(0)=0$ and such that there exist $C,M>0$ with
$$ |f(h)| \leq Ce^{M|h|^{\kappa}} $$
for all $h \in S(d,\alpha) \cup D(0,r)$.

For all $0 < \delta' <\pi/\kappa$ (which can be chosen close to $\pi/\kappa$), we define the kernel function
$$ G(\xi,h) = -\frac{\tilde{k} k}{2 i \pi} \xi^{\tilde{k}} \int_{V_{d,\tilde{k},\delta'}}
\exp( -(\frac{h}{u})^{k} + (\frac{\xi}{u})^{\tilde{k}} ) \frac{du}{u^{\tilde{k}+1}} $$
where $V_{d,\tilde{k},\delta'}$ is the path starting from 0 along the halfline
$\mathbb{R}_{+}e^{i(d + \frac{\pi}{2\tilde{k}} + \frac{\delta'}{2})}$ and back to the origin
along the halfline $\mathbb{R}_{+}e^{i(d - \frac{\pi}{2\tilde{k}} - \frac{\delta'}{2})}$. The function $G(\xi,h)$ is well
defined and satisfies the following estimates : there exist $c_{1},c_{2}>0$ such that
\begin{equation}
|G(\xi,h)| \leq c_{1} \exp( -c_{2} (\frac{|h|}{|\xi|})^{\kappa} ) \label{G_xi_h_exp_growth_order_kappa}
\end{equation}
for all $h \in L_{d} = \mathbb{R}_{+}e^{id}$ and all $\xi \in S(d, \delta'')$ for $0< \delta'' < \delta'$ (that can be chosen close to
$\delta'$).

Then, for any $0 < \rho < (c_{2}/M)^{1/\kappa}$, the function
$$ \mathcal{A}_{m_{\tilde{k}},m_{k}}^{d}f(\xi) = \int_{L_{d}} f(h) G(\xi,h) \frac{dh}{h} = g(\xi) $$
defines an analytic function on the bounded sector $S_{d,\kappa,\delta,\rho}$ with aperture $\frac{\pi}{\kappa} + \delta$,
for any $0 < \delta < \alpha$, in direction $d$ and radius $\rho$ and which satisfies that there exist $C,K>0$ with
\begin{equation}
|g(\xi) - \sum_{n=1}^{N-1} f_{n} \frac{\Gamma(n/k)}{\Gamma(n/\tilde{k})} \xi^{n}| \leq
CK^{N}\Gamma(1 + N/\kappa) |\xi|^{N} \label{kappa_Gevrey_ae_analytic_acceleration}
\end{equation}
for all $\xi \in S_{d,\kappa,\delta,\rho}$, all $N \geq 2$, where
$\hat{g}(\xi) = \sum_{n \geq 1} f_{n} \frac{\Gamma(n/k)}{\Gamma(n/\tilde{k})} \xi^{n}$ is
the formal acceleration $\hat{\mathcal{A}}_{m_{\tilde{k}},m_{k}} \hat{f} (\xi)$
where $\hat{f}(h) = \sum_{n \geq 1} f_{n}h^{n}$ is the (convergent) Taylor expansion at $h=0$ of $f$ on $D(0,r)$.

In other words, $g(\xi)$ is the $\kappa-$sum of $\hat{g}(\xi)$ on $S_{d,\kappa,\delta,\rho}$
in the sense of the definition \cite{ba} from Section 3.2.
\end{prop}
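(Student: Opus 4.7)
The plan is to identify the analytic acceleration operator $\mathcal{A}^{d}_{m_{\tilde{k}},m_{k}}$ with the composition $\mathcal{B}^{d}_{m_{\tilde{k}}} \circ \mathcal{L}^{d}_{m_{k}}$ of an $m_{k}$-Laplace transform followed by the analytic $m_{\tilde{k}}$-Borel transform of Proposition 12, and then to read off the asymptotic expansion $\eqref{kappa_Gevrey_ae_analytic_acceleration}$ from that proposition. Inserting $F(u) = \mathcal{L}^{d}_{m_{k}}f(u) = k\int_{L_{d}} f(h)e^{-(h/u)^{k}}\frac{dh}{h}$ into the defining integral of $\mathcal{B}^{d}_{m_{\tilde{k}}}$ and swapping integrals via Fubini produces exactly $g(\xi) = \int_{L_{d}} f(h)G(\xi,h)\frac{dh}{h}$, once the bounded Hankel path $\gamma_{\tilde{k}}$ of Proposition 12 is deformed into the unbounded path $V_{d,\tilde{k},\delta'}$; the deformation is legitimized by the decay of $u \mapsto \mathcal{L}^{d}_{m_{k}}f(u)$ combined with the exponential factor $e^{(\xi/u)^{\tilde{k}}}$ for $\xi \in S(d,\delta'')$.

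First I would prove the kernel estimate $\eqref{G_xi_h_exp_growth_order_kappa}$ by a steepest-descent analysis of the integral defining $G(\xi,h)$. A scaling of the form $u = |h|^{k/\kappa}v$ reveals that the exponent $-(h/u)^{k} + (\xi/u)^{\tilde{k}}$ has a saddle whose critical value is of order $-(|h|/|\xi|)^{\kappa}$, and a careful deformation of $V_{d,\tilde{k},\delta'}$ through that saddle, keeping $\arg u$ in a window where $\mathrm{Re}\,(h/u)^{k}>0$ and $\mathrm{Re}\,(\xi/u)^{\tilde{k}} \leq 0$ on the two outgoing rays, produces the claimed bound. This is where I expect the main technical effort: one must orchestrate the apertures $\delta$, $\delta'$, $\delta''$ and the choice of deformed contour so that the estimate is uniform for $\xi \in S(d,\delta'')$ and $h \in L_{d}$, while ensuring that the remaining polynomial factor $u^{-\tilde{k}-1}$ does not obstruct convergence on the tails.

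Once the kernel estimate is available, the combination with $|f(h)| \leq Ce^{M|h|^{\kappa}}$ yields $|f(h)G(\xi,h)/h| \leq \frac{Cc_{1}}{|h|}\exp\bigl((M - c_{2}/|\xi|^{\kappa})|h|^{\kappa}\bigr)$, which is integrable on $L_{d}$ as soon as $|\xi|^{\kappa} < c_{2}/M$. Hence $g$ is well defined on the sector $S_{d,\kappa,\delta,\rho}$ for any $\rho<(c_{2}/M)^{1/\kappa}$, and dominated convergence together with Morera's theorem supply its holomorphy. The opening $\pi/\kappa+\delta$ is inherited from the range of $\arg(\xi)$ compatible with the saddle-point estimate and with $\alpha$.

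For the Gevrey asymptotic expansion, I would apply Watson's lemma to the inner Laplace integral $F(T) = \mathcal{L}^{d}_{m_{k}}f(T)$, using the convergent Taylor series $\hat{f}(h) = \sum_{n\geq 1}f_{n}h^{n}$ at the origin together with the exponential bound on $f$, to obtain that $F$ satisfies bounds of the form $\eqref{Laplace_k_Gevrey_ae}$ with coefficients $f_{n}\Gamma(n/k)$ on a sector of direction $d$ and opening slightly larger than $\pi/k$. Proposition 12 then transforms this into the Gevrey-$1/\kappa$ expansion $\sum_{n\geq 1} f_{n}\frac{\Gamma(n/k)}{\Gamma(n/\tilde{k})}\xi^{n}$ for $g$ on $S_{d,\kappa,\delta,\rho}$, which is exactly $\hat{g}(\xi) = \hat{\mathcal{A}}_{m_{\tilde{k}},m_{k}}\hat{f}(\xi)$. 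Since the aperture exceeds $\pi/\kappa$, a final application of Watson's lemma identifies $g$ as the $\kappa$-sum of $\hat{g}$ in direction $d$.
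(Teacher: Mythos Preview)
There is a genuine gap in your approach. The $m_k$-Laplace transform $F(u) = \mathcal{L}^d_{m_k}f(u) = k\int_{L_d} f(h)\,e^{-(h/u)^k}\,\frac{dh}{h}$ does \emph{not} exist in general under the stated hypotheses: since $1/\kappa = 1/k - 1/\tilde{k} < 1/k$ one has $\kappa > k$, so the assumed growth $|f(h)| \leq Ce^{M|h|^{\kappa}}$ dominates the kernel decay $e^{-c|h|^{k}}$ and the integral over $L_d$ diverges. (This divergence is exactly why an acceleration operator is needed in the first place: one cannot reach the $m_{\tilde{k}}$-level by first taking an $m_k$-Laplace transform of $f$.) Consequently your Fubini swap is illegitimate, and the Watson-lemma argument applied to $F = \mathcal{L}^d_{m_k}f$ collapses because $F$ is not defined.

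The paper avoids this obstruction in two ways. For the kernel estimate it performs the change of variable $u = h\tilde{u}$ to recognise $G(\xi,h) = k\,(\mathcal{B}^0_{m_{\tilde{k}}}e_k)(\xi/h)$ with $e_k(u) = e^{-(1/u)^k}$; since $e_k$ is Gevrey-$1/k$ flat at the origin, Proposition~12 shows its $m_{\tilde{k}}$-Borel transform is Gevrey-$1/\kappa$ flat, which yields $|G(\xi,h)| \leq c_1 e^{-c_2(|h|/|\xi|)^{\kappa}}$ without any steepest-descent analysis. For the asymptotic expansion, the paper verifies the action on monomials $h^n$ (for which the Laplace integral \emph{does} converge and Fubini is legitimate), obtaining $\mathcal{A}^d_{m_{\tilde{k}},m_k}(h^n) = \frac{\Gamma(n/k)}{\Gamma(n/\tilde{k})}\xi^n$, and then bounds the remainder $\mathcal{A}^d_{m_{\tilde{k}},m_k}(R_{N-1}f)$ directly via the kernel decay and the convergent Taylor estimate $|R_{N-1}f(h)| \leq C_f K_f^N |h|^N$. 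Your plan can be repaired by following this monomial-plus-remainder route rather than attempting to compose $\mathcal{B}^d_{m_{\tilde{k}}} \circ \mathcal{L}^d_{m_k}$ on $f$ itself.
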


\begin{proof} We first show the estimates (\ref{G_xi_h_exp_growth_order_kappa}). We follow the idea of
proof of Lemma 1, Section 5.1 in \cite{ba}. We make the change of variable $u=h\tilde{u}$
in the integral $G(\xi,h)$ and we deform the path of integration in order to get the expression
$$ G(\xi,h) = - \frac{\tilde{k}k}{2 i \pi} (\frac{\xi}{h})^{\tilde{k}} \int_{\gamma_{\tilde{k}}} e^{-(1/\tilde{u})^{k}}
e^{(\xi/h)^{\tilde{k}}(\frac{1}{\tilde{u}})^{\tilde{k}}} \frac{1}{\tilde{u}^{\tilde{k}+1}} d\tilde{u}
$$
where $\gamma_{\tilde{k}}$ is the closed Hankel path defined in Proposition 12 with the direction $d=0$. Hence, we
recognize that $G(\xi,h)$ can be
written as an analytic Borel transform $G(\xi,h) = k (\mathcal{B}_{m_{\tilde{k}}}^{0}e_{k})(\xi/h)$ where
$e_{k}(u) = e^{-(1/u)^{k}}$. 
From Exercise 1 in Section 2.2 from \cite{ba}, we know that $e_{k}(u)$ has $\hat{0}$ as formal power series expansion of Gevrey order
$k$ on any sector $S_{0,\frac{\pi}{\tilde{k}} + \delta}$ with direction 0 for any $0<\delta<\pi/\kappa$. From Proposition 12,
we deduce that
$(\mathcal{B}_{m_{\tilde{k}}}^{0}e_{k})(Z)$ has $\hat{0}$ as formal series expansion of Gevrey order $\kappa$ on any unbounded
sector $S_{0,\delta''}$ where $0 < \delta'' < \delta' < \delta < \pi/\kappa$ (where $\delta''$ can be chosen close to $\pi/\kappa$.
Finally, using Exercice 3 in Section 2.2 from \cite{ba}, we get two constants $c_{1},c_{2}>0$ such that
$$ |(\mathcal{B}_{m_{\tilde{k}}}^{0}e_{k})(Z)| \leq c_{1} e^{-c_{2}|Z|^{-\kappa}} $$
for all $Z \in S_{0,\delta''}$. The estimates (\ref{G_xi_h_exp_growth_order_kappa}) follow.

In order to show the asymptotic expansion with bound estimates (\ref{kappa_Gevrey_ae_analytic_acceleration}), we first check that
if $f(h)=h^{n}$, for an integer $n \geq 0$, then
\begin{equation}
\mathcal{A}_{m_{\tilde{k}},m_{k}}^{d}f(\xi) =
\frac{\Gamma(n/k)}{\Gamma(n/\tilde{k})}\xi^{n} \label{analytic_acceleration_monomial}
\end{equation}
on $S_{d,\kappa,\delta,\rho}$. Indeed using Fubini's theorem, we can write
$$
\mathcal{A}_{m_{\tilde{k}},m_{k}}^{d}f(\xi) = -\frac{\tilde{k}}{2i\pi}
\int_{V_{d,\tilde{k},\delta'}} \left( k \int_{L_d} h^{n} e^{-(\frac{h}{u})^{k}}  \frac{dh}{h} \right)
e^{(\frac{\xi}{u})^{\tilde{k}}} \frac{\xi^{\tilde{k}}}{u^{\tilde{k}+1}} du.
$$ 
From the definition of the Gamma function we know that
$$ 
k \int_{L_d} h^{n} e^{-(\frac{h}{u})^{k}}  \frac{dh}{h} = \mathcal{L}_{m_{k}}^{d} (h^{n})(u) = \Gamma(\frac{n}{k}) u^{n},
$$
and bearing in mind (\ref{analytic_borel_monomial}), after a path deformation, we recognize that
$$
\mathcal{A}_{m_{\tilde{k}},m_{k}}^{d}f(\xi) = \Gamma( \frac{n}{k} ) \mathcal{B}_{m_{\tilde{k}}}^{d}(u^{n})(\xi) =  
\frac{\Gamma(n/k)}{\Gamma(n/\tilde{k})}\xi^{n}.
$$
Since the Taylor expansion of $f$ at $h=0$ is convergent, there exist two constants $C_{f},K_{f}>0$ such that
\begin{equation}
|f(h) - \sum_{n=1}^{N-1} f_{n} h^{n}| \leq C_{f}K_{f}^{N} |h|^{N} \label{cv_Taylor_f}
\end{equation}
for all $h \in D(0,r)$, all $N \geq 2$. Taking the expansion (\ref{cv_Taylor_f}) and the exponential growth estimates 
(\ref{G_xi_h_exp_growth_order_kappa}), using the same integrals estimates as in Exercice 3 in Section 2.1 of \cite{ba}, we get two constants
$C,K>0$ such that
$$
| \mathcal{A}_{m_{\tilde{k}},m_{k}}^{d}f(\xi) - \sum_{n=1}^{N-1} f_{n} \frac{ \Gamma( \frac{n}{k} )}{ \Gamma( \frac{n}{\tilde{k}} )}
\xi^{n} | = | \mathcal{A}_{m_{\tilde{k}},m_{k}}^{d} (R_{N-1}f)(\xi) | \leq C K^{N} \Gamma( 1 + N/\kappa )|\xi|^{N} 
$$
where $R_{N-1}f(h) = f(h) - \sum_{n=1}^{N-1} f_{n} h^{n}$, for all $N \geq 2$, all $\xi \in S_{d,\kappa,\delta,\rho}$.
\end{proof}

\subsection{Analytic solutions for an auxiliary convolution problem resulting from a $m_{k_{2}}-$Borel transform applied
to the main convolution initial value problem}

We keep the notations of Sections 4.1 and 4.2. For the integers $d_{l},\delta_{l}$, for $1 \leq l \leq D-1$ that satisfy the
constraints (\ref{delta_constraints}), (\ref{constrain_d_l_delta_l_Delta_l}) and
(\ref{constraint_dl_deltal_k1}), we make the additional assumption that there
exist integers $d_{l,k_{2}}^{2} > 0$ such that 
\begin{equation}
d_{l} + k_{2} + 1 = \delta_{l}(k_{2}+1) + d_{l,k_{2}}^{2}  \label{constraint_dl_deltal_k2} 
\end{equation}
for all $1 \leq l \leq D-1$. In order the ensure the positivity of the integers $d_{l,k_{2}}^{2}$, we impose the following
assumption on the integers $d_{l,k_{1}}^{1}$,
\begin{equation}
d_{l,k_{1}}^{1} > (\delta_{l}-1)(k_{2}-k_{1}),   \label{constraint_d_l_k1_with_k2}
\end{equation}
for all $1 \leq l \leq D-1$. Indeed, by Definition of $d_{l,k_{1}}^{1}$ in (\ref{defin_d_l_k1_1}), the constraint
(\ref{constraint_dl_deltal_k2}) rewrites $d_{l,k_{2}}^{2} = d_{l,k_{1}}^{1} + k_{2} - k_{1} - \delta_{l}(k_{2}-k_{1})$.
Using the formula (8.7) from \cite{taya}, p. 3630, we can expand the operators
$T^{\delta_{l}(k_{2}+1)} \partial_{T}^{\delta_l}$ in the form
\begin{equation}
T^{\delta_{l}(k_{2}+1)} \partial_{T}^{\delta_l} = (T^{k_{2}+1}\partial_{T})^{\delta_l} +
\sum_{1 \leq p \leq \delta_{l}-1} A_{\delta_{l},p} T^{k_{2}(\delta_{l}-p)} (T^{k_{2}+1}\partial_{T})^{p} \label{expand_op_diff_k2}
\end{equation}
where $A_{\delta_{l},p}$, $p=1,\ldots,\delta_{l}-1$ are real numbers, for all $1 \leq l \leq D$.

Multiplying the equation (\ref{SCP}) by $T^{k_{2}+1}$ and using
(\ref{expand_op_diff_k2}), we can rewrite the equation (\ref{SCP}) in the form
\begin{multline}
Q(im)( T^{k_{2}+1}\partial_{T}U(T,m,\epsilon) ) - R_{D}(im)(T^{k_{2}+1}\partial_{T})^{\delta_D}U(T,m,\epsilon) \\
= R_{D}(im) \sum_{1 \leq p \leq \delta_{D}-1} A_{\delta_{D},p} T^{k_{2}(\delta_{D}-p)}(T^{k_{2}+1}\partial_{T})^{p}U(T,m,\epsilon)\\
 + \epsilon^{-1}T^{k_{2}+1}
\frac{c_{1,2}(\epsilon)}{(2\pi)^{1/2}}\int_{-\infty}^{+\infty} Q_{1}(i(m-m_{1}))U(T,m-m_{1},\epsilon)
Q_{2}(im_{1})U(T,m_{1},\epsilon) dm_{1} \\
+ \sum_{l=1}^{D-1} R_{l}(im)\left( \epsilon^{\Delta_{l} - d_{l} + \delta_{l} - 1}
T^{d_{l,k_{2}}^{2}}(T^{k_{2}+1}\partial_{T})^{\delta_l}
U(T,m,\epsilon) \right.
\\+ \sum_{1 \leq p \leq \delta_{l}-1} A_{\delta_{l},p}
\left. \epsilon^{\Delta_{l}-d_{l}+\delta_{l}-1} T^{k_{2}(\delta_{l}-p) + d_{l,k_{2}}^{2}}
(T^{k_{2}+1}\partial_{T})^{p}U(T,m,\epsilon) \right)\\
+ \epsilon^{-1}T^{k_{2}+1}
\frac{c_{0}(\epsilon)}{(2\pi)^{1/2}}\int_{-\infty}^{+\infty} C_{0}(T,m-m_{1},\epsilon) R_{0}(im_{1})U(T,m_{1},\epsilon) dm_{1}\\
+ \epsilon^{-1}T^{k_{2}+1}
\frac{c_{0,0}(\epsilon)}{(2\pi)^{1/2}}\int_{-\infty}^{+\infty} C_{0,0}(m-m_{1},\epsilon) R_{0}(im_{1})U(T,m_{1},\epsilon) dm_{1}
+ \epsilon^{-1}c_{F}(\epsilon)T^{k_{2}+1}F(T,m,\epsilon)
\label{SCP_irregular_k2}
\end{multline}
We denote
$\hat{\omega}_{k_2}(\tau,m,\epsilon)$ the formal $m_{k_2}-$Borel transform of
$\hat{U}(T,m,\epsilon)$ with respect to $T$, $\varphi_{k_2}(\tau,m,\epsilon)$ the formal $m_{k_2}-$Borel transform of
$C_{0}(T,m,\epsilon)$ with respect to $T$ and $\hat{\psi}_{k_2}(\tau,m,\epsilon)$ the formal $m_{k_2}-$Borel transform of
$F(T,m,\epsilon)$ with respect to $T$,
\begin{multline}
\hat{\omega}_{k_2}(\tau,m,\epsilon) = \sum_{n \geq 1} U_{n}(m,\epsilon) \frac{\tau^n}{\Gamma(\frac{n}{k_2})} \ \ , \ \
\varphi_{k_2}(\tau,m,\epsilon) = \sum_{n \geq 1} C_{0,n}(m,\epsilon) \frac{\tau^n}{\Gamma(\frac{n}{k_2})}\\
\hat{\psi}_{k_2}(\tau,m,\epsilon) = \sum_{n \geq 1} F_{n}(m,\epsilon) \frac{\tau^n}{\Gamma(\frac{n}{k_2})}
\label{defin_borel_k_2_omega_varphi_psi}
\end{multline}
Using the computation rules for the formal $m_{k_2}-$Borel transform in Proposition 8, we deduce the following equation satisfied by
$\hat{\omega}_{k_2}(\tau,m,\epsilon)$,
\begin{multline}
Q(im)( k_{2} \tau^{k_2} \hat{\omega}_{k_2}(\tau,m,\epsilon) ) - (k_{2} \tau^{k_2})^{\delta_D}R_{D}(im)
\hat{\omega}_{k_2}(\tau,m,\epsilon)\\
= R_{D}(im) \sum_{1 \leq p \leq \delta_{D}-1} A_{\delta_{D},p}
\frac{\tau^{k_2}}{\Gamma(\delta_{D}-p)}\int_{0}^{\tau^{k_2}} (\tau^{k_2}-s)^{\delta_{D}-p-1}
 (k_2^{p} s^{p} \hat{\omega}_{k_2}(s^{1/k_2},m,\epsilon)) \frac{ds}{s}\\
+ \epsilon^{-1}
\frac{\tau^{k_2}}{\Gamma(1 + \frac{1}{k_2})} \int_{0}^{\tau^{k_2}}
(\tau^{k_2}-s)^{1/k_2}\\
\times \left( \frac{c_{1,2}(\epsilon)}{(2\pi)^{1/2}} s\int_{0}^{s} \int_{-\infty}^{+\infty} \right.
Q_{1}(i(m-m_{1}))\hat{\omega}_{k_2}((s-x)^{1/k_2},m-m_{1},\epsilon)\\
\left. \times  Q_{2}(im_{1})
\hat{\omega}_{k_2}(x^{1/k_2},m_{1},\epsilon) \frac{1}{(s-x)x} dxdm_{1} \right) \frac{ds}{s}\\
+ \sum_{l=1}^{D-1} R_{l}(im) \left( \epsilon^{\Delta_{l}-d_{l}+\delta_{l}-1}
\frac{\tau^{k_2}}{\Gamma( \frac{d_{l,k_{2}}^{2}}{k_2} )} \right.
\int_{0}^{\tau^{k_2}} (\tau^{k_2}-s)^{\frac{d_{l,k_{2}}^{2}}{k_2}-1}({k_2}^{\delta_l}s^{\delta_l}
\hat{\omega}_{k_2}(s^{1/k_2},m,\epsilon)) \frac{ds}{s}\\
+ \sum_{1 \leq p \leq \delta_{l}-1} A_{\delta_{l},p}\epsilon^{\Delta_{l}-d_{l}+\delta_{l}-1}
\frac{\tau^{k_2}}{\Gamma( \frac{d_{l,k_{2}}^{2}}{k_2} + \delta_{l}-p)} \int_{0}^{\tau^{k_2}}
\left. (\tau^{k_2}-s)^{\frac{d_{l,k_{2}}^{2}}{k_2}+\delta_{l}-p-1}({k_2}^{p}s^{p}
\hat{\omega}_{k_2}(s^{1/k_2},m,\epsilon)) \frac{ds}{s} \right)\\
+ \epsilon^{-1}
\frac{\tau^{k_2}}{\Gamma(1 + \frac{1}{k_2})} \int_{0}^{\tau^{k_2}}
(\tau^{k_2}-s)^{1/k_2}\\
\times \left( \frac{c_{0}(\epsilon)}{(2\pi)^{1/2}} s\int_{0}^{s} \int_{-\infty}^{+\infty} \right.
\left. \varphi_{k_2}((s-x)^{1/k_2},m-m_{1},\epsilon) R_{0}(im_{1}) \hat{\omega}_{k_2}(x^{1/k_2},m_{1},\epsilon) \frac{1}{(s-x)x}
dxdm_{1} \right) \frac{ds}{s}\\
+ \epsilon^{-1}\frac{\tau^{k_2}}{\Gamma(1 + \frac{1}{k_2})} \int_{0}^{\tau^{k_2}}
(\tau^{k_2}-s)^{1/k_2} \frac{c_{0,0}(\epsilon)}{(2\pi)^{1/2}} ( \int_{-\infty}^{+\infty} C_{0,0}(m-m_{1},\epsilon)R_{0}(im_{1})
\hat{\omega}_{k_2}(s^{1/k_2},m_{1},\epsilon) dm_{1} )\frac{ds}{s}\\
+ \epsilon^{-1} c_{F}(\epsilon) \frac{\tau^{k_2}}{\Gamma(1 + \frac{1}{k_2})}\int_{0}^{\tau^{k_2}}
(\tau^{k_2}-s)^{1/k_2} \hat{\psi}_{k_2}(s^{1/k_2},m,\epsilon) \frac{ds}{s} \label{k2_Borel_equation}
\end{multline}
We recall from \cite{lama} that
$\varphi_{k_2}(\tau,m,\epsilon) \in F_{(\nu,\beta,\mu,k_{2})}^{d}$ for
all $\epsilon \in D(0,\epsilon_{0})$, any unbounded sector $S_{d}$ and any bounded
sector $S_{d}^{b}$ centered at 0 with bisecting direction $d \in \mathbb{R}$, for some $\nu>0$. Indeed, we have that
\begin{multline}
||\varphi_{k_2}(\tau,m,\epsilon)||_{(\nu,\beta,\mu,k_{2})} \leq \sum_{n \geq 1}
||C_{0,n}(m,\epsilon)||_{(\beta,\mu)} (\sup_{\tau \in \bar{S}_{d}^{b} \cup S_{d}}
\frac{1 + |\tau|^{2k_{2}}}{|\tau|} \exp(-\nu |\tau|^{k_{2}})
\frac{|\tau|^n}{\Gamma(\frac{n}{k_{2}})}) \label{maj_norm_varphi_k_2}
\end{multline}
By using the classical estimates (\ref{x_m_exp_x<}) and Stirling formula
$\Gamma(n/k_{2}) \sim (2\pi)^{1/2}(n/k_{2})^{\frac{n}{k_2}-\frac{1}{2}}e^{-n/k_{2}}$ as $n$ tends to $+\infty$, we get two constants
$A_{1},A_{2}>0$ depending on $\nu,k_{2}$ such that
\begin{multline}
\sup_{\tau \in \bar{S}_{d}^{b} \cup S_{d}}
\frac{1 + |\tau|^{2k_{2}}}{|\tau|} \exp(-\nu |\tau|^{k_2})
\frac{|\tau|^n}{\Gamma(\frac{n}{k_{2}})} 
= \sup_{x \geq 0} (1+x^{2})x^{\frac{n-1}{k_{2}}}
\frac{e^{-\nu x}}{\Gamma(\frac{n}{k_{2}})}\\
\leq \left( (\frac{n-1}{\nu k_{2}})^{\frac{n-1}{k_{2}}}
e^{-\frac{n-1}{k_{2}}} +
( \frac{n-1}{\nu k_{2}} + \frac{2}{\nu})^{\frac{n-1}{k_{2}}+2} e^{-(\frac{n-1}{k_{2}}+2)} \right)/ \Gamma(n/k_{2})
\leq A_{1}(A_{2})^{n} \label{sup_Stirling}
\end{multline}
for all $n \geq 1$, all $\epsilon \in D(0,\epsilon_{0})$. Therefore, if
$A_{2} < T_{0}$ holds, we get the estimates
\begin{multline}
||\varphi_{k_2}(\tau,m,\epsilon)||_{(\nu,\beta,\mu,k_{2})} \leq A_{1} \sum_{n \geq 1} ||C_{0,n}(m,\epsilon)||_{(\beta,\mu)}
(A_{2})^{n} \leq \frac{A_{1}A_{2}K_{0}}{T_0} \frac{1}{ 1 - \frac{A_{2}}{T_0} },\\
\label{norm_F_varphi_k2_epsilon_0}
\end{multline}
for all $\epsilon \in D(0,\epsilon_{0})$.

From Section 4.2, we recall that $\psi_{k_1}^{d}(\tau,m,\epsilon) \in F_{(\nu,\beta,\mu,k_{1},k_{1})}^{d}$, for
all $\epsilon \in D(0,\epsilon_{0})$, for some unbounded sector $U_{d}$ with bisecting direction
$d \in \mathbb{R}$, where $\nu$ is chosen in that section.
\begin{lemma} The function
$$
\psi_{k_2}^{d}(\tau,m,\epsilon) := \mathcal{A}_{m_{k_2},m_{k_1}}^{d}( h \mapsto \psi_{k_1}^{d}(h,m,\epsilon)) (\tau) =
\int_{L_{d}} \psi_{k_1}^{d}(h,m,\epsilon) G(\tau,h)
\frac{dh}{h}
$$
is analytic on an unbounded sector $S_{d,\kappa,\delta}$ with aperture $\frac{\pi}{\kappa} + \delta$ in direction $d$, for any
$0 < \delta < \mathrm{ap}(U_{d})$ where $\mathrm{ap}(U_{d})$ denotes the aperture of the sector $U_{d}$, and has estimates
of the form : there exist constants $C_{\psi_{k_2}}>0$ and $\nu'>0$ such that 
\begin{equation}
|\psi_{k_2}^{d}(\tau,m,\epsilon)| \leq C_{\psi_{k_2}} (1+|m|)^{-\mu} e^{-\beta |m|}
\frac{ |\tau| }{1 + |\tau|^{2k_{2}}} \exp( \nu' |\tau|^{k_2} )
\label{psi_k2_exp_growth}
\end{equation}
for all $\tau \in S_{d,\kappa,\delta}$, all $m \in \mathbb{R}$, all $\epsilon \in D(0,\epsilon_{0})$. In particular,
we get that $\mathcal{A}_{m_{k_2},m_{k_1}}^{d}( h \mapsto \psi_{k_1}^{d}(h,m,\epsilon)) (\tau) \in
F_{(\nu',\beta,\mu,k_{2})}^{d}$
for any unbounded sector $S_{d}$ and bounded sector $S_{d}^{b}$ with aperture $\frac{\pi}{\kappa} + \delta$, with $\delta$ as above,
and we carry a constant $\zeta_{\psi_{k_2}}>0$ with 
\begin{equation}
||\psi_{k_2}^{d}(\tau,m,\epsilon)||_{(\nu',\beta,\mu,k_{2})} \leq \zeta_{\psi_{k_2}}
\end{equation}
for all $\epsilon \in D(0,\epsilon_{0})$.
\end{lemma}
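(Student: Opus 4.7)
The plan is to estimate the integral $\int_{L_d} \psi_{k_1}^d(h,m,\epsilon) G(\tau,h)\frac{dh}{h}$ directly, exploiting the pointwise bound
$$|\psi_{k_1}^d(h,m,\epsilon)| \leq \zeta_{\psi_{k_1}} (1+|m|)^{-\mu} e^{-\beta|m|} \frac{|h|}{1+|h|^{2k_1}} e^{\nu|h|^{k_1}}$$
that follows from membership in $F_{(\nu,\beta,\mu,k_1,k_1)}^d$, together with the kernel decay $|G(\tau,h)| \leq c_1 \exp(-c_2(|h|/|\tau|)^\kappa)$ established in Proposition 13. The factor $(1+|m|)^{-\mu}e^{-\beta|m|}$ factors out of the integral, so it suffices to bound the one-variable quantity $I(|\tau|) := \int_0^{+\infty} (1+r^{2k_1})^{-1} \exp(\nu r^{k_1} - c_2 (r/|\tau|)^\kappa)\,dr$. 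Since $1/\kappa = 1/k_1 - 1/k_2$ forces $\kappa > k_1$, the kernel decay dominates the growth of $\psi_{k_1}^d$ for $|h|$ large compared to any fixed power of $|\tau|$; this is precisely what removes the radius restriction $\rho < (c_2/M)^{1/\kappa}$ present in Proposition 13 and allows extension from a bounded to the unbounded sector $S_{d,\kappa,\delta}$.

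The crucial step is a Laplace-method analysis of $\phi(r) := \nu r^{k_1} - c_2(r/|\tau|)^\kappa$. A direct computation locates its unique interior maximum at $r_* = c_*\, |\tau|^{\kappa/(\kappa-k_1)}$ for an explicit $c_*>0$; using the algebraic identity $\kappa/(\kappa-k_1) = k_2/k_1$ (a direct consequence of $1/\kappa = 1/k_1 - 1/k_2$), one obtains $r_*^{k_1} = c_*^{k_1}|\tau|^{k_2}$ and hence $\phi(r_*) = (\nu(\kappa - k_1)/\kappa) c_*^{k_1} |\tau|^{k_2} =: C_\sharp |\tau|^{k_2}$. Splitting $I(|\tau|)$ at $r_*/2$ and $2r_*$, estimating the central piece by the quadratic Taylor expansion of $\phi$ near $r_*$ and the two outer pieces by monotonicity of $\phi$, one obtains for large $|\tau|$ a bound of the form $I(|\tau|) \leq K |\tau|^\gamma \exp(C_\sharp |\tau|^{k_2})$, and the polynomial factor $|\tau|^\gamma$ is absorbed by choosing any $\nu'>C_\sharp$. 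For small $|\tau|$, the change of variables $r = |\tau| s$ yields $I(|\tau|) = O(|\tau|)$, matching the $|\tau|/(1+|\tau|^{2k_2})$ prefactor required near $0$.

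Analyticity of $\psi_{k_2}^d$ in $\tau$ on $S_{d,\kappa,\delta}$ is obtained via Morera's theorem under the integral sign, using that $G(\tau,h)$ is holomorphic in $\tau$ on sectors of aperture arbitrarily close to $\pi/\kappa$ (Proposition 13) together with the locally uniform majorants derived above. The enlargement to aperture $\pi/\kappa + \delta$ for any $\delta < \mathrm{ap}(U_d)$ comes from rotating the integration ray $L_d$ inside $U_d$, which is permissible because $\psi_{k_1}^d$ retains its exponential bound of order $k_1$ on the entirety of $U_d$. The main obstacle is the quantitative Laplace-type estimate that pins the growth exponent at exactly $k_2$ and not some larger value; once this is in place, the $m$-decay, the normalizing prefactor $|\tau|/(1+|\tau|^{2k_2})$, and the extraction of the norm bound $\zeta_{\psi_{k_2}}$ all follow cleanly from the integral representation, giving also $\psi_{k_2}^d \in F_{(\nu',\beta,\mu,k_2)}^d$ on the required bounded and unbounded sectors of aperture $\pi/\kappa + \delta$.
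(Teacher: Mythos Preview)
Your argument is correct but takes a genuinely different route from the paper. The paper does \emph{not} estimate the acceleration integral directly. Instead, it first uses Fubini and a path deformation to rewrite
\[
\psi_{k_2}^{d}(\tau,m,\epsilon) \;=\; \mathcal{B}_{m_{k_2}}^{d}\!\bigl( \mathcal{L}_{m_{k_1}}^{d}(h \mapsto \psi_{k_1}^{d}(h,m,\epsilon)) \bigr)(\tau),
\]
i.e.\ as an analytic $m_{k_2}$-Borel transform of the (bounded, holomorphic on a sector of aperture larger than $\pi/k_2$) $m_{k_1}$-Laplace transform of $\psi_{k_1}^{d}$. Proposition~12 then furnishes, for free, analyticity on the unbounded sector $S(d,\delta'')$; the large-$\tau$ bound follows from a crude estimate of the Borel integral over the Hankel contour of fixed radius $\sigma'/2$ (this is where the growth $e^{E_2|\tau|^{k_2}}$ enters, via $|e^{(\tau/u)^{k_2}}|$), and the small-$\tau$ bound $O(|\tau|)$ is read off from the Gevrey-$\kappa$ asymptotic expansion (\ref{kappa_Gevrey_ae_analytic_Borel}) at the origin. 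Your approach bypasses this factorization and instead performs a saddle-point analysis of the exponent $\nu r^{k_1} - c_{2}(r/|\tau|)^{\kappa}$; the algebraic identity $\kappa/(\kappa-k_1)=k_2/k_1$ is exactly what makes the maximum value scale like $|\tau|^{k_2}$, and this is the heart of the matter. Your route is more elementary and gives an explicit growth constant $C_\sharp$ (hence a quantitative choice of $\nu'$), whereas the paper's route is more structural and reuses existing machinery (Propositions~12--13) without any new computation. Both are valid; the paper's version also makes transparent why the aperture of $S_{d,\kappa,\delta}$ can be widened up to $\pi/\kappa + \mathrm{ap}(U_d)$, since this is inherited directly from the aperture available to $\mathcal{L}_{m_{k_1}}^{d}\psi_{k_1}^{d}$.
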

\begin{proof} Bearing in mind the inclusion (\ref{psi_k1_bounded_norm_k1_kappa}) we already know from Proposition 13 that
the function $\tau \mapsto \psi_{k_2}^{d}(\tau,m,\epsilon)$ defines a holomorphic and bounded function (with bound independent of
$\epsilon \in D(0,\epsilon_{0})$) on a sector
$S_{d,\kappa,\delta,(c_{2}/\nu)^{1/\kappa}/2}$ with direction $d$, aperture
$\frac{\pi}{\kappa} + \delta$ and radius $(c_{2}/\nu)^{1/\kappa}/2$, for some
$\delta>0$ and the constant $c_{2}$ introduced in (\ref{G_xi_h_exp_growth_order_kappa}), for all $m \in \mathbb{R}$, all
$\epsilon \in D(0,\epsilon_{0})$.

From the assumption that the function $\psi_{k_1}^{d}(\tau,m,\epsilon)$ belongs to the space
$F_{(\nu,\beta,\mu,k_{1},k_{1})}^{d}$, see
(\ref{psi_k1_bounded_norm_k1_k1}), we know that the $m_{k_1}-$Laplace transform
$$
\mathcal{L}_{m_{k_1}}^{d}(h \mapsto \psi_{k_1}^{d}(h,m,\epsilon))(u) = k_{1} \int_{L_d} \psi_{k_1}^{d}(h,m,\epsilon)
\exp(-(\frac{h}{u})^{k_1}) \frac{dh}{h}
$$
defines a holomorphic and bounded function (by a constant that does not depend on
$\epsilon \in D(0,\epsilon_{0})$) on a sector
$S_{d,\theta,\sigma'}$ in direction $d$, with radius
$\sigma'$ and aperture $\theta$ which satisfies $\frac{\pi}{k_2} + \frac{\pi}{\kappa} < \theta <
\frac{\pi}{k_2} + \frac{\pi}{\kappa} + \mathrm{ap}(U_{d})$, where $\mathrm{ap}(U_{d})$ is the aperture of $U_{d}$, for
some $\sigma'>0$.

Hence, by using a path deformation and the Fubini theorem, we can rewrite the function $\psi_{k_2}^{d}(\tau,m,\epsilon)$ in the form
\begin{multline}
\psi_{k_2}^{d}(\tau,m,\epsilon) = -\frac{k_2}{2i \pi} \int_{V_{d,k_{2},\delta',\sigma'/2}}
\mathcal{L}_{m_{k_1}}^{d}(h \mapsto \psi_{k_1}^{d}(h,m,\epsilon))(u) e^{(\frac{\tau}{u})^{k_2}}
\frac{\tau^{k_2}}{u^{k_{2}+1}} du \\
= \mathcal{B}_{m_{k_2}}^{d}( \mathcal{L}_{m_{k_1}}^{d}(h \mapsto \psi_{k_1}^{d}(h,m,\epsilon))(u) ) (\tau) \label{psi_k2_Borel_form}
\end{multline}
where $V_{d,k_{2},\delta',\sigma'/2}$ is the closed Hankel path starting from the origin along the segment
$$ [0,(\sigma'/2)e^{i(d + \frac{\pi}{2k_{2}} + \frac{\delta'}{2})}]$$
following the arc of circle
$[ (\sigma'/2)e^{i(d + \frac{\pi}{2k_{2}} + \frac{\delta'}{2})},
(\sigma'/2)e^{i(d - \frac{\pi}{2k_{2}} - \frac{\delta'}{2})} ]$ and going back to the origin along the segment
$[(\sigma'/2)e^{i(d - \frac{\pi}{2k_{2}} - \frac{\delta'}{2})},0]$, where $0 < \delta' < \frac{\pi}{\kappa} + \mathrm{ap}(U_{d})$
that can be chosen close to $\frac{\pi}{\kappa} + \mathrm{ap}(U_{d})$.

Therefore, from Proposition 12, we know that $\tau \mapsto \psi_{k_2}^{d}(\tau,m,\epsilon)$ defines a holomorphic function
on the unbounded sector $S(d,\delta'')$ where $0 < \delta'' < \delta'$, which can be chosen close to $\delta'$, for all
$m \in \mathbb{R}$, all $\epsilon \in D(0,\epsilon_{0})$. Now, we turn to the estimates (\ref{psi_k2_exp_growth}).
From the representation (\ref{psi_k2_Borel_form}), we get the following estimates : there exist constants $E_{1},E_{2},E_{3}>0$
such that
\begin{multline}
|\psi_{k_2}^{d}(\tau,m,\epsilon)| \leq \frac{E_{1}e^{-\beta|m|}}{(1 + |m|)^{\mu}} \left( e^{E_{2}|\tau|^{k_2}}
|\tau|^{k_2} + \int_{0}^{\frac{\sigma'}{2}} e^{-E_{3}(\frac{|\tau|}{s})^{k_2}}
\frac{|\tau|^{k_2}}{s^{k_{2}+1}} ds \right)\\
\leq \frac{E_{1}e^{-\beta|m|}}{(1 + |m|)^{\mu}} \left( e^{E_{2}|\tau|^{k_2}}
|\tau|^{k_2}  + \frac{1}{E_{3}k_{2}} e^{-E_{3}(\frac{2}{\sigma'})^{k_2}|\tau|^{k_2}} \right) \label{psi_k2_epsilontau_growth_large_tau}
\end{multline}
for all $\tau \in S(d,\delta'')$, all $m \in \mathbb{R}$, all $\epsilon \in D(0,\epsilon_{0})$. Besides, from the asymptotic
expansion (\ref{kappa_Gevrey_ae_analytic_Borel}), we get in particular the existence of a constant $E_{0}>0$ such that
\begin{equation}
|\psi_{k_2}^{d}(\tau,m,\epsilon)| \leq \frac{E_{0}e^{-\beta|m|}}{(1 + |m|)^{\mu}}|\tau| \label{psi_k2_epsilontau_growth_small_tau}
\end{equation}
for all $\tau \in S(d,\delta'') \cap D(0,\rho')$ and some $\rho'>0$. Finally, combining the estimates
(\ref{psi_k2_epsilontau_growth_large_tau}) and (\ref{psi_k2_epsilontau_growth_small_tau}) yields (\ref{psi_k2_exp_growth}).
\end{proof}
We consider now the following problem
\begin{multline}
Q(im)( k_{2} \tau^{k_2} \omega_{k_2}(\tau,m,\epsilon) ) - (k_{2} \tau^{k_2})^{\delta_D}R_{D}(im)
\omega_{k_2}(\tau,m,\epsilon)\\
= R_{D}(im) \sum_{1 \leq p \leq \delta_{D}-1} A_{\delta_{D},p}
\frac{\tau^{k_2}}{\Gamma(\delta_{D}-p)}\int_{0}^{\tau^{k_2}} (\tau^{k_2}-s)^{\delta_{D}-p-1}
 (k_2^{p} s^{p} \omega_{k_2}(s^{1/k_2},m,\epsilon)) \frac{ds}{s}\\
+ \epsilon^{-1}
\frac{\tau^{k_2}}{\Gamma(1 + \frac{1}{k_2})} \int_{0}^{\tau^{k_2}}
(\tau^{k_2}-s)^{1/k_2}\\
\times \left( \frac{c_{1,2}(\epsilon)}{(2\pi)^{1/2}} s\int_{0}^{s} \int_{-\infty}^{+\infty} \right.
Q_{1}(i(m-m_{1}))\omega_{k_2}((s-x)^{1/k_2},m-m_{1},\epsilon)\\
\left. \times  Q_{2}(im_{1})
\omega_{k_2}(x^{1/k_2},m_{1},\epsilon) \frac{1}{(s-x)x} dxdm_{1} \right) \frac{ds}{s}\\
+ \sum_{l=1}^{D-1} R_{l}(im) \left( \epsilon^{\Delta_{l}-d_{l}+\delta_{l}-1}
\frac{\tau^{k_2}}{\Gamma( \frac{d_{l,k_{2}}^{2}}{k_2} )} \right.
\int_{0}^{\tau^{k_2}} (\tau^{k_2}-s)^{\frac{d_{l,k_{2}}^{2}}{k_2}-1}({k_2}^{\delta_l}s^{\delta_l}
\omega_{k_2}(s^{1/k_2},m,\epsilon)) \frac{ds}{s}\\
+ \sum_{1 \leq p \leq \delta_{l}-1} A_{\delta_{l},p}\epsilon^{\Delta_{l}-d_{l}+\delta_{l}-1}
\frac{\tau^{k_2}}{\Gamma( \frac{d_{l,k_{2}}^{2}}{k_2} + \delta_{l}-p)} \int_{0}^{\tau^{k_2}}
\left. (\tau^{k_2}-s)^{\frac{d_{l,k_{2}}^{2}}{k_2}+\delta_{l}-p-1}({k_2}^{p}s^{p}
\omega_{k_2}(s^{1/k_2},m,\epsilon)) \frac{ds}{s} \right)\\
+ \epsilon^{-1}
\frac{\tau^{k_2}}{\Gamma(1 + \frac{1}{k_2})} \int_{0}^{\tau^{k_2}}
(\tau^{k_2}-s)^{1/k_2}\\
\times \left( \frac{c_{0}(\epsilon)}{(2\pi)^{1/2}} s\int_{0}^{s} \int_{-\infty}^{+\infty} \right.
\left. \varphi_{k_2}((s-x)^{1/k_2},m-m_{1},\epsilon) R_{0}(im_{1}) \omega_{k_2}(x^{1/k_2},m_{1},\epsilon) \frac{1}{(s-x)x}
dxdm_{1} \right) \frac{ds}{s}\\
+ \epsilon^{-1}\frac{\tau^{k_2}}{\Gamma(1 + \frac{1}{k_2})} \int_{0}^{\tau^{k_2}}
(\tau^{k_2}-s)^{1/k_2} \frac{c_{0,0}(\epsilon)}{(2\pi)^{1/2}} ( \int_{-\infty}^{+\infty} C_{0,0}(m-m_{1},\epsilon)R_{0}(im_{1})
\omega_{k_2}(s^{1/k_2},m_{1},\epsilon) dm_{1} )\frac{ds}{s}\\
+ \epsilon^{-1} c_{F}(\epsilon)\frac{\tau^{k_2}}{\Gamma(1 + \frac{1}{k_2})}\int_{0}^{\tau^{k_2}}
(\tau^{k_2}-s)^{1/k_2} \psi_{k_2}^{d}(s^{1/k_2},m,\epsilon) \frac{ds}{s} \label{k2_Borel_equation_analytic}
\end{multline}
for vanishing initial data $\omega_{k_2}(0,m,\epsilon) \equiv 0$, where $\psi_{k_2}^{d}(\tau,m,\epsilon)$ has been constructed in Lemma
4.

We make the additional assumption that there exists an unbounded sector
$$ S_{Q,R_{D}} = \{ z \in \mathbb{C} / |z| \geq r_{Q,R_{D}} \ \ , \ \ |\mathrm{arg}(z) - d_{Q,R_{D}}| \leq \eta_{Q,R_{D}} \} $$
with direction $d_{Q,R_{D}} \in \mathbb{R}$, aperture $\eta_{Q,R_{D}}>0$ for some radius $r_{Q,R_{D}}>0$ such that
\begin{equation}
\frac{Q(im)}{R_{D}(im)} \in S_{Q,R_{D}} \label{quotient_Q_RD_in_S}
\end{equation} 
for all $m \in \mathbb{R}$. We factorize the polynomial $P_{m}(\tau) = Q(im)k_{2} - R_{D}(im)k_{2}^{\delta_D}
\tau^{(\delta_{D}-1)k_{2}}$ in the form
\begin{equation}
 P_{m}(\tau) = -R_{D}(im)k_{2}^{\delta_D}\Pi_{l=0}^{(\delta_{D}-1)k_{2}-1} (\tau - q_{l}(m)) \label{factor_P_m}
\end{equation}
where
\begin{multline}
q_{l}(m) = (\frac{|Q(im)|}{|R_{D}(im)|k_{2}^{\delta_{D}-1}})^{\frac{1}{(\delta_{D}-1)k_{2}}}\\
\times \exp( \sqrt{-1}( \mathrm{arg}( \frac{Q(im)}{R_{D}(im)k_{2}^{\delta_{D}-1}}) \frac{1}{(\delta_{D}-1)k_{2}} +
\frac{2\pi l}{(\delta_{D}-1)k_{2}} ) ) \label{defin_roots}
\end{multline}
for all $0 \leq l \leq (\delta_{D}-1)k_{2}-1$, all $m \in \mathbb{R}$.

We choose an unbounded sector $S_{d}$ centered at 0, a small closed disc $\bar{D}(0,\rho)$ and we prescribe the sector
$S_{Q,R_{D}}$ in such a way that the following conditions hold.\medskip

\noindent 1) There exists a constant $M_{1}>0$ such that
\begin{equation}
|\tau - q_{l}(m)| \geq M_{1}(1 + |\tau|) \label{root_cond_1}
\end{equation}
for all $0 \leq l \leq (\delta_{D}-1)k_{2}-1$, all $m \in \mathbb{R}$, all $\tau \in S_{d} \cup \bar{D}(0,\rho)$. Indeed,
from (\ref{quotient_Q_RD_in_S}) and the explicit expression (\ref{defin_roots}) of $q_{l}(m)$, we first observe that
$|q_{l}(m)| > 2\rho$ for every $m \in \mathbb{R}$, all $0 \leq l \leq (\delta_{D}-1)k_{2}-1$ for an appropriate choice of $r_{Q,R_{D}}$
and of $\rho>0$. We also see that for all $m \in \mathbb{R}$, all $0 \leq l \leq (\delta_{D}-1)k_{2}-1$, the roots
$q_{l}(m)$ remain in a union $\mathcal{U}$ of unbounded sectors centered at 0 that do not cover a full neighborhood of
the origin in $\mathbb{C}^{\ast}$ provided that $\eta_{Q,R_{D}}$ is small enough. Therefore, one can choose an adequate
sector $S_{d}$ such that $S_{d} \cap \mathcal{U} = \emptyset$ with the property that for all
$0 \leq l \leq (\delta_{D}-1)k_{2}-1$ the quotients $q_{l}(m)/\tau$ lay outside
some small disc centered at 1 in $\mathbb{C}$ for all $\tau \in S_{d}$, all $m \in \mathbb{R}$. This yields (\ref{root_cond_1})
for some small constant $M_{1}>0$.\medskip

\noindent 2) There exists a constant $M_{2}>0$ such that
\begin{equation}
|\tau - q_{l_0}(m)| \geq M_{2}|q_{l_0}(m)| \label{root_cond_2}
\end{equation}
for some $l_{0} \in \{0,\ldots,(\delta_{D}-1)k_{2}-1 \}$, all $m \in \mathbb{R}$, all $\tau \in S_{d} \cup \bar{D}(0,\rho)$. Indeed, for the
sector $S_{d}$ and the disc $\bar{D}(0,\rho)$ chosen as above in 1), we notice that for any fixed
$0 \leq l_{0} \leq (\delta_{D}-1)k_{2}-1$, the quotient $\tau/q_{l_0}(m)$ stays outside a small disc centered at 1 in $\mathbb{C}$
for all $\tau \in S_{d} \cup \bar{D}(0,\rho)$, all $m \in \mathbb{R}$. Hence (\ref{root_cond_2}) must hold for some small
constant $M_{2}>0$.\medskip

By construction
of the roots (\ref{defin_roots}) in the factorization (\ref{factor_P_m}) and using the lower bound estimates
(\ref{root_cond_1}), (\ref{root_cond_2}), we get a constant $C_{P}>0$ such that
\begin{multline}
|P_{m}(\tau)| \geq M_{1}^{(\delta_{D}-1)k_{2}-1}M_{2}|R_{D}(im)|k_{2}^{\delta_D}
(\frac{|Q(im)|}{|R_{D}(im)|k_{2}^{\delta_{D}-1}})^{\frac{1}{(\delta_{D}-1)k_{2}}} (1+|\tau|)^{(\delta_{D}-1)k_{2}-1}\\
\geq M_{1}^{(\delta_{D}-1)k_{2}-1}M_{2}\frac{k_{2}^{\delta_D}}{(k_{2}^{\delta_{D}-1})^{\frac{1}{(\delta_{D}-1)k_{2}}}}
(r_{Q,R_{D}})^{\frac{1}{(\delta_{D}-1)k_{2}}} |R_{D}(im)| \\
\times (\min_{x \geq 0}
\frac{(1+x)^{(\delta_{D}-1)k_{2}-1}}{(1+x^{k_2})^{(\delta_{D}-1) - \frac{1}{k_2}}}) (1 + |\tau|^{k_2})^{(\delta_{D}-1)-\frac{1}{k_2}}\\
= C_{P} (r_{Q,R_{D}})^{\frac{1}{(\delta_{D}-1)k_{2}}} |R_{D}(im)| (1+|\tau|^{k_2})^{(\delta_{D}-1)-\frac{1}{k_2}}
\label{low_bounds_P_m}
\end{multline}
for all $\tau \in S_{d} \cup \bar{D}(0,\rho)$, all $m \in \mathbb{R}$.

In the next proposition, we give sufficient conditions under which the equation (\ref{k2_Borel_equation_analytic}) has a solution
$\omega_{k_2}^{d}(\tau,m,\epsilon)$ in the Banach space $F_{(\nu',\beta,\mu,k_{2})}^{d}$ where $\nu',\beta,\mu$ are defined above.
\begin{prop} Under the assumption that
\begin{equation}
\delta_{D} \geq \delta_{l} + \frac{1}{k_{2}}
\label{constraints_k2_Borel_equation}
\end{equation}
for all $1 \leq l \leq D-1$, there exist a radius $r_{Q,R_{D}}>0$, a constant $\upsilon>0$ and constants
$\varsigma_{1,2},\varsigma_{0,0},\varsigma_{0},\varsigma_{1},\varsigma_{1,0},\varsigma_{F},\varsigma_{2}>0$ (depending on
$Q_{1},Q_{2},k_{2},C_{P},\mu,\nu,\epsilon_{0},R_{l},\Delta_{l},\delta_{l},d_{l}$ for
$1 \leq l \leq D-1$) such that if
\begin{multline}
\sup_{\epsilon \in D(0,\epsilon_{0})}|\frac{c_{1,2}(\epsilon)}{\epsilon}| \leq \varsigma_{1,2} \ \ , \ \
\sup_{\epsilon \in D(0,\epsilon_{0})}|\frac{c_{0}(\epsilon)}{\epsilon}| \leq \varsigma_{1,0} \ \ , \ \
||\varphi_{k_2}(\tau,m,\epsilon)||_{(\nu',\beta,\mu,k_{2})} \leq \varsigma_{1},\\
\sup_{\epsilon \in D(0,\epsilon_{0})}|\frac{c_{0,0}(\epsilon)}{\epsilon}| \leq \varsigma_{0,0} \ \ , \ \
||C_{0,0}(m,\epsilon)||_{(\beta,\mu)} \leq \varsigma_{0},\\
\sup_{\epsilon \in D(0,\epsilon_{0})}|\frac{c_{F}(\epsilon)}{\epsilon}| \leq \varsigma_{F} \ \ , \ \
||\psi_{k_2}^{d}(\tau,m,\epsilon)||_{(\nu',\beta,\mu,k_{2})} \leq \varsigma_{2} \label{norm_F_varphi_k2_psi_k_2_small}
\end{multline}
for all $\epsilon \in D(0,\epsilon_{0})$, the equation (\ref{k2_Borel_equation_analytic}) has a unique solution
$\omega_{k_2}^{d}(\tau,m,\epsilon)$ in the space $F_{(\nu',\beta,\mu,k_{2})}^{d}$ with the property that
$||\omega_{k_2}^{d}(\tau,m,\epsilon)||_{(\nu',\beta,\mu,k_{2})} \leq \upsilon$, for all
$\epsilon \in D(0,\epsilon_{0})$, where $\beta,\mu>0$ are defined above,
for any unbounded sector $S_{d}$ that satisfies the constraints (\ref{root_cond_1}), (\ref{root_cond_2}) and for any
bounded sector $S_{d}^{b}$ with aperture strictly larger than $\frac{\pi}{\kappa}$ such that
\begin{equation}
S_{d}^{b} \subset D(0,\rho) \ \ , \ \ S_{d}^{b} \subset S_{d,\kappa,\delta} \label{constraint_S_d_b}
\end{equation}
where $D(0,\rho)$ fulfills the constraints (\ref{root_cond_1}), (\ref{root_cond_2}) and where the sector
$S_{d,\kappa,\delta}$ with aperture $\frac{\pi}{\kappa} + \delta$ is defined in Lemma 4, where $0 < \delta < \mathrm{ap}(U_{d})$. 
\end{prop}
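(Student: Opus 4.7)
The plan is to rewrite equation (\ref{k2_Borel_equation_analytic}) as a fixed point problem $\omega = \mathcal{H}_{\epsilon}^{k_2}(\omega)$ by dividing through by $\tau^{k_2} P_{m}(\tau)$, where $P_m(\tau)$ is the polynomial factorized in (\ref{factor_P_m}). Since every term on the right-hand side of (\ref{k2_Borel_equation_analytic}) carries a prefactor $\tau^{k_2}$, after this division we obtain an equivalent integral equation in which $1/P_m(\tau)$ multiplies convolution and truncated-Laplace-type integrals. A Banach fixed point argument in the closed ball $\bar{B}(0,\upsilon) \subset F_{(\nu',\beta,\mu,k_2)}^{d}$ will then produce the unique solution $\omega_{k_2}^{d}(\tau,m,\epsilon)$, holomorphic in $\epsilon$ on $D(0,\epsilon_0)$ by the standard parameter dependence of contractions.

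The core analytic tool is the lower bound (\ref{low_bounds_P_m}), which gives $|P_m(\tau)|^{-1} \leq C\,|R_D(im)|^{-1}(1+|\tau|^{k_2})^{-\gamma_1}$ with $\gamma_1=(\delta_D-1)-1/k_2$. This puts $1/P_m(\tau)$ exactly in the form of the multiplier $a_{\gamma_1,k_2}(\tau)$ used in Proposition 6, while the factor $|R_D(im)|^{-1}$ absorbs the polynomial quotients under assumption (\ref{first_constraints_polynomials_Q_R}). For each term coming from the expansion (\ref{expand_op_diff_k2}) of $(T^{k_2+1}\partial_T)^{\delta_l}$, the integrand has the form $(\tau^{k_2}-s)^{d_{l,k_2}^2/k_2-1} s^{\delta_l-1}$, so the exponents in Proposition 6 are $\chi_2=d_{l,k_2}^2/k_2-1$ and $\nu_2=\delta_l-1$; the requirement $\gamma_1 \geq \nu_2$ becomes precisely the hypothesis $\delta_D \geq \delta_l+1/k_2$ of (\ref{constraints_k2_Borel_equation}). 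The quadratic nonlinear term and the $\varphi_{k_2}$-convolution term are handled by Proposition 7, and the $C_{0,0}$-term together with the forcing $\psi_{k_2}^{d}$ by Proposition 5, all applied in the Section 2.2 setting.

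Given these estimates, I will define $\mathcal{H}_\epsilon^{k_2}$ mirroring the operator in Lemma 3 but adapted to the $F_{(\nu',\beta,\mu,k_2)}^{d}$ space and with the multiplier $1/P_m(\tau)$, and verify the two properties (i) $\mathcal{H}_\epsilon^{k_2}(\bar{B}(0,\upsilon))\subset\bar{B}(0,\upsilon)$ and (ii) $\mathcal{H}_\epsilon^{k_2}$ is $\tfrac{1}{2}$-Lipschitz on $\bar{B}(0,\upsilon)$, exactly as in the proof of Proposition 11. The radius $r_{Q,R_D}$ is chosen large enough so that the factor $(r_{Q,R_D})^{1/((\delta_D-1)k_2)}$ appearing in (\ref{low_bounds_P_m}) dominates the contributions from the lower-order $R_l$-terms, and the smallness assumptions (\ref{norm_F_varphi_k2_psi_k_2_small}) together with the quadratic nature of the self-interaction ensure that the sum of all Lipschitz constants is at most $1/2$ and that $\mathcal{H}_\epsilon^{k_2}(0)$ lies well within $\bar{B}(0,\upsilon)$. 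The Banach fixed point theorem then yields the claim.

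The principal technical obstacle will be the simultaneous bookkeeping of three distinct constraints: first, the ``$\gamma_1 \geq \nu_2$'' threshold must be checked for every $l \in \{1,\ldots,D-1\}$ and every $p \in \{1,\ldots,\delta_l-1\}$ arising from (\ref{expand_op_diff_k2}), which amounts to using (\ref{constraints_k2_Borel_equation}) several times in combination with (\ref{constraint_dl_deltal_k2}); second, the forcing term $\psi_{k_2}^d$ only lives in $F_{(\nu',\beta,\mu,k_2)}^d$ with the enlarged exponential type $\nu'$ coming from Lemma 4, so one must work throughout with $\nu'$ rather than $\nu$ and check that all multiplicative estimates in Propositions 5, 6, 7 still produce absolute constants independent of $\epsilon$; third, the sectors $S_d^b$ and $S_d$ must be chosen jointly so that both the root-avoidance conditions (\ref{root_cond_1})--(\ref{root_cond_2}) and the inclusion (\ref{constraint_S_d_b}) into the acceleration-sector $S_{d,\kappa,\delta}$ from Lemma 4 hold, which fixes the admissible directions $d$. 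Once these choices are reconciled, the remainder of the argument is a routine application of Propositions 5, 6, 7 following the template of Proposition 11.
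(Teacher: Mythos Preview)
Your approach is essentially identical to the paper's: divide (\ref{k2_Borel_equation_analytic}) by $\tau^{k_2}P_m(\tau)$, define the resulting nonlinear map $\mathcal{H}_\epsilon^{k_2}$, and verify via the estimates of Section~2.2 that it is a $\tfrac12$-contraction of a closed ball in $F_{(\nu',\beta,\mu,k_2)}^d$, with the constraint $\gamma_1=(\delta_D-1)-1/k_2\ge \nu_2=\delta_l-1$ from Proposition~5 reproducing exactly the hypothesis (\ref{constraints_k2_Borel_equation}). One bookkeeping slip: you have the proposition labels shifted---the multiplier $a_{\gamma_1,k}$ and the single-variable convolution estimate live in Proposition~5 (not~6), the bilinear $Q_1$--$Q_2$ and $\varphi_{k_2}$-convolution terms use Proposition~6 (not~7), and the $C_{0,0}$-term (which involves an $m$-convolution against an $E_{(\beta,\mu)}$ function) requires Proposition~7 (not~5); only the pure forcing term $\psi_{k_2}^d$ is handled by Proposition~5.
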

\begin{proof} We start the proof with a lemma which provides appropriate conditions in order to apply a fixed point theorem.
\begin{lemma} One can choose the constant $r_{Q,R_{D}}>0$, a constant $\upsilon$ small enough and constants
$\varsigma_{1,2},\varsigma_{0,0},\varsigma_{0},\varsigma_{1},\varsigma_{1,0},\varsigma_{F},\varsigma_{2}>0$
(depending on
$Q_{1},Q_{2},k_{2},C_{P},\mu,\nu,\epsilon_{0},R_{l},\Delta_{l},\delta_{l},d_{l}$ for
$1 \leq l \leq D-1$) such that if (\ref{norm_F_varphi_k2_psi_k_2_small}) holds
for all $\epsilon \in D(0,\epsilon_{0})$, the map $\mathcal{H}_{\epsilon}^{k_2}$ defined by
\begin{multline}
\mathcal{H}_{\epsilon}^{k_2}(w(\tau,m))\\ =
 \frac{R_{D}(im)}{P_{m}(\tau)} \sum_{1 \leq p \leq \delta_{D}-1} A_{\delta_{D},p}
\frac{1}{\Gamma(\delta_{D}-p)}\int_{0}^{\tau^{k_2}} (\tau^{k_2}-s)^{\delta_{D}-p-1}
 (k_2^{p} s^{p} w(s^{1/k_2},m)) \frac{ds}{s}\\
+ \epsilon^{-1}
\frac{1}{P_{m}(\tau)\Gamma(1 + \frac{1}{k_2})} \int_{0}^{\tau^{k_2}}
(\tau^{k_2}-s)^{1/k_2}\\
\times \left( \frac{c_{1,2}(\epsilon)}{(2\pi)^{1/2}} s\int_{0}^{s} \int_{-\infty}^{+\infty} \right.
Q_{1}(i(m-m_{1}))w((s-x)^{1/k_2},m-m_{1})\\
\left. \times  Q_{2}(im_{1})
w(x^{1/k_2},m_{1}) \frac{1}{(s-x)x} dxdm_{1} \right) \frac{ds}{s}\\
+ \sum_{l=1}^{D-1} \frac{R_{l}(im)}{P_{m}(\tau)} \left( \epsilon^{\Delta_{l}-d_{l}+\delta_{l}-1}
\frac{1}{\Gamma( \frac{d_{l,k_{2}}^{2}}{k_2} )} \right.
\int_{0}^{\tau^{k_2}} (\tau^{k_2}-s)^{\frac{d_{l,k_{2}}^{2}}{k_2}-1}({k_2}^{\delta_l}s^{\delta_l}
w(s^{1/k_2},m)) \frac{ds}{s}\\
+ \sum_{1 \leq p \leq \delta_{l}-1} A_{\delta_{l},p}\epsilon^{\Delta_{l}-d_{l}+\delta_{l}-1}
\frac{1}{\Gamma( \frac{d_{l,k_{2}}^{2}}{k_2} + \delta_{l}-p)} \int_{0}^{\tau^{k_2}}
\left. (\tau^{k_2}-s)^{\frac{d_{l,k_{2}}^{2}}{k_2}+\delta_{l}-p-1}({k_2}^{p}s^{p}
w(s^{1/k_2},m)) \frac{ds}{s} \right)\\
+ \epsilon^{-1}
\frac{1}{P_{m}(\tau)\Gamma(1 + \frac{1}{k_2})} \int_{0}^{\tau^{k_2}}
(\tau^{k_2}-s)^{1/k_2}\\
\times \left( \frac{c_{0}(\epsilon)}{(2\pi)^{1/2}} s\int_{0}^{s} \int_{-\infty}^{+\infty} \right.
\left. \varphi_{k_2}((s-x)^{1/k_2},m-m_{1},\epsilon) R_{0}(im_{1}) w(x^{1/k_2},m_{1}) \frac{1}{(s-x)x}
dxdm_{1} \right) \frac{ds}{s}\\
+ \epsilon^{-1}\frac{1}{P_{m}(\tau)\Gamma(1 + \frac{1}{k_2})} \int_{0}^{\tau^{k_2}}
(\tau^{k_2}-s)^{1/k_2} \frac{c_{0,0}(\epsilon)}{(2\pi)^{1/2}} ( \int_{-\infty}^{+\infty} C_{0,0}(m-m_{1},\epsilon)R_{0}(im_{1})
w(s^{1/k_2},m_{1}) dm_{1} )\frac{ds}{s}\\
+ \epsilon^{-1}c_{F}(\epsilon) \frac{1}{P_{m}(\tau)\Gamma(1 + \frac{1}{k_2})}\int_{0}^{\tau^{k_2}}
(\tau^{k_2}-s)^{1/k_2} \psi_{k_2}^{d}(s^{1/k_2},m,\epsilon) \frac{ds}{s}
\end{multline}
satisfies the next properties.\\
{\bf i)} The following inclusion holds
\begin{equation}
\mathcal{H}_{\epsilon}^{k_2}(\bar{B}(0,\upsilon)) \subset \bar{B}(0,\upsilon) \label{H_k2_inclusion}
\end{equation}
where $\bar{B}(0,\upsilon)$ is the closed ball of radius $\upsilon>0$ centered at 0 in $F_{(\nu',\beta,\mu,k_{2})}^{d}$,
for all $\epsilon \in D(0,\epsilon_{0})$.\\
{\bf ii)} We have
\begin{equation}
|| \mathcal{H}_{\epsilon}^{k_2}(w_{1}) - \mathcal{H}_{\epsilon}^{k_2}(w_{2})||_{(\nu',\beta,\mu,k_{2})}
\leq \frac{1}{2} ||w_{1} - w_{2}||_{(\nu',\beta,\mu,k_{2})}
\label{H_k2_shrink}
\end{equation}
for all $w_{1},w_{2} \in \bar{B}(0,\upsilon)$, for all $\epsilon \in D(0,\epsilon_{0})$.
\end{lemma}
\begin{proof} We first check the property (\ref{H_k2_inclusion}). Let $\epsilon \in D(0,\epsilon_{0})$ and
$w(\tau,m)$ be in $F_{(\nu',\beta,\mu,k_{2})}^{d}$. We take
$\varsigma_{1,2},\varsigma_{0,0},\varsigma_{0},\varsigma_{1},\varsigma_{1,0},\varsigma_{F},\varsigma_{2}>0$ and
$\upsilon > 0$ such that if (\ref{norm_F_varphi_k2_psi_k_2_small}) holds and
$||w(\tau,m)||_{(\nu',\beta,\mu,k_{2})} \leq \upsilon$ for all $\epsilon \in D(0,\epsilon_{0})$. Due to (\ref{low_bounds_P_m})
and by using Lemma 2 with Proposition 6, we get that
\begin{multline}
||\epsilon^{-1}
\frac{c_{1,2}(\epsilon)}{P_{m}(\tau)\Gamma(1 + \frac{1}{k_2})} \int_{0}^{\tau^{k_2}}
(\tau^{k_2}-s)^{1/k_2}\\
\times \left( \frac{1}{(2\pi)^{1/2}} s\int_{0}^{s} \int_{-\infty}^{+\infty} \right.
Q_{1}(i(m-m_{1}))w((s-x)^{1/k_2},m-m_{1})\\
\left. \times  Q_{2}(im_{1})
w(x^{1/k_2},m_{1}) \frac{1}{(s-x)x} dxdm_{1} \right) \frac{ds}{s}||_{(\nu',\beta,\mu,k_{2})} \\
\leq \frac{\varsigma_{1,2}}{\Gamma(1 + \frac{1}{k_2})(2\pi)^{1/2}}
\frac{C_{7}||w(\tau,m)||_{(\nu',\beta,\mu,k_{2})}^{2}}{C_{P}(r_{Q,R_{D}})^{\frac{1}{(\delta_{D}-1)k_{2}}}} \\
\leq \frac{\varsigma_{1,2}}{\Gamma(1 + \frac{1}{k_2})(2\pi)^{1/2}}
\frac{C_{7}\upsilon^{2}}{C_{P}(r_{Q,R_{D}})^{\frac{1}{(\delta_{D}-1)k_{2}}}}. \label{fix_point_norm_2_estim_1}
\end{multline}
Moreover, for $1 \leq p \leq \delta_{D}-1$ and by means of Proposition 5, we deduce
\begin{multline}
||\frac{R_{D}(im)}{P_{m}(\tau)}A_{\delta_{D},p}
\frac{1}{\Gamma(\delta_{D}-p)}\int_{0}^{\tau^{k_2}} (\tau^{k_2}-s)^{\delta_{D}-p-1}
 (k_2^{p} s^{p} w(s^{1/k_2},m)) \frac{ds}{s}||_{(\nu',\beta,\mu,k_{2})}\\
 \leq \frac{|A_{\delta_{D},p}|k_{2}^{p}C_{6}}{\Gamma(\delta_{D}-p)C_{P}(r_{Q,R_{D}})^{\frac{1}{(\delta_{D}-1)k_{2}}}}
 ||w(\tau,m)||_{(\nu',\beta,\mu,k_{2})}\\
 \leq \frac{|A_{\delta_{D},p}|k_{2}^{p}C_{6}}{\Gamma(\delta_{D}-p)C_{P}(r_{Q,R_{D}})^{\frac{1}{(\delta_{D}-1)k_{2}}}}
\upsilon. \label{fix_point_norm_2_estim_2}
\end{multline}
For all $1 \leq l \leq D-1$, from the assumption (\ref{constraints_k2_Borel_equation}) and by means of Proposition 5, we have
\begin{multline}
||\frac{R_{l}(im)}{P_{m}(\tau)} \epsilon^{\Delta_{l}-d_{l}+\delta_{l}-1}
\frac{1}{\Gamma( \frac{d_{l,k_{2}}^{2}}{k_2} )}
\int_{0}^{\tau^{k_2}} (\tau^{k_2}-s)^{\frac{d_{l,k_{2}}^{2}}{k_2}-1}({k_2}^{\delta_l}s^{\delta_l}
w(s^{1/k_2},m)) \frac{ds}{s}||_{(\nu',\beta,\mu,k_{2})}\\
\leq \frac{k_{2}^{\delta_l}C_{6}}{\Gamma( \frac{d_{l,k_{2}}^{2}}{k_2}) C_{P}(r_{Q,R_{D}})^{\frac{1}{(\delta_{D}-1)k_{2}}} }
|\epsilon|^{\Delta_{l} - d_{l} + \delta_{l} -1}
\sup_{m \in \mathbb{R}} \frac{|R_{l}(im)|}{|R_{D}(im)|} ||w(\tau,m)||_{(\nu',\beta,\mu,k_{2})}\\
\leq \frac{k_{2}^{\delta_l}C_{6}}{\Gamma( \frac{d_{l,k_{2}}^{2}}{k_2}) C_{P}(r_{Q,R_{D}})^{\frac{1}{(\delta_{D}-1)k_{2}}} }
\epsilon_{0}^{\Delta_{l} - d_{l} + \delta_{l} -1 }
\sup_{m \in \mathbb{R}} \frac{|R_{l}(im)|}{|R_{D}(im)|} \upsilon \label{fix_point_norm_2_estim_3}
\end{multline}
and
\begin{multline}
|| \frac{R_{l}(im)}{P_{m}(\tau)} \frac{ A_{\delta_{l},p}\epsilon^{\Delta_{l}-d_{l}+\delta_{l}-1} }
{\Gamma( \frac{d_{l,k_{2}}^{2}}{k_2} + \delta_{l}-p)} \int_{0}^{\tau^{k_2}}
(\tau^{k_2}-s)^{\frac{d_{l,k_{2}}^{2}}{k_2}+\delta_{l}-p-1}(k_{2}^{p}s^{p}
w(s^{1/k_2},m)) \frac{ds}{s} ||_{(\nu',\beta,\mu,k_{2})}\\
\leq \frac{|A_{\delta_{l},p}|k_{2}^{p}C_{6}}{\Gamma( \frac{d_{l,k_{2}}^{2}}{k_{2}} + \delta_{l} - p)
C_{P}(r_{Q,R_{D}})^{\frac{1}{(\delta_{D}-1)k_{2}}} }|\epsilon|^{\Delta_{l}-d_{l}+\delta_{l} -1}\\
\times \sup_{m \in \mathbb{R}} \frac{|R_{l}(im)|}{|R_{D}(im)|} ||w(\tau,m)||_{(\nu',\beta,\mu,k_{2})}\\
\leq \frac{|A_{\delta_{l},p}|k_{2}^{p}C_{6}}{\Gamma( \frac{d_{l,k_{2}}^{2}}{k_{2}} + \delta_{l} - p)
C_{P}(r_{Q,R_{D}})^{\frac{1}{(\delta_{D}-1)k_{2}}} } \epsilon_{0}^{\Delta_{l}-d_{l}+\delta_{l}-1}
\sup_{m \in \mathbb{R}} \frac{|R_{l}(im)|}{|R_{D}(im)|}\upsilon, \label{fix_point_norm_2_estim_4}
\end{multline}
for all $1 \leq p \leq \delta_{l}-1$. Taking into account Lemma 2 and Proposition 6, we also get that
\begin{multline}
|| \epsilon^{-1}
\frac{c_{0}(\epsilon)}{P_{m}(\tau) \Gamma(1 + \frac{1}{k_2})} \int_{0}^{\tau^{k_2}}
(\tau^{k_2}-s)^{1/k_2}\\
\times \left( \frac{1}{(2\pi)^{1/2}} s\int_{0}^{s} \int_{-\infty}^{+\infty} \right.
\varphi_{k_2}((s-x)^{1/k_2},m-m_{1},\epsilon) \\
\times \left. R_{0}(im_{1}) w(x^{1/k_2},m_{1}) \frac{1}{(s-x)x}
dxdm_{1} \right) \frac{ds}{s} ||_{(\nu',\beta,\mu,k_{2})}\\
\leq \frac{\varsigma_{1,0}}{\Gamma(1 + \frac{1}{k_2})(2\pi)^{1/2}} \frac{C_{7}
||\varphi_{k_2}(\tau,m,\epsilon)||_{(\nu',\beta,\mu,k_{2})} ||w(\tau,m)||_{(\nu',\beta,\mu,k_{2})} }{
C_{P}(r_{Q,R_{D}})^{\frac{1}{(\delta_{D}-1)k_{2}}} }\\
\leq \frac{\varsigma_{1,0}}{\Gamma(1 + \frac{1}{k_2})(2\pi)^{1/2}} \frac{C_{7}
\varsigma_{1} \upsilon}{ C_{P}(r_{Q,R_{D}})^{\frac{1}{(\delta_{D}-1)k_{2}}} }. \label{fix_point_norm_2_estim_5}
\end{multline}
Due to Lemma 2 and Proposition 7,
\begin{multline}
|| \epsilon^{-1}\frac{c_{0,0}(\epsilon)}{P_{m}(\tau)\Gamma(1 + \frac{1}{k_2})} \int_{0}^{\tau^{k_2}}
(\tau^{k_2}-s)^{1/k_2} \frac{1}{(2\pi)^{1/2}} ( \int_{-\infty}^{+\infty} C_{0,0}(m-m_{1},\epsilon)\\
\times R_{0}(im_{1})
w(s^{1/k_2},m_{1}) dm_{1} )\frac{ds}{s} ||_{(\nu',\beta,\mu,k_{2})}\\
\leq \frac{\varsigma_{0,0}}{\Gamma(1 + \frac{1}{k_2})(2\pi)^{1/2}} \frac{C_{8} \varsigma_{0} \upsilon }{
C_{P}(r_{Q,R_{D}})^{\frac{1}{(\delta_{D}-1)k_{2}}} } \label{fix_point_norm_2_estim_6}
\end{multline}
holds. Finally, from Lemma 2 and Proposition 5, we get
\begin{multline}
|| \epsilon^{-1} \frac{c_{F}(\epsilon)}{P_{m}(\tau)\Gamma(1 + \frac{1}{k_2})}\int_{0}^{\tau^{k_2}}
(\tau^{k_2}-s)^{1/k_2} \psi_{k_2}^{d}(s^{1/k_2},m,\epsilon) \frac{ds}{s} ||_{(\nu',\beta,\mu,k_{2})}\\
\leq \frac{\varsigma_{F}C_{6}}{\Gamma(1 + \frac{1}{k_2})C_{P}(r_{Q,R_{D}})^{\frac{1}{(\delta_{D}-1)k_{2}}}
\min_{m \in \mathbb{R}} |R_{D}(im)| }
||\psi_{k_2}^{d}(\tau,m,\epsilon)||_{(\nu',\beta,\mu,k_{2})}\\
\leq \frac{\varsigma_{F}C_{6}}{\Gamma(1 + \frac{1}{k_2})C_{P}(r_{Q,R_{D}})^{\frac{1}{(\delta_{D}-1)k_{2}}}
\min_{m \in \mathbb{R}} |R_{D}(im)| } \varsigma_{2}. \label{fix_point_norm_2_estim_7}
\end{multline}
Now, we choose $\upsilon$,
$\varsigma_{1,2},\varsigma_{0,0},\varsigma_{0},\varsigma_{1},\varsigma_{1,0},\varsigma_{F},\varsigma_{2}>0$
and $r_{Q,R_{D}}>0$ such that
\begin{multline}
\frac{\varsigma_{1,2}}{\Gamma(1 + \frac{1}{k_2})(2\pi)^{1/2}}
\frac{C_{7}\upsilon^{2}}{C_{P}(r_{Q,R_{D}})^{\frac{1}{(\delta_{D}-1)k_{2}}}} + \sum_{1 \leq p \leq \delta_{D}-1}
\frac{|A_{\delta_{D},p}|k_{2}^{p}C_{6}}{\Gamma(\delta_{D}-p)C_{P}(r_{Q,R_{D}})^{\frac{1}{(\delta_{D}-1)k_{2}}}}
\upsilon\\
+ \sum_{1 \leq l \leq D-1}\frac{k_{2}^{\delta_l}C_{6}}{\Gamma( \frac{d_{l,k_{2}}^{2}}{k_2})
C_{P}(r_{Q,R_{D}})^{\frac{1}{(\delta_{D}-1)k_{2}}} }
\epsilon_{0}^{\Delta_{l} - d_{l} + \delta_{l} -1 }
\sup_{m \in \mathbb{R}} \frac{|R_{l}(im)|}{|R_{D}(im)|} \upsilon\\
+ \sum_{1 \leq p \leq \delta_{l}-1} \frac{|A_{\delta_{l},p}|k_{2}^{p}C_{6}}{\Gamma( \frac{d_{l,k_{2}}^{2}}{k_{2}} + \delta_{l} - p)
C_{P}(r_{Q,R_{D}})^{\frac{1}{(\delta_{D}-1)k_{2}}} } \epsilon_{0}^{\Delta_{l}-d_{l}+\delta_{l}-1}
\sup_{m \in \mathbb{R}} \frac{|R_{l}(im)|}{|R_{D}(im)|} \upsilon\\
+ \frac{\varsigma_{1,0}}{\Gamma(1 + \frac{1}{k_2})(2\pi)^{1/2}} \frac{C_{7}
\varsigma_{1} \upsilon}{ C_{P}(r_{Q,R_{D}})^{\frac{1}{(\delta_{D}-1)k_{2}}} } +
\frac{\varsigma_{0,0}}{\Gamma(1 + \frac{1}{k_2})(2\pi)^{1/2}} \frac{C_{8} \varsigma_{0} \upsilon }{
C_{P}(r_{Q,R_{D}})^{\frac{1}{(\delta_{D}-1)k_{2}}} }\\
+ \frac{\varsigma_{F}C_{6}}{\Gamma(1 + \frac{1}{k_2})C_{P}(r_{Q,R_{D}})^{\frac{1}{(\delta_{D}-1)k_{2}}}
\min_{m \in \mathbb{R}} |R_{D}(im)| } \varsigma_{2} \leq \upsilon \label{fix_point_sum<upsilon}
\end{multline}
Gathering all the norm estimates (\ref{fix_point_norm_2_estim_1}), (\ref{fix_point_norm_2_estim_2}),
(\ref{fix_point_norm_2_estim_3}), (\ref{fix_point_norm_2_estim_4}), (\ref{fix_point_norm_2_estim_5}),
(\ref{fix_point_norm_2_estim_6}), (\ref{fix_point_norm_2_estim_7}) under the constraint (\ref{fix_point_sum<upsilon}), one gets
(\ref{H_k2_inclusion}).\bigskip

Now, we check the second property (\ref{H_k2_shrink}). Let $w_{1}(\tau,m),w_{2}(\tau,m)$ be in
$F^{d}_{(\nu',\beta,\mu,k_{2})}$.
We take $\upsilon > 0$ such that
$$ ||w_{l}(\tau,m)||_{(\nu',\beta,\mu,k_{2})} \leq \upsilon,$$
for $l=1,2$. One can write
\begin{multline}
Q_{1}(i(m-m_{1}))w_{1}((s-x)^{1/k_{2}},m-m_{1})Q_{2}(im_{1})w_{1}(x^{1/k_{2}},m_{1})\\
-Q_{1}(i(m-m_{1}))w_{2}((s-x)^{1/k_{2}},m-m_{1})Q_{2}(im_{1})w_{2}(x^{1/k_{2}},m_{1})\\
= Q_{1}(i(m-m_{1}))\left(w_{1}((s-x)^{1/k_{2}},m-m_{1}) - w_{2}((s-x)^{1/k_{2}},m-m_{1})\right)
Q_{2}(im_{1})w_{1}(x^{1/k_{2}},m_{1})\\
+ Q_{1}(i(m-m_{1}))w_{2}((s-x)^{1/k_{2}},m-m_{1})Q_{2}(im_{1})\left(w_{1}(x^{1/k_{2}},m_{1}) - w_{2}(x^{1/k_{2}},m_{1})\right)
\label{conv_product_w1_w2_for_k_2}
\end{multline}
and using Lemma 2 with Proposition 6, we obtain
\begin{multline}
||\epsilon^{-1}
\frac{c_{1,2}(\epsilon)}{P_{m}(\tau)\Gamma(1 + \frac{1}{k_2})} \int_{0}^{\tau^{k_2}}
(\tau^{k_2}-s)^{1/k_2}\\
\times \left( \frac{1}{(2\pi)^{1/2}} s\int_{0}^{s} \int_{-\infty}^{+\infty} \right.
(Q_{1}(i(m-m_{1}))w_{1}((s-x)^{1/k_2},m-m_{1})\\
\times  Q_{2}(im_{1})
w_{1}(x^{1/k_2},m_{1}) - Q_{1}(i(m-m_{1}))w_{2}((s-x)^{1/k_2},m-m_{1})\\
\left. \times  Q_{2}(im_{1})
w_{2}(x^{1/k_2},m_{1})) \frac{1}{(s-x)x} dxdm_{1} \right) \frac{ds}{s}||_{(\nu',\beta,\mu,k_{2})} \\
\leq \frac{\varsigma_{1,2}}{\Gamma(1 + \frac{1}{k_2})(2\pi)^{1/2}}
\frac{C_{7}}{C_{P}(r_{Q,R_{D}})^{\frac{1}{(\delta_{D}-1)k_{2}}}}
||w_{1}(\tau,m) - w_{2}(\tau,m)||_{(\nu',\beta,\mu,k_{2})}\\
\times (||w_{1}(\tau,m)||_{(\nu',\beta,\mu,k_{2})} + ||w_{2}(\tau,m)||_{(\nu',\beta,\mu,k_{2})})
\\
\leq \frac{\varsigma_{1,2}}{\Gamma(1 + \frac{1}{k_2})(2\pi)^{1/2}}
\frac{C_{7}2\upsilon}{C_{P}(r_{Q,R_{D}})^{\frac{1}{(\delta_{D}-1)k_{2}}}}
||w_{1}(\tau,m) - w_{2}(\tau,m)||_{(\nu',\beta,\mu,k_{2})}
\label{fix_point_norm_2_estim_1_shrink}
\end{multline}
From the estimates (\ref{fix_point_norm_2_estim_2}), (\ref{fix_point_norm_2_estim_3}), 
(\ref{fix_point_norm_2_estim_4}), (\ref{fix_point_norm_2_estim_5}), (\ref{fix_point_norm_2_estim_6}) and under the constraint
(\ref{constraints_k2_Borel_equation}), we find that for $1 \leq p \leq \delta_{D}-1$, 
\begin{multline}
||\frac{R_{D}(im)}{P_{m}(\tau)}A_{\delta_{D},p}
\frac{1}{\Gamma(\delta_{D}-p)}\int_{0}^{\tau^{k_2}} (\tau^{k_2}-s)^{\delta_{D}-p-1}\\
\times 
 (k_2^{p} s^{p} (w_{1}(s^{1/k_2},m) - w_{2}(s^{1/k_2},m))) \frac{ds}{s}||_{(\nu',\beta,\mu,k_{2})}\\
 \leq \frac{|A_{\delta_{D},p}|k_{2}^{p}C_{6}}{\Gamma(\delta_{D}-p)C_{P}(r_{Q,R_{D}})^{\frac{1}{(\delta_{D}-1)k_{2}}}}
 ||w_{1}(\tau,m) - w_{2}(\tau,m)||_{(\nu',\beta,\mu,k_{2})} \label{fix_point_norm_2_estim_2_shrink}
\end{multline}
holds and that for $1 \leq l \leq D-1$, 
\begin{multline}
||\frac{R_{l}(im)}{P_{m}(\tau)} \epsilon^{\Delta_{l}-d_{l}+\delta_{l}-1}
\frac{1}{\Gamma( \frac{d_{l,k_{2}}^{2}}{k_2} )}
\int_{0}^{\tau^{k_2}} (\tau^{k_2}-s)^{\frac{d_{l,k_{2}}^{2}}{k_2}-1}\\
\times (k_{2}^{\delta_l}s^{\delta_l}
(w_{1}(s^{1/k_2},m) - w_{2}(s^{1/k_2},m))) \frac{ds}{s}||_{(\nu',\beta,\mu,k_{2})}\\
\leq \frac{k_{2}^{\delta_l}C_{6}}{\Gamma( \frac{d_{l,k_{2}}^{2}}{k_2}) C_{P}(r_{Q,R_{D}})^{\frac{1}{(\delta_{D}-1)k_{2}}} }
\epsilon_{0}^{\Delta_{l} - d_{l} + \delta_{l} - 1}\\
\times \sup_{m \in \mathbb{R}} \frac{|R_{l}(im)|}{|R_{D}(im)|} ||w_{1}(\tau,m) - w_{2}(\tau,m)||_{(\nu',\beta,\mu,k_{2})}
\label{fix_point_norm_2_estim_3_shrink}
\end{multline}
and
\begin{multline}
|| \frac{R_{l}(im)}{P_{m}(\tau)} \frac{ A_{\delta_{l},p}\epsilon^{\Delta_{l}-d_{l}+\delta_{l}-1} }
{\Gamma( \frac{d_{l,k_{2}}^{2}}{k_2} + \delta_{l}-p)} \int_{0}^{\tau^{k_2}}
(\tau^{k_2}-s)^{\frac{d_{l,k_{2}}^{2}}{k_2}+\delta_{l}-p-1}\\
\times (k_{2}^{p}s^{p}(w_{1}(s^{1/k_2},m) - w_{2}(s^{1/k_2},m))) \frac{ds}{s} ||_{(\nu',\beta,\mu,k_{2})}\\
\leq \frac{|A_{\delta_{l},p}|k_{2}^{p}C_{6}}{\Gamma( \frac{d_{l,k_{2}}^{2}}{k_{2}} + \delta_{l} - p)
C_{P}(r_{Q,R_{D}})^{\frac{1}{(\delta_{D}-1)k_{2}}} } \epsilon_{0}^{\Delta_{l}-d_{l}+\delta_{l} - 1} \\
\times \sup_{m \in \mathbb{R}} \frac{|R_{l}(im)|}{|R_{D}(im)|} ||w_{1}(\tau,m) - w_{2}(\tau,m)||_{(\nu',\beta,\mu,k_{2})}
\label{fix_point_norm_2_estim_4_shrink}
\end{multline}
arise for all $1 \leq p \leq \delta_{l}-1$, together with
\begin{multline}
|| \epsilon^{-1}
\frac{c_{0}(\epsilon)}{P_{m}(\tau) \Gamma(1 + \frac{1}{k_2})} \int_{0}^{\tau^{k_2}}
(\tau^{k_2}-s)^{1/k_2}\\
\times \left( \frac{1}{(2\pi)^{1/2}} s\int_{0}^{s} \int_{-\infty}^{+\infty} \right.
\varphi_{k_2}((s-x)^{1/k_2},m-m_{1},\epsilon) \\
\times \left. R_{0}(im_{1}) (w_{1}(x^{1/k_2},m_{1}) - w_{2}(x^{1/k_2},m_{1})) \frac{1}{(s-x)x}
dxdm_{1} \right) \frac{ds}{s} ||_{(\nu',\beta,\mu,k_{2})}\\
\leq \frac{\varsigma_{1,0}}{\Gamma(1 + \frac{1}{k_2})(2\pi)^{1/2}} \frac{C_{7}
||\varphi_{k_2}(\tau,m,\epsilon)||_{(\nu',\beta,\mu,k_{2})} ||w_{1}(\tau,m) - w_{2}(\tau,m)||_{(\nu',\beta,\mu,k_{2})} }{
C_{P}(r_{Q,R_{D}})^{\frac{1}{(\delta_{D}-1)k_{2}}} }\\
\leq \frac{\varsigma_{1,0}}{\Gamma(1 + \frac{1}{k_2})(2\pi)^{1/2}} \frac{C_{7}
\varsigma_{1}
||w_{1}(\tau,m) - w_{2}(\tau,m)||_{(\nu',\beta,\mu,k_{2})}}{ C_{P}(r_{Q,R_{D}})^{\frac{1}{(\delta_{D}-1)k_{2}}} }
\label{fix_point_norm_2_estim_5_shrink}
\end{multline}
and finally
\begin{multline}
|| \epsilon^{-1}\frac{c_{0,0}(\epsilon)}{P_{m}(\tau)\Gamma(1 + \frac{1}{k_2})} \int_{0}^{\tau^{k_2}}
(\tau^{k_2}-s)^{1/k_2} \frac{1}{(2\pi)^{1/2}} ( \int_{-\infty}^{+\infty} C_{0,0}(m-m_{1},\epsilon)\\
\times R_{0}(im_{1})
(w_{1}(s^{1/k_2},m_{1}) - w_{2}(s^{1/k_2},m_{1})) dm_{1} )\frac{ds}{s} ||_{(\nu',\beta,\mu,k_{2})}\\
\leq \frac{\varsigma_{0,0}}{\Gamma(1 + \frac{1}{k_2})(2\pi)^{1/2}}
\frac{C_{8} \varsigma_{0} ||w_{1}(\tau,m) - w_{2}(\tau,m)||_{(\nu',\beta,\mu,k_{2})} }{
C_{P}(r_{Q,R_{D}})^{\frac{1}{(\delta_{D}-1)k_{2}}} }. \label{fix_point_norm_2_estim_6_shrink}
\end{multline}
Now, we take $\upsilon$ and $r_{Q,R_{D}}$ such that
\begin{multline}
\frac{ \varsigma_{1,2}}{\Gamma(1 + \frac{1}{k_2})(2\pi)^{1/2}}
\frac{C_{7}2\upsilon}{C_{P}(r_{Q,R_{D}})^{\frac{1}{(\delta_{D}-1)k_{2}}}} + \sum_{1 \leq p \leq \delta_{D}-1}
\frac{|A_{\delta_{D},p}|k_{2}^{p}C_{6}}{\Gamma(\delta_{D}-p)C_{P}(r_{Q,R_{D}})^{\frac{1}{(\delta_{D}-1)k_{2}}}}\\
+ \sum_{1 \leq l \leq D-1} \frac{k_{2}^{\delta_l}C_{6}}{\Gamma( \frac{d_{l,k_{2}}^{2}}{k_2})
C_{P}(r_{Q,R_{D}})^{\frac{1}{(\delta_{D}-1)k_{2}}} }
\epsilon_{0}^{\Delta_{l} - d_{l} + \delta_{l} -1}
\sup_{m \in \mathbb{R}} \frac{|R_{l}(im)|}{|R_{D}(im)|}\\
+ \sum_{1 \leq p \leq \delta_{l}-1} \frac{|A_{\delta_{l},p}|k_{2}^{p}C_{6}}{\Gamma( \frac{d_{l,k_{2}}^{2}}{k_{2}} + \delta_{l} - p)
C_{P}(r_{Q,R_{D}})^{\frac{1}{(\delta_{D}-1)k_{2}}} } \epsilon_{0}^{\Delta_{l}-d_{l}+\delta_{l}-1}
\sup_{m \in \mathbb{R}} \frac{|R_{l}(im)|}{|R_{D}(im)|}\\
+ \frac{1}{\Gamma(1 + \frac{1}{k_2})(2\pi)^{1/2}} \frac{\varsigma_{1,0}C_{7}
\varsigma_{1} + \varsigma_{0,0}C_{8} \varsigma_{0}}{ C_{P}(r_{Q,R_{D}})^{\frac{1}{(\delta_{D}-1)k_{2}}} } \leq \frac{1}{2}
\label{fix_point_sum<1/2}
\end{multline}
Bearing in mind the estimates (\ref{fix_point_norm_2_estim_1_shrink}),
(\ref{fix_point_norm_2_estim_2_shrink}), (\ref{fix_point_norm_2_estim_3_shrink}), (\ref{fix_point_norm_2_estim_4_shrink}),
(\ref{fix_point_norm_2_estim_5_shrink}), (\ref{fix_point_norm_2_estim_6_shrink}) under the constraint (\ref{fix_point_sum<1/2}), one
gets (\ref{H_k2_shrink}).

Finally, we choose $\upsilon$ and $r_{Q,R_{D}}$ such that both (\ref{fix_point_sum<upsilon}) and (\ref{fix_point_sum<1/2}) are
fulfilled. This yields our lemma.
\end{proof}
We consider the ball $\bar{B}(0,\upsilon) \subset F_{(\nu',\beta,\mu,k_{2})}^{d}$ constructed in Lemma 5 which is a complete
metric space for the norm $||.||_{(\nu',\beta,\mu,k_{2})}$. From the lemma above, we get that
$\mathcal{H}_{\epsilon}^{k_2}$ is a
contractive map from $\bar{B}(0,\upsilon)$ into itself. Due to the classical contractive mapping theorem, we deduce that
the map $\mathcal{H}_{\epsilon}^{k_2}$ has a unique fixed point denoted $\omega_{k_2}^{d}(\tau,m,\epsilon)$ (i.e
$\mathcal{H}_{\epsilon}^{k_2}(\omega_{k_2}^{d}(\tau,m,\epsilon))= \omega_{k_2}^{d}(\tau,m,\epsilon)$) in
$\bar{B}(0,\upsilon)$, for all $\epsilon \in D(0,\epsilon_{0})$. Moreover, the function
$\omega_{k_2}^{d}(\tau,m,\epsilon)$ depends holomorphically on $\epsilon$ in $D(0,\epsilon_{0})$. By construction,
$\omega_{k_2}^{d}(\tau,m,\epsilon)$ defines a solution of the equation (\ref{k2_Borel_equation_analytic}). This yields
the proposition.
\end{proof}

In the next proposition, we present the link, by means of the analytic acceleration operator defined in Proposition 13,
between the holomorphic solution of the problem (\ref{k_1_Borel_equation}) constructed in Proposition 11 and the solution
of the problem (\ref{k2_Borel_equation_analytic}) found in Proposition 14.  

\begin{prop} Let us consider the function $\omega_{k_1}^{d}(\tau,m,\epsilon)$ constructed in Proposition 11 and which
solves the equation (\ref{k_1_Borel_equation}). The function
$$
\tau \mapsto \mathrm{Acc}_{k_{2},k_{1}}^{d}(\omega_{k_1}^{d})(\tau,m,\epsilon) :=
\mathcal{A}_{m_{k_2},m_{k_1}}^{d}( h \mapsto \omega_{k_1}^{d}(h,m,\epsilon)) (\tau) =
\int_{L_{d}} \omega_{k_1}^{d}(h,m,\epsilon) G(\tau,h)
\frac{dh}{h}
$$
defines an analytic function on a sector $S_{d,\kappa,\delta,(c_{2}/\nu)^{1/\kappa}/2}$ with direction
$d$, aperture $\frac{\pi}{\kappa} + \delta$ and radius $(c_{2}/\nu)^{1/\kappa}/2$, for any $0 < \delta < \mathrm{ap}(U_d)$
and for a constant $c_{2}$ introduced in (\ref{G_xi_h_exp_growth_order_kappa}), with the property that
$\mathrm{Acc}_{k_{2},k_{1}}^{d}(\omega_{k_1}^{d})(0,m,\epsilon) \equiv 0$, for all
$m \in \mathbb{R}$, all $\epsilon \in D(0,\epsilon_{0})$.

Moreover, for all fixed $\epsilon \in D(0,\epsilon_{0})$, the following identity
\begin{equation}
\mathrm{Acc}_{k_{2},k_{1}}^{d}(\omega_{k_1}^{d})(\tau,m,\epsilon) = \omega_{k_2}^{d}(\tau,m,\epsilon) \label{acc_omega_k_1=omega_k_2}
\end{equation}
holds for all $\tau \in S_{d,\kappa,\delta,(c_{2}/\nu)^{1/\kappa}/2}$, all
$m \in \mathbb{R}$, provided that $\nu>0$ is chosen in a way that
$S_{d,\kappa,\delta,(c_{2}/\nu)^{1/\kappa}/2} \subset S_{d}^{b}$ holds where $S_{d}^{b}$ is the bounded sector introduced in
Proposition 14.

As a consequence of Proposition 14, the function
$\tau \mapsto \mathrm{Acc}_{k_{2},k_{1}}^{d}(\omega_{k_1}^{d})(\tau,m,\epsilon)$ has an analytic continuation on the union
$S_{d}^{b} \cup S_{d}$, where the sector $S_{d}$ has been described in Proposition 14,
denoted again
$\mathrm{Acc}_{k_{2},k_{1}}^{d}(\omega_{k_1}^{d})(\tau,m,\epsilon)$ which satisfies estimates of the form : there exists a constant
$C_{\omega_{k_2}}>0$ with
\begin{equation}
|\mathrm{Acc}_{k_{2},k_{1}}^{d}(\omega_{k_1}^{d})(\tau,m,\epsilon)| \leq C_{\omega_{k_2}} (1+|m|)^{-\mu} e^{-\beta |m|}
\frac{ |\tau| }{1 + |\tau|^{2k_{2}}} \exp( \nu' |\tau|^{k_2} )
\label{acc_k_2_k_1_omega_k_1_exp_growth}
\end{equation}
for all $\tau \in S_{d}^{b} \cup S_{d}$, all $m \in \mathbb{R}$, all $\epsilon \in D(0,\epsilon_{0})$.
\end{prop}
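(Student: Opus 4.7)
The plan is to proceed in three steps. First, I would establish that $\mathrm{Acc}_{k_{2},k_{1}}^{d}(\omega_{k_1}^{d})(\tau,m,\epsilon)$ is well-defined and analytic on the small sector $S_{d,\kappa,\delta,(c_{2}/\nu)^{1/\kappa}/2}$. This follows directly from Proposition 13 applied to the function $h \mapsto \omega_{k_1}^{d}(h,m,\epsilon)$: by Proposition 11 this function is analytic on $U_{d} \cup D(0,\rho)$ with $\omega_{k_1}^{d}(0,m,\epsilon) = 0$, and membership in $F_{(\nu,\beta,\mu,k_{1},\kappa)}^{d}$ provides the exponential growth bound $|\omega_{k_1}^{d}(h,m,\epsilon)| \leq C\exp(\nu|h|^{\kappa})$. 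The condition $\rho < (c_{2}/M)^{1/\kappa}$ of Proposition 13 is met with $M=\nu$ since the radius is $(c_{2}/\nu)^{1/\kappa}/2$. Vanishing at $\tau=0$ is immediate from Proposition 13 applied to a function whose Taylor series starts at degree one.

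Second, for the identity (\ref{acc_omega_k_1=omega_k_2}), I would argue by uniqueness of the $\kappa$-sum in direction $d$. Near $h=0$ the function $\omega_{k_1}^{d}$ coincides with the convergent power series $\sum_{n \geq 1} U_{n}(m,\epsilon) h^{n}/\Gamma(n/k_{1})$. By Definition 8 combined with Proposition 13 applied to this Taylor expansion, $\mathrm{Acc}_{k_{2},k_{1}}^{d}(\omega_{k_1}^{d})(\tau,m,\epsilon)$ admits the formal series $\hat{\omega}_{k_2}(\tau,m,\epsilon) = \sum_{n \geq 1} U_{n}(m,\epsilon)\tau^{n}/\Gamma(n/k_{2})$ as Gevrey asymptotic expansion of order $1/\kappa$ on $S_{d,\kappa,\delta,(c_{2}/\nu)^{1/\kappa}/2}$, with bounds exactly as in (\ref{kappa_Gevrey_ae_analytic_acceleration}). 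On the other hand, $\omega_{k_2}^{d}$ is the unique fixed point of $\mathcal{H}_{\epsilon}^{k_2}$ in $\bar{B}(0,\upsilon) \subset F_{(\nu',\beta,\mu,k_{2})}^{d}$. Since the forcing term $\psi_{k_2}^{d}$ is itself, by Lemma 4 and Proposition 13, the $\kappa$-sum of $\hat{\psi}_{k_2}$ in direction $d$, an induction on the Picard iterates $w^{(N)}$ of $\mathcal{H}_{\epsilon}^{k_2}$ (starting from $w^{(0)} \equiv 0$) shows that each $w^{(N)}$ carries a Gevrey $1/\kappa$-asymptotic expansion whose first $N$ terms coincide with those of the unique formal solution $\hat{\omega}_{k_2}$ of (\ref{k2_Borel_equation}); passing to the limit gives the same asymptotic expansion for $\omega_{k_2}^{d}$ on $S_{d,\kappa,\delta,(c_{2}/\nu)^{1/\kappa}/2}$. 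Because the aperture $\pi/\kappa + \delta$ exceeds $\pi/\kappa$, Watson's lemma forces the two $\kappa$-sums to coincide.

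Finally, the analytic continuation assertion is immediate from the previous step. The function $\omega_{k_2}^{d}$ produced in Proposition 14 is already analytic on $S_{d}^{b} \cup S_{d}$, and by the identity (\ref{acc_omega_k_1=omega_k_2}) it equals $\mathrm{Acc}_{k_{2},k_{1}}^{d}(\omega_{k_1}^{d})$ on the subsector $S_{d,\kappa,\delta,(c_{2}/\nu)^{1/\kappa}/2} \subset S_{d}^{b}$; one thus takes $\omega_{k_2}^{d}$ itself as the required analytic continuation. The estimates (\ref{acc_k_2_k_1_omega_k_1_exp_growth}) follow at once from the membership $\omega_{k_2}^{d} \in F_{(\nu',\beta,\mu,k_{2})}^{d}$ with norm at most $\upsilon$, by unpacking the definition of $||\cdot||_{(\nu',\beta,\mu,k_{2})}$.

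The main obstacle I anticipate is the rigorous transfer of Gevrey $1/\kappa$-asymptotics through the nonlinear fixed-point iteration defining $\omega_{k_2}^{d}$: one must verify that each application of $\mathcal{H}_{\epsilon}^{k_2}$ preserves Gevrey asymptotic expansions of order $1/\kappa$ on $S_{d,\kappa,\delta,(c_{2}/\nu)^{1/\kappa}/2}$, which requires control of the Gevrey class under the convolution operators bounded in Propositions 5--7, and careful handling of the forcing term whose asymptotic content comes only indirectly from Lemma 4. An alternative, possibly cleaner, route would be to apply $\mathrm{Acc}_{k_{2},k_{1}}^{d}$ directly to each term of (\ref{k_1_Borel_equation}) and verify that (\ref{k2_Borel_equation_analytic}) is recovered, using the functorial identity $\mathcal{B}_{m_{k_2}}^{d}\circ\mathcal{L}_{m_{k_1}}^{d}$ from Proposition 12; then uniqueness in $\bar{B}(0,\upsilon)$ concludes.
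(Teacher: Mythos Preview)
Your first and third steps are correct and coincide with the paper's argument. The second step, however, proceeds differently and your primary route carries a real technical burden that the paper sidesteps.

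The paper does not try to endow $\omega_{k_2}^{d}$ with a Gevrey $1/\kappa$ asymptotic via Picard iteration. Instead it works on the other side of the identity: since $\mathrm{Acc}_{k_{2},k_{1}}^{d}(\omega_{k_1}^{d})$ is the $\kappa$-sum of $\hat{\omega}_{k_2}$ (Proposition 13), $\psi_{k_2}^{d}$ is the $\kappa$-sum of $\hat{\psi}_{k_2}$ (Lemma 4), and $\hat{\omega}_{k_2}$ formally solves (\ref{k2_Borel_equation}), the standard stability of $\kappa$-summation under sums, products and integration (Balser, Section 3.3, Theorem 2) gives directly that $\mathrm{Acc}_{k_{2},k_{1}}^{d}(\omega_{k_1}^{d})$ satisfies (\ref{k2_Borel_equation_analytic}) on the small sector. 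No iteration, no limit passage in Gevrey classes. This is essentially your alternative route, and it is indeed the cleaner one.

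The one piece you are missing, even in your alternative, is the uniqueness mechanism. You invoke uniqueness in $\bar{B}(0,\upsilon)\subset F_{(\nu',\beta,\mu,k_{2})}^{d}$, but that space lives on $\bar{S}_{d}^{b}\cup S_{d}$, whereas $\mathrm{Acc}_{k_{2},k_{1}}^{d}(\omega_{k_1}^{d})$ is at this stage only defined on the smaller sector $S_{d,\kappa,\delta,(c_{2}/\nu)^{1/\kappa}/2}$; you cannot yet assert it belongs to $F_{(\nu',\beta,\mu,k_{2})}^{d}$. The paper closes this gap by introducing an auxiliary Banach space $H_{(\nu',\beta,\mu,k_{2},h')}$ of functions on the bounded closure $\bar{S}_{d,\kappa,\delta,h'}$ only, observes that restriction gives a norm-decreasing map $F_{(\nu',\beta,\mu,k_{2})}^{d}\to H_{(\nu',\beta,\mu,k_{2},h')}$, reproves the fixed-point argument of Proposition 14 verbatim in $H_{(\nu',\beta,\mu,k_{2},h')}$ (Lemma 6), and concludes that both $\omega_{k_2}^{d}$ and $\mathrm{Acc}_{k_{2},k_{1}}^{d}(\omega_{k_1}^{d})$ lie in the same ball of this smaller space and solve the same equation there, hence coincide. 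Once you add this auxiliary-space step to your alternative route, you recover exactly the paper's proof.
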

\begin{proof} From Proposition 11, we point out that $\omega_{k_1}^{d}(\tau,m,\epsilon)$ belongs to the space
$F_{(\nu,\beta,\mu,k_{1},\kappa)}^{d}$ and that $||\omega_{k_1}^{d}||_{(\nu,\beta,\mu,k_{1},\kappa)} \leq
\varpi$ for all $\epsilon \in D(0,\epsilon_{0})$. Due to Proposition 13, we deduce that the function
$\tau \mapsto \mathrm{Acc}_{k_{2},k_{1}}^{d}(\omega_{k_1}^{d})(\tau,m,\epsilon)$ defines a holomorphic and bounded function 
with values in the Banach space $E_{(\beta,\mu)}$ (with bound independent of $\epsilon$) on a sector
$S_{d,\kappa,\delta,(c_{2}/\nu)^{1/\kappa}/2}$ with direction
$d$, aperture $\frac{\pi}{\kappa} + \delta$ and radius $(c_{2}/\nu)^{1/\kappa}/2$, for any $0< \delta < \mathrm{ap}(U_d)$
and for a constant $c_{2}$ introduced in (\ref{G_xi_h_exp_growth_order_kappa}), for all
$\epsilon \in D(0,\epsilon_{0})$.

Now, as a result of Proposition 13, we also know that the function
$\tau \mapsto \mathrm{Acc}_{k_{2},k_{1}}^{d}(\omega_{k_1}^{d})(\tau,m,\epsilon)$ is the $\kappa-$sum of the formal series
$$
\hat{\mathcal{A}}_{m_{k_2},m_{k_1}}( h \mapsto \omega_{k_1}(h,m,\epsilon)) (\tau) = \hat{\omega}_{k_2}(\tau,m,\epsilon)
$$
viewed as formal series in $\tau$ with coefficients in the Banach space
$E_{(\beta,\mu)}$, on $S_{d,\kappa,\delta,(c_{2}/\nu)^{1/\kappa}/2}$,
for all $\epsilon \in D(0,\epsilon_{0})$. In particular, one sees that
$\mathrm{Acc}_{k_{2},k_{1}}^{d}(\omega_{k_1}^{d})(0,m,\epsilon) \equiv 0$, for all
$\epsilon \in D(0,\epsilon_{0})$.

Likewise, we notice from Lemma 4, that the function
$\tau \mapsto \psi_{k_2}^{d}(\tau,m,\epsilon)$ is the $\kappa-$sum on
$S_{d,\kappa,\delta,(c_{2}/\nu)^{1/\kappa}/2}$ of the formal series $\hat{\psi}_{k_2}(\tau,m,\epsilon)$
defined in (\ref{defin_borel_k_2_omega_varphi_psi}), viewed
as formal series in $\tau$ with coefficients in the Banach space
$E_{(\beta,\mu)}$, for all $\epsilon \in D(0,\epsilon_{0})$. We recall that 
$\hat{\omega}_{k_2}(\tau,m,\epsilon)$ formally solves the equation (\ref{k2_Borel_equation}) for vanishing
initial data $\hat{\omega}_{k_2}(0,m,\epsilon) \equiv 0$. Using standard stability properties of
the $\kappa-$sums of formal series with respect to algebraic operations and integration (see \cite{ba}, Section 3.3 Theorem 2 p. 28),
we deduce that the function $\mathrm{Acc}_{k_{2},k_{1}}^{d}(\omega_{k_1}^{d})(\tau,m,\epsilon)$ satisfies the
equation (\ref{k2_Borel_equation_analytic}) for all $\tau \in S_{d,\kappa,\delta,(c_{2}/\nu)^{1/\kappa}/2}$, all
$m \in \mathbb{R}$, all $\epsilon \in D(0,\epsilon_{0})$, for vanishing initial data
$\mathrm{Acc}_{k_{2},k_{1}}^{d}(\omega_{k_1}^{d})(0,m,\epsilon) \equiv 0$.

In order to justify the identity (\ref{acc_omega_k_1=omega_k_2}), we need to define some additional Banach space. 
We keep the aforementioned notations.

\begin{defin} Let $h'=(c_{2}/\nu)^{1/\kappa}/2$. We
denote $H_{(\nu',\beta,\mu,k_{2},h')}$ the vector space of continuous functions $(\tau,m) \mapsto h(\tau,m)$ on
$\bar{S}_{d,\kappa,\delta,h'} \times \mathbb{R}$, holomorphic with respect to $\tau$
on $S_{d,\kappa,\delta,h'}$ such that
\begin{equation}
||h(\tau,m)||_{(\nu',\beta,\mu,k_{2},h')} =
\sup_{\tau \in \bar{S}_{d,\kappa,\delta,h'}, m \in \mathbb{R}} (1+|m|)^{\mu}
\frac{1 + |\tau|^{2k_{2}}}{|\tau|} \exp( \beta|m| - \nu'|\tau|^{k_2} ) |h(\tau,m)|
\end{equation}
is finite. One can check that $H_{(\nu',\beta,\mu,k_{2},h')}$ endowed with the norm
$||.||_{(\nu',\beta,\mu,k_{2},h')}$ is a Banach space.
\end{defin}

\noindent {\bf Remark:} Notice that if a function $h(\tau,m)$ belongs to the space $F_{(\nu',\beta,\mu,k_{2})}^{d}$
for the sectors $S_{d}$ and $S_{d}^{b}$ described in Proposition 14, then it belongs to the space
$H_{(\nu',\beta,\mu,k_{2},h')}$ (provided that $\nu>0$ is chosen such that
$S_{d,\kappa,\delta,h'} \subset S_{d}^{b}$) and moreover
$$ ||h(\tau,m)||_{(\nu',\beta,\mu,k_{2},h')} \leq ||h(\tau,m)||_{(\nu',\beta,\mu,k_{2})} $$
holds.\medskip

\noindent From the remark above, one deduces that
the functions $\varphi_{k_2}(\tau,m,\epsilon)$ and $\psi_{k_2}^{d}(\tau,m,\epsilon)$ belong to the space
$H_{(\nu',\beta,\mu,k_{2},h')}$.

In the following, one can reproduce the same lines of arguments as in the proof of Proposition 14 just by replacing the Banach
space $F_{(\nu',\beta,\mu,k_{2})}^{d}$ by $H_{(\nu',\beta,\mu,k_{2},h')}$, one gets the next

\begin{lemma} Under the assumption that (\ref{constraints_k2_Borel_equation}) holds, for the radius $r_{Q,R_{D}}>0$,
the constants $\upsilon$ and
$\varsigma_{1,2},\varsigma_{0,0},\varsigma_{0},\varsigma_{1},\varsigma_{1,0},\varsigma_{F},\varsigma_{2}$ given in
Proposition 14 for which the constraints (\ref{norm_F_varphi_k2_psi_k_2_small}) hold,
the equation (\ref{k2_Borel_equation_analytic}) has a unique solution
$\omega_{k_{2},h'}(\tau,m,\epsilon)$ in the space $H_{(\nu',\beta,\mu,k_{2},h')}$ with the property that
$||\omega_{k_{2},h'}(\tau,m,\epsilon)||_{(\nu',\beta,\mu,k_{2},h')} \leq \upsilon$, for all
$\epsilon \in D(0,\epsilon_{0})$.
\end{lemma}
Taking into account Proposition 14, since $\omega_{k_2}^{d}(\tau,m,\epsilon)$ belongs to $F_{(\nu',\beta,\mu,k_{2})}^{d}$, it
also belongs to the space $H_{(\nu',\beta,\mu,k_{2},h')}$. Moreover, since  
$\tau \mapsto \mathrm{Acc}_{k_{2},k_{1}}^{d}(\omega_{k_1}^{d})(\tau,m,\epsilon)$ defines a holomorphic and bounded function 
with values in the Banach space $E_{(\beta,\mu)}$ (with bound independent of $\epsilon$) on 
$S_{d,\kappa,\delta,h'}$ that vanishes at $\tau=0$, we also get that
$\mathrm{Acc}_{k_{2},k_{1}}^{d}(\omega_{k_1}^{d})(\tau,m,\epsilon)$ belongs to $H_{(\nu',\beta,\mu,k_{2},h')}$.

As a summary, we have seen that both $\omega_{k_2}^{d}(\tau,m,\epsilon)$ and
$\mathrm{Acc}_{k_{2},k_{1}}^{d}(\omega_{k_1}^{d})(\tau,m,\epsilon)$ solve the same equation
(\ref{k2_Borel_equation_analytic}) for vanishing initial data and belong to $H_{(\nu',\beta,\mu,k_{2},h')}$.
Moreover, one can check that the constant $\upsilon>0$ in Lemma 6 and Proposition 14 can be chosen sufficiently large such that
$||\mathrm{Acc}_{k_{2},k_{1}}^{d}(\omega_{k_1}^{d})(\tau,m,\epsilon)||_{(\nu',\beta,\mu,k_{2},h')} \leq \upsilon$ holds,
if the constants $\varsigma_{1,2},\varsigma_{0,0},\varsigma_{1,0},\varsigma_{F}>0$ are chosen small enough and
$r_{Q,R_{D}}>0$ is taken large enough. By construction, we already know that
$||\omega_{k_{2}}^{d}(\tau,m,\epsilon)||_{(\nu',\beta,\mu,k_{2},h')} \leq \upsilon$. Therefore, from
Lemma 6, we get that they must be equal. Proposition 15 follows.
\end{proof}

Now, we define the $m_{k_2}-$Laplace transforms
\begin{multline}
F^{d}(T,m,\epsilon) := k_{2}\int_{L_{d}} \psi_{k_2}^{d}(u,m,\epsilon) e^{-(\frac{u}{T})^{k_2}} \frac{du}{u},\\
U^{d}(T,m,\epsilon) := k_{2}\int_{L_{d}} \omega_{k_2}^{d}(u,m,\epsilon) e^{-(\frac{u}{T})^{k_2}} \frac{du}{u} \label{defin_Fd_Ud}
\end{multline}
which, according to the estimates (\ref{psi_k2_exp_growth}) and (\ref{acc_k_2_k_1_omega_k_1_exp_growth}),
are $E_{(\beta,\mu)}-$valued bounded holomorphic functions
on the sector $S_{d,\theta,h'}$ with bisecting direction $d$, aperture $\frac{\pi}{k_2} < \theta < \frac{\pi}{k_2} +
\mathrm{ap}(S_{d})$ and radius $h'$, where $h'>0$ is some positive real number, for all
$\epsilon \in D(0,\epsilon_{0})$.\medskip

\noindent {\bf Remark:} The analytic functions $F^{d}(T,m,\epsilon)$ (resp. $U^{d}(T,m,\epsilon)$) can be called the\\
$(m_{k_2},m_{k_1})-$sums in the direction $d$ of the formal series $F(T,m,\epsilon)$ (resp. $U(T,m,\epsilon)$)
introduced in the Section 4.1, following the terminology of \cite{ba}, Section 6.1.\medskip

\noindent In the next proposition, we construct analytic solutions to the problem (\ref{SCP}) with analytic forcing term and
for vanishing initial data.

\begin{prop} The function $U^{d}(T,m,\epsilon)$ solves the following equation
\begin{multline}
Q(im)(\partial_{T}U^{d}(T,m,\epsilon) ) - T^{(\delta_{D}-1)(k_{2}+1)}\partial_{T}^{\delta_{D}}R_{D}(im)U^{d}(T,m,\epsilon) \\
= \epsilon^{-1}\frac{c_{1,2}(\epsilon)}{(2\pi)^{1/2}}\int_{-\infty}^{+\infty}Q_{1}(i(m-m_{1}))U^{d}(T,m-m_{1},\epsilon)
Q_{2}(im_{1})U^{d}(T,m_{1},\epsilon) dm_{1}\\
+ \sum_{l=1}^{D-1} R_{l}(im) \epsilon^{\Delta_{l} - d_{l} + \delta_{l} - 1} T^{d_{l}} \partial_{T}^{\delta_l}U^{d}(T,m,\epsilon)\\
+ \epsilon^{-1}\frac{c_{0}(\epsilon)}{(2\pi)^{1/2}}\int_{-\infty}^{+\infty}
C_{0}(T,m-m_{1},\epsilon)R_{0}(im_{1})U^{d}(T,m_{1},\epsilon) dm_{1}\\
+  \epsilon^{-1}\frac{c_{0,0}(\epsilon)}{(2\pi)^{1/2}}\int_{-\infty}^{+\infty}
C_{0,0}(m-m_{1},\epsilon)R_{0}(im_{1})U^{d}(T,m_{1},\epsilon) dm_{1}
+ \epsilon^{-1}c_{F}(\epsilon)F^{d}(T,m,\epsilon)
\label{SCP_analytic_d}
\end{multline}
for given initial data $U^{d}(0,m,\epsilon) = 0$, for all $T \in S_{d,\theta,h'}$, $m\in \mathbb{R}$, all
$\epsilon \in D(0,\epsilon_{0})$.
\end{prop}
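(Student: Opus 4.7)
The plan is to apply the $m_{k_2}-$Laplace transform in direction $d$ to the convolution equation (\ref{k2_Borel_equation_analytic}) that $\omega_{k_2}^{d}(\tau,m,\epsilon)$ satisfies, and use the dictionary between operations in the $m_{k_2}$-Borel plane and operations in the $T$-variable provided by Proposition 8 in order to recognize (\ref{SCP_analytic_d}) as the Laplace image, up to a division by $T^{k_2+1}$. The convergence of each Laplace integral along $L_{d}$ is guaranteed by the exponential growth estimates (\ref{acc_k_2_k_1_omega_k_1_exp_growth}) for $\omega_{k_2}^{d}$ and (\ref{psi_k2_exp_growth}) for $\psi_{k_2}^{d}$, provided $T$ is taken in a sector $S_{d,\theta,h'}$ of sufficiently small radius $h'$ with $\frac{\pi}{k_2} < \theta < \frac{\pi}{k_2} + \mathrm{ap}(S_d)$; this sets the domain on which the identity (\ref{SCP_analytic_d}) will be established.

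First I would translate each monomial-type term of (\ref{k2_Borel_equation_analytic}) via the formal rules (\ref{Borel_diff}) and (\ref{Borel_mult_monom}) of Proposition 8, read now in the Laplace direction. The factor $k_{2}\tau^{k_2}\omega_{k_2}^{d}(\tau,m,\epsilon)$ yields $T^{k_{2}+1}\partial_{T}U^{d}(T,m,\epsilon)$; iterating, $(k_{2}\tau^{k_2})^{\delta_{D}}\omega_{k_2}^{d}$ yields $(T^{k_{2}+1}\partial_{T})^{\delta_{D}}U^{d}$; and each convolution $\frac{\tau^{k_{2}}}{\Gamma(m/k_2)}\int_{0}^{\tau^{k_2}}(\tau^{k_2}-s)^{m/k_2 - 1}(k_{2}^{p}s^{p}\omega_{k_2}^{d}(s^{1/k_2},m,\epsilon))\frac{ds}{s}$ becomes $T^{m}(T^{k_{2}+1}\partial_{T})^{p}U^{d}$ after Laplace transform. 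Applying this to the term with coefficient $A_{\delta_{D},p}$ and to the $R_{l}$-terms with their factors involving $d_{l,k_{2}}^{2}$ and $A_{\delta_{l},p}$, one obtains exactly the right-hand side of the expansion (\ref{expand_op_diff_k2}) contributions; collecting them and using the definition (\ref{constraint_dl_deltal_k2}) of $d_{l,k_{2}}^{2}$ reconstructs the terms $T^{\delta_{D}(k_{2}+1)}\partial_{T}^{\delta_{D}}R_{D}(im)U^{d}$ and $T^{d_{l}+k_{2}+1}\partial_{T}^{\delta_l}R_{l}(im)U^{d}$ on the right side. The Laplace transforms of the forcing and linear coefficient terms, which involve $\psi_{k_2}^{d}$, $\varphi_{k_2}$ and $C_{0,0}$, are handled by formula (\ref{Borel_mult_monom}) and yield directly $T^{k_{2}+1}c_{F}(\epsilon)F^{d}(T,m,\epsilon)$ and the two $T^{k_{2}+1}$-prefactored integrals involving $C_{0}$ and $C_{0,0}$, since $\varphi_{k_2}$ is by construction the $m_{k_2}-$Borel transform of $C_{0}$.

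The main obstacle is the nonlinear quadratic term. Here one must justify that the Laplace transform of the iterated convolution integral in $\tau$ and $m_1$ coincides with the analogous convolution integral in $m_{1}$ of the product $U^{d}(T,m-m_{1},\epsilon)U^{d}(T,m_{1},\epsilon)$. This is the content of identity (\ref{Borel_product}) from Proposition 8 applied in the Banach algebra $E_{(\beta,\mu)}$ equipped with the convolution product $\star$ from Proposition 4, combined with the rule (\ref{Borel_mult_monom}) for the $T^{k_{2}+1}$ prefactor. The bounds $||\omega_{k_2}^{d}||_{(\nu',\beta,\mu,k_{2})} \leq \upsilon$ from Proposition 14 together with the exponential decay in $m$ provided by the weight $e^{-\beta|m|}$ allow one to invoke Fubini's theorem to exchange the integrations over $u \in L_{d}$ and $m_{1} \in \mathbb{R}$ and the inner convolution variables $s,x$, so that the formal identity becomes rigorous. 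Analogous Fubini-type arguments, simpler because only one integration in $m_{1}$ is present, cover the $C_{0}$ and $C_{0,0}$ terms.

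Putting the pieces together, one obtains the equation (\ref{SCP_irregular_k2}) satisfied pointwise by $U^{d}(T,m,\epsilon)$ on $S_{d,\theta,h'} \times \mathbb{R} \times D(0,\epsilon_{0})$. Dividing by $T^{k_{2}+1}$ and using the expansion (\ref{expand_op_diff_k2}) in reverse to recombine the differential operator terms into $T^{(\delta_{D}-1)(k_{2}+1)}\partial_{T}^{\delta_{D}}R_{D}(im)$ and $T^{d_{l}}\partial_{T}^{\delta_l}R_{l}(im)$ yields exactly (\ref{SCP_analytic_d}). The initial condition $U^{d}(0,m,\epsilon)=0$ follows from the prefactor $|\tau|/(1+|\tau|^{2k_{2}})$ in the bound (\ref{acc_k_2_k_1_omega_k_1_exp_growth}), which forces the Laplace integral defining $U^{d}$ to vanish as $T \to 0$.
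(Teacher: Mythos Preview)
Your proposal is correct and follows exactly the approach sketched in the paper's proof: apply the $m_{k_2}$-Laplace transform to the integral equation (\ref{k2_Borel_equation_analytic}) using the identities of Proposition 8 to obtain (\ref{SCP_irregular_k2}) with $F^{d}$ in place of $F$, then divide by $T^{k_2+1}$ to recover (\ref{SCP_analytic_d}). The paper states this in two lines, while you have supplied the term-by-term verification (including the Fubini justification for the quadratic term and the vanishing initial data from the $|\tau|$-prefactor), but the underlying argument is the same.
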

\begin{proof} Since the function $\omega_{k_2}^{d}(u,m,\epsilon)$ solves the integral equation (\ref{k2_Borel_equation_analytic}),
one can check by direct computations similar to those described in Proposition 8, using the integral representations
(\ref{defin_Fd_Ud}) that $U^{d}(T,m,\epsilon)$ solves the equation (\ref{SCP_irregular_k2}) where the formal series
$F(T,m,\epsilon)$ is replaced by $F^{d}(T,m,\epsilon)$ and hence solves the equation (\ref{SCP}) where $F^{d}(T,m,\epsilon)$ must be put in
place of $F(T,m,\epsilon)$.
\end{proof}

\section{Analytic solutions of a nonlinear initial value Cauchy problem with analytic forcing term on sectors and with complex
parameter}

Let $k_{1},k_{2} \geq 1$, $D \geq 2$ be integers such that $k_{2} > k_{1}$. Let $\delta_{l} \geq 1$ be integers such that
\begin{equation}
1 = \delta_{1} \ \ , \ \ \delta_{l} < \delta_{l+1}, \label{delta_constraints_main_CP}
\end{equation}
for all $1 \leq l \leq D-1$. For all $1 \leq l \leq D-1$, let
$d_{l},\Delta_{l} \geq 0$ be nonnegative integers such that 
\begin{equation}
d_{l} > \delta_{l} \ \ , \ \ \Delta_{l} - d_{l} + \delta_{l} - 1 \geq 0. \label{constrain_d_l_delta_l_Delta_l_main_CP}
\end{equation}
Let
$Q(X),Q_{1}(X),Q_{2}(X),R_{l}(X) \in \mathbb{C}[X]$, $0 \leq l \leq D$, be polynomials such that
\begin{multline}
\mathrm{deg}(Q) \geq \mathrm{deg}(R_{D}) \geq \mathrm{deg}(R_{l}) \ \ , \ \ \mathrm{deg}(R_{D}) \geq \mathrm{deg}(Q_{1}) \ \ , \ \
\mathrm{deg}(R_{D}) \geq \mathrm{deg}(Q_{2}), \\
Q(im) \neq 0 \ \ , \ \ R_{l}(im) \neq 0 \ \ , \ \ R_{D}(im) \neq 0 \label{first_constraints_polynomials_Q_R_main_CP}
\end{multline}
for all $m \in \mathbb{R}$, all $0 \leq l \leq D-1$.

We require that there exists a constant $r_{Q,R_{l}}>0$ such that
\begin{equation}
|\frac{Q(im)}{R_{l}(im)}| \geq r_{Q,R_{l}} \label{quotient_Q_Rl_larger_than_radius_main_CP}
\end{equation} 
for all $m \in \mathbb{R}$, all $1 \leq l \leq D$.  We make the additional assumption that there exists an unbounded sector
$$ S_{Q,R_{D}} = \{ z \in \mathbb{C} / |z| \geq r_{Q,R_{D}} \ \ , \ \ |\mathrm{arg}(z) - d_{Q,R_{D}}| \leq \eta_{Q,R_{D}} \} $$
with direction $d_{Q,R_{D}} \in \mathbb{R}$, aperture $\eta_{Q,R_{D}}>0$ for the radius $r_{Q,R_{D}}>0$ given above, such that
\begin{equation}
\frac{Q(im)}{R_{D}(im)} \in S_{Q,R_{D}} \label{quotient_Q_RD_in_S_main_CP}
\end{equation} 
for all $m \in \mathbb{R}$.

\begin{defin} Let $\varsigma \geq 2$ be an integer. For all $0 \leq p \leq \varsigma-1$, we consider open sectors
$\mathcal{E}_{p}$ centered at $0$, with radius $\epsilon_{0}$ and opening
$\frac{\pi}{k_2}+\kappa_{p}$, with $\kappa_{p}>0$ small enough such that
$\mathcal{E}_{p} \cap \mathcal{E}_{p+1} \neq \emptyset$, for all
$0 \leq p \leq \varsigma-1$ (with the convention that $\mathcal{E}_{\varsigma} = \mathcal{E}_{0})$. Moreover, we assume that
the intersection of any three different elements in $\{\mathcal{E}_{p}\}_{0 \leq p \leq \varsigma-1}$ is empty and that
$\cup_{p=0}^{\varsigma - 1} \mathcal{E}_{p} = \mathcal{U} \setminus \{ 0 \}$,
where $\mathcal{U}$ is some neighborhood of 0 in $\mathbb{C}$. Such a set of sectors
$\{ \mathcal{E}_{p} \}_{0 \leq p \leq \varsigma - 1}$ is called a good covering in $\mathbb{C}^{\ast}$.
\end{defin}

\begin{defin} Let $\{ \mathcal{E}_{p} \}_{0 \leq p \leq \varsigma - 1}$ be a good covering in $\mathbb{C}^{\ast}$. Let
$\mathcal{T}$ be an open bounded sector centered at 0 with radius $r_{\mathcal{T}}$ and consider a family of open sectors
$$ S_{\mathfrak{d}_{p},\theta,\epsilon_{0}r_{\mathcal{T}}} =
\{ T \in \mathbb{C}^{\ast} / |T| < \epsilon_{0}r_{\mathcal{T}} \ \ , \ \ |\mathfrak{d}_{p} - \mathrm{arg}(T)| < \theta/2 \} $$
with aperture $\theta > \pi/k_{2}$ and where $\mathfrak{d}_{p} \in \mathbb{R}$, for all $0 \leq p \leq \varsigma-1$, are directions
which satisfy
the following constraints: Let $q_{l}(m)$ be the roots of the polynomials (\ref{factor_P_m}) defined by (\ref{defin_roots}) and
$S_{\mathfrak{d}_p}$, $0 \leq p \leq \varsigma -1$ be unbounded sectors centered at 0 with directions
$\mathfrak{d}_{p}$ and with small aperture. Let $\rho>0$ be a positive real number. We assume
that\\
1) There exists a constant $M_{1}>0$ such that
\begin{equation}
|\tau - q_{l}(m)| \geq M_{1}(1 + |\tau|) \label{root_cond_1_in_defin_main_CP}
\end{equation}
for all $0 \leq l \leq (\delta_{D}-1)k_{2}-1$, all $m \in \mathbb{R}$, all $\tau \in S_{\mathfrak{d}_p} \cup \bar{D}(0,\rho)$, for all
$0 \leq p \leq \varsigma-1$.\\
2) There exists a constant $M_{2}>0$ such that
\begin{equation}
|\tau - q_{l_0}(m)| \geq M_{2}|q_{l_0}(m)| \label{root_cond_2_in_defin_main_CP}
\end{equation}
for some $l_{0} \in \{0,\ldots,(\delta_{D}-1)k_{2}-1 \}$, all $m \in \mathbb{R}$, all $\tau \in S_{\mathfrak{d}_p} \cup \bar{D}(0,\rho)$, for
all $0 \leq p \leq \varsigma - 1$.\\
3) There exist a family of unbounded sectors $U_{\mathfrak{d}_{p}}$ with bisecting direction $\mathfrak{d}_{p}$ and bounded
sectors $S_{\mathfrak{d}_{p}}^{b}$ with bisecting direction $\mathfrak{d}_{p}$, with radius less than $\rho$, with aperture
$\frac{\pi}{\kappa} + \delta_{p}$, with $0 < \delta_{p} < \mathrm{ap}(U_{\mathfrak{d}_{p}})$, for all $0 \leq p  \leq \varsigma - 1$,
with the property that $S_{\mathfrak{d}_{p}}^{b} \cap S_{\mathfrak{d}_{p+1}}^{b} \neq \emptyset$ for all $0 \leq p  \leq
\varsigma - 1$ (with
the convention that $\mathfrak{d}_{\varsigma} = \mathfrak{d}_{0}$).\\
4) For all $0 \leq p \leq \varsigma - 1$, for all $t \in \mathcal{T}$, all $\epsilon \in \mathcal{E}_{p}$, we have that
$\epsilon t \in S_{\mathfrak{d}_{p},\theta,\epsilon_{0}r_{\mathcal{T}}}$.\medskip

\noindent We say that the family
$\{ (S_{\mathfrak{d}_{p},\theta,\epsilon_{0}r_{\mathcal{T}}})_{0 \leq p \leq \varsigma-1},\mathcal{T} \}$
is associated to the good covering $\{ \mathcal{E}_{p} \}_{0 \leq p \leq \varsigma - 1}$.
\end{defin}

We consider a good covering $\{ \mathcal{E}_{p} \}_{0 \leq p \leq \varsigma - 1}$ and a family of sectors 
$\{ (S_{\mathfrak{d}_{p},\theta,\epsilon_{0}r_{\mathcal{T}}})_{0 \leq p \leq \varsigma-1},\mathcal{T} \}$ associated to it.
For all $0 \leq p \leq \varsigma-1$, we consider the following nonlinear initial value problem with forcing term
\begin{multline}
Q(\partial_{z})(\partial_{t}u^{\mathfrak{d}_{p}}(t,z,\epsilon)) =
c_{1,2}(\epsilon)(Q_{1}(\partial_{z})u^{\mathfrak{d}_{p}}(t,z,\epsilon))(Q_{2}(\partial_{z})u^{\mathfrak{d}_{p}}(t,z,\epsilon))\\
+ \epsilon^{(\delta_{D}-1)(k_{2}+1) - \delta_{D} + 1}t^{(\delta_{D}-1)(k_{2}+1)}
\partial_{t}^{\delta_D}R_{D}(\partial_{z})u^{\mathfrak{d}_{p}}(t,z,\epsilon)
+ \sum_{l=1}^{D-1} \epsilon^{\Delta_{l}}t^{d_l}\partial_{t}^{\delta_l}R_{l}(\partial_{z})u^{\mathfrak{d}_{p}}(t,z,\epsilon)\\
+ c_{0}(t,z,\epsilon)R_{0}(\partial_{z})u^{\mathfrak{d}_{p}}(t,z,\epsilon) +
c_{F}(\epsilon)f^{\mathfrak{d}_{p}}(t,z,\epsilon) \label{ICP_main_p}
\end{multline}
for given initial data $u^{\mathfrak{d}_{p}}(0,z,\epsilon) \equiv 0$.

The functions $c_{1,2}(\epsilon)$ and $c_{F}(\epsilon)$ are holomorphic and
bounded on the disc $D(0,\epsilon_{0})$ and are such that $c_{1,2}(0)=c_{F}(0)=0$.
The coefficient $c_{0}(t,z,\epsilon)$ and the forcing term $f^{\mathfrak{d}_{p}}(t,z,\epsilon)$ are constructed as follows.
Let $c_{0}(\epsilon)$ and $c_{0,0}(\epsilon)$ be holomorphic and bounded functions on the disc $D(0,\epsilon_{0})$ which
satisfy $c_{0}(0)=c_{0,0}(0)=0$. We consider sequences of functions
$m \mapsto C_{0,n}(m,\epsilon)$, for $n \geq 0$ and $m \mapsto F_{n}(m,\epsilon)$, for $n \geq 1$, that belong to the Banach space
$E_{(\beta,\mu)}$ for some $\beta > 0$, $\mu > \max( \mathrm{deg}(Q_{1})+1, \mathrm{deg}(Q_{2})+1)$ and which
depend holomorphically on $\epsilon \in D(0,\epsilon_{0})$. We assume that there exist constants $K_{0},T_{0}>0$
such that (\ref{norm_beta_mu_C0_n}) hold for all $n \geq 1$, for all $\epsilon \in D(0,\epsilon_{0})$.
We deduce that the function
$$ \mathfrak{C}_{0}(T,z,\epsilon) = c_{0,0}(\epsilon)\mathcal{F}^{-1}(m \mapsto C_{0,0}(m,\epsilon))(z) +
\sum_{n \geq 1} c_{0}(\epsilon)\mathcal{F}^{-1}(m \mapsto C_{0,n}(m,\epsilon))(z) T^{n}  $$
represents a bounded holomorphic function on $D(0,T_{0}/2) \times H_{\beta'} \times D(0,\epsilon_{0})$ for any
$0 < \beta' < \beta$ (where
$\mathcal{F}^{-1}$ denotes the inverse Fourier transform defined in Proposition 9). We define
the coefficient $c_{0}(t,z,\epsilon)$ as
\begin{equation}
c_{0}(t,z,\epsilon) = \mathfrak{C}_{0}(\epsilon t,z,\epsilon)
\label{defin_c_0}
\end{equation}
The function $c_{0}$ is holomorphic and bounded on $D(0,r) \times H_{\beta'} \times D(0,\epsilon_{0})$ where
$r \epsilon_{0} < T_{0}/2$.

We make the assumption that the formal $m_{k_1}-$Borel transform
$$\psi_{k_1}(\tau,m,\epsilon) = \sum_{n \geq 1} F_{n}(m,\epsilon) \frac{\tau^n}{\Gamma(\frac{n}{k_1})}$$
is convergent on the disc $D(0,\rho)$ given in Definition 8 and can be analytically continued w.r.t $\tau$ as a function
$\tau \mapsto \psi_{k_1}^{\mathfrak{d}_{p}}(\tau,m,\epsilon)$ on the domain $U_{\mathfrak{d}_{p}} \cup D(0,\rho)$, where
$U_{\mathfrak{d}_{p}}$ is the unbounded sector given in Definition 8, with
$\psi_{k_1}^{\mathfrak{d}_{p}}(\tau,m,\epsilon) \in F_{(\nu,\beta,\mu,k_{1},k_{1})}^{\mathfrak{d}_{p}}$ and such that
there exists a constant $\zeta_{\psi_{k_1}}>0$ such that
\begin{equation}
||\psi_{k_1}^{\mathfrak{d}_{p}}(\tau,m,\epsilon) ||_{(\nu,\beta,\mu,k_{1},k_{1})} \leq \zeta_{\psi_{k_1}}
\label{psi_k1_bounded_norm_k1_k1_main_CP}
\end{equation}
for all $\epsilon \in D(0,\epsilon_{0})$.

From Lemma 4, we know that the accelerated function
$$ \psi_{k_2}^{\mathfrak{d}_{p}}(\tau,m,\epsilon) :=
\mathcal{A}_{m_{k_2},m_{k_1}}^{\mathfrak{d}_{p}}( h \mapsto \psi_{k_1}^{\mathfrak{d}_{p}}(h,m,\epsilon)) (\tau) $$
defines a function that belongs to the space $F_{(\nu',\beta,\mu,k_{2})}^{\mathfrak{d}_{p}}$ for the
unbounded sector $S_{\mathfrak{d}_{p}}$ and the bounded sector $S_{\mathfrak{d}_{p}}^{b}$ given in Definition 8. Moreover,
we get a constant $\zeta_{\psi_{k_2}}>0$ with 
\begin{equation}
||\psi_{k_2}^{\mathfrak{d}_{p}}(\tau,m,\epsilon)||_{(\nu',\beta,\mu,k_{2})} \leq \zeta_{\psi_{k_2}} \label{psi_p_k2_exp_growth}
\end{equation}
for all $\epsilon \in D(0,\epsilon_{0})$. We take the $m_{k_2}-$Laplace transform
\begin{equation}
F^{\mathfrak{d}_{p}}(T,m,\epsilon) := k_{2}
\int_{L_{\mathfrak{d}_{p}}} \psi_{k_2}^{\mathfrak{d}_{p}}(u,m,\epsilon) e^{-(\frac{u}{T})^{k_2}} \frac{du}{u}
\label{defin_F_frak_d_p}
\end{equation}
which exists for all $T \in S_{\mathfrak{d}_{p},\theta,h'}$, $m \in \mathbb{R}$,
$\epsilon \in D(0,\epsilon_{0})$,
where $S_{\mathfrak{d}_{p},\theta,h'}$ is a sector with bisecting direction $\mathfrak{d}_{p}$,
aperture $\frac{\pi}{k_2} < \theta < \frac{\pi}{k_2} +
\mathrm{ap}(S_{\mathfrak{d}_{p}})$ and radius $h'$, where $h'>0$ is some positive real number, for all
$\epsilon \in D(0,\epsilon_{0})$.

We define the forcing term $f^{\mathfrak{d}_{p}}(t,z,\epsilon)$ as
\begin{equation}
f^{\mathfrak{d}_{p}}(t,z,\epsilon) := \mathcal{F}^{-1}( m \mapsto F^{\mathfrak{d}_{p}}(\epsilon t,m,\epsilon) )(z)
\label{forcing_term_frak_d_p}
\end{equation}
By construction, $f^{\mathfrak{d}_{p}}(t,z,\epsilon)$ represents a bounded holomorphic function on
$\mathcal{T} \times H_{\beta'} \times \mathcal{E}_{p}$ (provided that the radius $r_{\mathcal{T}}$ of $\mathcal{T}$
satisfies the inequality $\epsilon_{0}r_{\mathcal{T}} \leq h'$ which will be assumed in the sequel).\bigskip

In the next first main result, we construct a family of actual holomorphic solutions to the equation (\ref{ICP_main_p}) for given
initial data at $t=0$ being identically equal to zero, defined on the sectors $\mathcal{E}_{p}$ with respect to the
complex parameter $\epsilon$. We can also control the difference between any two neighboring solutions
on the intersection of sectors $\mathcal{E}_{p} \cap \mathcal{E}_{p+1}$.

\begin{theo} We consider the equation (\ref{ICP_main_p}) and we assume that the constraints
(\ref{delta_constraints_main_CP}), (\ref{constrain_d_l_delta_l_Delta_l_main_CP}), (\ref{first_constraints_polynomials_Q_R_main_CP}),
(\ref{quotient_Q_Rl_larger_than_radius_main_CP}) and (\ref{quotient_Q_RD_in_S_main_CP}) hold. We also make the additional assumption
that
\begin{multline}
d_{l}+k_{1}+1 = \delta_{l}(k_{1}+1) + d_{l,k_{1}}^{1} \ \ , \ \ d_{l,k_{1}}^{1}>0 \ \ , \ \
\frac{1}{\kappa} = \frac{1}{k_1} - \frac{1}{k_2},\\
\frac{k_2}{k_{2} - k_{1}} \geq \frac{d_{l} + (1 - \delta_{l})}{d_{l} + (1 - \delta_{l})(k_{1}+1) } \ \ , \ \
d_{l,k_{1}}^{1} > (\delta_{l}-1)(k_{2}-k_{1}) \ \ , \ \ \delta_{D} \geq \delta_{l} + \frac{1}{k_2},
\label{constraints_k1_k2_Borel_equation_for_u_p}
\end{multline}
for $1 \leq l \leq D-1$. Let the coefficient $c_{0}(t,z,\epsilon)$ and
the forcing terms $f^{\mathfrak{d}_{p}}(t,z,\epsilon)$ be constructed as in (\ref{defin_c_0}),
(\ref{forcing_term_frak_d_p}). Let a good covering $\{ \mathcal{E}_{p} \}_{0 \leq p \leq \varsigma - 1}$ in $\mathbb{C}^{\ast}$
be given, for which a family of sectors\\
$\{ (S_{\mathfrak{d}_{p},\theta,\epsilon_{0}r_{\mathcal{T}}})_{0 \leq p \leq \varsigma-1},\mathcal{T} \}$
associated to this good covering can be considered.

Then, there exist radii $r_{Q,R_{l}}>0$ large enough, for $1 \leq l \leq D$ and constants
$\zeta_{1,2},\zeta_{0,0},\zeta_{1,0},\zeta_{F}>0$ small enough such that if
\begin{multline}
\sup_{\epsilon \in D(0,\epsilon_{0})}|\frac{c_{1,2}(\epsilon)}{\epsilon}| \leq \zeta_{1,2} \ \ , \ \
\sup_{\epsilon \in D(0,\epsilon_{0})}|\frac{c_{0}(\epsilon)}{\epsilon}| \leq \zeta_{1,0} \ \ , \ \
\sup_{\epsilon \in D(0,\epsilon_{0})}|\frac{c_{0,0}(\epsilon)}{\epsilon}| \leq \zeta_{0,0},\\
\sup_{\epsilon \in D(0,\epsilon_{0})}|\frac{c_{F}(\epsilon)}{\epsilon}| \leq \zeta_{F},
\label{constraints_coeff_epsilon_main_CP}
\end{multline}
thereafter for every $0 \leq p \leq \varsigma -1$, one can construct a solution $u^{\mathfrak{d}_{p}}(t,z,\epsilon)$ of the equation
(\ref{ICP_main_p}) with $u^{\mathfrak{d}_{p}}(0,z,\epsilon) \equiv 0$ which
defines a bounded holomorphic function on the domain $\mathcal{T} \times H_{\beta'} \times
\mathcal{E}_{p}$ for any given $0< \beta'< \beta$.

Moreover, the next estimates hold for the solution $u^{\mathfrak{d}_{p}}$
and the forcing term $f^{\mathfrak{d}_{p}}$ : there exist constants $0 < h'' \leq r_{\mathcal{T}}$,
$K_{p},M_{p}>0$ (independent of $\epsilon$) with the following properties:\\
1) Assume that the unbounded sectors $U_{\mathfrak{d}_{p}}$ and $U_{\mathfrak{d}_{p+1}}$ have sufficiently large aperture
in such a way that $U_{\mathfrak{d}_{p}} \cap U_{\mathfrak{d}_{p+1}}$ contains the sector
$U_{\mathfrak{d}_{p},\mathfrak{d}_{p+1}} = \{ \tau \in \mathbb{C}^{\ast} /
\mathrm{arg}(\tau) \in [\mathfrak{d}_{p},\mathfrak{d}_{p+1}] \}$, then
\begin{multline}
\sup_{t \in \mathcal{T} \cap D(0,h''), z \in H_{\beta'}}
|u^{\mathfrak{d}_{p+1}}(t,z,\epsilon) - u^{\mathfrak{d}_{p}}(t,z,\epsilon)| \leq K_{p}e^{-\frac{M_p}{|\epsilon|^{k_2}}},\\
\sup_{t \in \mathcal{T} \cap D(0,h''), z \in H_{\beta'}}
|f^{\mathfrak{d}_{p+1}}(t,z,\epsilon) - f^{\mathfrak{d}_{p}}(t,z,\epsilon)| \leq K_{p}e^{-\frac{M_p}{|\epsilon|^{k_2}}}
\label{exp_small_difference_u_p_k2}
\end{multline}
for all $\epsilon \in \mathcal{E}_{p+1} \cap \mathcal{E}_{p}$.\\
2) Assume that the unbounded sectors $U_{\mathfrak{d}_{p}}$ and $U_{\mathfrak{d}_{p+1}}$ have empty intersection, then 
\begin{multline}
\sup_{t \in \mathcal{T} \cap D(0,h''), z \in H_{\beta'}}
|u^{\mathfrak{d}_{p+1}}(t,z,\epsilon) - u^{\mathfrak{d}_{p}}(t,z,\epsilon)| \leq K_{p}e^{-\frac{M_p}{|\epsilon|^{k_1}}},\\
\sup_{t \in \mathcal{T} \cap D(0,h''), z \in H_{\beta'}}
|f^{\mathfrak{d}_{p+1}}(t,z,\epsilon) - f^{\mathfrak{d}_{p}}(t,z,\epsilon)| \leq K_{p}e^{-\frac{M_p}{|\epsilon|^{k_1}}}
\label{exp_small_difference_u_p_k1}
\end{multline}
for all $\epsilon \in \mathcal{E}_{p+1} \cap \mathcal{E}_{p}$.
\end{theo}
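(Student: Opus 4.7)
The plan is to assemble the solutions $u^{\mathfrak{d}_p}$ by combining the machinery already set up in Section~4: for each direction $\mathfrak{d}_p$, I invoke Proposition~11 to obtain $\omega_{k_1}^{\mathfrak{d}_p}(\tau,m,\epsilon) \in F^{\mathfrak{d}_p}_{(\nu,\beta,\mu,k_1,\kappa)}$ (here the smallness assumptions (\ref{constraints_coeff_epsilon_main_CP}) are used to verify the hypotheses (\ref{norm_F_varphi_k_psi_k_1_small})), then Lemma~4 produces $\psi_{k_2}^{\mathfrak{d}_p}$ from the given $\psi_{k_1}^{\mathfrak{d}_p}$ via acceleration and its estimate (\ref{psi_p_k2_exp_growth}) makes it an admissible forcing term for the $k_2$-Borel problem. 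Proposition~14 then yields $\omega_{k_2}^{\mathfrak{d}_p} \in F^{\mathfrak{d}_p}_{(\nu',\beta,\mu,k_2)}$ solving (\ref{k2_Borel_equation_analytic}), and Proposition~15 certifies that on a small bounded sector this $\omega_{k_2}^{\mathfrak{d}_p}$ coincides with the analytic acceleration $\mathrm{Acc}_{k_2,k_1}^{\mathfrak{d}_p}(\omega_{k_1}^{\mathfrak{d}_p})$. I then define
\[
U^{\mathfrak{d}_p}(T,m,\epsilon) = k_2\int_{L_{\mathfrak{d}_p}} \omega_{k_2}^{\mathfrak{d}_p}(u,m,\epsilon)\, e^{-(u/T)^{k_2}}\,\frac{du}{u},\qquad u^{\mathfrak{d}_p}(t,z,\epsilon) = \mathcal{F}^{-1}\bigl(m \mapsto U^{\mathfrak{d}_p}(\epsilon t,m,\epsilon)\bigr)(z),
\]
which is bounded holomorphic on $\mathcal{T} \times H_{\beta'} \times \mathcal{E}_p$ thanks to the exponential growth control (\ref{acc_k_2_k_1_omega_k_1_exp_growth}) and the condition on $\epsilon t$ in Definition~8, together with Proposition~9. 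That $u^{\mathfrak{d}_p}$ solves (\ref{ICP_main_p}) with vanishing initial data follows from Proposition~16 applied to $U^{\mathfrak{d}_p}$ and from the Fourier inversion identities (\ref{dz_fourier})--(\ref{prod_fourier}); the analogous formula for $f^{\mathfrak{d}_p}$ is exactly (\ref{defin_F_frak_d_p})--(\ref{forcing_term_frak_d_p}).

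The substantive part is the flatness estimates (\ref{exp_small_difference_u_p_k2})--(\ref{exp_small_difference_u_p_k1}). In case 1, where $U_{\mathfrak{d}_p}\cap U_{\mathfrak{d}_{p+1}}$ contains the intermediate sector $U_{\mathfrak{d}_p,\mathfrak{d}_{p+1}}$, both $\omega_{k_2}^{\mathfrak{d}_p}$ and $\omega_{k_2}^{\mathfrak{d}_{p+1}}$ extend analytically through this region (by the uniqueness of the fixed point in Proposition~14 applied to any common direction), so I deform the two Laplace rays $L_{\mathfrak{d}_p}$, $L_{\mathfrak{d}_{p+1}}$ into a closed contour consisting of two truncated rays of length $r_0$ plus an arc $C_{r_0}$ of radius $r_0$. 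The two ray contributions compensate, and what remains is the arc integral, on which $|e^{-(u/(\epsilon t))^{k_2}}| \leq \exp(-M\, r_0^{k_2}/|\epsilon|^{k_2})$ uniformly for $t$ in a small subsector $\mathcal{T}\cap D(0,h'')$ (using $|\arg(u/(\epsilon t))| < \pi/(2k_2) - \delta_1$); combined with (\ref{acc_k_2_k_1_omega_k_1_exp_growth}), by choosing $r_0$ of order $|\epsilon|$, I get the $k_2$-exponential smallness on $U^{\mathfrak{d}_{p+1}} - U^{\mathfrak{d}_p}$, and Fourier inversion in $m$ preserves it. The same argument applied to $\psi_{k_2}^{\mathfrak{d}_p}$ yields the companion bound for $f^{\mathfrak{d}_p}$.

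Case 2 is the main obstacle: when $U_{\mathfrak{d}_p}\cap U_{\mathfrak{d}_{p+1}} = \emptyset$, one cannot connect the two $k_2$-Borel functions $\omega_{k_2}^{\mathfrak{d}_p}$, $\omega_{k_2}^{\mathfrak{d}_{p+1}}$ directly on any common ray, so the preceding path-deformation argument fails. The idea is to re-express the solutions at the $k_1$-level: for $t$ in a smaller subsector $\mathcal{T}\cap D(0,h'')$, the composition of the analytic acceleration with the $m_{k_2}$-Laplace transform collapses, by the same computation as in Lemma~13, into a single $m_{k_1}$-Laplace transform of $\omega_{k_1}^{\mathfrak{d}_p}$ along $L_{\mathfrak{d}_p}$. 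Since $\omega_{k_1}^{\mathfrak{d}_p}$ is an element of $F^{\mathfrak{d}_p}_{(\nu,\beta,\mu,k_1,\kappa)}$ with $\kappa>k_1$ and similarly for index $p+1$, and since both solve the same convolution equation (\ref{k_1_Borel_equation}) so must agree on the common disc $D(0,\rho)$ by Proposition~11's uniqueness, I can again close the contour along an arc of radius $\rho$. This time the Laplace kernel is $\exp(-(u/(\epsilon t))^{k_1})$ and the arc contribution is bounded by $\exp(-M'/|\epsilon|^{k_1})$; the excess exponential growth $e^{\nu|\tau|^{\kappa}}$ on the two ray pieces is dominated by the Laplace kernel only when $|\epsilon|$ is small enough, which is exactly where the choice $h'' \leq r_{\mathcal{T}}$ enters. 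The forcing term estimate is entirely parallel, using $\psi_{k_1}^{\mathfrak{d}_p}$ in place of $\omega_{k_1}^{\mathfrak{d}_p}$. Putting the two cases together and applying $\mathcal{F}^{-1}$, which is bounded from $E_{(\beta,\mu)}$ into $L^\infty(H_{\beta'})$, gives (\ref{exp_small_difference_u_p_k2}) and (\ref{exp_small_difference_u_p_k1}).
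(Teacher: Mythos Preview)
Your construction of $u^{\mathfrak{d}_p}$ via Propositions~11, 14, 15, 16 and the Fourier inversion is exactly what the paper does. Your Case~1 sketch is in the right spirit but imprecise: the truncated ray pieces do not ``compensate'' --- in the paper each of $I_1$, $I_2$ (integrals along $[\rho_{\nu,\kappa}/2,\infty)e^{i\gamma_{p}}$, $[\rho_{\nu,\kappa}/2,\infty)e^{i\gamma_{p+1}}$) and the arc $I_3$ is bounded separately, and the radius is a fixed number $\rho_{\nu,\kappa}/2$, not ``of order $|\epsilon|$''. Also the coincidence of $\omega_{k_2}^{\mathfrak{d}_p}$ and $\omega_{k_2}^{\mathfrak{d}_{p+1}}$ on the overlap is obtained in the paper by path deformation at the $k_1$-Borel/acceleration level (both $\omega_{k_1}^{\mathfrak{d}_p}$ continue the same convergent series on $D(0,\rho)$), not by invoking Proposition~14 for a common direction.

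The real problem is Case~2. Your key step --- that $\mathcal{L}^{\mathfrak{d}_p}_{m_{k_2}}\!\circ\mathrm{Acc}^{\mathfrak{d}_p}_{k_2,k_1}$ applied to $\omega_{k_1}^{\mathfrak{d}_p}$ ``collapses into a single $m_{k_1}$-Laplace transform of $\omega_{k_1}^{\mathfrak{d}_p}$'' by the computation of Lemma~13 --- is not valid. Lemma~13 uses crucially that $\psi_{k_1}^{\mathfrak{d}_p}\in F^{\mathfrak{d}_p}_{(\nu,\beta,\mu,k_1,k_1)}$, i.e.\ has $k_1$-exponential growth, so that its $m_{k_1}$-Laplace transform exists. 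By contrast $\omega_{k_1}^{\mathfrak{d}_p}$ only lies in $F^{\mathfrak{d}_p}_{(\nu,\beta,\mu,k_1,\kappa)}$ and satisfies (\ref{omega_k1_frak_d_p_exp_growth}) with growth $e^{\nu|\tau|^{\kappa}}$, $\kappa>k_1$. The $m_{k_1}$-Laplace kernel $e^{-(u/(\epsilon t))^{k_1}}$ decays only to order $k_1$, so $e^{\nu|u|^{\kappa}-c|u|^{k_1}/|\epsilon t|^{k_1}}\to\infty$ as $|u|\to\infty$ for every $\epsilon$; the integral diverges and no choice of $h''$ rescues it. This is precisely why the accelero-summation procedure is needed for $u^{\mathfrak{d}_p}$ in the first place (cf.\ the remark after (\ref{omega_k1_frak_d_p_exp_growth}) in the introduction).

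The paper's route in Case~2 is genuinely different. One stays at the $k_2$-level and decomposes $u^{\mathfrak{d}_{p+1}}-u^{\mathfrak{d}_p}$ into five pieces (\ref{diff_u_p_sum_5_integrals}): two truncated rays and two arcs, each giving $k_2$-flatness by the usual estimate, plus a segment integral $J_5$ over $[0,\rho_{\nu,\kappa}/2]e^{i\theta_{p,p+1}}$ of the \emph{difference} $\mathrm{Acc}^{\mathfrak{d}_{p+1}}_{k_2,k_1}(\omega_{k_1}^{\mathfrak{d}_{p+1}})-\mathrm{Acc}^{\mathfrak{d}_{p}}_{k_2,k_1}(\omega_{k_1}^{\mathfrak{d}_{p}})$. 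A separate Lemma~7 shows this difference is $O(e^{-M/|\tau|^{\kappa}})$ (by path deformation at the $k_1$-level inside $D(0,\rho)$), and then Watson's Lemma (Lemmas~8 and 9) is invoked to show that integrating a function which is $\kappa$-flat at $0$ against the kernel $e^{-(u/(\epsilon t))^{k_2}}$ produces a quantity that is $k_1$-flat in $\epsilon$, using $\tfrac{1}{k_1}=\tfrac{1}{\kappa}+\tfrac{1}{k_2}$. That Gevrey-order bookkeeping via Watson's Lemma is the missing ingredient in your argument.
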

\begin{proof} Let $0 \leq p \leq \varsigma -1$. Under the assumptions of Theorem 1, using Proposition 16, one can construct a function
$U^{\mathfrak{d}_{p}}(T,m,\epsilon)$ which satisfies $U^{\mathfrak{d}_{p}}(0,m,\epsilon) \equiv 0$ and solves the equation
\begin{multline}
Q(im)(\partial_{T}U^{\mathfrak{d}_{p}}(T,m,\epsilon) ) - T^{(\delta_{D}-1)(k_{2}+1)}\partial_{T}^{\delta_{D}}R_{D}(im)
U^{\mathfrak{d}_{p}}(T,m,\epsilon) \\
= \epsilon^{-1}\frac{c_{1,2}(\epsilon)}{(2\pi)^{1/2}}
\int_{-\infty}^{+\infty}Q_{1}(i(m-m_{1}))U^{\mathfrak{d}_{p}}(T,m-m_{1},\epsilon)
Q_{2}(im_{1})U^{\mathfrak{d}_{p}}(T,m_{1},\epsilon) dm_{1}\\
+ \sum_{l=1}^{D-1} R_{l}(im) \epsilon^{\Delta_{l} - d_{l} + \delta_{l} - 1} T^{d_{l}} \partial_{T}^{\delta_l}
U^{\mathfrak{d}_{p}}(T,m,\epsilon)\\
+ \epsilon^{-1}\frac{c_{0}(\epsilon)}{(2\pi)^{1/2}}\int_{-\infty}^{+\infty}C_{0}(T,m-m_{1},\epsilon)R_{0}(im_{1})
U^{\mathfrak{d}_{p}}(T,m_{1},\epsilon) dm_{1}\\
+  \epsilon^{-1}\frac{c_{0,0}(\epsilon)}{(2\pi)^{1/2}}\int_{-\infty}^{+\infty}C_{0,0}(m-m_{1},\epsilon)R_{0}(im_{1})
U^{\mathfrak{d}_{p}}(T,m_{1},\epsilon) dm_{1}
+ \epsilon^{-1}c_{F}(\epsilon)F^{\mathfrak{d}_{p}}(T,m,\epsilon)
\label{SCP_analytic_frak_d_p}
\end{multline}
where $C_{0}(T,m,\epsilon) = \sum_{n \geq 1} C_{0,n}(m,\epsilon) T^{n}$ is a convergent series on
$D(0,T_{0}/2)$ with values in $E_{(\beta,\mu)}$ and $F^{\mathfrak{d}_{p}}(T,m,\epsilon)$ is given by the formula
(\ref{defin_F_frak_d_p}), for all $\epsilon \in D(0,\epsilon_{0})$. The function
$(T,m) \mapsto U^{\mathfrak{d}_{p}}(T,m,\epsilon)$ is well defined on the domain
$S_{\mathfrak{d}_{p},\theta,h'} \times \mathbb{R}$.

Moreover, $U^{\mathfrak{d}_{p}}(T,m,\epsilon)$ can be written as $m_{k_2}-$Laplace transform
\begin{equation}
U^{\mathfrak{d}_{p}}(T,m,\epsilon) = k_{2}\int_{L_{\gamma_{p}}} \omega_{k_2}^{\mathfrak{d}_{p}}(u,m,\epsilon)
\exp( -(\frac{u}{T})^{k_2} ) \frac{du}{u}
\end{equation}
along a halfline $L_{\gamma_{p}} = \mathbb{R}_{+}e^{\sqrt{-1}\gamma_{p}} \subset S_{\mathfrak{d}_{p}} \cup \{ 0 \}$
(the direction $\gamma_{p}$ may depend on $T$), where $\omega_{k_2}^{\mathfrak{d}_{p}}(\tau,m,\epsilon)$ defines a continuous
function on $(S_{\mathfrak{d}_{p}}^{b} \cup S_{\mathfrak{d}_{p}}) \times \mathbb{R} \times D(0,\epsilon_{0})$, which
is holomorphic with respect to $(\tau,\epsilon)$ on
$(S_{\mathfrak{d}_{p}}^{b} \cup S_{\mathfrak{d}_{p}}) \times D(0,\epsilon_{0})$ for any $m \in \mathbb{R}$
and satisfies the estimates: there exists a constant $C_{\omega_{k_2}^{\mathfrak{d}_{p}}}>0$ with
\begin{equation}
|\omega_{k_2}^{\mathfrak{d}_{p}}(\tau,m,\epsilon)| \leq C_{\omega_{k_2}^{\mathfrak{d}_{p}}} (1+|m|)^{-\mu} e^{-\beta |m|}
\frac{ |\tau| }{1 + |\tau|^{2k_{2}}} \exp( \nu' |\tau|^{k_2} )
\label{omega_k2_frak_d_p_exp_growth}
\end{equation}
for all $\tau \in S_{\mathfrak{d}_{p}}^{b} \cup S_{\mathfrak{d}_{p}}$, all $m \in \mathbb{R}$, all
$\epsilon \in D(0,\epsilon_{0})$. Besides, the function
$\omega_{k_2}^{\mathfrak{d}_{p}}(\tau,m,\epsilon)$ is the analytic continuation w.r.t $\tau$ of the function
\begin{equation}
\tau \mapsto \mathrm{Acc}_{k_{2},k_{1}}^{\mathfrak{d}_{p}}(\omega_{k_1}^{\mathfrak{d}_{p}})(\tau,m,\epsilon) =
\int_{L_{\gamma_{p}^{1}}} \omega_{k_1}^{\mathfrak{d}_{p}}(h,m,\epsilon) G(\tau,h) \frac{dh}{h}
\label{acc_k_2_k_1_omega_k_1_integral}
\end{equation}
where the path of integration is a halfline
$L_{\gamma_{p}^{1}} = \mathbb{R}_{+}e^{\sqrt{-1}\gamma_{p}^{1}} \subset U_{\mathfrak{d}_{p}}$ (the direction $\gamma_{p}^{1}$ may
depend on $\tau$), which defines an analytic function on
$S_{\mathfrak{d}_{p},\kappa,\delta_{p},(c_{2}/\nu)^{1/\kappa}/2} \subset S_{\mathfrak{d}_{p}}^{b}$ which is a
sector with bisecting direction $\mathfrak{d}_{p}$, aperture $\frac{\pi}{\kappa} + \delta_{p}$ and radius
$(c_{2}/\nu)^{1/\kappa}/2$. We recall that
$\omega_{k_1}^{\mathfrak{d}_{p}}(h,m,\epsilon)$ defines a continuous function on
$(U_{\mathfrak{d}_{p}} \cup D(0,\rho)) \times \mathbb{R} \times D(0,\epsilon_{0})$, which is holomorphic
w.r.t $(\tau,\epsilon)$ on $(U_{\mathfrak{d}_{p}} \cup D(0,\rho)) \times D(0,\epsilon_{0})$, for any
$m \in \mathbb{R}$ and satisfies the estimates: there exists a constant $C_{\omega_{k_1}^{\mathfrak{d}_{p}}}>0$ with
\begin{equation}
|\omega_{k_1}^{\mathfrak{d}_{p}}(\tau,m,\epsilon)| \leq C_{\omega_{k_1}^{\mathfrak{d}_{p}}} (1+|m|)^{-\mu} e^{-\beta |m|}
\frac{ |\tau| }{1 + |\tau|^{2k_{1}}} \exp( \nu |\tau|^{\kappa} )
\label{omega_k1_frak_d_p_exp_growth}
\end{equation}
for all $\tau \in U_{\mathfrak{d}_{p}} \cup D(0,\rho)$, all $m \in \mathbb{R}$, all
$\epsilon \in D(0,\epsilon_{0})$.

Using the estimates (\ref{omega_k2_frak_d_p_exp_growth}), we get that the function
$$ (T,z) \mapsto \mathbf{U}^{\mathfrak{d}_{p}}(T,z,\epsilon) = \mathcal{F}^{-1}( m \mapsto U^{\mathfrak{d}_{p}}(T,m,\epsilon) )(z) $$
defines a bounded holomorphic function on $S_{\mathfrak{d}_{p},\theta,h'} \times H_{\beta'}$, for all
$\epsilon \in D(0,\epsilon_{0})$ and any $0 < \beta' < \beta$. For all $0 \leq p \leq \varsigma - 1$, we define
$$ u^{\mathfrak{d}_{p}}(t,z,\epsilon) = \mathbf{U}^{\mathfrak{d}_{p}}(\epsilon t,z,\epsilon) =
\frac{k_2}{(2 \pi)^{1/2}} \int_{-\infty}^{+\infty} \int_{L_{\gamma_{p}}} \omega_{k_2}^{\mathfrak{d}_{p}}(u,m,\epsilon)
\exp( - (\frac{u}{\epsilon t})^{k_2} ) e^{izm} \frac{du}{u} dm. $$
Taking into account the construction provided in 4) from Definition 8, the function $u^{\mathfrak{d}_{p}}(t,z,\epsilon)$ defines
a bounded holomorphic
function on the domain $\mathcal{T} \times H_{\beta'} \times \mathcal{E}_{p}$. Moreover, we have
$u^{\mathfrak{d}_{p}}(0,z,\epsilon) \equiv 0$ and using the properties of the Fourier inverse transform from Proposition 9, we
deduce that $u^{\mathfrak{d}_{p}}(t,z,\epsilon)$ solves the main equation (\ref{ICP_main_p}) on
$\mathcal{T} \times H_{\beta'} \times \mathcal{E}_{p}$.\medskip

Now, we proceed to the proof of the estimates (\ref{exp_small_difference_u_p_k2}). We detail only the arguments for the
functions $u^{\mathfrak{d}_{p}}$ since the estimates for the forcing terms $f^{\mathfrak{d}_{p}}$ follow the same line of
discourse as below with the help of the estimates (\ref{psi_p_k2_exp_growth}) instead of (\ref{omega_k2_frak_d_p_exp_growth}).

Let $0 \leq p \leq \varsigma - 1$ such that
$U_{\mathfrak{d}_{p}} \cap U_{\mathfrak{d}_{p+1}}$ contains the sector $U_{\mathfrak{d}_{p},\mathfrak{d}_{p+1}}$. First of all,
from the integral representation (\ref{acc_k_2_k_1_omega_k_1_integral}) by using a path deformation between $L_{\gamma_{p}^{1}}$
and $L_{\gamma_{p+1}^{1}}$, we observe that the functions
$\mathrm{Acc}_{k_{2},k_{1}}^{\mathfrak{d}_{p}}(\omega_{k_1}^{\mathfrak{d}_{p}})(\tau,m,\epsilon)$
and $\mathrm{Acc}_{k_{2},k_{1}}^{\mathfrak{d}_{p+1}}(\omega_{k_1}^{\mathfrak{d}_{p+1}})(\tau,m,\epsilon)$ must coincide
on the domain $(S_{\mathfrak{d}_{p},\kappa,\delta_{p},(c_{2}/\nu)^{1/\kappa}/2} \cap
S_{\mathfrak{d}_{p+1},\kappa,\delta_{p+1},(c_{2}/\nu)^{1/\kappa}/2}) \times \mathbb{R} \times D(0,\epsilon_{0})$. Hence, there
exists a function that we denote
$\omega_{k_2}^{\mathfrak{d}_{p},\mathfrak{d}_{p+1}}(\tau,m,\epsilon)$ which is holomorphic w.r.t $\tau$ on
$S_{\mathfrak{d}_{p},\kappa,\delta_{p},(c_{2}/\nu)^{1/\kappa}/2} \cup
S_{\mathfrak{d}_{p+1},\kappa,\delta_{p+1},(c_{2}/\nu)^{1/\kappa}/2}$, continuous w.r.t $m$ on $\mathbb{R}$, holomorphic w.r.t
$\epsilon$ on $D(0,\epsilon_{0})$ which coincides with
$\mathrm{Acc}_{k_{2},k_{1}}^{\mathfrak{d}_{p}}(\omega_{k_1}^{\mathfrak{d}_{p}})(\tau,m,\epsilon)$ on
$S_{\mathfrak{d}_{p},\kappa,\delta_{p},(c_{2}/\nu)^{1/\kappa}/2} \times \mathbb{R} \times D(0,\epsilon_{0})$ and with
$\mathrm{Acc}_{k_{2},k_{1}}^{\mathfrak{d}_{p+1}}(\omega_{k_1}^{\mathfrak{d}_{p+1}})(\tau,m,\epsilon)$ on
$S_{\mathfrak{d}_{p+1},\kappa,\delta_{p+1},(c_{2}/\nu)^{1/\kappa}/2} \times \mathbb{R} \times D(0,\epsilon_{0})$.

Now, we put $\rho_{\nu,\kappa}=(c_{2}/\nu)^{1/\kappa}/2$. Using the fact that
the function
$$ u \mapsto \omega_{k_2}^{\mathfrak{d}_{p},\mathfrak{d}_{p+1}}(u,m,\epsilon)
\exp( -(\frac{u}{\epsilon t})^{k_2} )/u $$
is holomorphic on
$S_{\mathfrak{d}_{p},\kappa,\delta_{p},\rho_{\nu,\kappa}} \cup
S_{\mathfrak{d}_{p+1},\kappa,\delta_{p+1},\rho_{\nu,\kappa}}$ for all
$(m,\epsilon) \in \mathbb{R} \times D(0,\epsilon_{0})$, its integral along the union of a segment starting from
0 to $(\rho_{\nu,\kappa}/2)e^{i\gamma_{p+1}}$, an arc of circle with radius $\rho_{\nu,\kappa}/2$ which connects
$(\rho_{\nu,\kappa}/2)e^{i\gamma_{p+1}}$ and $(\rho_{\nu,\kappa}/2)e^{i\gamma_{p}}$ and a segment starting from
$(\rho_{\nu,\kappa}/2)e^{i\gamma_{p}}$ to 0, is equal to zero. Therefore, we can write the difference
$u^{\mathfrak{d}_{p+1}} - u^{\mathfrak{d}_{p}}$ as a sum of three
integrals,
\begin{multline}
u^{\mathfrak{d}_{p+1}}(t,z,\epsilon) - u^{\mathfrak{d}_{p}}(t,z,\epsilon) = \frac{k_2}{(2\pi)^{1/2}}\int_{-\infty}^{+\infty}
\int_{L_{\rho_{\nu,\kappa}/2,\gamma_{p+1}}}
\omega_{k_2}^{\mathfrak{d}_{p+1}}(u,m,\epsilon) e^{-(\frac{u}{\epsilon t})^{k_2}} e^{izm} \frac{du}{u} dm\\ -
\frac{k_2}{(2\pi)^{1/2}}\int_{-\infty}^{+\infty}
\int_{L_{\rho_{\nu,\kappa}/2,\gamma_{p}}}
\omega_{k_2}^{\mathfrak{d}_p}(u,m,\epsilon) e^{-(\frac{u}{\epsilon t})^{k_2}} e^{izm} \frac{du}{u} dm\\
+ \frac{k_2}{(2\pi)^{1/2}}\int_{-\infty}^{+\infty}
\int_{C_{\rho_{\nu,\kappa}/2,\gamma_{p},\gamma_{p+1}}}
\omega_{k_2}^{\mathfrak{d}_{p},\mathfrak{d}_{p+1}}(u,m,\epsilon) e^{-(\frac{u}{\epsilon t})^{k_2}} e^{izm}
\frac{du}{u} dm \label{difference_u_p_decomposition_k2}
\end{multline}
where $L_{\rho_{\nu,\kappa}/2,\gamma_{p+1}} = [\rho_{\nu,\kappa}/2,+\infty)e^{i\gamma_{p+1}}$,
$L_{\rho_{\nu,\kappa}/2,\gamma_{p}} = [\rho_{\nu,\kappa}/2,+\infty)e^{i\gamma_{p}}$ and
$C_{\rho_{\nu,\kappa}/2,\gamma_{p},\gamma_{p+1}}$ is an arc of circle with radius $\rho_{\nu,\kappa}/2$ connecting
$(\rho_{\nu,\kappa}/2)e^{i\gamma_{p}}$ and $(\rho_{\nu,\kappa}/2)e^{i\gamma_{p+1}}$ with a well chosen orientation.\medskip

We give estimates for the quantity
$$ I_{1} = \left| \frac{k_2}{(2\pi)^{1/2}}\int_{-\infty}^{+\infty}
\int_{L_{\rho_{\nu,\kappa}/2,\gamma_{p+1}}}
\omega_{k_2}^{\mathfrak{d}_{p+1}}(u,m,\epsilon) e^{-(\frac{u}{\epsilon t})^{k_2}} e^{izm} \frac{du}{u} dm \right|.
$$
By construction, the direction $\gamma_{p+1}$ (which depends on $\epsilon t$) is chosen in such a way that
$\cos( k_{2}( \gamma_{p+1} - \mathrm{arg}(\epsilon t) )) \geq \delta_{1}$, for all
$\epsilon \in \mathcal{E}_{p} \cap \mathcal{E}_{p+1}$, all $t \in \mathcal{T}$, for some fixed $\delta_{1} > 0$.
From the estimates (\ref{omega_k2_frak_d_p_exp_growth}), we get that
\begin{multline}
I_{1} \leq \frac{k_{2}}{(2\pi)^{1/2}} \int_{-\infty}^{+\infty} \int_{\rho_{\nu,\kappa}/2}^{+\infty}
C_{\omega_{k_2}^{\mathfrak{d}_{p+1}}}(1+|m|)^{-\mu} e^{-\beta|m|}\frac{r}{1 + r^{2k_{2}} } \\
\times \exp( \nu' r^{k_2} )
\exp(-\frac{\cos(k_{2}(\gamma_{p+1} - \mathrm{arg}(\epsilon t)))}{|\epsilon t|^{k_2}}r^{k_2})
e^{-m\mathrm{Im}(z)} \frac{dr}{r} dm\\
\leq \frac{k_{2}C_{\omega_{k_2}^{\mathfrak{d}_{p+1}}}}{(2\pi)^{1/2}}
\int_{-\infty}^{+\infty} e^{-(\beta - \beta')|m|} dm
\int_{\rho_{\nu,\kappa}/2}^{+\infty} \exp( -(\frac{\delta_{1}}{|t|^{k_2}} - \nu'|\epsilon|^{k_2})(\frac{r}{|\epsilon|})^{k_2} )
dr\\
\leq  \frac{2k_{2}C_{\omega_{k_2}^{\mathfrak{d}_{p+1}}}}{(2\pi)^{1/2}} \int_{0}^{+\infty} e^{-(\beta - \beta')m} dm
\int_{\rho_{\nu,\kappa}/2}^{+\infty} \frac{|\epsilon|^{k_2}}{(\frac{\delta_1}{|t|^{k_2}} - \nu'|\epsilon|^{k_2})
k_{2}(\frac{\rho_{\nu,\kappa}}{2})^{k_{2}-1}}
\times \frac{ (\frac{\delta_1}{|t|^{k_2}} - \nu'|\epsilon|^{k_2})k_{2} r^{k_{2}-1} }{|\epsilon|^{k_2}}\\
\times \exp( -(\frac{\delta_{1}}{|t|^{k_2}} - \nu'|\epsilon|^{k_2})(\frac{r}{|\epsilon|})^{k_2} ) dr\\
\leq
\frac{2k_{2}C_{\omega_{k_2}^{\mathfrak{d}_{p+1}}}}{(2\pi)^{1/2}} \frac{|\epsilon|^{k_2}}{(\beta - \beta')
(\frac{\delta_{1}}{|t|^{k_2}} - \nu'|\epsilon|^{k_2})k_{2}(\frac{\rho_{\nu,\kappa}}{2})^{k_{2}-1}}
\exp( -(\frac{\delta_1}{|t|^{k_2}} - \nu'|\epsilon|^{k_2}) \frac{(\rho_{\nu,\kappa}/2)^{k_2}}{|\epsilon|^{k_2}} )\\
\leq \frac{2k_{2}C_{\omega_{k_2}^{\mathfrak{d}_{p+1}}}}{(2\pi)^{1/2}} \frac{|\epsilon|^{k_2}}{(\beta - \beta')
\delta_{2}k_{2}(\frac{\rho_{\nu,\kappa}}{2})^{k_{2}-1}}
\exp( -\delta_{2} \frac{(\rho_{\nu,\kappa}/2)^{k_2}}{|\epsilon|^{k_2}} ) \label{I_1_exp_small_order_k_2}
\end{multline}
for all $t \in \mathcal{T}$ and $|\mathrm{Im}(z)| \leq \beta'$ with
$|t| < (\frac{\delta_{1}}{\delta_{2} + \nu'\epsilon_{0}^{k_2}})^{1/k_{2}}$, for some $\delta_{2}>0$, for all
$\epsilon \in \mathcal{E}_{p} \cap \mathcal{E}_{p+1}$.\medskip

In the same way, we also give estimates for the integral
$$ I_{2} = \left| \frac{k_2}{(2\pi)^{1/2}}\int_{-\infty}^{+\infty}
\int_{L_{\rho_{\nu,\kappa}/2,\gamma_{p}}}
\omega_{k_2}^{\mathfrak{d}_{p}}(u,m,\epsilon) e^{-(\frac{u}{\epsilon t})^{k_2}} e^{izm} \frac{du}{u} dm \right|.
$$
Namely, the direction $\gamma_{p}$ (which depends on $\epsilon t$) is chosen in such a way that
$\cos( k_{2}( \gamma_{p} - \mathrm{arg}(\epsilon t) )) \geq \delta_{1}$, for all
$\epsilon \in \mathcal{E}_{p} \cap \mathcal{E}_{p+1}$, all $t \in \mathcal{T}$, for some fixed $\delta_{1} > 0$.
Again from the estimates (\ref{omega_k2_frak_d_p_exp_growth}) and following the same steps as in (\ref{I_1_exp_small_order_k_2}),
we get that
\begin{equation}
I_{2} \leq \frac{2k_{2}C_{\omega_{k_2}^{\mathfrak{d}_{p}}}}{(2\pi)^{1/2}} \frac{|\epsilon|^{k_2}}{(\beta - \beta')
\delta_{2}k_{2}(\frac{\rho_{\nu,\kappa}}{2})^{k_{2}-1}}
\exp( -\delta_{2} \frac{(\rho_{\nu,\kappa}/2)^{k_2}}{|\epsilon|^{k_2}} ) \label{I_2_exp_small_order_k_2}
\end{equation}
for all $t \in \mathcal{T}$ and $|\mathrm{Im}(z)| \leq \beta'$ with
$|t| < (\frac{\delta_{1}}{\delta_{2} + \nu'\epsilon_{0}^{k_2}})^{1/k_{2}}$, for some $\delta_{2}>0$, for all
$\epsilon \in \mathcal{E}_{p} \cap \mathcal{E}_{p+1}$.\medskip

Finally, we give upper bound estimates for the integral
$$
I_{3} = \left| \frac{k_2}{(2\pi)^{1/2}}\int_{-\infty}^{+\infty}
\int_{C_{\rho_{\nu,\kappa}/2,\gamma_{p},\gamma_{p+1}}}
\omega_{k_2}^{\mathfrak{d}_{p},\mathfrak{d}_{p+1}}(u,m,\epsilon) e^{-(\frac{u}{\epsilon t})^{k_2}} e^{izm} \frac{du}{u} dm \right|.
$$
By construction, the arc of circle $C_{\rho_{\nu,\kappa}/2,\gamma_{p},\gamma_{p+1}}$ is chosen in such a way that
$\cos(k_{2}(\theta - \mathrm{arg}(\epsilon t))) \geq \delta_{1}$, for all $\theta \in [\gamma_{p},\gamma_{p+1}]$ (if
$\gamma_{p} < \gamma_{p+1}$), $\theta \in [\gamma_{p+1},\gamma_{p}]$ (if
$\gamma_{p+1} < \gamma_{p}$), for all $t \in \mathcal{T}$, all $\epsilon \in \mathcal{E}_{p} \cap \mathcal{E}_{p+1}$, for some
fixed $\delta_{1}>0$. Bearing in mind (\ref{omega_k2_frak_d_p_exp_growth}), we get that
\begin{multline}
I_{3} \leq \frac{k_2}{(2\pi)^{1/2}} \int_{-\infty}^{+\infty}  \left| \int_{\gamma_{p}}^{\gamma_{p+1}} \right.
\max \{ C_{\omega_{k_2}^{\mathfrak{d}_{p}}},  C_{\omega_{k_2}^{\mathfrak{d}_{p+1}}} \}
(1+|m|)^{-\mu} e^{-\beta|m|}
\frac{\rho_{\nu,\kappa}/2}{1 + (\rho_{\nu,\kappa}/2)^{2k_{2}} } \\
\times \exp( \nu' (\rho_{\nu,\kappa}/2)^{k_2} )
\exp(-\frac{\cos(k_{2}(\theta - \mathrm{arg}(\epsilon t)))}{|\epsilon t|^{k_2}}(\frac{\rho_{\nu,\kappa}}{2})^{k_2})
\left. e^{-m\mathrm{Im}(z)} d\theta \right| dm\\
\leq \frac{k_{2}( \max \{ C_{\omega_{k_2}^{\mathfrak{d}_{p}}},  C_{\omega_{k_2}^{\mathfrak{d}_{p+1}}} \})}{(2\pi)^{1/2}}
\int_{-\infty}^{+\infty}
e^{-(\beta - \beta')|m|} dm \times
|\gamma_{p} - \gamma_{p+1}| \frac{\rho_{\nu,\kappa}}{2}
\exp( -( \frac{\delta_1}{|t|^{k_2}} - \nu'|\epsilon|^{k_2}) (\frac{\rho_{\nu,\kappa}/2}{|\epsilon|})^{k_2}) \\
\leq \frac{2k_{2}( \max \{ C_{\omega_{k_2}^{\mathfrak{d}_{p}}},
C_{\omega_{k_2}^{\mathfrak{d}_{p+1}}} \}) }{(2\pi)^{1/2}(\beta - \beta')} |\gamma_{p} - \gamma_{p+1}|
\frac{\rho_{\nu,\kappa}}{2} \exp( -\delta_{2} (\frac{\rho_{\nu,\kappa}/2}{|\epsilon|})^{k_2}) \label{I_3_exp_small_order_k_2}
\end{multline}
for all $t \in \mathcal{T}$ and $|\mathrm{Im}(z)| \leq \beta'$ with
$|t| < (\frac{\delta_{1}}{\delta_{2} + \nu' \epsilon_{0}^{k_2}})^{1/k_{2}}$, for some $\delta_{2}>0$, for all
$\epsilon \in \mathcal{E}_{p} \cap \mathcal{E}_{p+1}$.\medskip

Finally, gathering the three above inequalities (\ref{I_1_exp_small_order_k_2}), (\ref{I_2_exp_small_order_k_2}) and
(\ref{I_3_exp_small_order_k_2}), we deduce from the decomposition (\ref{difference_u_p_decomposition_k2}) that
\begin{multline*}
|u^{\mathfrak{d}_{p+1}}(t,z,\epsilon) - u^{\mathfrak{d}_{p}}(t,z,\epsilon)| \leq
\frac{2k_{2}(C_{\omega_{k_2}^{\mathfrak{d}_{p}}} + C_{\omega_{k_2}^{\mathfrak{d}_{p+1}}} )}{(2\pi)^{1/2}}
\frac{|\epsilon|^{k_2}}{(\beta - \beta')
\delta_{2}k_{2}(\frac{\rho_{\nu,\kappa}}{2})^{k_{2}-1}}
\exp( -\delta_{2} \frac{(\rho_{\nu,\kappa}/2)^{k_2}}{|\epsilon|^{k_2}} )+\\
\frac{2k_{2}( \max \{ C_{\omega_{k_2}^{\mathfrak{d}_{p}}},
C_{\omega_{k_2}^{\mathfrak{d}_{p+1}}} \}) }{(2\pi)^{1/2}(\beta - \beta')} |\gamma_{p} - \gamma_{p+1}|
\frac{\rho_{\nu,\kappa}}{2} \exp( -\delta_{2} (\frac{\rho_{\nu,\kappa}/2}{|\epsilon|})^{k_2})
\end{multline*}
for all $t \in \mathcal{T}$ and $|\mathrm{Im}(z)| \leq \beta'$ with
$|t| < (\frac{\delta_{1}}{\delta_{2} + \nu' \epsilon_{0}^{k_2}})^{1/k_{2}}$, for some $\delta_{2}>0$, for all
$\epsilon \in \mathcal{E}_{p} \cap \mathcal{E}_{p+1}$. Therefore, the inequality
(\ref{exp_small_difference_u_p_k2}) holds.\bigskip

In the last part of the proof, we show the estimates (\ref{exp_small_difference_u_p_k1}). Again, we only describe the
arguments for the functions $u^{\mathfrak{d}_{p}}$ since exactly the same analysis can be made for the forcing term
$f^{\mathfrak{d}_{p}}$ using the estimates (\ref{psi_k1_bounded_norm_k1_k1_main_CP}) and
(\ref{psi_p_k2_exp_growth}) instead of (\ref{omega_k2_frak_d_p_exp_growth}) and (\ref{omega_k1_frak_d_p_exp_growth}).

Let $0 \leq p \leq \varsigma-1$ such
that $U_{\mathfrak{d}_{p}} \cap U_{\mathfrak{d}_{p+1}} = \emptyset$. We first consider the following
\begin{lemma} There exist two constants $K_{p}^{\mathcal{A}},M_{p}^{\mathcal{A}}>0$ such that
\begin{equation}
|\mathrm{Acc}_{k_{2},k_{1}}^{\mathfrak{d}_{p+1}}(\omega_{k_1}^{\mathfrak{d}_{p+1}})(\tau,m,\epsilon)
- \mathrm{Acc}_{k_{2},k_{1}}^{\mathfrak{d}_{p}}(\omega_{k_1}^{\mathfrak{d}_{p}})(\tau,m,\epsilon) | \leq
K_{p}^{\mathcal{A}} \exp( - \frac{M_{p}^{\mathcal{A}}}{|\tau|^{\kappa}} ) (1+|m|)^{-\mu} e^{-\beta |m| }
\label{diff_acc_k1k2_exp_small_order_kappa}
\end{equation}
for all $\epsilon \in \mathcal{E}_{p+1} \cap \mathcal{E}_{p}$, all $\tau \in
S_{\mathfrak{d}_{p+1},\kappa,\delta_{p+1},\rho_{\nu,\kappa}} \cap S_{\mathfrak{d}_{p},\kappa,\delta_{p},\rho_{\nu,\kappa}}$,
all $m \in \mathbb{R}$.
\end{lemma}
\begin{proof} We first notice that the functions $\tau \mapsto \omega_{k_1}^{\mathfrak{d}_p}(\tau,m,\epsilon)$ and
$\tau \mapsto \omega_{k_1}^{\mathfrak{d}_{p+1}}(\tau,m,\epsilon)$ are analytic continuations of the common
$m_{k_1}-$Borel transform $\omega_{k_1}(\tau,m,\epsilon) = \sum_{n \geq 1} U_{n}(m,\epsilon) \tau^{n}/\Gamma(n/k_{1})$ which
defines a continuous function on $D(0,\rho) \times \mathbb{R} \times D(0,\epsilon_{0})$, holomorphic w.r.t
$(\tau,\epsilon)$ on $D(0,\rho) \times D(0,\epsilon_{0})$ for any $m \in \mathbb{R}$ with estimates : there exists a constant
$C_{\omega_{k_1}}>0$ with
\begin{equation}
|\omega_{k_1}(\tau,m,\epsilon)| \leq C_{\omega_{k_1}}(1+|m|)^{-\mu}e^{-\beta|m|} \frac{|\tau|}{1 + |\tau|^{2k_1}}
e^{\nu |\tau|^{\kappa}} \label{omega_k_1_exp_growth_decay}
\end{equation}
for all $\tau \in D(0,\rho)$, all $m \in \mathbb{R}$, all $\epsilon \in D(0,\epsilon_{0})$. From the
proof of Proposition 13, we know that
the function $G(\tau,h)$ is holomorphic w.r.t $(\tau,h) \in \mathbb{C}^{2}$ whenever $\tau/h$ belongs to an open unbounded
sector with direction $d=0$ and aperture $\pi/\kappa$. As a result, the integral of the function
$h \mapsto \omega_{k_1}(h,m,\epsilon)G(\tau,h)/h$, for all
$(m,\epsilon) \in \mathbb{R} \times D(0,\epsilon_{0})$, all $\tau \in S_{\mathfrak{d}_{p+1},\kappa,\delta_{p+1},\rho_{\nu,\kappa}}
\cap S_{\mathfrak{d}_{p},\kappa,\delta_{p},\rho_{\nu,\kappa}}$,
along the union of a segment starting from
0 to $(\rho/2)e^{i\gamma_{p+1}^{1}}$, an arc of circle with radius $\rho/2$ which connects
$(\rho/2)e^{i\gamma_{p+1}^{1}}$ and $(\rho/2)e^{i\gamma_{p}^{1}}$ and a segment starting from
$(\rho/2)e^{i\gamma_{p}^{1}}$ to 0, is equal to zero. Therefore, we can write the difference
$\mathrm{Acc}_{k_{2},k_{1}}^{\mathfrak{d}_{p+1}}(\omega_{k_1}^{\mathfrak{d}_{p+1}}) -
\mathrm{Acc}_{k_{2},k_{1}}^{\mathfrak{d}_{p}}(\omega_{k_1}^{\mathfrak{d}_{p}})$ as a sum of three integrals
\begin{multline}
 \mathrm{Acc}_{k_{2},k_{1}}^{\mathfrak{d}_{p+1}}(\omega_{k_1}^{\mathfrak{d}_{p+1}})(\tau,m,\epsilon)
- \mathrm{Acc}_{k_{2},k_{1}}^{\mathfrak{d}_{p}}(\omega_{k_1}^{\mathfrak{d}_{p}})(\tau,m,\epsilon)\\
= \int_{L_{\rho/2,\gamma_{p+1}^{1}}} \omega_{k_1}^{\mathfrak{d}_{p+1}}(h,m,\epsilon) G(\tau,h) \frac{dh}{h} -
\int_{L_{\rho/2,\gamma_{p}^{1}}} \omega_{k_1}^{\mathfrak{d}_{p}}(h,m,\epsilon) G(\tau,h) \frac{dh}{h}\\
+ \int_{C_{\rho/2,\gamma_{p}^{1},\gamma_{p+1}^{1}}} \omega_{k_1}(h,m,\epsilon) G(\tau,h) \frac{dh}{h}
\label{acc_k_2_k_1_omega_k_1_sum_three_integrals}
\end{multline}
where $L_{\rho/2,\gamma_{p+1}^{1}} = [\rho/2,+\infty)e^{i\gamma_{p+1}^{1}}$,
$L_{\rho/2,\gamma_{p}^{1}} = [\rho/2,+\infty)e^{i\gamma_{p}^{1}}$ and
$C_{\rho/2,\gamma_{p}^{1},\gamma_{p+1}^{1}}$ is an arc of circle with radius $\rho/2$ connecting
$(\rho/2)e^{i\gamma_{p}^{1}}$ and $(\rho/2)e^{i\gamma_{p+1}^{1}}$ with a well chosen orientation.\medskip

We give estimates for the quantity
$$ I_{1}^{\mathcal{A}} = |\int_{L_{\rho/2,\gamma_{p+1}^{1}}} \omega_{k_1}^{\mathfrak{d}_{p+1}}(h,m,\epsilon)
G(\tau,h) \frac{dh}{h} |. $$
From the estimates (\ref{G_xi_h_exp_growth_order_kappa}) and (\ref{omega_k1_frak_d_p_exp_growth}), we get that
\begin{multline}
I_{1}^{\mathcal{A}} \leq \int_{\rho/2}^{+\infty} C_{\omega_{k_1}}^{\mathfrak{d}_{p+1}} (1+|m|)^{-\mu}e^{-\beta |m|}
\frac{r}{1 + r^{2k_1}} e^{\nu r^{\kappa}} c_{1} \exp( -c_{2} (\frac{r}{|\tau|})^{\kappa} ) \frac{dr}{r}\\
\leq c_{1}C_{\omega_{k_1}}^{\mathfrak{d}_{p+1}} (1+|m|)^{-\mu}e^{-\beta |m|}
\int_{\rho/2}^{+\infty} \frac{|\tau|^{\kappa}}{(c_{2}- |\tau|^{\kappa}\nu)\kappa (\rho/2)^{\kappa-1} }
\times \frac{(c_{2} - |\tau|^{\kappa} \nu) \kappa r^{\kappa - 1}}{|\tau|^{\kappa}}\\
\times \exp( -(c_{2} - |\tau|^{\kappa}\nu)(\frac{r}{|\tau|})^{\kappa} ) dr\\
\leq c_{1}C_{\omega_{k_1}}^{\mathfrak{d}_{p+1}} (1+|m|)^{-\mu}e^{-\beta |m|}
\frac{|\tau|^{\kappa}}{(c_{2}- |\tau|^{\kappa}\nu)\kappa (\rho/2)^{\kappa-1} }
\exp( -(c_{2} - |\tau|^{\kappa}\nu)(\frac{\rho/2}{|\tau|})^{\kappa} )\\
\leq c_{1}C_{\omega_{k_1}}^{\mathfrak{d}_{p+1}} (1+|m|)^{-\mu}e^{-\beta |m|}
\frac{|\tau|^{\kappa}}{c_{2}(1 - \frac{1}{2^{\kappa}})\kappa (\rho/2)^{\kappa-1}}
\exp( -(c_{2}(1 - \frac{1}{2^{\kappa}})(\frac{\rho/2}{|\tau|})^{\kappa} ) ) \label{I_1_mathcal_A_expo_small}
\end{multline}
for all $\epsilon \in \mathcal{E}_{p+1} \cap \mathcal{E}_{p}$, all $\tau \in
S_{\mathfrak{d}_{p+1},\kappa,\delta_{p+1},\rho_{\nu,\kappa}} \cap S_{\mathfrak{d}_{p},\kappa,\delta_{p},\rho_{\nu,\kappa}}$,
all $m \in \mathbb{R}$.\medskip

In the same way, we also give estimates for the integral
$$ I_{2}^{\mathcal{A}} = |\int_{L_{\rho/2,\gamma_{p}^{1}}} \omega_{k_1}^{\mathfrak{d}_{p}}(h,m,\epsilon)
G(\tau,h) \frac{dh}{h} |. $$
Namely, from the estimates (\ref{G_xi_h_exp_growth_order_kappa}) and (\ref{omega_k1_frak_d_p_exp_growth}),
following the same steps as above in (\ref{I_1_mathcal_A_expo_small}), we get that
\begin{equation}
I_{2}^{\mathcal{A}} \leq c_{1}C_{\omega_{k_1}}^{\mathfrak{d}_{p}} (1+|m|)^{-\mu}e^{-\beta |m|}
\frac{|\tau|^{\kappa}}{c_{2}(1 - \frac{1}{2^{\kappa}})\kappa (\rho/2)^{\kappa-1}}
\exp( -(c_{2}(1 - \frac{1}{2^{\kappa}})(\frac{\rho/2}{|\tau|})^{\kappa} ) ) \label{I_2_mathcal_A_expo_small}
\end{equation}
for all $\epsilon \in \mathcal{E}_{p+1} \cap \mathcal{E}_{p}$, all $\tau \in
S_{\mathfrak{d}_{p+1},\kappa,\delta_{p+1},\rho_{\nu,\kappa}} \cap S_{\mathfrak{d}_{p},\kappa,\delta_{p},\rho_{\nu,\kappa}}$,
all $m \in \mathbb{R}$.\medskip

Finally, we give upper bound estimates for the integral
$$ I_{3}^{\mathcal{A}} = |\int_{C_{\rho/2,\gamma_{p}^{1},\gamma_{p+1}^{1}}} \omega_{k_1}(h,m,\epsilon)
G(\tau,h) \frac{dh}{h} |. $$
Bearing in mind (\ref{G_xi_h_exp_growth_order_kappa}) and (\ref{omega_k_1_exp_growth_decay}), we get that
\begin{multline}
I_{3}^{\mathcal{A}} \leq | \int_{\gamma_{p}^{1}}^{\gamma_{p+1}^{1}} C_{\omega_{k_1}} (1+|m|)^{-\mu}e^{-\beta |m|}
\frac{\rho/2}{1 + (\rho/2)^{2k_{1}}} e^{\nu (\rho/2)^{\kappa} } c_{1} \exp( -c_{2}(\frac{\rho/2}{|\tau|})^{\kappa} ) d\theta |\\
\leq c_{1}C_{\omega_{k_1}} \frac{\rho}{2} |\gamma_{p}^{1} - \gamma_{p+1}^{1}|
(1+|m|)^{-\mu}e^{-\beta |m|} \exp( -(c_{2} - |\tau|^{\kappa} \nu)(\frac{\rho/2}{|\tau|})^{\kappa} )\\
\leq c_{1}C_{\omega_{k_1}} \frac{\rho}{2} |\gamma_{p}^{1} - \gamma_{p+1}^{1}|(1+|m|)^{-\mu}e^{-\beta |m|}
\exp( -(c_{2}(1 - \frac{1}{2^{\kappa}}))(\frac{\rho/2}{|\tau|})^{\kappa} ) \label{I_3_mathcal_A_expo_small}
\end{multline}
for all $\epsilon \in \mathcal{E}_{p+1} \cap \mathcal{E}_{p}$, all $\tau \in
S_{\mathfrak{d}_{p+1},\kappa,\delta_{p+1},\rho_{\nu,\kappa}} \cap S_{\mathfrak{d}_{p},\kappa,\delta_{p},\rho_{\nu,\kappa}}$,
all $m \in \mathbb{R}$.\medskip

Finally, gathering the above inequalities (\ref{I_1_mathcal_A_expo_small}), (\ref{I_2_mathcal_A_expo_small}),
(\ref{I_3_mathcal_A_expo_small}), we deduce from the decomposition (\ref{acc_k_2_k_1_omega_k_1_sum_three_integrals}) that
\begin{multline}
|\mathrm{Acc}_{k_{2},k_{1}}^{\mathfrak{d}_{p+1}}(\omega_{k_1}^{\mathfrak{d}_{p+1}})(\tau,m,\epsilon)
- \mathrm{Acc}_{k_{2},k_{1}}^{\mathfrak{d}_{p}}(\omega_{k_1}^{\mathfrak{d}_{p}})(\tau,m,\epsilon)|\\
\leq c_{1}(C_{\omega_{k_1}}^{\mathfrak{d}_{p+1}} + C_{\omega_{k_1}}^{\mathfrak{d}_{p}})
(1+|m|)^{-\mu}e^{-\beta |m|}
\frac{\rho_{\nu,\kappa}^{\kappa}}{c_{2}(1 - \frac{1}{2^{\kappa}})\kappa (\rho/2)^{\kappa-1}}
\exp( -(c_{2}(1 - \frac{1}{2^{\kappa}})(\frac{\rho/2}{|\tau|})^{\kappa} ) )\\
+ c_{1}C_{\omega_{k_1}} \frac{\rho}{2} |\gamma_{p}^{1} - \gamma_{p+1}^{1}|(1+|m|)^{-\mu}e^{-\beta |m|}
\exp( -(c_{2}(1 - \frac{1}{2^{\kappa}}))(\frac{\rho/2}{|\tau|})^{\kappa} )
\end{multline}
for all $\epsilon \in \mathcal{E}_{p+1} \cap \mathcal{E}_{p}$, all $\tau \in
S_{\mathfrak{d}_{p+1},\kappa,\delta_{p+1},\rho_{\nu,\kappa}} \cap S_{\mathfrak{d}_{p},\kappa,\delta_{p},\rho_{\nu,\kappa}}$,
all $m \in \mathbb{R}$. We conclude that the inequality (\ref{diff_acc_k1k2_exp_small_order_kappa}) holds.
\end{proof}
Using the analytic continuation property (\ref{acc_k_2_k_1_omega_k_1_integral}) and the fact that the functions\\
$u \mapsto \omega_{k_2}^{\mathfrak{d}_{p}}(u,m,\epsilon)\exp( -(\frac{u}{\epsilon t})^{k_2} )/u$ (resp.
$u \mapsto \omega_{k_2}^{\mathfrak{d}_{p+1}}(u,m,\epsilon)\exp( -(\frac{u}{\epsilon t})^{k_2} )/u$ ) are holomorphic
on $S_{\mathfrak{d}_p}^{b} \cup S_{\mathfrak{d}_{p}}$ (resp. on
$S_{\mathfrak{d}_{p+1}}^{b} \cup S_{\mathfrak{d}_{p}}$), we can deform the straight lines of integration $L_{\gamma_{p}}$
(resp. $L_{\gamma_{p+1}}$) in such a way that
\begin{multline}
u^{\mathfrak{d}_{p+1}}(t,z,\epsilon) - u^{\mathfrak{d}_{p}}(t,z,\epsilon)\\
= \frac{k_2}{(2 \pi)^{1/2}} \int_{-\infty}^{+\infty}
\int_{L_{\rho_{\nu,\kappa}/2,\gamma_{p+1}}} \omega_{k_2}^{\mathfrak{d}_{p+1}}(u,m,\epsilon)
\exp( -(\frac{u}{\epsilon t})^{k_2} ) e^{izm} \frac{du}{u} dm\\
- \frac{k_2}{(2 \pi)^{1/2}} \int_{-\infty}^{+\infty}
\int_{L_{\rho_{\nu,\kappa}/2,\gamma_{p}}} \omega_{k_2}^{\mathfrak{d}_{p}}(u,m,\epsilon)
\exp( -(\frac{u}{\epsilon t})^{k_2} ) e^{izm} \frac{du}{u} dm\\
+ \frac{k_2}{(2 \pi)^{1/2}} \int_{-\infty}^{+\infty}
\int_{C_{\rho_{\nu,\kappa}/2,\theta_{p,p+1},\gamma_{p+1}}} \omega_{k_2}^{\mathfrak{d}_{p+1}}(u,m,\epsilon)
\exp( -(\frac{u}{\epsilon t})^{k_2} ) e^{izm} \frac{du}{u} dm\\
- \frac{k_2}{(2 \pi)^{1/2}} \int_{-\infty}^{+\infty}
\int_{C_{\rho_{\nu,\kappa}/2,\theta_{p,p+1},\gamma_{p}}} \omega_{k_2}^{\mathfrak{d}_{p}}(u,m,\epsilon)
\exp( -(\frac{u}{\epsilon t})^{k_2} ) e^{izm} \frac{du}{u} dm\\
+ \frac{k_2}{(2 \pi)^{1/2}} \int_{-\infty}^{+\infty}
\int_{L_{0,\rho_{\nu,\kappa}/2,\theta_{p,p+1}}} \left(
\mathrm{Acc}_{k_{2},k_{1}}^{\mathfrak{d}_{p+1}}(\omega_{k_1}^{\mathfrak{d}_{p+1}})(u,m,\epsilon)
- \mathrm{Acc}_{k_{2},k_{1}}^{\mathfrak{d}_{p}}(\omega_{k_1}^{\mathfrak{d}_{p}})(u,m,\epsilon) \right)\\
\times \exp( -(\frac{u}{\epsilon t})^{k_2} ) e^{izm} \frac{du}{u} dm \label{diff_u_p_sum_5_integrals}
\end{multline}
where $L_{\rho_{\nu,\kappa}/2,\gamma_{p+1}}=[\rho_{\nu,\kappa}/2,+\infty)e^{\sqrt{-1} \gamma_{p+1}}$,
$L_{\rho_{\nu,\kappa}/2,\gamma_{p}}=[\rho_{\nu,\kappa}/2,+\infty)e^{\sqrt{-1} \gamma_{p}}$, 
$C_{\rho_{\nu,\kappa}/2,\theta_{p,p+1},\gamma_{p+1}}$ is an arc of circle with radius
$\rho_{\nu,\kappa}/2$, connecting $(\rho_{\nu,\kappa}/2)e^{\sqrt{-1} \theta_{p,p+1}}$ and
$(\rho_{\nu,\kappa}/2)e^{\sqrt{-1} \gamma_{p+1}}$ with a well chosen orientation, where
$\theta_{p,p+1}$ denotes the bisecting direction of the sector
$S_{\mathfrak{d}_{p+1},\kappa,\delta_{p+1},\rho_{\nu,\kappa}} \cap
S_{\mathfrak{d}_{p},\kappa,\delta_{p},\rho_{\nu,\kappa}}$ and likewise
$C_{\rho_{\nu,\kappa}/2,\theta_{p,p+1},\gamma_{p}}$ is an arc of circle with radius
$\rho_{\nu,\kappa}/2$, connecting the points $(\rho_{\nu,\kappa}/2)e^{\sqrt{-1} \theta_{p,p+1}}$ and
$(\rho_{\nu,\kappa}/2)e^{\sqrt{-1} \gamma_{p}}$ with a well chosen orientation and finally
$L_{0,\rho_{\nu,\kappa}/2,\theta_{p,p+1}} = [0,\rho_{\nu,\kappa}/2]e^{\sqrt{-1}\theta_{p,p+1}}$.\medskip

Following the same lines of arguments as in the estimates (\ref{I_1_exp_small_order_k_2}) and
(\ref{I_3_exp_small_order_k_2}), we get the next inequalities
\begin{multline}
J_{1} = |\frac{k_2}{(2 \pi)^{1/2}} \int_{-\infty}^{+\infty}
\int_{L_{\rho_{\nu,\kappa}/2,\gamma_{p+1}}} \omega_{k_2}^{\mathfrak{d}_{p+1}}(u,m,\epsilon)
\exp( -(\frac{u}{\epsilon t})^{k_2} ) e^{izm} \frac{du}{u} dm|\\
\leq \frac{2k_{2}C_{\omega_{k_2}^{\mathfrak{d}_{p+1}}}}{(2\pi)^{1/2}} \frac{|\epsilon|^{k_2}}{(\beta - \beta')
\delta_{2}k_{2}(\frac{\rho_{\nu,\kappa}}{2})^{k_{2}-1}}
\exp( -\delta_{2} \frac{(\rho_{\nu,\kappa}/2)^{k_2}}{|\epsilon|^{k_2}} ),\\
J_{2} = |\frac{k_2}{(2 \pi)^{1/2}} \int_{-\infty}^{+\infty}
\int_{L_{\rho_{\nu,\kappa}/2,\gamma_{p}}} \omega_{k_2}^{\mathfrak{d}_{p}}(u,m,\epsilon)
\exp( -(\frac{u}{\epsilon t})^{k_2} ) e^{izm} \frac{du}{u} dm|\\
\leq \frac{2k_{2}C_{\omega_{k_2}^{\mathfrak{d}_{p}}}}{(2\pi)^{1/2}} \frac{|\epsilon|^{k_2}}{(\beta - \beta')
\delta_{2}k_{2}(\frac{\rho_{\nu,\kappa}}{2})^{k_{2}-1}}
\exp( -\delta_{2} \frac{(\rho_{\nu,\kappa}/2)^{k_2}}{|\epsilon|^{k_2}} ),\\
J_{3} = |\frac{k_2}{(2 \pi)^{1/2}} \int_{-\infty}^{+\infty}
\int_{C_{\rho_{\nu,\kappa}/2,\theta_{p,p+1},\gamma_{p+1}}} \omega_{k_2}^{\mathfrak{d}_{p+1}}(u,m,\epsilon)
\exp( -(\frac{u}{\epsilon t})^{k_2} ) e^{izm} \frac{du}{u} dm|\\
\leq \frac{2k_{2} C_{\omega_{k_2}^{\mathfrak{d}_{p+1}}} }{(2\pi)^{1/2}(\beta - \beta')}
|\gamma_{p+1} - \theta_{p,p+1}|
\frac{\rho_{\nu,\kappa}}{2} \exp( -\delta_{2} (\frac{\rho_{\nu,\kappa}/2}{|\epsilon|})^{k_2}),\\
J_{4} = |\frac{k_2}{(2 \pi)^{1/2}} \int_{-\infty}^{+\infty}
\int_{C_{\rho_{\nu,\kappa}/2,\theta_{p,p+1},\gamma_{p}}} \omega_{k_2}^{\mathfrak{d}_{p}}(u,m,\epsilon)
\exp( -(\frac{u}{\epsilon t})^{k_2} ) e^{izm} \frac{du}{u} dm|\\
\leq \frac{2k_{2} C_{\omega_{k_2}^{\mathfrak{d}_{p}}}}{(2\pi)^{1/2}(\beta - \beta')}
|\gamma_{p} - \theta_{p,p+1}|
\frac{\rho_{\nu,\kappa}}{2} \exp( -\delta_{2} (\frac{\rho_{\nu,\kappa}/2}{|\epsilon|})^{k_2})
\label{J_1_2_3_4_exp_small_order_k_2}
\end{multline}
for all $t \in \mathcal{T}$ and $|\mathrm{Im}(z)| \leq \beta'$ with
$|t| < (\frac{\delta_{1}}{\delta_{2} + \nu' \epsilon_{0}^{k_2}})^{1/k_{2}}$, for some $\delta_{1},\delta_{2}>0$, for all
$\epsilon \in \mathcal{E}_{p} \cap \mathcal{E}_{p+1}$.\medskip

In the last part of the proof, it remains to give upper bounds for the integral
\begin{multline*}
J_{5} = |\frac{k_2}{(2 \pi)^{1/2}} \int_{-\infty}^{+\infty}
\int_{L_{0,\rho_{\nu,\kappa}/2,\theta_{p,p+1}}} \left(
\mathrm{Acc}_{k_{2},k_{1}}^{\mathfrak{d}_{p+1}}(\omega_{k_1}^{\mathfrak{d}_{p+1}})(u,m,\epsilon)
- \mathrm{Acc}_{k_{2},k_{1}}^{\mathfrak{d}_{p}}(\omega_{k_1}^{\mathfrak{d}_{p}})(u,m,\epsilon) \right)\\
\times \exp( -(\frac{u}{\epsilon t})^{k_2} ) e^{izm} \frac{du}{u} dm|.
\end{multline*}
By construction, there exists $\delta_{1}>0$ such that
$\cos( k_{2}(\theta_{p,p+1} - \mathrm{arg}(\epsilon t)) ) \geq \delta_{1}$ for all
$\epsilon \in \mathcal{E}_{p} \cap \mathcal{E}_{p+1}$, all $t \in \mathcal{T}$. From Lemma 7, we get that
\begin{multline}
J_{5} \leq \frac{k_2}{(2 \pi)^{1/2}} \int_{-\infty}^{+\infty}
\int_{0}^{\rho_{\nu,\kappa}/2} K_{p}^{\mathcal{A}} (1 + |m|)^{-\mu} e^{-\beta |m|}
\exp( - \frac{M_{p}^{\mathcal{A}}}{r^{\kappa}} )\\
\times \exp( - \frac{\cos( k_{2}( \theta_{p,p+1} - \mathrm{arg}(\epsilon t) ) ) }{ |\epsilon t|^{k_2} } r^{k_2} )
e^{-m \mathrm{Im}(z) } \frac{dr}{r} dm\\
\leq \frac{k_{2}K_{p}^{\mathcal{A}}}{(2 \pi)^{1/2}} \int_{-\infty}^{+\infty} e^{-(\beta - \beta')|m|} dm \times J_{5}(\epsilon t)
\label{J_5_leq_J_5_epsilon_t}
\end{multline}
where
\begin{equation}
J_{5}(\epsilon t) = \int_{0}^{\rho_{\nu,\kappa}/2}
\exp( - \frac{M_{p}^{\mathcal{A}}}{r^{\kappa}} ) \exp( -\frac{\delta_{1}}{|\epsilon t|^{k_2}} r^{k_2} ) \frac{dr}{r}.
\label{J_5_epsilon_t}
\end{equation}
The study of estimates for $J_{5}( \epsilon t)$ as $\epsilon$ tends to zero rests on the following two lemmas.

\begin{lemma}[Watson's Lemma. Exercise 4, page 16 in~\cite{ba}]

Let $b>0$ and $f:[0,b] \rightarrow \mathbb{C} $ be a continuous function having the formal expansion
$\sum_{n \geq 0}a_{n}t^n \in \mathbb{C}[[t]]$ as its asymptotic expansion of Gevrey order $\kappa>0$ at 0, meaning there exist
$C,M>0$ such that
$$\left |f(t)-\sum_{n=0}^{N-1}a_{n}t^n \right| \leq CM^{N}N!^{\kappa}|t|^{N},$$
for every $N \geq 1$ and $t\in [0,\delta]$, for some $0<\delta<b$. Then, the function
$$I(x)=\int_{0}^{b}f(s)e^{-\frac{s}{x}}ds$$
admits the formal power series $\sum_{n \geq 0} a_{n}n!x^{n+1} \in \mathbb{C}[[x]]$ as its asymptotic expansion
of Gevrey order $\kappa+1$ at 0, it is to say, there exist $\tilde{C},\tilde{K}>0$ such that
$$\left |I(x)-\sum_{n=0}^{N-1}a_{n}n!x^{n+1}\right| \leq \tilde{C}\tilde{K}^{N+1}(N+1)!^{1+\kappa}|x|^{N+1},$$
for every $N \geq 0$ and $x \in [0,\delta']$ for some $0<\delta'<b$.
\end{lemma}
\begin{lemma}[Exercise 3, page 18 in~\cite{ba}]
Let $\delta,q>0$, and $\psi:[0,\delta] \rightarrow \mathbb{C} $ be a continuous function. The following assertions are equivalent:
\begin{enumerate}
\item There exist $C,M>0$ such that $|\psi(x)|\le CM^{n}n!^{q}|x|^{n},$ for every $n \in \mathbb{N}$, $n\ge 0$ and
$x \in [0,\delta]$.
\item There exist $C',M'>0$ such that $|\psi(x)|\le C'e^{-M'/x^{\frac{1}{q}}}$, for every $x \in (0,\delta]$.
\end{enumerate}
\end{lemma}
We make the change of variable $r^{k_2}=s$ in the integral (\ref{J_5_epsilon_t}) and we get
$$ J_{5}(\epsilon t) = \frac{1}{k_2} \int_{0}^{(\rho_{\nu,\kappa}/2)^{k_2}}
\exp( - \frac{M_{p}^{\mathcal{A}}}{s^{\kappa/k_{2}}} ) \exp( -\frac{\delta_{1}}{|\epsilon t|^{k_2}} s ) \frac{ds}{s}. $$
We put $\psi_{\mathcal{A},p}(s) = \exp( - \frac{M_{p}^{\mathcal{A}}}{s^{\kappa/k_{2}}} )/s$. From Lemma 9, there exist
constants $C,M>0$ such that
$$ |\psi_{\mathcal{A},p}(s)| \leq CM^{n}(n!)^{\frac{k_{2}}{\kappa}} |s|^{n} $$
for all $n \geq 0$, all $s \in [0,(\rho_{\nu,\kappa}/2)^{k_2}]$. In other words,
$\psi_{\mathcal{A},p}(s)$ admits the null formal series $\hat{0} \in \mathbb{C}[[s]]$ as asymptotic expansion of Gevrey order
$k_{2}/\kappa$ on $[0,(\rho_{\nu,\kappa}/2)^{k_2}]$. By Lemma 8, we deduce that the function
$$ I_{\mathcal{A},p}(x) = \int_{0}^{(\rho_{\nu,\kappa}/2)^{k_2}} \psi_{\mathcal{A},p}(s) e^{-\frac{s}{x}} ds $$
has the formal series $\hat{0} \in \mathbb{C}[[x]]$ as asymptotic expansion of Gevrey order
$\frac{k_2}{\kappa} + 1 = \frac{k_2}{k_1}$ on some segment $[0,\delta']$ with $0 < \delta' < (\rho_{\nu,\kappa}/2)^{k_2}$.
Hence, using again Lemma 9, we get two constants $C',M'>0$ with
$$ I_{\mathcal{A},p}(x) \leq C' \exp( -\frac{M'}{x^{k_{1}/k_{2}}} ) $$
for $x \in [0,\delta']$. We deduce the existence of two constants $C_{J_5}>0$, $M_{J_5}>0$ with
\begin{equation}
J_{5}(\epsilon t) \leq C_{J_5} \exp( -\frac{M_{J_5}}{|\epsilon t|^{k_1}} ) \label{J_5_epsilon_t_exp_small_order_k_1} 
\end{equation}
for all $\epsilon \in \mathcal{E}_{p} \cap \mathcal{E}_{p+1}$, all $t \in \mathcal{T} \cap D(0,h_{\mathcal{A},p})$, for some
$h_{\mathcal{A},p}>0$. Gathering the last inequality (\ref{J_5_epsilon_t_exp_small_order_k_1}) and
(\ref{J_5_leq_J_5_epsilon_t}) yields
\begin{equation}
J_{5} \leq \frac{2C_{J_5}k_{2}K_{p}^{\mathcal{A}}}{(2 \pi)^{1/2}(\beta - \beta')}
\exp( -\frac{M_{J_5}}{h_{\mathcal{A},p}^{k_1}|\epsilon|^{k_1}} ) \label{J_5_exp_small_order_k_1} 
\end{equation}
for all $\epsilon \in \mathcal{E}_{p} \cap \mathcal{E}_{p+1}$, all $t \in \mathcal{T} \cap D(0,h_{\mathcal{A},p})$.\medskip

In conclusion, taking into account the above inequalities (\ref{J_1_2_3_4_exp_small_order_k_2}) and
(\ref{J_5_exp_small_order_k_1}), we deduce from the decomposition (\ref{diff_u_p_sum_5_integrals}) that
\begin{multline*}
|u^{\mathfrak{d}_{p+1}}(t,z,\epsilon) - u^{\mathfrak{d}_{p}}(t,z,\epsilon)|
\leq \frac{2k_{2}(C_{\omega_{k_2}^{\mathfrak{d}_{p+1}}} + C_{\omega_{k_2}^{\mathfrak{d}_{p}}})}{(2\pi)^{1/2}}
\frac{|\epsilon|^{k_2}}{(\beta - \beta')
\delta_{2}k_{2}(\frac{\rho_{\nu,\kappa}}{2})^{k_{2}-1}}
\exp( -\delta_{2} \frac{(\rho_{\nu,\kappa}/2)^{k_2}}{|\epsilon|^{k_2}} )\\
+ \frac{2k_{2}}{(2\pi)^{1/2}(\beta - \beta')}\left( C_{\omega_{k_2}^{\mathfrak{d}_{p+1}}}
|\gamma_{p+1} - \theta_{p,p+1}| + C_{\omega_{k_2}^{\mathfrak{d}_{p}}}|\gamma_{p} - \theta_{p,p+1}| \right)
\frac{\rho_{\nu,\kappa}}{2} \exp( -\delta_{2} (\frac{\rho_{\nu,\kappa}/2}{|\epsilon|})^{k_2})\\
+ \frac{2C_{J_5}k_{2}K_{p}^{\mathcal{A}}}{(2 \pi)^{1/2}(\beta - \beta')}
\exp( -\frac{M_{J_5}}{h_{\mathcal{A},p}^{k_1}|\epsilon|^{k_1}} )
\end{multline*}
for all $t \in \mathcal{T}$ with $|t| < (\frac{\delta_{1}}{\delta_{2} + \nu' \epsilon_{0}^{k_2}})^{1/k_{2}}$ and
$|t| \leq h_{\mathcal{A},p}$ for some constants $\delta_{1},\delta_{2},h_{\mathcal{A},p}>0$,
$|\mathrm{Im}(z)| \leq \beta'$, for all $\epsilon \in \mathcal{E}_{p} \cap \mathcal{E}_{p+1}$. Therefore the
inequality (\ref{exp_small_difference_u_p_k1}) holds.
\end{proof}

\section{Existence of formal series solutions in the complex parameter and asymptotic expansion in two levels}

\subsection{Summable and multisummable formal series and a Ramis-Sibuya theorem with two levels}

In the next definitions we recall the meaning of Gevrey asymptotic expansions for holomorphic functions and
$k-$summability. We also give the signification of $(k_{2},k_{1})-$summability for power series in a Banach space,
as described in \cite{ba}.

\begin{defin} Let $(\mathbb{E},||.||_{\mathbb{E}})$ be a complex Banach space and let $\mathcal{E}$ be a bounded open
sector centered at 0.
Let $k>0$ be a positive real number. We say that a holomorphic function $f : \mathcal{E} \rightarrow \mathbb{E}$ admits
a formal power series $\hat{f}(\epsilon)=\sum_{n \geq 0} a_{n} \epsilon^{n} \in \mathbb{E}[[\epsilon]]$ as its asymptotic expansion
of Gevrey order $1/k$ if, for any closed proper subsector $\mathcal{W} \subset \mathcal{E}$ centered at 0, there exist
$C,M>0$ with
\begin{equation}
|| f(\epsilon) - \sum_{n=0}^{N-1} a_{n} \epsilon^{n} ||_{\mathbb{E}} \leq CM^{N}(N!)^{1/k}|\epsilon|^{N} \label{f_asympt_expans}
\end{equation}
for all $N \geq 1$, all $\epsilon \in \mathcal{W}$.

If moreover the aperture of $\mathcal{E}$ is larger than $\frac{\pi}{k} + \delta$ for some $\delta>0$, then the function
$f$ is the unique holomorphic function on $\mathcal{E}$ satisfying (\ref{f_asympt_expans}). In that case, we say that $\hat{f}$ is
$k-$summable on $\mathcal{E}$ and that $f$ defines its $k-$sum on $\mathcal{E}$. In addition, the function $f$ can be reconstructed
from the analytic continuation of the $k_{1}-$Borel transform
$$ \hat{\mathcal{B}}_{k_1}\hat{f}(\tau) = \sum_{n \geq 0} a_{n} \frac{\tau^n}{\Gamma(1 + \frac{n}{k_{1}})} $$
on an unbounded sector and by applying a $k_{1}-$Laplace transform to it, see Section 3.2 from \cite{ba}.
\end{defin}

\begin{defin} Let $(\mathbb{E},||.||_{\mathbb{E}})$ be a complex Banach space and let $0 < k_{1} < k_{2}$ be two positive real
numbers. Let $\mathcal{E}$ be a bounded open sector centered at 0 with aperture $\frac{\pi}{k_{2}} + \delta_{2}$ for some
$\delta_{2}>0$ and let $\mathcal{F}$ be a bounded open sector centered at 0 with aperture $\frac{\pi}{k_{1}} + \delta_{1}$ for some
$\delta_{1}>0$ such that the inclusion $\mathcal{E} \subset \mathcal{F}$ holds.

A formal power series $\hat{f}(\epsilon) = \sum_{n \geq 0} a_{n} \epsilon^{n} \in \mathbb{E}[[\epsilon]]$ is said to be
$(k_{2},k_{1})-$summable on $\mathcal{E}$ if there exist a formal series
$\hat{f}_{2}(\epsilon) \in \mathbb{E}[[\epsilon]]$ which is $k_{2}-$summable on $\mathcal{E}$ with
$k_{2}-$sum $f_{2}: \mathcal{E} \rightarrow \mathbb{E}$ and a second formal series
$\hat{f}_{1}(\epsilon) \in \mathbb{E}[[\epsilon]]$ which is $k_{1}-$summable on $\mathcal{F}$ with
$k_{1}-$sum $f_{1}: \mathcal{F} \rightarrow \mathbb{E}$ such that
$\hat{f} = \hat{f}_{1} + \hat{f}_{2}$. Furthermore, the holomorphic function
$f(\epsilon) = f_{1}(\epsilon) + f_{2}(\epsilon)$ defined on $\mathcal{E}$ is called the
$(k_{2},k_{1})-$sum of $\hat{f}$ on $\mathcal{E}$. In that case, the function $f(\epsilon)$ can be reconstructed
from the analytic continuation of the $k_{1}-$Borel transform of $\hat{f}$ by applying successively some acceleration operator and
Laplace transform of order $k_{2}$, see Section 6.1 from \cite{ba}.
\end{defin}

In this section, we state a version of the classical Ramis-Sibuya theorem (see \cite{hssi}, Theorem XI-2-3) with two different
Gevrey levels which describes also the case when multisummability holds on some sector. We mention that a similar multi-level
version of the Ramis-Sibuya theorem has already been stated in the manuscript \cite{tak} and also in a former work of the
authors, see \cite{lama2}.\medskip

\noindent {\bf Theorem (RS)} Let $0<k_{1}<k_{2}$ be positive real numbers. Let $(\mathbb{E},||.||_{\mathbb{E}})$ be a
Banach space over $\mathbb{C}$ and
$\{ \mathcal{E}_{i} \}_{0 \leq i \leq \nu-1}$ be a good covering in $\mathbb{C}^{\ast}$, see Definition 7. For all
$0 \leq i \leq \nu-1$, let
$G_{i}$ be a holomorphic function from $\mathcal{E}_{i}$ into
the Banach space $(\mathbb{E},||.||_{\mathbb{E}})$ and let the cocycle
$\Delta_{i}(\epsilon) = G_{i+1}(\epsilon) - G_{i}(\epsilon)$ be a
holomorphic function from the sector $Z_{i} = \mathcal{E}_{i+1} \cap \mathcal{E}_{i}$ into $\mathbb{E}$
(with the convention that $\mathcal{E}_{\nu} = \mathcal{E}_{0}$ and $G_{\nu} = G_{0}$). We make the following assumptions.\medskip

\noindent {\bf 1)} The functions $G_{i}(\epsilon)$ are bounded as $\epsilon \in \mathcal{E}_{i}$ tends to the origin in $\mathbb{C}$, for
all $0 \leq i \leq \nu - 1$.\medskip

\noindent {\bf 2)} For some finite subset $I_{1} \subset \{ 0,\ldots\nu-1 \}$ and for all $i \in I_{1}$, the functions
$\Delta_{i}(\epsilon)$ are exponentially flat on $Z_{i}$ of order $k_{1}$, for all $0 \leq i \leq \nu-1$. This means that
there exist constants $K_{i},M_{i}>0$ such that
\begin{equation}
||\Delta_{i}(\epsilon)||_{\mathbb{E}} \leq K_{i}\exp(-\frac{M_{i}}{|\epsilon|^{k_1}}) \label{Delta_i_exp_small_k1}
\end{equation}
for all $\epsilon \in Z_{i}$.\medskip

\noindent {\bf 3)} For all $i \in I_{2}=\{0,\ldots,\nu-1\} \setminus I_{1}$, the functions
$\Delta_{i}(\epsilon)$ are exponentially flat of order $k_{2}$ on $Z_{i}$, for all $0 \leq i \leq \nu-1$. This means that
there exist constants $K_{i},M_{i}>0$ such that
\begin{equation}
 ||\Delta_{i}(\epsilon)||_{\mathbb{E}} \leq K_{i}\exp(-\frac{M_{i}}{|\epsilon|^{k_{2}}} )
\label{Delta_i_exp_small_k2}
\end{equation}
for all $\epsilon \in Z_{i}$.\medskip

Then, there exist a convergent power series $a(\epsilon) \in \mathbb{E}\{ \epsilon \}$ near $\epsilon=0$ and two formal series
$\hat{G}^{1}(\epsilon),\hat{G}^{2}(\epsilon) \in \mathbb{E}[[\epsilon]]$ such that $G_{i}(\epsilon)$ owns the
following decomposition
\begin{equation}
G_{i}(\epsilon) = a(\epsilon) + G_{i}^{1}(\epsilon) + G_{i}^{2}(\epsilon) \label{G_i_equal_G_i_1_plus_G_i_2}
\end{equation}
where $G_{i}^{1}(\epsilon)$ is holomorphic on $\mathcal{E}_{i}$ and has $\hat{G}^{1}(\epsilon)$ as
asymptotic expansion of Gevrey order $1/k_{1}$
on $\mathcal{E}_{i}$, $G_{i}^{2}(\epsilon)$ is holomorphic on $\mathcal{E}_{i}$ and carries
$\hat{G}^{2}(\epsilon)$ as asymptotic expansion of Gevrey order $1/k_{2}$ on $\mathcal{E}_{i}$,
for all $0 \leq i \leq \nu-1$.\medskip

Assume moreover that some integer $i_{0} \in I_{2}$ is such that
$I_{\delta_{1},i_{0},\delta_{2}} = \{ i_{0} - \delta_{1},\ldots,i_{0},\ldots,i_{0}+\delta_{2} \} \subset I_{2}$ for some
integers $\delta_{1},\delta_{2} \geq 0$ and with the property that
\begin{equation}
\mathcal{E}_{i_{0}} \subset S_{\pi/k_{1}} \subset \bigcup_{h \in I_{\delta_{1},i_{0},\delta_{2}}} \mathcal{E}_{h}
\label{E_i0_subset_large_sector_subset_union_E_i}
\end{equation}
where $S_{\pi/k_{1}}$ is a sector centered at 0 with aperture a bit larger than $\pi/k_{1}$. Then, the formal series
$\hat{G}(\epsilon)$ is $(k_{2},k_{1})-$summable on $\mathcal{E}_{i_0}$ and its $(k_{2},k_{1})-$sum is $G_{i_0}(\epsilon)$ on
$\mathcal{E}_{i_0}$.
\begin{proof}
We consider two holomorphic cocycles $\Delta_{i}^{1}(\epsilon)$ and $\Delta_{i}^{2}(\epsilon)$ defined on the sectors $Z_{i}$ in the
following way:
$$ \Delta_{i}^{1}(\epsilon) =
\begin{cases}
\Delta_{i}(\epsilon) & \text{if } i \in I_{1} \\
0 & \text{if } i \in I_{2} 
\end{cases} \ \ , \ \ \Delta_{i}^{2}(\epsilon) =
\begin{cases}
0 & \text{if } i \in I_{1} \\
\Delta_{i}(\epsilon) & \text{if } i \in I_{2} 
\end{cases} $$
for all $\epsilon \in Z_{i}$, all $0 \leq i \leq \nu-1$. We need the following lemma.

\begin{lemma} 1) For all $0 \leq i \leq \nu-1$, there exist bounded holomorphic functions
$\Psi_{i}^{1} : \mathcal{E}_{i} \rightarrow \mathbb{C}$ such that
\begin{equation}
\Delta_{i}^{1}(\epsilon) = \Psi_{i+1}^{1}(\epsilon) - \Psi_{i}^{1}(\epsilon)
\end{equation}
for all $\epsilon \in Z_{i}$, where by convention $\Psi_{\nu}^{1}(\epsilon) = \Psi_{0}^{1}(\epsilon)$. Moreover, there exist
coefficients $\varphi_{m}^{1} \in \mathbb{E}$, $m \geq 0$, such that for each $0 \leq l \leq \nu-1$ and any closed proper subsector
$\mathcal{W} \subset \mathcal{E}_{l}$, centered at 0, there exist two constants $\breve{K}_{l},\breve{M}_{l}>0$ with
\begin{equation}
 || \Psi_{l}^{1}(\epsilon) - \sum_{m=0}^{M-1} \varphi_{m}^{1} \epsilon^{m} ||_{\mathbb{E}} \leq
\breve{K}_{l}(\breve{M}_{l})^{M}(M!)^{1/k_{1}} |\epsilon|^{M} \label{psi_l_1_gevrey_expansion_k1}
\end{equation}
for all $\epsilon \in \mathcal{W}$, all $M \geq 1$.\medskip

\noindent 2) For all $0 \leq i \leq \nu-1$, there exist bounded holomorphic functions
$\Psi_{i}^{2} : \mathcal{E}_{i} \rightarrow \mathbb{C}$ such that
\begin{equation}
\Delta_{i}^{2}(\epsilon) = \Psi_{i+1}^{2}(\epsilon) - \Psi_{i}^{2}(\epsilon)
\end{equation}
for all $\epsilon \in Z_{i}$, where by convention $\Psi_{\nu}^{2}(\epsilon) = \Psi_{0}^{2}(\epsilon)$. Moreover, there exist
coefficients $\varphi_{m}^{2} \in \mathbb{E}$, $m \geq 0$, such that for each $0 \leq l \leq \nu-1$ and any closed proper subsector
$\mathcal{W} \subset \mathcal{E}_{l}$, centered at 0, there exist two constants $\hat{K}_{l},\hat{M}_{l}>0$ with
\begin{equation}
 || \Psi_{l}^{2}(\epsilon) - \sum_{m=0}^{M-1} \varphi_{m}^{2} \epsilon^{m} ||_{\mathbb{E}} \leq
\hat{K}_{l}(\hat{M}_{l})^{M}(M!)^{1/k_{2}} |\epsilon|^{M} \label{psi_l_2_gevrey_expansion_k2}
\end{equation}
for all $\epsilon \in \mathcal{W}$, all $M \geq 1$.
\end{lemma}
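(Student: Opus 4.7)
The plan is to treat parts 1 and 2 in parallel; they only differ by the index ($k_1$ versus $k_2$) appearing in the Gevrey bound, so I will describe the argument for $(\Delta_i^1,\Psi_i^1)$ and note that the $(\Delta_i^2,\Psi_i^2)$ case is identical. The construction is the classical Cauchy-Heine-type splitting of an additive cocycle on a good covering. For each $0 \leq i \leq \nu-1$ I fix a point $z_i \in Z_i \setminus \{0\}$ (with $|z_i|$ small) and a straight segment $\ell_i$ joining $0$ to $z_i$ while remaining inside $Z_i$. For $i \in I_2$ we have $\Delta_i^1 \equiv 0$, so we may omit those segments. Then I set
$$\Psi_l^1(\epsilon) = \sum_{i \in I_1} \frac{1}{2i\pi}\int_{\ell_i} \frac{\Delta_i^1(\xi)}{\xi - \epsilon}\, d\xi, \qquad \epsilon \in \mathcal{E}_l,$$
choosing the segments $\ell_i$ so that for a given $l$ none of them meets $\mathcal{E}_l$ except possibly $\ell_{l-1}$ and $\ell_l$ at their endpoints. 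A standard residue/path-deformation argument (deform $\ell_l$ across $\epsilon \in Z_l$) yields $\Psi^1_{l+1}(\epsilon) - \Psi^1_l(\epsilon) = \Delta_l^1(\epsilon)$ on $Z_l$; boundedness of $\Psi_l^1$ on $\mathcal{E}_l$ follows from the exponential flatness of the integrands near $0$.

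The heart of the proof is the Gevrey estimate~(\ref{psi_l_1_gevrey_expansion_k1}). For $\epsilon \in \mathcal{W} \subset \mathcal{E}_l$ and each $i \in I_1$, I expand
$$\frac{1}{\xi - \epsilon} = -\sum_{m=0}^{M-1}\frac{\epsilon^m}{\xi^{m+1}} + \frac{\epsilon^M}{\xi^M(\xi - \epsilon)},$$
so that
$$\frac{1}{2i\pi}\int_{\ell_i}\frac{\Delta_i^1(\xi)}{\xi - \epsilon}\,d\xi = \sum_{m=0}^{M-1}\varphi_{m,i}^1\,\epsilon^m + \epsilon^M R_{i,M}(\epsilon),$$
with $\varphi_{m,i}^1 = -\frac{1}{2i\pi}\int_{\ell_i}\Delta_i^1(\xi)\xi^{-(m+1)}\,d\xi$, which are convergent integrals in $\mathbb{E}$ thanks to the flatness (\ref{Delta_i_exp_small_k1}). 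Summing over $i \in I_1$ and setting $\varphi_m^1 = \sum_{i \in I_1} \varphi_{m,i}^1 \in \mathbb{E}$ (independent of $l$) gives the desired expansion. The remainder is controlled by bounding
$$|R_{i,M}(\epsilon)| \;\leq\; \frac{K_i}{2\pi \,\mathrm{dist}(\mathcal{W},\ell_i)}\int_{\ell_i}|\xi|^{-M}\exp\!\bigl(-M_i/|\xi|^{k_1}\bigr)\,|d\xi|,$$
and the supremum $\sup_{x>0} x^{-M} e^{-M_i/x^{k_1}}$ is computed explicitly: it equals $(M/(k_1 M_i))^{M/k_1} e^{-M/k_1}$ by elementary calculus. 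Applying Stirling's formula $\Gamma(M/k_1+1) \sim (M/k_1)^{M/k_1}e^{-M/k_1}\sqrt{2\pi M/k_1}$, one gets the bound $(M!)^{1/k_1}(\breve{M}_l)^M$ for some $\breve{M}_l>0$, yielding exactly (\ref{psi_l_1_gevrey_expansion_k1}).

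The same recipe applied to the cocycle $(\Delta_i^2)$ (using only the segments $\ell_i$ with $i \in I_2$, on which the flatness is of order $k_2$) produces $\Psi_i^2$ with the Gevrey-$1/k_2$ expansion (\ref{psi_l_2_gevrey_expansion_k2}); the calculation is identical after replacing $k_1$ by $k_2$ and $M_i$ by the constants from (\ref{Delta_i_exp_small_k2}). The main technical obstacle is purely book-keeping: choosing the segments $\ell_i$ compatibly with the good covering structure so that for each fixed $l$ the paths do not obstruct holomorphy of $\Psi_l^j$ on all of $\mathcal{E}_l$, and verifying that $l$-dependence drops out of the coefficients $\varphi_m^j$; both points follow from the cocycle identity and the freedom to deform $\ell_i$ in $Z_i$ without changing the integrals (thanks to holomorphy of $\Delta_i^j$ on $Z_i$).
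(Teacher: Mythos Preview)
Your proposal is correct; the paper's own proof is a one-line appeal to Lemma~XI-2-6 of \cite{hssi} (the classical Ramis--Sibuya theorem in Gevrey classes), and what you have written out is precisely the standard Cauchy--Heine construction that underlies that lemma. So your argument is not a different route but an explicit unpacking of the cited result, with the key Gevrey estimate obtained via the sup bound $\sup_{x>0}x^{-M}e^{-M_i/x^{k_1}}$ and Stirling, exactly as in the classical proof.
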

\begin{proof} The proof is a consequence of Lemma XI-2-6 from \cite{hssi} which provides the so-called classical
Ramis-Sibuya theorem in Gevrey classes.
 \end{proof}
 
We consider now the bounded holomorphic functions
$$ a_{i}(\epsilon) = G_{i}(\epsilon) - \Psi_{i}^{1}(\epsilon) - \Psi_{i}^{2}(\epsilon) $$
for all $0 \leq i \leq \nu-1$, all $\epsilon \in \mathcal{E}_{i}$. By definition, for $i \in I_{1}$ or $i \in I_{2}$, we have that
$$ a_{i+1}(\epsilon) - a_{i}(\epsilon) = G_{i+1}(\epsilon) - G_{i}(\epsilon) - \Delta_{i}^{1}(\epsilon) - \Delta_{i}^{2}(\epsilon) =
G_{i+1}(\epsilon) - G_{i}(\epsilon) - \Delta_{i}(\epsilon) = 0 $$
for all $\epsilon \in Z_{i}$. Therefore, each $a_{i}(\epsilon)$ is the restriction on $\mathcal{E}_{i}$ of a holomorphic function
$a(\epsilon)$ on $D(0,r) \setminus \{ 0 \}$. Since $a(\epsilon)$ is moreover bounded on $D(0,r) \setminus \{ 0 \}$, the origin turns out
to be a removable singularity for $a(\epsilon)$ which, as a consequence, defines a convergent power series on $D(0,r)$.

Finally, one can write the following decomposition
$$ G_{i}(\epsilon) = a(\epsilon) + \Psi_{i}^{1}(\epsilon) + \Psi_{i}^{2}(\epsilon) $$
for all $\epsilon \in \mathcal{E}_{i}$, all $0 \leq i \leq \nu-1$. Moreover, $a(\epsilon)$ is a convergent power series and from
(\ref{psi_l_1_gevrey_expansion_k1}) we know that $\Psi_{i}^{1}(\epsilon)$ has the series
$\hat{G}^{1}(\epsilon) = \sum_{m \geq 0} \varphi_{m}^{1} \epsilon^{m}$ as
asymptotic expansion of Gevrey order $1/k_{1}$ on $\mathcal{E}_{i}$ and due to
(\ref{psi_l_2_gevrey_expansion_k2}) $\Psi_{i}^{2}(\epsilon)$ carries the series
$\hat{G}^{2}(\epsilon) = \sum_{m \geq 0} \varphi_{m}^{2} \epsilon^{m}$ as
asymptotic expansion of Gevrey order $1/k_{2}$ on $\mathcal{E}_{i}$, for all $0 \leq i \leq \nu-1$. Therefore, the decomposition
(\ref{G_i_equal_G_i_1_plus_G_i_2}) holds.\medskip

Assume now that some integer $i_{0} \in I_{2}$ is such that
$I_{\delta_{1},i_{0},\delta_{2}} = \{ i_{0} - \delta_{1},\ldots,i_{0},\ldots,i_{0}+\delta_{2} \} \subset I_{2}$ for some
integers $\delta_{1},\delta_{2} \geq 0$ and with the property (\ref{E_i0_subset_large_sector_subset_union_E_i}). Then, in the
decomposition (\ref{G_i_equal_G_i_1_plus_G_i_2}), we observe from the construction above that the function
$G_{i_0}^{1}(\epsilon)$ can be analytically continued on the sector $S_{\pi/k_{1}}$ and has the formal series
$\hat{G}^{1}(\epsilon)$ as asymptotic expansion of Gevrey order $1/k_{1}$ on $S_{\pi/k_{1}}$ (this is the consequence of the
fact that $\Delta_{h}^{1}(\epsilon)=0$ for $h \in I_{\delta_{1},i_{0},\delta_{2}}$). Hence,
$G_{i_0}^{1}(\epsilon)$ is the $k_{1}-$sum of $\hat{G}^{1}(\epsilon)$ on $S_{\pi/k_{1}}$ in the sense of Definition 9.
Moreover, we already know that the function $G_{i_0}^{2}(\epsilon)$ has $\hat{G}^{2}(\epsilon)$ as asymptotic expansion
of Gevrey order $1/k_{2}$ on $\mathcal{E}_{i_0}$, meaning that $G_{i_0}^{2}(\epsilon)$ is the $k_{2}-$sum of
$\hat{G}^{2}(\epsilon)$ on $\mathcal{E}_{i_0}$. In other words, by Definition 10, the formal series
$\hat{G}(\epsilon)$ is $(k_{2},k_{1})-$summable on $\mathcal{E}_{i_0}$ and its
$(k_{2},k_{1})-$sum is the function $G_{i_0}(\epsilon) = a(\epsilon) +
G_{i_0}^{1}(\epsilon) + G_{i_0}^{2}(\epsilon)$ on $\mathcal{E}_{i_0}$.
\end{proof}

\subsection{Construction of formal power series solutions in the complex parameter with two levels of asymptotics}

In this subsection, we establish the second main result of our work, namely the existence of a formal power series
$\hat{u}(t,z,\epsilon)$ in the parameter $\epsilon$ whose coefficients are bounded holomorphic
functions on the product of a sector with small radius centered at 0 and a strip in $\mathbb{C}^2$, that is a solution
of the equation (\ref{ICP_main_formal_epsilon}) and which is the common Gevrey asymptotic expansion of order $1/k_{1}$ of the
actual solutions $u^{\mathfrak{d}_{p}}(t,z,\epsilon)$ of (\ref{ICP_main_p}) constructed in Theorem 1. Furthermore,
this formal series $\hat{u}$ and the corresponding functions $u^{\mathfrak{d}_{p}}$ own a fine structure which involves
two levels of Gevrey asymptotics.\medskip

\noindent We first start by showing that the forcing terms $f^{\mathfrak{d}_{p}}(t,z,\epsilon)$ share a common formal power
series $\hat{f}(t,z,\epsilon)$ in $\epsilon$ as asymptotic expansion of Gevrey order $1/k_{1}$ on $\mathcal{E}_{p}$.

\begin{lemma} Let us assume that the hypotheses of Theorem 1 hold. Then, there exists a formal power series 
$$ \hat{f}(t,z,\epsilon) = \sum_{m \geq 0} f_{m}(t,z) \epsilon^{m}/m! $$
whose coefficients $f_{m}(t,z)$ belong to the
Banach space $\mathbb{F}$ of bounded holomorphic functions on
$(\mathcal{T} \cap D(0,h'')) \times H_{\beta'}$ equipped with supremum norm, where $h''>0$ is constructed in Theorem 1,
which is the common asymptotic expansion of Gevrey order $1/k_{1}$ on $\mathcal{E}_{p}$ of the functions
$f^{\mathfrak{d}_{p}}$, seen as holomorphic functions from $\mathcal{E}_{p}$ into
$\mathbb{F}$, for all $0 \leq p \leq \varsigma - 1$.
\end{lemma}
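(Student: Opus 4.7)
The plan is to apply the classical (one-level) Ramis--Sibuya theorem to the family of maps $\epsilon \mapsto f^{\mathfrak{d}_p}(t,z,\epsilon)$, viewed as $\mathbb{F}$-valued holomorphic functions on $\mathcal{E}_p$, with the good covering $\{\mathcal{E}_p\}_{0\le p\le\varsigma-1}$ of Definition 7. Concretely, I would first check the two hypotheses of the theorem. Boundedness of each $f^{\mathfrak{d}_p}$ on $\mathcal{E}_p$ with values in $\mathbb{F}$ follows from the construction (\ref{defin_F_frak_d_p})--(\ref{forcing_term_frak_d_p}) together with the exponential-growth estimate (\ref{psi_p_k2_exp_growth}) of $\psi_{k_2}^{\mathfrak{d}_p}$, exactly as for $u^{\mathfrak{d}_p}$ in the proof of Theorem 1. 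For the cocycle condition, I would invoke the estimates of Theorem 1 on $f^{\mathfrak{d}_{p+1}}-f^{\mathfrak{d}_p}$: depending on whether $U_{\mathfrak{d}_p}\cap U_{\mathfrak{d}_{p+1}}$ is nonempty or empty, one has (\ref{exp_small_difference_u_p_k2}) respectively (\ref{exp_small_difference_u_p_k1}), i.e.\ exponential flatness on $\mathcal{E}_{p+1}\cap\mathcal{E}_p$ of order $k_2$ or $k_1$.

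The key trivial observation is that, since $k_2>k_1$, for $|\epsilon|$ small enough one has $|\epsilon|^{k_2}\le|\epsilon|^{k_1}$, so $\exp(-M_p/|\epsilon|^{k_2})\le\exp(-M_p/|\epsilon|^{k_1})$. Hence in both cases the cocycle $\Delta_p(\epsilon)=f^{\mathfrak{d}_{p+1}}(t,z,\epsilon)-f^{\mathfrak{d}_p}(t,z,\epsilon)$, regarded as an $\mathbb{F}$-valued function on $\mathcal{E}_{p+1}\cap\mathcal{E}_p$, is exponentially flat of order $k_1$, uniformly in $(t,z)\in(\mathcal{T}\cap D(0,h''))\times H_{\beta'}$.

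I would then invoke Theorem RS of Section 6.1 with the choice $I_1=\{0,\dots,\varsigma-1\}$ and $I_2=\emptyset$ (equivalently, the classical one-level Ramis--Sibuya theorem, cf.\ \cite{hssi}, Theorem XI-2-3). This yields coefficients $f_m\in\mathbb{F}$ and a formal series $\hat f(t,z,\epsilon)=\sum_{m\ge0}f_m(t,z)\epsilon^m/m!\in\mathbb{F}[[\epsilon]]$ such that each $f^{\mathfrak{d}_p}$ admits $\hat f$ as its asymptotic expansion of Gevrey order $1/k_1$ on $\mathcal{E}_p$. The coefficient $f_m(t,z)$ is automatically bounded holomorphic on $(\mathcal{T}\cap D(0,h''))\times H_{\beta'}$ since it belongs to $\mathbb{F}$.

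There is no real obstacle here; the only point requiring care is the compatibility of the two flatness orders. The heavy lifting has been done in proving the exponential decay estimates (\ref{exp_small_difference_u_p_k2}) and (\ref{exp_small_difference_u_p_k1}) in Theorem 1, and the present lemma is essentially a packaging statement in which the finer $k_2$-flat pieces are deliberately forgotten in favor of the uniform $k_1$-flat bound, so that a single Gevrey order suffices. The two-level refinement (retaining the splitting produced by Theorem RS with nonempty $I_2$) will be used in Theorem 2 below rather than here.
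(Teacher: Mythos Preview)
Your proposal is correct and follows essentially the same approach as the paper: both arguments view the $f^{\mathfrak{d}_p}$ as $\mathbb{F}$-valued holomorphic functions, observe that $k_2>k_1$ forces all cocycles to be exponentially flat of order $k_1$ (so the two cases in Theorem~1 collapse to a single level), and then invoke the Ramis--Sibuya theorem with $I_2=\emptyset$ to produce the common formal series $\hat f$.
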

\begin{proof} We consider the family of functions $f^{\mathfrak{d}_{p}}(t,z,\epsilon)$, $0 \leq p \leq \varsigma-1$ constructed
in (\ref{forcing_term_frak_d_p}). For all $0 \leq p \leq \varsigma-1$, we define
$G_{p}^{f}(\epsilon) := (t,z) \mapsto f^{\mathfrak{d}_{p}}(t,z,\epsilon)$,
which is by construction a
holomorphic and bounded function from $\mathcal{E}_{p}$ into the Banach space $\mathbb{F}$ of bounded holomorphic functions on
$(\mathcal{T} \cap D(0,h'')) \times H_{\beta'}$ equipped with the supremum norm, where $\mathcal{T}$ is introduced in
Definition 8 and $h''>0$ is set in Theorem 1.

Bearing in mind the estimates (\ref{exp_small_difference_u_p_k2}) and
(\ref{exp_small_difference_u_p_k1}) and from the fact that $k_{2} > k_{1}$, we see in particular that the cocycle
$\Theta_{p}^{f}(\epsilon) = G_{p+1}^{f}(\epsilon) - G_{p}^{f}(\epsilon)$ is exponentially flat of order $k_{1}$ on
$Z_{p} = \mathcal{E}_{p} \cap \mathcal{E}_{p+1}$, for all $0 \leq p \leq \varsigma - 1$.

From the Theorem (RS) stated above in Section 6.1, we deduce the existence of a convergent power series
$a^{f}(\epsilon) \in \mathbb{F}\{ \epsilon \}$ and a formal series
$\hat{G}^{1,f}(\epsilon) \in \mathbb{F}[[\epsilon]]$ such that $G_{p}^{f}(\epsilon)$ owns the
following decomposition
$$
G_{p}^{f}(\epsilon) = a^{f}(\epsilon) + G_{p}^{1,f}(\epsilon)
$$
where $G_{p}^{1,f}(\epsilon)$ is holomorphic on $\mathcal{E}_{p}$ and has $\hat{G}^{1,f}(\epsilon)$ as
its asymptotic expansion of Gevrey order $1/k_{1}$ on $\mathcal{E}_{p}$,
We define
$$ \hat{f}(t,z,\epsilon) = \sum_{m \geq 0} f_{m}(t,z) \epsilon^{m}/m! := a^{f}(\epsilon) +
\hat{G}^{1,f}(\epsilon).$$
\end{proof}

\noindent The second main result of this work can be stated as follows.

\begin{theo} a) Let us assume that the hypotheses of Theorem 1 hold. Then, there exists a formal power series 
$$ \hat{u}(t,z,\epsilon) = \sum_{m \geq 0} h_{m}(t,z) \epsilon^{m}/m! $$
solution of the equation 
\begin{multline}
Q(\partial_{z})(\partial_{t}\hat{u}(t,z,\epsilon)) =
c_{1,2}(\epsilon)(Q_{1}(\partial_{z})\hat{u}(t,z,\epsilon))(Q_{2}(\partial_{z})\hat{u}(t,z,\epsilon))\\
+ \epsilon^{(\delta_{D}-1)(k_{2}+1) - \delta_{D} + 1}t^{(\delta_{D}-1)(k_{2}+1)}
\partial_{t}^{\delta_D}R_{D}(\partial_{z})\hat{u}(t,z,\epsilon)
+ \sum_{l=1}^{D-1} \epsilon^{\Delta_{l}}t^{d_l}\partial_{t}^{\delta_l}R_{l}(\partial_{z})\hat{u}(t,z,\epsilon)\\
+ c_{0}(t,z,\epsilon)R_{0}(\partial_{z})\hat{u}(t,z,\epsilon) +
c_{F}(\epsilon)\hat{f}(t,z,\epsilon) \label{ICP_main_formal_epsilon}
\end{multline}
whose coefficients $h_{m}(t,z)$ belong to the
Banach space $\mathbb{F}$ of bounded holomorphic functions on
$(\mathcal{T} \cap D(0,h'')) \times H_{\beta'}$ equipped with supremum norm, where $h''>0$ is constructed in Theorem 1,
which is the common asymptotic expansion of Gevrey order $1/k_{1}$ on $\mathcal{E}_{p}$ of the functions
$u^{\mathfrak{d}_{p}}$, seen as holomorphic functions from $\mathcal{E}_{p}$ into
$\mathbb{F}$, for all $0 \leq p \leq \varsigma - 1$. Additionally, the formal series can be decomposed into a sum of three
terms
$$ \hat{u}(t,z,\epsilon) = a(t,z,\epsilon) + \hat{u}_{1}(t,z,\epsilon) + \hat{u}_{2}(t,z,\epsilon) $$
where $a(t,z,\epsilon) \in \mathbb{F}\{ \epsilon \}$ is a convergent series near $\epsilon=0$ and
$\hat{u}_{1}(t,z,\epsilon)$, $\hat{u}_{2}(t,z,\epsilon)$ belong to $\mathbb{F}[[\epsilon]]$ with the property that,
accordingly, the function $u^{\mathfrak{d}_{p}}$ shares a similar decomposition
$$ u^{\mathfrak{d}_{p}}(t,z,\epsilon) = a(t,z,\epsilon) + u_{1}^{\mathfrak{d}_{p}}(t,z,\epsilon) +
u_{2}^{\mathfrak{d}_{p}}(t,z,\epsilon) $$
where $\epsilon \mapsto u_{1}^{\mathfrak{d}_{p}}(t,z,\epsilon)$ is a $\mathbb{F}-$valued function owning
$\hat{u}_{1}(t,z,\epsilon)$ as asymptotic expansion of Gevrey order $1/k_{1}$ on $\mathcal{E}_{p}$ and
where $\epsilon \mapsto u_{2}^{\mathfrak{d}_{p}}(t,z,\epsilon)$ is a $\mathbb{F}-$valued function owning
$\hat{u}_{2}(t,z,\epsilon)$ as asymptotic expansion of Gevrey order $1/k_{2}$ on $\mathcal{E}_{p}$, for all
$0 \leq p \leq \varsigma - 1$.\medskip

\noindent b) We make now the further assumption completing the four properties described in Definition 8 that the
good covering $\{ \mathcal{E}_{p} \}_{0 \leq p \leq \varsigma - 1}$ and that the family of unbounded sectors
$\{ U_{\mathfrak{d}_{p}} \}_{0 \leq p \leq \varsigma - 1}$ satisfy the following property:\medskip

\noindent 5) There exist $0 \leq p_{0} \leq \varsigma - 1$ and two integers $\delta_{1},\delta_{2} \geq 0$ such that for all
$p \in I_{\delta_{1},p_{0},\delta_{2}} = \{ p_{0}- \delta_{1}, \ldots, p_{0}, \ldots, p_{0} + \delta_{2} \}$, the unbounded
sectors $U_{\mathfrak{d}_{p}}$ are such that the intersection $U_{\mathfrak{d}_{p}} \cap U_{\mathfrak{d}_{p+1}}$ contains the sector
$U_{\mathfrak{d}_{p},\mathfrak{d}_{p+1}} = \{ \tau \in \mathbb{C}^{\ast} / \mathrm{arg}(\tau) \in
[\mathfrak{d}_{p},\mathfrak{d}_{p+1}] \}$ and such that
$$ \mathcal{E}_{p_0} \subset S_{\pi/k_{1}} \subset \bigcup_{h \in I_{\delta_{1},p_{0},\delta_{2}}} \mathcal{E}_{h} $$
where $S_{\pi/k_{1}}$ is a sector centered at 0 with aperture slightly larger than $\pi/k_{1}$.\medskip

\noindent Then, the formal series
$\hat{u}(t,z,\epsilon)$ is $(k_{2},k_{1})-$summable on $\mathcal{E}_{p_0}$ and its $(k_{2},k_{1})-$sum is given by
$u^{\mathfrak{d}_{p_0}}(t,z,\epsilon)$.
\end{theo}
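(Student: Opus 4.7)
The plan is to apply the two-level Ramis--Sibuya theorem (Theorem (RS) of Section 6.1) to the $\mathbb{F}$--valued family obtained by freezing $(t,z)$ in the actual solutions of Theorem 1. Concretely, for each $0 \leq p \leq \varsigma-1$, I would set $G_{p}(\epsilon) := \bigl( (t,z) \mapsto u^{\mathfrak{d}_{p}}(t,z,\epsilon) \bigr)$, viewed as a holomorphic map from $\mathcal{E}_{p}$ into the Banach space $\mathbb{F}$ of bounded holomorphic functions on $(\mathcal{T}\cap D(0,h''))\times H_{\beta'}$. The boundedness hypothesis of Theorem (RS) follows from Theorem 1. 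The crucial input, namely the control of the cocycle $\Delta_{p}(\epsilon)=G_{p+1}(\epsilon)-G_{p}(\epsilon)$, is given directly by (\ref{exp_small_difference_u_p_k2}) and (\ref{exp_small_difference_u_p_k1}): I would define $I_{2}$ to be the set of indices $p$ such that $U_{\mathfrak{d}_{p}}\cap U_{\mathfrak{d}_{p+1}}$ contains the sector $U_{\mathfrak{d}_{p},\mathfrak{d}_{p+1}}$, in which case $\Delta_{p}$ is exponentially flat of order $k_{2}$, and let $I_{1}$ be its complement, where $\Delta_{p}$ is only exponentially flat of order $k_{1}$.

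Feeding this partition into Theorem (RS) yields a convergent series $a(\epsilon)\in\mathbb{F}\{\epsilon\}$ together with two formal series $\hat{G}^{1}(\epsilon),\hat{G}^{2}(\epsilon)\in\mathbb{F}[[\epsilon]]$ and the decomposition $G_{p}(\epsilon) = a(\epsilon) + G_{p}^{1}(\epsilon) + G_{p}^{2}(\epsilon)$, where $G_{p}^{1}$ has $\hat{G}^{1}$ as Gevrey asymptotic expansion of order $1/k_{1}$ on $\mathcal{E}_{p}$ and $G_{p}^{2}$ has $\hat{G}^{2}$ as Gevrey asymptotic expansion of order $1/k_{2}$ on $\mathcal{E}_{p}$. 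I would then set $\hat{u}_{1}:=\hat{G}^{1}$, $\hat{u}_{2}:=\hat{G}^{2}$, $u_{j}^{\mathfrak{d}_{p}}:=G_{p}^{j}$ for $j=1,2$, and $\hat{u}:=a+\hat{u}_{1}+\hat{u}_{2}$. Since $k_{2}>k_{1}$, a Gevrey-$1/k_{2}$ asymptotic expansion is in particular a Gevrey-$1/k_{1}$ asymptotic expansion, and any convergent series trivially has Gevrey asymptotic expansion of any positive order; hence $\hat{u}$ is the common Gevrey-$1/k_{1}$ asymptotic expansion of the family $u^{\mathfrak{d}_{p}}$ on $\mathcal{E}_{p}$, as required.

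It remains to check that $\hat{u}$ formally satisfies (\ref{ICP_main_formal_epsilon}). Here I would rely on the differential-algebra properties of Gevrey asymptotic classes: the set of $\mathbb{F}$--valued functions admitting a given formal series as Gevrey asymptotic expansion of order $1/k_{1}$ on $\mathcal{E}_{p}$ is stable under sums, products, differentiation with respect to the variables on which $\mathbb{F}$ is built, multiplication by bounded holomorphic coefficients in $\epsilon$, and by Lemma 11 the forcing terms $f^{\mathfrak{d}_{p}}$ share the common Gevrey-$1/k_{1}$ asymptotic expansion $\hat{f}$. Taking the common asymptotic expansion of both sides of (\ref{ICP_main_p}) therefore produces (\ref{ICP_main_formal_epsilon}) at the formal level, identifying coefficients of powers of $\epsilon$ by the uniqueness of Gevrey asymptotic expansions on sectors of aperture larger than $\pi/k_{1}$ (or, alternatively, by matching coefficients inductively in the recursion inherited from the equation).

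For part (b), Assumption 5 is precisely the hypothesis (\ref{E_i0_subset_large_sector_subset_union_E_i}) of the refined part of Theorem (RS) with $i_{0}=p_{0}$, together with the fact that Theorem 1 produces $k_{2}$--flat differences on every intersection $\mathcal{E}_{p}\cap\mathcal{E}_{p+1}$ indexed by $p\in I_{\delta_{1},p_{0},\delta_{2}}$, since for these indices $U_{\mathfrak{d}_{p}}\cap U_{\mathfrak{d}_{p+1}}$ contains $U_{\mathfrak{d}_{p},\mathfrak{d}_{p+1}}$. Theorem (RS) then gives that $u_{1}^{\mathfrak{d}_{p_{0}}}$ extends analytically to the sector $S_{\pi/k_{1}}$ of aperture slightly larger than $\pi/k_{1}$ and is the $k_{1}$--sum of $\hat{u}_{1}$ there, while $u_{2}^{\mathfrak{d}_{p_{0}}}$ is already the $k_{2}$--sum of $\hat{u}_{2}$ on $\mathcal{E}_{p_{0}}$. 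Adding the convergent part $a(\epsilon)$ and invoking Definition 10 identifies $u^{\mathfrak{d}_{p_{0}}}$ as the $(k_{2},k_{1})$--sum of $\hat{u}$ on $\mathcal{E}_{p_{0}}$. The main obstacle in this scheme is the correct bookkeeping of the two flatness levels on the cocycle (the partition $I_{1},I_{2}$) so that the hypotheses of the two-level Ramis--Sibuya theorem are verified verbatim; once this is in place, every other step reduces to stability properties of Gevrey asymptotics that are already encoded in Theorem (RS) and in the standard differential-algebra framework for asymptotic expansions.
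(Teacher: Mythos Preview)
Your proposal is correct and follows essentially the same route as the paper: define $G_{p}(\epsilon)=(t,z)\mapsto u^{\mathfrak{d}_{p}}(t,z,\epsilon)$, feed the two-level flatness estimates (\ref{exp_small_difference_u_p_k2}), (\ref{exp_small_difference_u_p_k1}) from Theorem~1 into Theorem~(RS) with exactly your partition $I_{1},I_{2}$, and read off the decomposition and, under Assumption~5, the $(k_{2},k_{1})$-summability on $\mathcal{E}_{p_{0}}$. The only place where the paper proceeds a bit differently is the verification that $\hat{u}$ formally solves (\ref{ICP_main_formal_epsilon}): rather than invoking abstract stability of Gevrey asymptotic classes under the $(t,z)$-differential operators (which would require some care since $\partial_{t},\partial_{z}$ are not bounded on $\mathbb{F}$), the paper uses your ``alternative'' route explicitly --- it differentiates (\ref{ICP_main_p}) $m$ times in $\epsilon$ via Leibniz, uses $\lim_{\epsilon\to 0}\partial_{\epsilon}^{m}u^{\mathfrak{d}_{p}}=h_{m}$ and $\lim_{\epsilon\to 0}\partial_{\epsilon}^{m}f^{\mathfrak{d}_{p}}=f_{m}$, and obtains the recursion for the $h_{m}$ that is equivalent to (\ref{ICP_main_formal_epsilon}).
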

\begin{proof} We consider the family of functions $u^{\mathfrak{d}_{p}}(t,z,\epsilon)$, $0 \leq p \leq \varsigma-1$ constructed
in Theorem 1.
For all $0 \leq p \leq \varsigma-1$, we define $G_{p}(\epsilon) := (t,z) \mapsto u^{\mathfrak{d}_{p}}(t,z,\epsilon)$,
which is by construction a
holomorphic and bounded function from $\mathcal{E}_{p}$ into the Banach space $\mathbb{F}$ of bounded holomorphic functions on
$(\mathcal{T} \cap D(0,h'')) \times H_{\beta'}$ equipped with the supremum norm, where $\mathcal{T}$ is introduced in
Definition 8, $h''>0$ is set in Theorem 1 and
$\beta'>0$ is the width of the strip $H_{\beta'}$ on which the coefficient $c_{0}(t,z,\epsilon)$ and the
forcing term $f^{\mathfrak{d}_{p}}(t,z,\epsilon)$ are defined with respect to
$z$, see (\ref{defin_c_0}) and (\ref{forcing_term_frak_d_p}).

Bearing in mind the estimates (\ref{exp_small_difference_u_p_k2}) and
(\ref{exp_small_difference_u_p_k1}) we see that the cocycle
$\Theta_{p}(\epsilon) = G_{p+1}(\epsilon) - G_{p}(\epsilon)$ is exponentially flat of order $k_{2}$ on
$Z_{p} = \mathcal{E}_{p} \cap \mathcal{E}_{p+1}$, for all $p \in I_{2} \subset \{ 0, \ldots, \varsigma-1 \}$ such that
the intersection $U_{\mathfrak{d}_{p}} \cap U_{\mathfrak{d}_{p+1}}$ contains the sector $U_{\mathfrak{d}_{p},\mathfrak{d}_{p+1}}$
and is exponentially flat of order $k_{1}$ on $Z_{p} = \mathcal{E}_{p} \cap \mathcal{E}_{p+1}$, for all
$p \in I_{1} \subset \{ 0, \ldots, \varsigma-1 \}$ such that the intersection
$U_{\mathfrak{d}_{p}} \cap U_{\mathfrak{d}_{p+1}}$ is empty.

From the Theorem (RS) stated above in Section 6.1, we deduce the existence of a convergent power series
$a(\epsilon) \in \mathbb{F}\{ \epsilon \}$ and two formal series
$\hat{G}^{1}(\epsilon),\hat{G}^{2}(\epsilon) \in \mathbb{F}[[\epsilon]]$ such that $G_{p}(\epsilon)$ owns the
following decomposition
$$
G_{p}(\epsilon) = a(\epsilon) + G_{p}^{1}(\epsilon) + G_{p}^{2}(\epsilon)
$$
where $G_{p}^{1}(\epsilon)$ is holomorphic on $\mathcal{E}_{p}$ and has $\hat{G}^{1}(\epsilon)$ as
its asymptotic expansion of Gevrey order $1/k_{1}$ on $\mathcal{E}_{p}$,
$G_{p}^{2}(\epsilon)$ is holomorphic on $\mathcal{E}_{p}$ and carries
$\hat{G}^{2}(\epsilon)$ as its asymptotic expansion of Gevrey order $1/k_{2}$
on $\mathcal{E}_{p}$, for all $0 \leq p \leq \nu-1$.
We set
$$ \hat{u}(t,z,\epsilon) = \sum_{m \geq 0} h_{m}(t,z) \epsilon^{m}/m! := a(\epsilon) +
\hat{G}^{1}(\epsilon) + \hat{G}^{2}(\epsilon).$$
This yields the first part a) of Theorem 2.

Furthermore, under the assumption b) 5) described above, the Theorem (RS) claims that the formal series
$\hat{G}(\epsilon) = a(\epsilon) + \hat{G}^{1}(\epsilon) + \hat{G}^{2}(\epsilon)$ is $(k_{2},k_{1})-$summable on
$\mathcal{E}_{p_0}$ and that its $(k_{2},k_{1})-$sum is given by $G_{p_0}(\epsilon)$.\medskip

It remains to show that the formal series $\hat{u}(t,z,\epsilon)$ solves the main equation (\ref{ICP_main_formal_epsilon}). Since
$u^{\mathfrak{d}_{p}}(t,z,\epsilon)$ (resp. $f^{\mathfrak{d}_{p}}(t,z,\epsilon)$ ) has $\hat{u}(t,z,\epsilon)$
(resp. $\hat{f}(t,z,\epsilon)$) as its asymptotic expansion of Gevrey order $1/k_{1}$ on $\mathcal{E}_{p}$, we have in particular
that
\begin{multline}
\lim_{\epsilon \rightarrow 0, \epsilon \in \mathcal{E}_{p}}
\sup_{t \in \mathcal{T} \cap D(0,h''),z \in H_{\beta'}}
|\partial_{\epsilon}^{m}u^{\mathfrak{d}_{p}}(t,z,\epsilon) - h_{m}(t,z)| = 0,\\
\lim_{\epsilon \rightarrow 0, \epsilon \in \mathcal{E}_{p}}
\sup_{t \in \mathcal{T} \cap D(0,h''),z \in H_{\beta'}}
|\partial_{\epsilon}^{m}f^{\mathfrak{d}_{p}}(t,z,\epsilon) - f_{m}(t,z)| = 0, \label{limit_deriv_order_m_of_up_fp_epsilon}
\end{multline}
for all $0 \leq p \leq \varsigma-1$, all $m \geq 0$. Now, we choose some $p \in \{ 0, \ldots, \varsigma-1 \}$.
By construction, the function
$u^{\mathfrak{d}_{p}}(t,z,\epsilon)$ is a solution of (\ref{ICP_main_p}). We take the derivative of order $m \geq 0$ w.r.t
$\epsilon$ on the
left and right handside of the equation (\ref{ICP_main_p}). From the Leibniz rule, we deduce that
$\partial_{\epsilon}^{m}u^{\mathfrak{d}_{p}}(t,z,\epsilon)$ verifies the following equation
\begin{multline}
Q(\partial_{z})\partial_{t} \partial_{\epsilon}^{m}u^{\mathfrak{d}_{p}}(t,z,\epsilon) =
\sum_{m_{1}+m_{2}+m_{3} = m} \frac{m!}{m_{1}!m_{2}!m_{3}!}
\partial_{\epsilon}^{m_1}c_{1,2}(\epsilon)
\left( Q_{1}(\partial_{z}) \partial_{\epsilon}^{m_2}u^{\mathfrak{d}_{p}}(t,z,\epsilon) \right)\\
\times \left( Q_{2}(\partial_{z}) \partial_{\epsilon}^{m_3}u^{\mathfrak{d}_{p}}(t,z,\epsilon) \right) +
\sum_{m_{1}+m_{2}=m} \frac{m!}{m_{1}!m_{2}!} \partial_{\epsilon}^{m_1}( \epsilon^{(\delta_{D}-1)(k_{2}+1) - \delta_{D}+1})
t^{(\delta_{D}-1)(k_{2}+1)}\\
\times \partial_{t}^{\delta_{D}}R_{D}(\partial_{z})\partial_{\epsilon}^{m_2}u^{\mathfrak{d}_{p}}(t,z,\epsilon)
+ \sum_{l=1}^{D-1}( \sum_{m_{1}+m_{2}=m} \frac{m!}{m_{1}!m_{2}!} \partial_{\epsilon}^{m_1}(\epsilon^{\Delta_l})
t^{d_l}\partial_{t}^{\delta_l}R_{l}(\partial_{z})\partial_{\epsilon}^{m_2}u^{\mathfrak{d}_{p}}(t,z,\epsilon) )\\
+ \sum_{m_{1}+m_{2}=m} \frac{m!}{m_{1}!m_{2}!} \partial_{\epsilon}^{m_1}c_{0}(t,z,\epsilon)R_{0}(\partial_{z})
\partial_{\epsilon}^{m_2}u^{\mathfrak{d}_{p}}(t,z,\epsilon)\\
+ \sum_{m_{1}+m_{2}=m} \frac{m!}{m_{1}!m_{2}!} \partial_{\epsilon}^{m_1}c_{F}(\epsilon)
\partial_{\epsilon}^{m_2}f^{\mathfrak{d}_{p}}(t,z,\epsilon) \label{ICP_main_p_derivative_epsilon}
\end{multline}
for all $m \geq 0$, all $(t,z,\epsilon) \in (\mathcal{T} \cap D(0,h'')) \times H_{\beta'} \times \mathcal{E}_{p}$. If we
let $\epsilon$ tend to zero in (\ref{ICP_main_p_derivative_epsilon}) and if we use
(\ref{limit_deriv_order_m_of_up_fp_epsilon}), we get the recursion
\begin{multline}
Q(\partial_{z})\partial_{t}h_{m}(t,z) = \sum_{m_{1}+m_{2}+m_{3}=m}
\frac{m!}{m_{1}!m_{2}!m_{3}!}(\partial_{\epsilon}^{m_1}c_{1,2})(0) \left( Q_{1}(\partial_{z})h_{m_2}(t,z) \right)
\left( Q_{2}(\partial_{z})h_{m_3}(t,z) \right)\\
+ \frac{m!}{(m - ((\delta_{D}-1)(k_{2}+1)-\delta_{D}+1))!}t^{(\delta_{D}-1)(k_{2}+1)}\partial_{t}^{\delta_D}
R_{D}(\partial_{z})h_{m-((\delta_{D}-1)(k_{2}+1)-\delta_{D}+1)}(t,z)\\
+ \sum_{l=1}^{D-1} \frac{m!}{(m - \Delta_{l})!}t^{d_l}\partial_{t}^{\delta_l}R_{l}(\partial_{z})h_{m - \Delta_{l}}(t,z)
+ \sum_{m_{1}+m_{2}=m} \frac{m!}{m_{1}!m_{2}!}(\partial_{\epsilon}^{m_1}c_{0})(t,z,0)R_{0}(\partial_{z})h_{m_2}(t,z)\\
+ \sum_{m_{1}+m_{2}=m} \frac{m!}{m_{1}!m_{2}!}(\partial_{\epsilon}^{m_1}c_{F})(0)f_{m_2}(t,z) \label{rec_h_m}
\end{multline}
for all $m \geq \max_{1 \leq l \leq D-1} \{ \Delta_{l}, (\delta_{D}-1)(k_{2}+1) - \delta_{D}+1 \}$,
all $(t,z) \in (\mathcal{T} \cap D(0,h'')) \times H_{\beta'}$. Since the functions $c_{1,2}(\epsilon)$,
$c_{0}(t,z,\epsilon)$ and $c_{F}(\epsilon)$ are analytic w.r.t $\epsilon$ at 0, we know that
\begin{multline}
c_{1,2}(\epsilon) = \sum_{m \geq 0} \frac{ (\partial_{\epsilon}^{m}c_{1,2})(0) }{m!} \epsilon^{m} \ \ , \ \
c_{0}(t,z,\epsilon) = \sum_{m \geq 0} \frac{(\partial_{\epsilon}^{m}c_{0})(t,z,0)}{m!}\epsilon^{m},\\
c_{F}(\epsilon) = \sum_{m \geq 0} \frac{ (\partial_{\epsilon}^{m}c_{F})(0) }{m!} \epsilon^{m} \label{Taylor_c12_c0_cF}
\end{multline}
for all $\epsilon \in D(0,\epsilon_{0})$, all $z \in H_{\beta'}$. On other hand, one can check by direct
inspection from the recursion
(\ref{rec_h_m}) and the expansions (\ref{Taylor_c12_c0_cF}) that the series
$\hat{u}(t,z,\epsilon) = \sum_{m \geq 0} h_{m}(t,z)\epsilon^{m}/m!$ formally solves the equation (\ref{ICP_main_formal_epsilon}).
\end{proof}

\section{Application. Construction of analytic and formal solutions in a complex parameter of a nonlinear initial value Cauchy
problem with analytic coefficients and forcing term near the origin in $\mathbb{C}^{3}$}

In this section, we give sufficient conditions on the forcing term $F(T,m,\epsilon)$ for the functions
$u^{\mathfrak{d}_{p}}(t,z,\epsilon)$ and its corresponding formal power series expansion
$\hat{u}(t,z,\epsilon)$ w.r.t $\epsilon$ constructed in Theorem 1 and Theorem 2
to solve a nonlinear problem with holomorphic coefficients and forcing term near the origin given by
(\ref{nlpde_holcoef_near_origin_u_dp}). 

\subsection{A linear convolution initial value problem satisfied by the formal forcing term $F(T,m,\epsilon)$}

Let $k_{1} \geq 1$ be the integer defined above in Section 5 and let $\mathbf{D} \geq 2$ be an integer. For
$1 \leq l \leq \mathbf{D}$, let
$\mathbf{d}_{l}$,$\boldsymbol{\delta}_{l}$,$\mathbf{\Delta}_{l} \geq 0$ be nonnegative integers.
We assume that 
\begin{equation}
1 = \boldsymbol{\delta}_{1} \ \ , \ \ \boldsymbol{\delta}_{l} < \boldsymbol{\delta}_{l+1},  \label{assum_delta_l_bf}
\end{equation}
for all $1 \leq l \leq \mathbf{D}-1$. We make also the assumption that
\begin{equation}
\mathbf{d}_{\mathbf{D}} = (\boldsymbol{\delta}_{\mathbf{D}}-1)(k_{1}+1) \ \ , \ \
\mathbf{d}_{l} > (\boldsymbol{\delta}_{l}-1)(k_{1}+1) \ \ , \ \
\mathbf{\Delta}_{l} - \mathbf{d}_{l} + \boldsymbol{\delta}_{l} - 1 \geq 0 \ \ , \ \
\mathbf{\Delta}_{\mathbf{D}} = \mathbf{d}_{\mathbf{D}} - \boldsymbol{\delta}_{\mathbf{D}} + 1
\label{assum_dl_delta_l_Delta_l_bf}
\end{equation}
for all $1 \leq l \leq \mathbf{D}-1$. Let $\mathbf{Q}(X),\mathbf{R}_{l}(X) \in \mathbb{C}[X]$,
$0 \leq l \leq \mathbf{D}$, be polynomials such that
\begin{equation}
\mathrm{deg}(\mathbf{Q}) \geq \mathrm{deg}(\mathbf{R}_{\mathbf{D}}) \geq \mathrm{deg}(\mathbf{R}_{l}) \ \ , \ \
\mathbf{Q}(im) \neq 0 \ \ , \ \ \mathbf{R}_{\mathbf{D}}(im) \neq 0 \label{assum_Q_Rl_bf}
\end{equation}
for all $m \in \mathbb{R}$, all $0 \leq l \leq \mathbf{D}-1$. Let $\beta,\mu>0$ be the integers defined above in Section 5.
We consider sequences of functions
$m \mapsto \mathbf{C}_{0,n}(m,\epsilon)$, for all $n \geq 0$ and $m \mapsto \mathbf{F}_{n}(m,\epsilon)$, for all $n \geq 1$,
that belong to the Banach space $E_{(\beta,\mu)}$ and which
depend holomorphically on $\epsilon \in D(0,\epsilon_{0})$. We assume that there exist constants $\mathbf{K}_{0},\mathbf{T}_{0}>0$
such that
\begin{equation}
||\mathbf{C}_{0,n}(m,\epsilon)|| _{(\beta,\mu)} \leq \mathbf{K}_{0} (\frac{1}{\mathbf{T}_{0}})^{n} \ \ , \ \
||\mathbf{F}_{n}(m,\epsilon)||_{(\beta,\mu)} \leq \mathbf{K}_{0} (\frac{1}{\mathbf{T}_{0}})^{n} \label{norm_beta_mu_F_n_bf}
\end{equation}
for all $n \geq 1$, for all $\epsilon \in D(0,\epsilon_{0})$. We define
$$ \mathbf{C}_{0}(T,m,\epsilon) = \sum_{n \geq 1} \mathbf{C}_{0,n}(m,\epsilon) T^{n} \ \ , \ \
\mathbf{F}(T,m,\epsilon) = \sum_{n \geq 1} \mathbf{F}_{n}(m,\epsilon) T^{n} $$
which are convergent series on $D(0,\mathbf{T}_{0}/2)$ with values in $E_{(\beta,\mu)}$.
Let $\mathbf{c}_{0}(\epsilon),\mathbf{c}_{0,0}(\epsilon)$ and
$\mathbf{c}_{\mathbf{F}}(\epsilon)$ be bounded holomorphic functions on $D(0,\epsilon_{0})$ which vanish at the origin
$\epsilon=0$.

We make the assumption that the
formal series $F(T,m,\epsilon)=\sum_{n \geq 1} F_{n}(m,\epsilon)T^{n}$, where the coefficients $F_{n}(m,\epsilon)$ are defined
after the problem (\ref{ICP_main_p}) in Section 5 satisfies the next linear initial value problem
\begin{multline}
\mathbf{Q}(im)(\partial_{T}F(T,m,\epsilon) ) =
\sum_{l=1}^{\mathbf{D}} \mathbf{R}_{l}(im) \epsilon^{\mathbf{\Delta}_{l} - \mathbf{d}_{l} + \boldsymbol{\delta}_{l} - 1}
T^{\mathbf{d}_{l}} \partial_{T}^{\boldsymbol{\delta}_l}F(T,m,\epsilon)\\
+ \epsilon^{-1}\frac{\mathbf{c}_{0}(\epsilon)}{(2\pi)^{1/2}}\int_{-\infty}^{+\infty}
\mathbf{C}_{0}(T,m-m_{1},\epsilon)\mathbf{R}_{0}(im_{1})F(T,m_{1},\epsilon) dm_{1}\\
+ \epsilon^{-1}\frac{\mathbf{c}_{0,0}(\epsilon)}{(2\pi)^{1/2}}\int_{-\infty}^{+\infty}\mathbf{C}_{0,0}(m-m_{1},\epsilon)
\mathbf{R}_{0}(im_{1})F(T,m_{1},\epsilon) dm_{1}
+ \epsilon^{-1}\mathbf{c}_{\mathbf{F}}(\epsilon)\mathbf{F}(T,m,\epsilon)
\label{SCP_bf}
\end{multline}
for given initial data $F(0,m,\epsilon) = 0$.\medskip

The existence and uniqueness of the formal power series solution of (\ref{SCP_bf}) is ensured by the following

\begin{prop} There exists a unique formal series
$$ F(T,m,\epsilon) = \sum_{n \geq 1} F_{n}(m,\epsilon) T^{n} $$
solution of (\ref{SCP_bf}) with initial data $F(0,m,\epsilon) \equiv 0$, where the coefficients
$m \mapsto F_{n}(m,\epsilon)$ belong to $E_{(\beta,\mu)}$ for $\beta,\mu>0$ given above and depend
holomorphically on $\epsilon$ in $D(0,\epsilon_{0})$. 
\end{prop}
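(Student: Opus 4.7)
The plan is to mimic the strategy of Proposition 10, but now in the much simpler linear setting. First I would insert the ansatz $F(T,m,\epsilon)=\sum_{n\ge 1}F_{n}(m,\epsilon)T^{n}$ into equation (\ref{SCP_bf}) and identify coefficients of $T^{n}$ on both sides. On the left hand side, $\mathbf{Q}(im)\partial_{T}F$ produces the term $(n+1)\mathbf{Q}(im)F_{n+1}(m,\epsilon)$. On the right hand side, each term $\mathbf{R}_{l}(im)\epsilon^{\mathbf{\Delta}_{l}-\mathbf{d}_{l}+\boldsymbol{\delta}_{l}-1}T^{\mathbf{d}_{l}}\partial_{T}^{\boldsymbol{\delta}_{l}}F$ contributes $\mathbf{R}_{l}(im)\epsilon^{\mathbf{\Delta}_{l}-\mathbf{d}_{l}+\boldsymbol{\delta}_{l}-1}\bigl(\prod_{j=0}^{\boldsymbol{\delta}_{l}-1}(n+\boldsymbol{\delta}_{l}-\mathbf{d}_{l}-j)\bigr)F_{n+\boldsymbol{\delta}_{l}-\mathbf{d}_{l}}(m,\epsilon)$; the convolution terms produce sums of the form $\sum_{n_{1}+n_{2}=n,\,n_{2}\ge 1}\int_{-\infty}^{+\infty}\mathbf{C}_{0,n_{1}}(m-m_{1},\epsilon)\mathbf{R}_{0}(im_{1})F_{n_{2}}(m_{1},\epsilon)\,dm_{1}$ (together with a diagonal term involving $\mathbf{C}_{0,0}$) and an inhomogeneous contribution $\mathbf{c}_{\mathbf{F}}(\epsilon)\mathbf{F}_{n}(m,\epsilon)$. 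Dividing by $(n+1)\mathbf{Q}(im)$, which is nonvanishing by (\ref{assum_Q_Rl_bf}), gives an explicit recursion expressing $F_{n+1}$ in terms of $F_{k}$ for $k\le n$.

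Next I would verify that the recursion is indeed well posed. The shifts of the form $n+\boldsymbol{\delta}_{l}-\mathbf{d}_{l}$ take values $\le n$ because (\ref{assum_dl_delta_l_Delta_l_bf}) forces $\mathbf{d}_{l}>(\boldsymbol{\delta}_{l}-1)(k_{1}+1)\ge\boldsymbol{\delta}_{l}-1$, hence $\mathbf{d}_{l}\ge\boldsymbol{\delta}_{l}$; the exponents $\mathbf{\Delta}_{l}-\mathbf{d}_{l}+\boldsymbol{\delta}_{l}-1$ are nonnegative by the same hypothesis, so the factors in $\epsilon$ that appear are holomorphic on $D(0,\epsilon_{0})$ and do not introduce any singularity at $\epsilon=0$ (here the assumption that $\mathbf{c}_{0}$, $\mathbf{c}_{0,0}$ and $\mathbf{c}_{\mathbf{F}}$ vanish at $\epsilon=0$ is what absorbs the $\epsilon^{-1}$ factors in front of the convolution and forcing terms). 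Once all indices have been checked, one initializes the recursion: the equation evaluated at coefficient $T^{0}$ gives $F_{1}(m,\epsilon)=\mathbf{Q}(im)^{-1}\mathbf{c}_{\mathbf{F}}(\epsilon)\epsilon^{-1}\mathbf{F}_{1}(m,\epsilon)$ (plus the contribution of the $l=1$ term, since $\mathbf{d}_{1}\ge\boldsymbol{\delta}_{1}=1$ permits it), and all subsequent coefficients are determined uniquely.

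Finally I would check that each $F_{n}(m,\epsilon)$ lies in $E_{(\beta,\mu)}$ and depends holomorphically on $\epsilon\in D(0,\epsilon_{0})$. The membership in $E_{(\beta,\mu)}$ propagates inductively: the multiplication by $\mathbf{R}_{l}(im)/\mathbf{Q}(im)$ preserves $E_{(\beta,\mu)}$ thanks to (\ref{assum_Q_Rl_bf}) and an estimate in the spirit of the one used in Proposition 4, while the convolutions of the type $\int_{-\infty}^{+\infty}\mathbf{Q}(im)^{-1}\mathbf{C}_{0,n_{1}}(m-m_{1},\epsilon)\mathbf{R}_{0}(im_{1})F_{n_{2}}(m_{1},\epsilon)\,dm_{1}$ are exactly of the form treated by Proposition 4, which guarantees that they remain in $E_{(\beta,\mu)}$ for any $\mu>\max(\mathrm{deg}(Q_{1})+1,\mathrm{deg}(Q_{2})+1)$ (in particular for our choice of $\mu$). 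Holomorphic dependence of $F_{n}$ on $\epsilon$ follows inductively from the explicit recursion, since the building blocks $\mathbf{C}_{0,n}(m,\epsilon)$, $\mathbf{F}_{n}(m,\epsilon)$, $\mathbf{c}_{0}(\epsilon)$, $\mathbf{c}_{0,0}(\epsilon)$, $\mathbf{c}_{\mathbf{F}}(\epsilon)$ are all holomorphic on $D(0,\epsilon_{0})$ (with $E_{(\beta,\mu)}$-valued Banach structure) and the operations involved preserve holomorphy. No obstacle is truly serious here since the problem is linear and no fixed point argument is needed; the only point that requires some care is the bookkeeping of the indices and of the powers of $\epsilon$ to ensure that the recursion closes up at each step without singular factors.
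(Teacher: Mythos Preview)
Your proposal is correct and follows essentially the same approach as the paper: substitute the formal ansatz, derive the recursion for $F_{n+1}$ in terms of earlier $F_{k}$ (the paper writes the same recursion and notes it is valid for $n \geq \max_{1 \leq l \leq \mathbf{D}} \mathbf{d}_{l}$), and invoke Proposition~4 to ensure that each $F_{n}$ stays in $E_{(\beta,\mu)}$ with holomorphic dependence on $\epsilon$. Your treatment is in fact more detailed than the paper's, which dispatches the argument in a single sentence plus the recursion formula; your only slip is the parenthetical about an $l=1$ contribution to $F_{1}$, which actually vanishes since $\mathbf{d}_{1} \geq 1$ forces the relevant shifted index to be nonpositive.
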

\begin{proof} From Proposition 4, we get that the coefficients
$F_{n}(m,\epsilon)$ of $F(T,m,\epsilon)$ are well defined, belong to $E_{(\beta,\mu)}$ for all
$\epsilon \in D(0,\epsilon_{0})$, all $n \geq 1$ and satisfy the following recursion relation
\begin{multline}
(n+1)F_{n+1}(m,\epsilon)\\
= \sum_{l=1}^{\mathbf{D}} \frac{\mathbf{R}_{l}(im)}{\mathbf{Q}(im)}\left( \epsilon^{\mathbf{\Delta}_{l} - \mathbf{d}_{l}
+ \boldsymbol{\delta}_{l} - 1}
\Pi_{j=0}^{\boldsymbol{\delta}_{l}-1} (n+\boldsymbol{\delta}_{l}-\mathbf{d}_{l}-j) \right)
F_{n+\boldsymbol{\delta}_{l}-\mathbf{d}_{l}}(m,\epsilon)\\
+ \frac{\epsilon^{-1}\mathbf{c}_{0}(\epsilon)}{\mathbf{Q}(im)}
\sum_{n_{1}+n_{2}=n,n_{1} \geq 1,n_{2} \geq 1} \frac{1}{(2\pi)^{1/2}} \int_{-\infty}^{+\infty}
\mathbf{C}_{0,n_{1}}(m-m_{1},\epsilon)\mathbf{R}_{0}(im_{1})F_{n_2}(m_{1},\epsilon) dm_{1}\\
+ \frac{\epsilon^{-1}\mathbf{c}_{0,0}(\epsilon)}{(2\pi)^{1/2}\mathbf{Q}(im)} \int_{-\infty}^{+\infty}
\mathbf{C}_{0,0}(m-m_{1},\epsilon)\mathbf{R}_{0}(im_{1})F_{n}(m_{1},\epsilon) dm_{1}
+ \frac{\epsilon^{-1}\mathbf{c}_{\mathbf{F}}(\epsilon)}{\mathbf{Q}(im)}\mathbf{F}_{n}(m,\epsilon)
\end{multline}
for all $n \geq \max_{1 \leq l \leq \mathbf{D}}\mathbf{d}_{l}$.
\end{proof}

\subsection{Analytic solutions for an auxiliary linear convolution problem resulting from a $m_{k_1}-$Borel transform
applied to the linear initial value convolution problem}

Using the formula (8.7) from \cite{taya}, p. 3630, we can expand the operators
$T^{\boldsymbol{\delta}_{l}(k_{1}+1)} \partial_{T}^{\boldsymbol{\delta}_l}$ in the form
\begin{equation}
T^{\boldsymbol{\delta}_{l}(k_{1}+1)} \partial_{T}^{\boldsymbol{\delta}_l} = (T^{k_{1}+1}\partial_{T})^{\boldsymbol{\delta}_l} +
\sum_{1 \leq p \leq \boldsymbol{\delta}_{l}-1} A_{\boldsymbol{\delta}_{l},p} T^{k_{1}(\boldsymbol{\delta}_{l}-p)}
(T^{k_{1}+1}\partial_{T})^{p}
\label{expand_op_diff_bf}
\end{equation}
where $A_{\boldsymbol{\delta}_{l},p}$, $p=1,\ldots,\boldsymbol{\delta}_{l}-1$ are real numbers, for all
$1 \leq l \leq \mathbf{D}$. We define integers $\mathbf{d}_{l,k_{1}} \geq 0$ to satisfy
\begin{equation}
\mathbf{d}_{l} + k_{1} + 1 = \boldsymbol{\delta}_{l}(k_{1}+1) + \mathbf{d}_{l,k_{1}} \label{defin_d_l_k1_bold}
\end{equation}
for all $1 \leq l \leq \mathbf{D}$. Multiplying the equation (\ref{SCP_bf}) by $T^{k_{1}+1}$ and using
(\ref{expand_op_diff_bf}), (\ref{defin_d_l_k1_bold}) we can rewrite the equation (\ref{SCP_bf}) in the form
\begin{multline}
\mathbf{Q}(im)( T^{k_{1}+1}\partial_{T}F(T,m,\epsilon) ) \\
= \sum_{l=1}^{\mathbf{D}} \mathbf{R}_{l}(im)\left( \epsilon^{\mathbf{\Delta}_{l} - \mathbf{d}_{l} +
\boldsymbol{\delta}_{l} - 1}T^{\mathbf{d}_{l,k_{1}}}(T^{k_{1}+1}\partial_{T})^{\boldsymbol{\delta}_l}
F(T,m,\epsilon) \right.
\\+ \sum_{1 \leq p \leq \boldsymbol{\delta}_{l}-1} A_{\boldsymbol{\delta}_{l},p}
\left. \epsilon^{\mathbf{\Delta}_{l}-\mathbf{d}_{l}+\boldsymbol{\delta}_{l}-1} T^{k_{1}(\boldsymbol{\delta}_{l}-p)
+ \mathbf{d}_{l,k_{1}}}(T^{k_{1}+1}\partial_{T})^{p}F(T,m,\epsilon) \right)\\
+ \epsilon^{-1}T^{k_{1}+1}
\frac{\mathbf{c}_{0}(\epsilon)}{(2\pi)^{1/2}}\int_{-\infty}^{+\infty}
\mathbf{C}_{0}(T,m-m_{1},\epsilon) \mathbf{R}_{0}(im_{1})F(T,m_{1},\epsilon) dm_{1}\\
+ \epsilon^{-1}T^{k_{1}+1}
\frac{\mathbf{c}_{0,0}(\epsilon)}{(2\pi)^{1/2}}\int_{-\infty}^{+\infty} \mathbf{C}_{0,0}(m-m_{1},\epsilon)
\mathbf{R}_{0}(im_{1})F(T,m_{1},\epsilon) dm_{1}
+ \epsilon^{-1}\mathbf{c}_{\mathbf{F}}(\epsilon)T^{k_{1}+1}\mathbf{F}(T,m,\epsilon)
\label{SCP_irregular_bf}
\end{multline}
As above, we denote $\psi_{k_1}(\tau,m,\epsilon)$ the formal $m_{k_1}-$Borel transform of $F(T,m,\epsilon)$ w.r.t $T$ and
$\boldsymbol{\varphi}_{k_1}(\tau,m,\epsilon)$ the formal $m_{k_1}-$Borel transform of
$\mathbf{C}_{0}(T,m,\epsilon)$ with respect to $T$ and
$\boldsymbol{\psi}_{k_1}(\tau,m,\epsilon)$ the formal $m_{k_1}-$Borel transform of $\mathbf{F}(T,m,\epsilon)$ w.r.t $T$,
\begin{multline*}
\psi_{k_1}(\tau,m,\epsilon) = \sum_{n \geq 1} F_{n}(m,\epsilon) \frac{\tau^n}{\Gamma(\frac{n}{k_{1}})} \ \ , \ \
\boldsymbol{\varphi}_{k_1}(\tau,m,\epsilon) = \sum_{n \geq 1} \mathbf{C}_{0,n}(m,\epsilon) \frac{\tau^n}{\Gamma(\frac{n}{k_1})},\\
\boldsymbol{\psi}_{k_1}(\tau,m,\epsilon) = \sum_{n \geq 1} \mathbf{F}_{n}(m,\epsilon) \frac{\tau^n}{\Gamma(\frac{n}{k_1})}
\end{multline*}
Using (\ref{norm_beta_mu_F_n_bf}) we get that
$\boldsymbol{\varphi}_{k_1}(\tau,m,\epsilon) \in F_{(\nu,\beta,\mu,k_{1},k_{1})}^{\mathfrak{d}_{p}}$ and
$\boldsymbol{\psi}_{k_1}(\tau,m,\epsilon) \in F_{(\nu,\beta,\mu,k_{1},k_{1})}^{\mathfrak{d}_{p}}$, for
all $\epsilon \in D(0,\epsilon_{0})$, for all the unbounded sectors $U_{\mathfrak{d}_{p}}$ centered at 0 and bisecting direction
$\mathfrak{d}_{p} \in \mathbb{R}$ introduced in Definition 8, for some $\nu>0$. Indeed, we have that
\begin{multline}
||\boldsymbol{\varphi}_{k_1}(\tau,m,\epsilon)||_{(\nu,\beta,\mu,k_{1},k_{1})} \leq \sum_{n \geq 1}
||\mathbf{C}_{0,n}(m,\epsilon)||_{(\beta,\mu)} (\sup_{\tau \in \bar{D}(0,\rho) \cup U_{\mathfrak{d}_{p}}}
\frac{1 + |\tau|^{2k_{1}}}{|\tau|} \exp(-\nu |\tau|^{k_{1}})
\frac{|\tau|^n}{\Gamma(\frac{n}{k_{1}})}),\\
||\boldsymbol{\psi}_{k_1}(\tau,m,\epsilon)||_{(\nu,\beta,\mu,k_{1},k_{1})} \leq \sum_{n \geq 1}
||\mathbf{F}_{n}(m,\epsilon)||_{(\beta,\mu)} (\sup_{\tau \in \bar{D}(0,\rho) \cup U_{\mathfrak{d}_{p}}}
\frac{1 + |\tau|^{2k_{1}}}{|\tau|} \exp(-\nu |\tau|^{k_1})
\frac{|\tau|^n}{\Gamma(\frac{n}{k_{1}})}) \label{maj_norm_psi_k_1_bf}
\end{multline}
By using the classical estimates (\ref{x_m_exp_x<}) and the Stirling formula
$\Gamma(n/k_{1}) \sim (2\pi)^{1/2}(n/k_{1})^{\frac{n}{k_1}-\frac{1}{2}}e^{-n/k_{1}}$ as $n$ tends to $+\infty$, we get two constants
$\mathbf{A}_{1},\mathbf{A}_{2}>0$ depending on $\nu,k_{1}$ such that
\begin{multline}
\sup_{\tau \in \bar{D}(0,\rho) \cup U_{\mathfrak{d}_{p}}}
\frac{1 + |\tau|^{2k_{1}}}{|\tau|} \exp(-\nu |\tau|^{k_1})
\frac{|\tau|^n}{\Gamma(\frac{n}{k_{1}})} 
\leq \sup_{x \geq 0} (1+x^{2})x^{\frac{n-1}{k_{1}}}
\frac{e^{-\nu x}}{\Gamma(\frac{n}{k_{1}})}\\
\leq \left( (\frac{n-1}{\nu k_{1}})^{\frac{n-1}{k_{1}}}
e^{-\frac{n-1}{k_{1}}} +
( \frac{n-1}{\nu k_{1}} + \frac{2}{\nu})^{\frac{n-1}{k_{1}}+2} e^{-(\frac{n-1}{k_{1}}+2)} \right)/ \Gamma(n/k_{1})
\leq \mathbf{A}_{1}(\mathbf{A}_{2})^{n} \label{sup_Stirling_bf}
\end{multline}
for all $n \geq 1$, all $\epsilon \in D(0,\epsilon_{0})$. Therefore if $\mathbf{A}_{2} < \mathbf{T}_{0}$ holds, we get the estimates
\begin{multline}
||\boldsymbol{\varphi}_{k_1}(\tau,m,\epsilon)||_{(\nu,\beta,\mu,k_{1},k_{1})} \leq
\mathbf{A}_{1} \sum_{n \geq 1} ||\mathbf{C}_{0,n}(m,\epsilon)||_{(\beta,\mu)}
(\mathbf{A}_{2})^{n} \leq \frac{\mathbf{A}_{1}\mathbf{A}_{2}\mathbf{K}_{0}}{\mathbf{T}_0}
\frac{1}{1 - \frac{\mathbf{A}_{2}}{\mathbf{T}_0}},\\
||\boldsymbol{\psi}_{k_1}(\tau,m,\epsilon)||_{(\nu,\beta,\mu,k_{1},k_{1})} \leq \mathbf{A}_{1} \sum_{n \geq 1}
||\mathbf{F}_{n}(m,\epsilon)||_{(\beta,\mu)}
(\mathbf{A}_{2})^{n} \leq \frac{\mathbf{A}_{1}\mathbf{A}_{2}\mathbf{K}_{0}}{\mathbf{T}_0}
\frac{1}{ 1 - \frac{\mathbf{A}_{2}}{\mathbf{T}_0}}
\label{norm_F_varphi_k_psi_k_epsilon_0_bf}
\end{multline}
for all $\epsilon \in D(0,\epsilon_{0})$.

Observe that $\mathbf{d}_{\mathbf{D},k_{1}}=0$. Using the computation rules for the formal $m_{k_1}-$Borel transform in
Proposition 8, we deduce the following equation satisfied by $\psi_{k_1}(\tau,m,\epsilon)$,
\begin{multline}
\mathbf{Q}(im)( k_{1} \tau^{k_1} \psi_{k_1}(\tau,m,\epsilon) )
= \mathbf{R}_{\mathbf{D}}(im) \left( k_{1}^{\boldsymbol{\delta}_{\mathbf{D}}}
\tau^{\boldsymbol{\delta}_{\mathbf{D}}k_{1}} \psi_{k_1}(\tau,m,\epsilon) \right. \\
+ \sum_{1 \leq p \leq \boldsymbol{\delta}_{\mathbf{D}}-1} A_{\boldsymbol{\delta}_{\mathbf{D}},p}
\frac{\tau^{k_{1}}}{\Gamma(\boldsymbol{\delta}_{\mathbf{D}}-p)}\int_{0}^{\tau^{k_1}}
(\tau^{k_1}-s)^{\boldsymbol{\delta}_{\mathbf{D}}-p-1}
\left. (k_{1}^{p} s^{p} \psi_{k_1}(s^{1/k_{1}},m,\epsilon)) \frac{ds}{s} \right)\\
+ \sum_{l=1}^{\mathbf{D}-1} \mathbf{R}_{l}(im)
\left( \epsilon^{\mathbf{\Delta}_{l}-\mathbf{d}_{l}+\boldsymbol{\delta}_{l}-1}
\frac{\tau^{k_1}}{\Gamma( \frac{\mathbf{d}_{l,k_{1}}}{k_1} )} \right.
\int_{0}^{\tau^{k_1}} (\tau^{k_1}-s)^{\frac{\mathbf{d}_{l,k_{1}}}{k_1}-1}(k_{1}^{\boldsymbol{\delta}_l}s^{\boldsymbol{\delta}_l}
\psi_{k_1}(s^{1/k_{1}},m,\epsilon)) \frac{ds}{s}\\
+ \sum_{1 \leq p \leq \boldsymbol{\delta}_{l}-1} A_{\boldsymbol{\delta}_{l},p}
\epsilon^{\mathbf{\Delta}_{l}-\mathbf{d}_{l}+\boldsymbol{\delta}_{l}-1}
\frac{\tau^{k_{1}}}{\Gamma( \frac{\mathbf{d}_{l,k_{1}}}{k_1} + \boldsymbol{\delta}_{l}-p)} \int_{0}^{\tau^{k_1}}
\left. (\tau^{k_1}-s)^{\frac{\mathbf{d}_{l,k_{1}}}{k_1}+\boldsymbol{\delta}_{l}-p-1}(k_{1}^{p}s^{p}\psi_{k_1}(s^{1/k_1},m,\epsilon))
\frac{ds}{s} \right)\\
 + \epsilon^{-1}
\frac{\tau^{k_1}}{\Gamma(1 + \frac{1}{k_1})} \int_{0}^{\tau^{k_1}}
(\tau^{k_1}-s)^{1/k_1}\\
\times \left( \frac{\mathbf{c}_{0}(\epsilon)}{(2\pi)^{1/2}} s\int_{0}^{s} \int_{-\infty}^{+\infty} \right.
\left. \boldsymbol{\varphi}_{k_1}((s-x)^{1/k_1},m-m_{1},\epsilon) \mathbf{R}_{0}(im_{1})
\psi_{k_1}(x^{1/k_{1}},m_{1},\epsilon) \frac{1}{(s-x)x}
dxdm_{1} \right) \frac{ds}{s}\\
+ \epsilon^{-1}\frac{\tau^{k_1}}{\Gamma(1 + \frac{1}{k_1})} \int_{0}^{\tau^{k_1}}
(\tau^{k_1}-s)^{1/k_1} \frac{\mathbf{c}_{0,0}(\epsilon)}{(2\pi)^{1/2}}
( \int_{-\infty}^{+\infty} \mathbf{C}_{0,0}(m-m_{1},\epsilon)\mathbf{R}_{0}(im_{1})
\psi_{k_1}(s^{1/k_1},m_{1},\epsilon) dm_{1} )\frac{ds}{s}\\
+ \epsilon^{-1} \mathbf{c}_{\mathbf{F}}(\epsilon) \frac{\tau^{k_1}}{\Gamma(1 + \frac{1}{k_1})}\int_{0}^{\tau^{k_{1}}}
(\tau^{k_1}-s)^{1/k_1} \boldsymbol{\psi}_{k_1}(s^{1/k_{1}},m,\epsilon) \frac{ds}{s}. \label{k1_Borel_equation_analytic_bf}
\end{multline}
We make the additional assumption that there exists an unbounded sector
$$ S_{\mathbf{Q},\mathbf{R}_{\mathbf{D}}} = \{ z \in \mathbb{C} /
|z| \geq r_{\mathbf{Q},\mathbf{R}_{\mathbf{D}}} \ \ , \ \ |\mathrm{arg}(z) - d_{\mathbf{Q},\mathbf{R}_{\mathbf{D}}}|
\leq \boldsymbol{\eta}_{\mathbf{Q},\mathbf{R}_{\mathbf{D}}} \} $$
with direction $d_{\mathbf{Q},\mathbf{R}_{\mathbf{D}}} \in \mathbb{R}$, aperture
$\eta_{\mathbf{Q},\mathbf{R}_{\mathbf{D}}}>0$ for some radius $r_{\mathbf{Q},\mathbf{R}_{\mathbf{D}}}>0$
such that
\begin{equation}
\frac{\mathbf{Q}(im)}{\mathbf{R}_{\mathbf{D}}(im)} \in S_{\mathbf{Q},\mathbf{R}_{\mathbf{D}}} \label{quotient_Q_RD_in_S_bf}
\end{equation} 
for all $m \in \mathbb{R}$. We factorize the polynomial $\mathbf{P}_{m}(\tau) = \mathbf{Q}(im)k_{1} -
\mathbf{R}_{\mathbf{D}}(im)k_{1}^{\boldsymbol{\delta}_{\mathbf{D}}}
\tau^{(\boldsymbol{\delta}_{\mathbf{D}}-1)k_{1}}$ in the form
\begin{equation}
\mathbf{P}_{m}(\tau) = -\mathbf{R}_{\mathbf{D}}(im)k_{1}^{\boldsymbol{\delta}_{\mathbf{D}}}
\Pi_{l=0}^{(\boldsymbol{\delta}_{\mathbf{D}}-1)k_{1}-1} (\tau - \mathbf{q}_{l}(m)) \label{factor_P_m_bf}
\end{equation}
where
\begin{multline}
\mathbf{q}_{l}(m) = (\frac{|\mathbf{Q}(im)|}{|\mathbf{R}_{\mathbf{D}}(im)|
k_{1}^{\boldsymbol{\delta}_{\mathbf{D}}-1}})^{\frac{1}{(\boldsymbol{\delta}_{\mathbf{D}}-1)k_{1}}}\\
\times \exp( \sqrt{-1}( \mathrm{arg}( \frac{\mathbf{Q}(im)}{\mathbf{R}_{\mathbf{D}}(im)
k_{1}^{\boldsymbol{\delta}_{\mathbf{D}}-1}}) \frac{1}{(\boldsymbol{\delta}_{\mathbf{D}}-1)k_{1}} +
\frac{2\pi l}{(\boldsymbol{\delta}_{\mathbf{D}}-1)k_{1}} ) ) \label{defin_roots_bf}
\end{multline}
for all $0 \leq l \leq (\boldsymbol{\delta}_{\mathbf{D}}-1)k_{1}-1$, all $m \in \mathbb{R}$.

We choose the family of unbounded sectors $U_{\mathfrak{d}_{p}}$ centered at 0, a small closed disc $\bar{D}(0,\rho)$
(introduced in Definition 8) and we prescribe the sector
$S_{\mathbf{Q},\mathbf{R}_{\mathbf{D}}}$ in such a way that the following conditions hold.\medskip

\noindent 1) There exists a constant $\mathbf{M}_{1}>0$ such that
\begin{equation}
|\tau - \mathbf{q}_{l}(m)| \geq \mathbf{M}_{1}(1 + |\tau|) \label{root_cond_1_bf}
\end{equation}
for all $0 \leq l \leq (\boldsymbol{\delta}_{\mathbf{D}}-1)k_{1}-1$, all $m \in \mathbb{R}$,
all $\tau \in U_{\mathfrak{d}_{p}} \cup \bar{D}(0,\rho)$, for all $0 \leq p \leq \varsigma -1$.\medskip

\noindent 2) There exists a constant $\mathbf{M}_{2}>0$ such that
\begin{equation}
|\tau - \mathbf{q}_{l_0}(m)| \geq \mathbf{M}_{2}|\mathbf{q}_{l_0}(m)| \label{root_cond_2_bf}
\end{equation}
for some $l_{0} \in \{0,\ldots,(\boldsymbol{\delta}_{\mathbf{D}}-1)k_{1}-1 \}$, all $m \in \mathbb{R}$, all
$\tau \in U_{\mathfrak{d}_{p}} \cup \bar{D}(0,\rho)$, for all $0 \leq p \leq \varsigma -1$.\medskip

By construction
of the roots (\ref{defin_roots_bf}) in the factorization (\ref{factor_P_m_bf}) and using the lower bound estimates
(\ref{root_cond_1_bf}), (\ref{root_cond_2_bf}), we get a constant $C_{\mathbf{P}}>0$ such that
\begin{multline}
|\mathbf{P}_{m}(\tau)| \geq \mathbf{M}_{1}^{(\boldsymbol{\delta}_{\mathbf{D}}-1)k_{1}-1}\mathbf{M}_{2}
|\mathbf{R}_{\mathbf{D}}(im)|k_{1}^{\boldsymbol{\delta}_{\mathbf{D}}}
(\frac{|\mathbf{Q}(im)|}{|\mathbf{R}_{\mathbf{D}}(im)|k_{1}^{\boldsymbol{\delta}_{\mathbf{D}}-1}})^{
\frac{1}{(\boldsymbol{\delta}_{\mathbf{D}}-1)k_{1}}} (1+|\tau|)^{(\boldsymbol{\delta}_{\mathbf{D}}-1)k_{1}-1}\\
\geq \mathbf{M}_{1}^{(\boldsymbol{\delta}_{\mathbf{D}}-1)k_{1}-1}\mathbf{M}_{2}
\frac{k_{1}^{\boldsymbol{\delta}_{\mathbf{D}}}}{(k_{1}^{\boldsymbol{\delta}_{\mathbf{D}}-1})^{
\frac{1}{(\boldsymbol{\delta}_{\mathbf{D}}-1)k_{1}}} }
(r_{\mathbf{Q},\mathbf{R}_{\mathbf{D}}})^{\frac{1}{(\boldsymbol{\delta}_{\mathbf{D}}-1)k_{1}}}
|\mathbf{R}_{\mathbf{D}}(im)| \\
\times (\min_{x \geq 0}
\frac{(1+x)^{(\boldsymbol{\delta}_{\mathbf{D}}-1)k_{1}-1}}{(1+x^{k_1})^{(\boldsymbol{\delta}_{\mathbf{D}}-1)
- \frac{1}{k_1}}}) (1 + |\tau|^{k_1})^{(\boldsymbol{\delta}_{\mathbf{D}}-1)-\frac{1}{k_1}}\\
= C_{\mathbf{P}} (r_{\mathbf{Q},\mathbf{R}_{\mathbf{D}}})^{
\frac{1}{(\boldsymbol{\delta}_{\mathbf{D}}-1)k_{1}}} |\mathbf{R}_{\mathbf{D}}(im)|
(1+|\tau|^{k_1})^{(\boldsymbol{\delta}_{\mathbf{D}}-1)-\frac{1}{k_1}}
\label{low_bounds_P_m_bf}
\end{multline}
for all $\tau \in U_{\mathfrak{d}_{p}} \cup \bar{D}(0,\rho)$, all $m \in \mathbb{R}$, all $0 \leq p \leq \varsigma - 1$.

In the next proposition, we give sufficient conditions under which the equation (\ref{k1_Borel_equation_analytic_bf}) has a solution
$\psi_{k_1}^{\mathfrak{d}_{p}}(\tau,m,\epsilon)$ in the Banach space
$F_{(\nu,\beta,\mu,k_{1},k_{1})}^{\mathfrak{d}_{p}}$ where $\beta,\mu$ are defined above.
\begin{prop} Under the assumption that
\begin{equation}
\boldsymbol{\delta}_{\mathbf{D}} \geq \boldsymbol{\delta}_{l} + \frac{1}{k_{1}}
\label{constraints_k1_Borel_equation_bf}
\end{equation}
for all $1 \leq l \leq \mathbf{D}-1$, there exist a radius $r_{\mathbf{Q},\mathbf{R}_{\mathbf{D}}}>0$,
a constant $\boldsymbol{\upsilon}>0$ and constants
$\boldsymbol{\varsigma}_{0,0},\boldsymbol{\varsigma}_{0},\boldsymbol{\varsigma}_{1},\boldsymbol{\varsigma}_{1,0},
\boldsymbol{\varsigma}_{F},\boldsymbol{\varsigma}_{2}>0$ (depending on
$k_{1},C_{\mathbf{P}},\mu,\nu,\epsilon_{0},\mathbf{R}_{l},\mathbf{\Delta}_{l},\boldsymbol{\delta}_{l},\mathbf{d}_{l}$ for
$0 \leq l \leq \mathbf{D}$) such that if
\begin{multline}
\sup_{\epsilon \in D(0,\epsilon_{0})}|\frac{\mathbf{c}_{0}(\epsilon)}{\epsilon}| \leq \boldsymbol{\varsigma}_{1,0} \ \ , \ \
||\boldsymbol{\varphi}_{k_1}(\tau,m,\epsilon)||_{(\nu,\beta,\mu,k_{1},k_{1})} \leq \boldsymbol{\varsigma}_{1},\\
\sup_{\epsilon \in D(0,\epsilon_{0})}|\frac{\mathbf{c}_{0,0}(\epsilon)}{\epsilon}| \leq \boldsymbol{\varsigma}_{0,0} \ \ , \ \
||\mathbf{C}_{0,0}(m,\epsilon)||_{(\beta,\mu)} \leq \boldsymbol{\varsigma}_{0},\\
\sup_{\epsilon \in D(0,\epsilon_{0})}|\frac{\mathbf{c}_{\mathbf{F}}(\epsilon)}{\epsilon}| \leq \boldsymbol{\varsigma}_{F} \ \ , \ \
||\boldsymbol{\psi}_{k_1}(\tau,m,\epsilon)||_{(\nu,\beta,\mu,k_{1},k_{1})} \leq \boldsymbol{\varsigma}_{2}
\label{norm_F_varphi_k1_psi_k_1_small_bf}
\end{multline}
for all $\epsilon \in D(0,\epsilon_{0})$, the equation (\ref{k1_Borel_equation_analytic_bf}) has a unique solution
$\psi_{k_1}^{\mathfrak{d}_{p}}(\tau,m,\epsilon)$ in the space $F_{(\nu,\beta,\mu,k_{1},k_{1})}^{\mathfrak{d}_{p}}$
with the property that
$||\psi_{k_1}^{\mathfrak{d}_{p}}(\tau,m,\epsilon)||_{(\nu,\beta,\mu,k_{1},k_{1})} \leq \boldsymbol{\upsilon}$, for all
$\epsilon \in D(0,\epsilon_{0})$, where $\beta,\mu>0$ are defined above,
for any unbounded sector $U_{\mathfrak{d}_{p}}$ and disc $\bar{D}(0,\rho)$ that satisfy the constraints
(\ref{root_cond_1_bf}), (\ref{root_cond_2_bf}), for all $0 \leq p \leq \varsigma - 1$.
\end{prop}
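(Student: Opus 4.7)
The plan is to recast Proposition 18 as a fixed point problem in the Banach space $F_{(\nu,\beta,\mu,k_{1},k_{1})}^{\mathfrak{d}_{p}}$, imitating the scheme already carried out in Proposition 11 for the nonlinear convolution equation and, more closely, in Proposition 14 where a polynomial of the same shape as $\mathbf{P}_{m}(\tau)$ appears. Concretely, I would rewrite (\ref{k1_Borel_equation_analytic_bf}) in the form $\psi_{k_{1}}^{\mathfrak{d}_{p}} = \mathcal{H}_{\epsilon}^{k_{1},\mathrm{bf}}(\psi_{k_{1}}^{\mathfrak{d}_{p}})$ by moving the highest order term $\mathbf{R}_{\mathbf{D}}(im)k_{1}^{\boldsymbol{\delta}_{\mathbf{D}}}\tau^{(\boldsymbol{\delta}_{\mathbf{D}}-1)k_{1}}\psi_{k_{1}}$ to the left hand side, factorizing $\mathbf{P}_{m}(\tau) = \mathbf{Q}(im)k_{1} - \mathbf{R}_{\mathbf{D}}(im)k_{1}^{\boldsymbol{\delta}_{\mathbf{D}}}\tau^{(\boldsymbol{\delta}_{\mathbf{D}}-1)k_{1}}$ as in (\ref{factor_P_m_bf}), and then dividing by $\mathbf{P}_{m}(\tau)$, which is legitimate on $U_{\mathfrak{d}_{p}} \cup \bar{D}(0,\rho)$ thanks to the lower bound (\ref{low_bounds_P_m_bf}).

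Next I would prove the analogue of Lemma 5 (the inclusion and contraction properties) for $\mathcal{H}_{\epsilon}^{k_{1},\mathrm{bf}}$ acting on the closed ball $\bar{B}(0,\boldsymbol{\upsilon}) \subset F_{(\nu,\beta,\mu,k_{1},k_{1})}^{\mathfrak{d}_{p}}$. The norm estimates for each term split into three types: the $\sum_{p=1}^{\boldsymbol{\delta}_{\mathbf{D}}-1}$ terms coming from the expansion (\ref{expand_op_diff_bf}) of the top derivative, which are bounded using Lemma 2 together with Proposition 5 after observing that the factor $\tfrac{1+|\tau|^{2k_{1}}}{(1+|\tau|^{k_{1}})^{\boldsymbol{\delta}_{\mathbf{D}}-1-1/k_{1}}}$ is bounded on $U_{\mathfrak{d}_{p}} \cup \bar{D}(0,\rho)$ precisely under the constraint (\ref{constraints_k1_Borel_equation_bf}) applied with $l=\mathbf{D}$; the $\sum_{l=1}^{\mathbf{D}-1}$ terms, where the same combination of Lemma 2 and Proposition 5 applies after noting that $\mathbf{d}_{l,k_{1}}>0$ by (\ref{defin_d_l_k1_bold}) and that the quotient $|\mathbf{R}_{l}(im)|/|\mathbf{R}_{\mathbf{D}}(im)|$ is bounded in $m$ by (\ref{assum_Q_Rl_bf}); and finally the three convolution/forcing terms, for which one invokes Proposition 6 (for the $\boldsymbol{\varphi}_{k_{1}}$ contribution), Proposition 7 (for the $\mathbf{C}_{0,0}$ contribution), and Proposition 5 again (for the $\boldsymbol{\psi}_{k_{1}}$ forcing term), using $|\mathbf{P}_{m}(\tau)|^{-1} \leq (C_{\mathbf{P}} r_{\mathbf{Q},\mathbf{R}_{\mathbf{D}}}^{1/((\boldsymbol{\delta}_{\mathbf{D}}-1)k_{1})} |\mathbf{R}_{\mathbf{D}}(im)|)^{-1}$ to absorb the denominator.

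Gathering these estimates, I would choose $\boldsymbol{\upsilon}>0$ and the smallness constants $\boldsymbol{\varsigma}_{0,0}, \boldsymbol{\varsigma}_{0}, \boldsymbol{\varsigma}_{1}, \boldsymbol{\varsigma}_{1,0}, \boldsymbol{\varsigma}_{F}, \boldsymbol{\varsigma}_{2}>0$, as well as a sufficiently large radius $r_{\mathbf{Q},\mathbf{R}_{\mathbf{D}}}>0$, such that two inequalities analogous to (\ref{H_k1_incl_constr}) and (\ref{H_k1_shrink_constr}) hold simultaneously. The first yields $\mathcal{H}_{\epsilon}^{k_{1},\mathrm{bf}}(\bar{B}(0,\boldsymbol{\upsilon})) \subset \bar{B}(0,\boldsymbol{\upsilon})$, and the second (which is in fact easier here than in Proposition 14, since our equation is linear and no quadratic term forces us to handle a difference of products as in (\ref{conv_product_w1_w2_for_k_2})) gives the contraction estimate with factor $1/2$. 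The Banach fixed point theorem then supplies the unique solution $\psi_{k_{1}}^{\mathfrak{d}_{p}}(\tau,m,\epsilon)$ with norm bounded by $\boldsymbol{\upsilon}$, and the fact that all bounds are uniform in $\epsilon \in D(0,\epsilon_{0})$ gives holomorphic dependence on $\epsilon$ by standard arguments.

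The main technical obstacle I anticipate is verifying that the constraint (\ref{constraints_k1_Borel_equation_bf}) is exactly what is needed in order to apply Proposition 5 (with the choice $\gamma_{1}=\boldsymbol{\delta}_{\mathbf{D}}-1-1/k_{1}$ coming from the lower bound (\ref{low_bounds_P_m_bf})) to control the terms arising from expanding $T^{\boldsymbol{\delta}_{l}(k_{1}+1)}\partial_{T}^{\boldsymbol{\delta}_{l}}$ via (\ref{expand_op_diff_bf}); that is, one must check that the exponents $\chi_{2}=\tfrac{\mathbf{d}_{l,k_{1}}}{k_{1}}+\boldsymbol{\delta}_{l}-p-1$ and $\nu_{2}=p$ (or $\boldsymbol{\delta}_{l}$) entering Proposition 5 satisfy the hypothesis $\gamma_{1} \geq \nu_{2}$ and $1+\chi_{2}+\nu_{2} \geq 0$ for every $1 \leq l \leq \mathbf{D}-1$ and every $1 \leq p \leq \boldsymbol{\delta}_{l}-1$. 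A careful bookkeeping, combining (\ref{defin_d_l_k1_bold}), (\ref{assum_dl_delta_l_Delta_l_bf}) and (\ref{constraints_k1_Borel_equation_bf}), should close this gap, and the rest is essentially a rerun of the arguments of Lemma 5 in the simpler linear setting.
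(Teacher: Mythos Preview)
Your proposal is correct and follows essentially the same route as the paper: rewrite (\ref{k1_Borel_equation_analytic_bf}) as a fixed point equation for the map obtained by dividing by $\mathbf{P}_{m}(\tau)$, then establish the inclusion and $1/2$-contraction properties on a ball $\bar{B}(0,\boldsymbol{\upsilon})\subset F_{(\nu,\beta,\mu,k_{1},k_{1})}^{\mathfrak{d}_{p}}$ using exactly the tools you name (Proposition 5 for the $\mathbf{R}_{\mathbf{D}}$ and $\mathbf{R}_{l}$ terms, Propositions 6 and 7 for the two convolution terms, Proposition 5 again for the forcing), with the constraint (\ref{constraints_k1_Borel_equation_bf}) playing precisely the role of ensuring $\gamma_{1}\geq\nu_{2}$ in Proposition 5. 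The only cosmetic slip is that in your bookkeeping the integrals carry an extra factor $ds/s$, so the correct exponent is $\nu_{2}=p-1$ (resp.\ $\boldsymbol{\delta}_{l}-1$) rather than $p$ (resp.\ $\boldsymbol{\delta}_{l}$); this does not affect the argument.
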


\begin{proof} The proof will follow the same lines of arguments as in Proposition 14. We give a thorough treatment of it
only for the sake of completeness.

We begin with a lemma which provides appropriate conditions in order to apply a fixed point theorem.
\begin{lemma} One can choose the constant $r_{\mathbf{Q},\mathbf{R}_{\mathbf{D}}}>0$, a constant
$\boldsymbol{\upsilon}$ small enough and constants
$\boldsymbol{\varsigma}_{0,0},\boldsymbol{\varsigma}_{0},\boldsymbol{\varsigma}_{1},\boldsymbol{\varsigma}_{1,0},
\boldsymbol{\varsigma}_{F},\boldsymbol{\varsigma}_{2}>0$
(depending on
$k_{1},C_{\mathbf{P}},\mu,\nu,\epsilon_{0},\mathbf{R}_{l},\mathbf{\Delta}_{l},\boldsymbol{\delta}_{l},\mathbf{d}_{l}$ for
$0 \leq l \leq \mathbf{D}$) such that if (\ref{norm_F_varphi_k1_psi_k_1_small_bf}) holds
for all $\epsilon \in D(0,\epsilon_{0})$, the map $\mathbf{H}_{\epsilon}^{k_1}$ defined by
\begin{multline}
\mathbf{H}_{\epsilon}^{k_1}(w(\tau,m))\\ =
 \frac{\mathbf{R}_{\mathbf{D}}(im)}{\mathbf{P}_{m}(\tau)} \sum_{1 \leq p \leq \boldsymbol{\delta}_{\mathbf{D}}-1}
 A_{\boldsymbol{\delta}_{\mathbf{D}},p}
\frac{1}{\Gamma(\boldsymbol{\delta}_{\mathbf{D}}-p)}\int_{0}^{\tau^{k_1}} (\tau^{k_1}-s)^{\boldsymbol{\delta}_{\mathbf{D}}-p-1}
 (k_{1}^{p} s^{p} w(s^{1/k_1},m)) \frac{ds}{s}\\
+ \sum_{l=1}^{\mathbf{D}-1} \frac{\mathbf{R}_{l}(im)}{\mathbf{P}_{m}(\tau)}
\left( \epsilon^{\mathbf{\Delta}_{l}-\mathbf{d}_{l}+\boldsymbol{\delta}_{l}-1}
\frac{1}{\Gamma( \frac{\mathbf{d}_{l,k_{1}}}{k_1} )} \right.
\int_{0}^{\tau^{k_1}} (\tau^{k_1}-s)^{\frac{\mathbf{d}_{l,k_{1}}}{k_1}-1}({k_1}^{\boldsymbol{\delta}_l}s^{\boldsymbol{\delta}_l}
w(s^{1/k_1},m)) \frac{ds}{s}\\
+ \sum_{1 \leq p \leq \boldsymbol{\delta}_{l}-1} A_{\boldsymbol{\delta}_{l},p}\epsilon^{\mathbf{\Delta}_{l}-\mathbf{d}_{l}
+\boldsymbol{\delta}_{l}-1}
\frac{1}{\Gamma( \frac{\mathbf{d}_{l,k_{1}}}{k_1} + \boldsymbol{\delta}_{l}-p)} \int_{0}^{\tau^{k_1}}
\left. (\tau^{k_1}-s)^{\frac{\mathbf{d}_{l,k_{1}}}{k_1}+\boldsymbol{\delta}_{l}-p-1}({k_1}^{p}s^{p}
w(s^{1/k_1},m)) \frac{ds}{s} \right)\\
+ \epsilon^{-1}
\frac{1}{\mathbf{P}_{m}(\tau)\Gamma(1 + \frac{1}{k_1})} \int_{0}^{\tau^{k_1}}
(\tau^{k_1}-s)^{1/k_1}\\
\times \left( \frac{\mathbf{c}_{0}(\epsilon)}{(2\pi)^{1/2}} s\int_{0}^{s} \int_{-\infty}^{+\infty} \right.
\left. \boldsymbol{\varphi}_{k_1}((s-x)^{1/k_1},m-m_{1},\epsilon) \mathbf{R}_{0}(im_{1}) w(x^{1/k_1},m_{1}) \frac{1}{(s-x)x}
dxdm_{1} \right) \frac{ds}{s}\\
+ \epsilon^{-1}\frac{1}{\mathbf{P}_{m}(\tau)\Gamma(1 + \frac{1}{k_1})} \int_{0}^{\tau^{k_1}}
(\tau^{k_1}-s)^{1/k_1} \frac{\mathbf{c}_{0,0}(\epsilon)}{(2\pi)^{1/2}} ( \int_{-\infty}^{+\infty}
\mathbf{C}_{0,0}(m-m_{1},\epsilon)\mathbf{R}_{0}(im_{1})
w(s^{1/k_1},m_{1}) dm_{1} )\frac{ds}{s}\\
+ \epsilon^{-1}\mathbf{c}_{\mathbf{F}}(\epsilon) \frac{1}{\mathbf{P}_{m}(\tau)\Gamma(1 + \frac{1}{k_1})}\int_{0}^{\tau^{k_1}}
(\tau^{k_1}-s)^{1/k_1} \boldsymbol{\psi}_{k_1}(s^{1/k_1},m,\epsilon) \frac{ds}{s}
\end{multline}
satisfies the next properties.\\
{\bf i)} The following inclusion holds
\begin{equation}
\mathbf{H}_{\epsilon}^{k_1}(\bar{B}(0,\boldsymbol{\upsilon})) \subset \bar{B}(0,\boldsymbol{\upsilon}) \label{H_k2_inclusion_bf}
\end{equation}
where $\bar{B}(0,\boldsymbol{\upsilon})$ is the closed ball of radius $\boldsymbol{\upsilon}>0$ centered at 0 in
$F_{(\nu,\beta,\mu,k_{1},k_{1})}^{\mathfrak{d}_{p}}$,
for all $\epsilon \in D(0,\epsilon_{0})$.\\
{\bf ii)} We have
\begin{equation}
|| \mathbf{H}_{\epsilon}^{k_1}(w_{1}) - \mathbf{H}_{\epsilon}^{k_1}(w_{2})||_{(\nu,\beta,\mu,k_{1},k_{1})}
\leq \frac{1}{2} ||w_{1} - w_{2}||_{(\nu,\beta,\mu,k_{1},k_{1})}
\label{H_k2_shrink_bf}
\end{equation}
for all $w_{1},w_{2} \in \bar{B}(0,\boldsymbol{\upsilon})$, for all $\epsilon \in D(0,\epsilon_{0})$.
\end{lemma}
\begin{proof} We first check the property (\ref{H_k2_inclusion_bf}). Let $\epsilon \in D(0,\epsilon_{0})$ and
$w(\tau,m)$ be in $F_{(\nu,\beta,\mu,k_{1},k_{1})}^{\mathfrak{d}_{p}}$. We take
$\boldsymbol{\varsigma}_{0,0},\boldsymbol{\varsigma}_{0},\boldsymbol{\varsigma}_{1},\boldsymbol{\varsigma}_{1,0},
\boldsymbol{\varsigma}_{F},\boldsymbol{\varsigma}_{2}>0$ and
$\boldsymbol{\upsilon} > 0$ such that $||w(\tau,m)||_{(\nu,\beta,\mu,k_{1},k_{1})} \leq \boldsymbol{\upsilon}$ and
(\ref{norm_F_varphi_k1_psi_k_1_small_bf}) hold for all $\epsilon \in D(0,\epsilon_{0})$.

Due to (\ref{low_bounds_P_m_bf}), for $1 \leq p \leq \boldsymbol{\delta}_{\mathbf{D}}-1$ and by means of Proposition 5
(take the particular case
$S_{d}^{b} = D(0,\rho)$, $S_{d}=U_{\mathfrak{d}_{p}}$ and $k=k_{1}$), we deduce
\begin{multline}
||\frac{\mathbf{R}_{\mathbf{D}}(im)}{\mathbf{P}_{m}(\tau)}A_{\boldsymbol{\delta}_{\mathbf{D}},p}
\frac{1}{\Gamma(\boldsymbol{\delta}_{\mathbf{D}}-p)}\int_{0}^{\tau^{k_1}} (\tau^{k_1}-s)^{\boldsymbol{\delta}_{\mathbf{D}}-p-1}
 (k_{1}^{p} s^{p} w(s^{1/k_1},m)) \frac{ds}{s}||_{(\nu,\beta,\mu,k_{1},k_{1})}\\
 \leq \frac{|A_{\boldsymbol{\delta}_{\mathbf{D}},p}|k_{1}^{p}C_{6}}{\Gamma(\boldsymbol{\delta}_{\mathbf{D}}-p)
 C_{\mathbf{P}}(r_{\mathbf{Q},\mathbf{R}_{\mathbf{D}}})^{\frac{1}{(\boldsymbol{\delta}_{\mathbf{D}}-1)k_{1}}}}
 ||w(\tau,m)||_{(\nu,\beta,\mu,k_{1},k_{1})}\\
 \leq \frac{|A_{\boldsymbol{\delta}_{\mathbf{D}},p}|k_{1}^{p}C_{6}}{\Gamma(\boldsymbol{\delta}_{\mathbf{D}}-p)
 C_{\mathbf{P}}(r_{\mathbf{Q},\mathbf{R}_{\mathbf{D}}})^{\frac{1}{(\boldsymbol{\delta}_{\mathbf{D}}-1)k_{1}}}}
\boldsymbol{\upsilon} \label{fix_point_norm_2_estim_2_bf}
\end{multline}
For all $1 \leq l \leq \mathbf{D}-1$, by means of Proposition 5 as above and under the constraint
(\ref{constraints_k1_Borel_equation_bf}), we have
\begin{multline}
||\frac{\mathbf{R}_{l}(im)}{\mathbf{P}_{m}(\tau)} \epsilon^{\mathbf{\Delta}_{l}-\mathbf{d}_{l}+\boldsymbol{\delta}_{l}-1}
\frac{1}{\Gamma( \frac{\mathbf{d}_{l,k_{1}}}{k_1} )}
\int_{0}^{\tau^{k_1}} (\tau^{k_1}-s)^{\frac{\mathbf{d}_{l,k_{1}}}{k_1}-1}({k_1}^{\boldsymbol{\delta}_l}s^{\boldsymbol{\delta}_l}
w(s^{1/k_1},m)) \frac{ds}{s}||_{(\nu,\beta,\mu,k_{1},k_{1})}\\
\leq \frac{k_{1}^{\boldsymbol{\delta}_l}C_{6}}{\Gamma( \frac{\mathbf{d}_{l,k_{1}}}{k_1}) C_{\mathbf{P}}
(r_{\mathbf{Q},\mathbf{R}_{\mathbf{D}}})^{\frac{1}{(\boldsymbol{\delta}_{\mathbf{D}}-1)k_{1}}} }
|\epsilon|^{\mathbf{\Delta}_{l} - \mathbf{d}_{l} + \boldsymbol{\delta}_{l} -1}
\sup_{m \in \mathbb{R}} \frac{|\mathbf{R}_{l}(im)|}{|\mathbf{R}_{\mathbf{D}}(im)|} ||w(\tau,m)||_{(\nu,\beta,\mu,k_{1},k_{1})}\\
\leq \frac{k_{1}^{\boldsymbol{\delta}_l}C_{6}}{\Gamma( \frac{\mathbf{d}_{l,k_{1}}}{k_1})
C_{\mathbf{P}}(r_{\mathbf{Q},\mathbf{R}_{\mathbf{D}}})^{\frac{1}{(\boldsymbol{\delta}_{\mathbf{D}}-1)k_{1}}} }
\epsilon_{0}^{\mathbf{\Delta}_{l} - \mathbf{d}_{l} + \boldsymbol{\delta}_{l} -1 }
\sup_{m \in \mathbb{R}} \frac{|\mathbf{R}_{l}(im)|}{|\mathbf{R}_{\mathbf{D}}(im)|} \boldsymbol{\upsilon}
\label{fix_point_norm_2_estim_3_bf}
\end{multline}
and
\begin{multline}
|| \frac{\mathbf{R}_{l}(im)}{\mathbf{P}_{m}(\tau)} \frac{ A_{\boldsymbol{\delta}_{l},p}
\epsilon^{\mathbf{\Delta}_{l}-\mathbf{d}_{l}+\boldsymbol{\delta}_{l}-1} }
{\Gamma( \frac{\mathbf{d}_{l,k_{1}}}{k_1} + \boldsymbol{\delta}_{l}-p)} \int_{0}^{\tau^{k_1}}
(\tau^{k_1}-s)^{\frac{\mathbf{d}_{l,k_{1}}}{k_1}+\boldsymbol{\delta}_{l}-p-1}(k_{1}^{p}s^{p}
w(s^{1/k_1},m)) \frac{ds}{s} ||_{(\nu,\beta,\mu,k_{1},k_{1})}\\
\leq \frac{|A_{\boldsymbol{\delta}_{l},p}|k_{1}^{p}C_{6}}{\Gamma( \frac{\mathbf{d}_{l,k_{1}}}{k_{1}} + \boldsymbol{\delta}_{l} - p)
C_{\mathbf{P}}
(r_{\mathbf{Q},\mathbf{R}_{\mathbf{D}}})^{\frac{1}{(\boldsymbol{\delta}_{\mathbf{D}}-1)k_{1}}} }
|\epsilon|^{\mathbf{\Delta}_{l}-\mathbf{d}_{l}+\boldsymbol{\delta}_{l} -1}\\
\times \sup_{m \in \mathbb{R}} \frac{|\mathbf{R}_{l}(im)|}{|\mathbf{R}_{\mathbf{D}}(im)|}
||w(\tau,m)||_{(\nu,\beta,\mu,k_{1},k_{1})}\\
\leq \frac{|A_{\boldsymbol{\delta}_{l},p}|k_{1}^{p}C_{6}}{\Gamma(
\frac{\mathbf{d}_{l,k_{1}}}{k_{1}} + \boldsymbol{\delta}_{l} - p)
C_{\mathbf{P}}(r_{\mathbf{Q},\mathbf{R}_{\mathbf{D}}})^{
\frac{1}{(\boldsymbol{\delta}_{\mathbf{D}}-1)k_{1}}} } \epsilon_{0}^{\mathbf{\Delta}_{l}-\mathbf{d}_{l}+\boldsymbol{\delta}_{l}-1}
\sup_{m \in \mathbb{R}} \frac{|\mathbf{R}_{l}(im)|}{|\mathbf{R}_{\mathbf{D}}(im)|} \boldsymbol{\upsilon},
\label{fix_point_norm_2_estim_4_bf}
\end{multline}
for all $1 \leq p \leq \boldsymbol{\delta}_{l}-1$. Taking into account Lemma 2 and Proposition 6, we also get that
\begin{multline}
|| \epsilon^{-1}
\frac{\mathbf{c}_{0}(\epsilon)}{\mathbf{P}_{m}(\tau) \Gamma(1 + \frac{1}{k_1})} \int_{0}^{\tau^{k_1}}
(\tau^{k_1}-s)^{1/k_1}\\
\times \left( \frac{1}{(2\pi)^{1/2}} s\int_{0}^{s} \int_{-\infty}^{+\infty} \right.
\boldsymbol{\varphi}_{k_1}((s-x)^{1/k_1},m-m_{1},\epsilon) \\
\times \left. \mathbf{R}_{0}(im_{1}) w(x^{1/k_1},m_{1}) \frac{1}{(s-x)x}
dxdm_{1} \right) \frac{ds}{s} ||_{(\nu,\beta,\mu,k_{1},k_{1})}\\
\leq \frac{\boldsymbol{\varsigma}_{1,0}}{\Gamma(1 + \frac{1}{k_1})(2\pi)^{1/2}} \frac{C_{7}
||\boldsymbol{\varphi}_{k_1}(\tau,m,\epsilon)||_{(\nu,\beta,\mu,k_{1},k_{1})} ||w(\tau,m)||_{(\nu,\beta,\mu,k_{1},k_{1})} }{
C_{\mathbf{P}}(r_{\mathbf{Q},\mathbf{R}_{\mathbf{D}}})^{\frac{1}{(\boldsymbol{\delta}_{\mathbf{D}}-1)k_{1}}} }\\
\leq \frac{\boldsymbol{\varsigma}_{1,0}}{\Gamma(1 + \frac{1}{k_1})(2\pi)^{1/2}} \frac{C_{7}
\boldsymbol{\varsigma}_{1} \boldsymbol{\upsilon}}{
C_{\mathbf{P}}(r_{\mathbf{Q},\mathbf{R}_{\mathbf{D}}})^{\frac{1}{(\boldsymbol{\delta}_{\mathbf{D}}-1)k_{1}}} }
\label{fix_point_norm_2_estim_5_bf}
\end{multline}
Due to Lemma 2 and Proposition 7,
\begin{multline}
|| \epsilon^{-1}\frac{\mathbf{c}_{0,0}(\epsilon)}{\mathbf{P}_{m}(\tau)\Gamma(1 + \frac{1}{k_1})} \int_{0}^{\tau^{k_1}}
(\tau^{k_1}-s)^{1/k_1} \frac{1}{(2\pi)^{1/2}} ( \int_{-\infty}^{+\infty} \mathbf{C}_{0,0}(m-m_{1},\epsilon)\\
\times \mathbf{R}_{0}(im_{1})
w(s^{1/k_1},m_{1}) dm_{1} )\frac{ds}{s} ||_{(\nu,\beta,\mu,k_{1},k_{1})}\\
\leq \frac{\boldsymbol{\varsigma}_{0,0}}{\Gamma(1 + \frac{1}{k_1})(2\pi)^{1/2}} \frac{C_{8} \boldsymbol{\varsigma}_{0}
\boldsymbol{\upsilon} }{
C_{\mathbf{P}}(r_{\mathbf{Q},\mathbf{R}_{\mathbf{D}}})^{\frac{1}{(\boldsymbol{\delta}_{\mathbf{D}}-1)k_{1}}} }
\label{fix_point_norm_2_estim_6_bf}
\end{multline}
holds. Finally, from Lemma 2 and Proposition 5, we get
\begin{multline}
|| \epsilon^{-1} \frac{\mathbf{c}_{\mathbf{F}}(\epsilon)}{\mathbf{P}_{m}(\tau)\Gamma(1 + \frac{1}{k_1})}\int_{0}^{\tau^{k_1}}
(\tau^{k_1}-s)^{1/k_1} \boldsymbol{\psi}_{k_1}(s^{1/k_1},m,\epsilon) \frac{ds}{s} ||_{(\nu,\beta,\mu,k_{1},k_{1})}\\
\leq \frac{\boldsymbol{\varsigma}_{\mathbf{F}}C_{6}}{\Gamma(1 + \frac{1}{k_2})C_{\mathbf{P}}
(r_{\mathbf{Q},\mathbf{R}_{\mathbf{D}}})^{\frac{1}{(\boldsymbol{\delta}_{\mathbf{D}}-1)k_{1}}}
\min_{m \in \mathbb{R}} |\mathbf{R}_{\mathbf{D}}(im)| }
||\boldsymbol{\psi}_{k_1}(\tau,m,\epsilon)||_{(\nu,\beta,\mu,k_{1},k_{1})}\\
\leq \frac{\boldsymbol{\varsigma}_{\mathbf{F}}C_{6}}{\Gamma(1 + \frac{1}{k_1})C_{\mathbf{P}}
(r_{\mathbf{Q},\mathbf{R}_{\mathbf{D}}})^{\frac{1}{(\boldsymbol{\delta}_{\mathbf{D}}-1)k_{1}}}
\min_{m \in \mathbb{R}} |\mathbf{R}_{\mathbf{D}}(im)| } \boldsymbol{\varsigma}_{2}. \label{fix_point_norm_2_estim_7_bf}
\end{multline}
Now, we choose $\boldsymbol{\upsilon}$,
$\boldsymbol{\varsigma}_{0,0},\boldsymbol{\varsigma}_{0},\boldsymbol{\varsigma}_{1},
\boldsymbol{\varsigma}_{1,0},\boldsymbol{\varsigma}_{\mathbf{F}},\boldsymbol{\varsigma}_{2}>0$
and $r_{\mathbf{Q},\mathbf{R}_{\mathbf{D}}}>0$ such that
\begin{multline}
\sum_{1 \leq p \leq \boldsymbol{\delta}_{\mathbf{D}}-1}
\frac{|A_{\boldsymbol{\delta}_{\mathbf{D}},p}|k_{1}^{p}C_{6}}{\Gamma(\boldsymbol{\delta}_{\mathbf{D}}-p)
C_{\mathbf{P}}(r_{\mathbf{Q},\mathbf{R}_{\mathbf{D}}})^{\frac{1}{(\boldsymbol{\delta}_{\mathbf{D}}-1)k_{1}}}}
\boldsymbol{\upsilon}\\
+ \sum_{1 \leq l \leq \mathbf{D}-1}\frac{k_{1}^{\boldsymbol{\delta}_l}C_{6}}{\Gamma( \frac{\mathbf{d}_{l,k_{1}}}{k_1})
C_{\mathbf{P}}(r_{\mathbf{Q},\mathbf{R}_{\mathbf{D}}})^{\frac{1}{(\boldsymbol{\delta}_{\mathbf{D}}-1)k_{1}}} }
\epsilon_{0}^{\mathbf{\Delta}_{l} - \mathbf{d}_{l} + \boldsymbol{\delta}_{l} -1 }
\sup_{m \in \mathbb{R}} \frac{|\mathbf{R}_{l}(im)|}{|\mathbf{R}_{\mathbf{D}}(im)|} \boldsymbol{\upsilon}\\
+ \sum_{1 \leq p \leq \boldsymbol{\delta}_{l}-1}
\frac{|A_{\boldsymbol{\delta}_{l},p}|k_{1}^{p}C_{6}}{
\Gamma( \frac{\mathbf{d}_{l,k_{1}}}{k_{1}} + \boldsymbol{\delta}_{l} - p)
C_{\mathbf{P}}(r_{\mathbf{Q},\mathbf{R}_{\mathbf{D}}})^{\frac{1}{(\boldsymbol{\delta}_{\mathbf{D}}-1)k_{1}}} }
\epsilon_{0}^{\mathbf{\Delta}_{l}-\mathbf{d}_{l}+\boldsymbol{\delta}_{l}-1}
\sup_{m \in \mathbb{R}} \frac{|\mathbf{R}_{l}(im)|}{|\mathbf{R}_{\mathbf{D}}(im)|} \boldsymbol{\upsilon}\\
+ \frac{\boldsymbol{\varsigma}_{1,0}}{\Gamma(1 + \frac{1}{k_1})(2\pi)^{1/2}} \frac{C_{7}
\boldsymbol{\varsigma}_{1} \boldsymbol{\upsilon}}{ C_{\mathbf{P}}(r_{\mathbf{Q},\mathbf{R}_{\mathbf{D}}})^{
\frac{1}{(\boldsymbol{\delta}_{\mathbf{D}}-1)k_{1}}} } +
\frac{\boldsymbol{\varsigma}_{0,0}}{\Gamma(1 + \frac{1}{k_1})(2\pi)^{1/2}} \frac{C_{8} \boldsymbol{\varsigma}_{0}
\boldsymbol{\upsilon} }{
C_{\mathbf{P}}(r_{\mathbf{Q},\mathbf{R}_{\mathbf{D}}})^{\frac{1}{(\boldsymbol{\delta}_{\mathbf{D}}-1)k_{1}}} }\\
+ \frac{\boldsymbol{\varsigma}_{\mathbf{F}}C_{6}}{\Gamma(1 + \frac{1}{k_1})
C_{\mathbf{P}}(r_{\mathbf{Q},\mathbf{R}_{\mathbf{D}}})^{\frac{1}{(\boldsymbol{\delta}_{\mathbf{D}}-1)k_{1}}}
\min_{m \in \mathbb{R}} |\mathbf{R}_{\mathbf{D}}(im)| } \boldsymbol{\varsigma}_{2} \leq \boldsymbol{\upsilon}
\label{fix_point_sum<upsilon_bf}
\end{multline}
Gathering all the norm estimates (\ref{fix_point_norm_2_estim_2_bf}),
(\ref{fix_point_norm_2_estim_3_bf}), (\ref{fix_point_norm_2_estim_4_bf}), (\ref{fix_point_norm_2_estim_5_bf}),
(\ref{fix_point_norm_2_estim_6_bf}), (\ref{fix_point_norm_2_estim_7_bf}) under the constraint
(\ref{fix_point_sum<upsilon_bf}), one gets
(\ref{H_k2_inclusion_bf}).\bigskip

Now, we check the second property (\ref{H_k2_shrink_bf}). Let $w_{1}(\tau,m),w_{2}(\tau,m)$ be in
$F^{\mathfrak{d}_{p}}_{(\nu,\beta,\mu,k_{1},k_{1})}$.
We take $\boldsymbol{\upsilon} > 0$ such that
$$ ||w_{l}(\tau,m)||_{(\nu,\beta,\mu,k_{1},k_{1})} \leq \boldsymbol{\upsilon},$$
for $l=1,2$. 

From the estimates (\ref{fix_point_norm_2_estim_2_bf}), (\ref{fix_point_norm_2_estim_3_bf}), 
(\ref{fix_point_norm_2_estim_4_bf}), (\ref{fix_point_norm_2_estim_5_bf}), (\ref{fix_point_norm_2_estim_6_bf}) and
under the constraint (\ref{constraints_k1_Borel_equation_bf}), we find that for $1 \leq p \leq \boldsymbol{\delta}_{\mathbf{D}}-1$, 
\begin{multline}
||\frac{\mathbf{R}_{\mathbf{D}}(im)}{\mathbf{P}_{m}(\tau)}A_{\boldsymbol{\delta}_{\mathbf{D}},p}
\frac{1}{\Gamma(\boldsymbol{\delta}_{\mathbf{D}}-p)}\int_{0}^{\tau^{k_1}} (\tau^{k_1}-s)^{\boldsymbol{\delta}_{\mathbf{D}}-p-1}\\
\times 
 (k_{1}^{p} s^{p} (w_{1}(s^{1/k_1},m) - w_{2}(s^{1/k_1},m))) \frac{ds}{s}||_{(\nu,\beta,\mu,k_{1},k_{1})}\\
 \leq \frac{|A_{\boldsymbol{\delta}_{\mathbf{D}},p}|k_{1}^{p}C_{6}}{
 \Gamma(\boldsymbol{\delta}_{\mathbf{D}}-p)
 C_{\mathbf{P}}(r_{\mathbf{Q},\mathbf{R}_{\mathbf{D}}})^{\frac{1}{(\boldsymbol{\delta}_{\mathbf{D}}-1)k_{1}}}}
 ||w_{1}(\tau,m) - w_{2}(\tau,m)||_{(\nu,\beta,\mu,k_{1},k_{1})} \label{fix_point_norm_2_estim_2_shrink_bf}
\end{multline}
holds and that for $1 \leq l \leq \mathbf{D}-1$, 
\begin{multline}
||\frac{\mathbf{R}_{l}(im)}{\mathbf{P}_{m}(\tau)} \epsilon^{\mathbf{\Delta}_{l}-\mathbf{d}_{l}+\boldsymbol{\delta}_{l}-1}
\frac{1}{\Gamma( \frac{\mathbf{d}_{l,k_{1}}}{k_1} )}
\int_{0}^{\tau^{k_1}} (\tau^{k_1}-s)^{\frac{\mathbf{d}_{l,k_{1}}}{k_1}-1}\\
\times (k_{1}^{\boldsymbol{\delta}_l}s^{\boldsymbol{\delta}_l}
(w_{1}(s^{1/k_1},m) - w_{2}(s^{1/k_1},m))) \frac{ds}{s}||_{(\nu,\beta,\mu,k_{1},k_{1})}\\
\leq \frac{k_{1}^{\boldsymbol{\delta}_l}C_{6}}{\Gamma( \frac{\mathbf{d}_{l,k_{1}}}{k_1})
C_{\mathbf{P}}(r_{\mathbf{Q},\mathbf{R}_{\mathbf{D}}})^{\frac{1}{(\boldsymbol{\delta}_{\mathbf{D}}-1)k_{1}}} }
\epsilon_{0}^{\mathbf{\Delta}_{l} - \mathbf{d}_{l} + \boldsymbol{\delta}_{l} - 1}\\
\times \sup_{m \in \mathbb{R}} \frac{|\mathbf{R}_{l}(im)|}{|\mathbf{R}_{\mathbf{D}}(im)|}
||w_{1}(\tau,m) - w_{2}(\tau,m)||_{(\nu,\beta,\mu,k_{1},k_{1})}
\label{fix_point_norm_2_estim_3_shrink_bf}
\end{multline}
and
\begin{multline}
|| \frac{\mathbf{R}_{l}(im)}{\mathbf{P}_{m}(\tau)} \frac{ A_{\boldsymbol{\delta}_{l},p}
\epsilon^{\mathbf{\Delta}_{l}-\mathbf{d}_{l}+\boldsymbol{\delta}_{l}-1} }
{\Gamma( \frac{\mathbf{d}_{l,k_{1}}}{k_1} + \boldsymbol{\delta}_{l}-p)} \int_{0}^{\tau^{k_1}}
(\tau^{k_1}-s)^{\frac{\mathbf{d}_{l,k_{1}}}{k_1}+ \boldsymbol{\delta}_{l}-p-1}\\
\times (k_{1}^{p}s^{p}(w_{1}(s^{1/k_1},m) - w_{2}(s^{1/k_1},m))) \frac{ds}{s} ||_{(\nu,\beta,\mu,k_{1},k_{1})}\\
\leq \frac{|A_{\boldsymbol{\delta}_{l},p}|k_{1}^{p}C_{6}}{\Gamma( \frac{\mathbf{d}_{l,k_{1}}}{k_{1}} +
\boldsymbol{\delta}_{l} - p)
C_{\mathbf{P}}(r_{\mathbf{Q},\mathbf{R}_{\mathbf{D}}})^{\frac{1}{(\boldsymbol{\delta}_{\mathbf{D}}-1)k_{1}}} }
\epsilon_{0}^{\mathbf{\Delta}_{l}-\mathbf{d}_{l}+\boldsymbol{\delta}_{l} - 1} \\
\times \sup_{m \in \mathbb{R}} \frac{|\mathbf{R}_{l}(im)|}{|\mathbf{R}_{\mathbf{D}}(im)|}
||w_{1}(\tau,m) - w_{2}(\tau,m)||_{(\nu,\beta,\mu,k_{1},k_{1})}
\label{fix_point_norm_2_estim_4_shrink_bf}
\end{multline}
arise for all $1 \leq p \leq \boldsymbol{\delta}_{l}-1$, together with
\begin{multline}
|| \epsilon^{-1}
\frac{\mathbf{c}_{0}(\epsilon)}{\mathbf{P}_{m}(\tau) \Gamma(1 + \frac{1}{k_1})} \int_{0}^{\tau^{k_1}}
(\tau^{k_1}-s)^{1/k_1}\\
\times \left( \frac{1}{(2\pi)^{1/2}} s\int_{0}^{s} \int_{-\infty}^{+\infty} \right.
\boldsymbol{\varphi}_{k_1}((s-x)^{1/k_1},m-m_{1},\epsilon) \\
\times \left. \mathbf{R}_{0}(im_{1}) (w_{1}(x^{1/k_1},m_{1}) - w_{2}(x^{1/k_1},m_{1})) \frac{1}{(s-x)x}
dxdm_{1} \right) \frac{ds}{s} ||_{(\nu,\beta,\mu,k_{1},k_{1})}\\
\leq \frac{\boldsymbol{\varsigma}_{1,0}}{\Gamma(1 + \frac{1}{k_1})(2\pi)^{1/2}} \frac{C_{7}
\boldsymbol{\varsigma}_{1}
||w_{1}(\tau,m) - w_{2}(\tau,m)||_{(\nu,\beta,\mu,k_{1},k_{1})}}{
C_{\mathbf{P}}(r_{\mathbf{Q},\mathbf{R}_{\mathbf{D}}})^{\frac{1}{(\boldsymbol{\delta}_{\mathbf{D}}-1)k_{1}}} }
\label{fix_point_norm_2_estim_5_shrink_bf}
\end{multline}
and finally
\begin{multline}
|| \epsilon^{-1}\frac{\mathbf{c}_{0,0}(\epsilon)}{\mathbf{P}_{m}(\tau)\Gamma(1 + \frac{1}{k_1})} \int_{0}^{\tau^{k_1}}
(\tau^{k_1}-s)^{1/k_1} \frac{1}{(2\pi)^{1/2}} ( \int_{-\infty}^{+\infty} \mathbf{C}_{0,0}(m-m_{1},\epsilon)\\
\times \mathbf{R}_{0}(im_{1})
(w_{1}(s^{1/k_1},m_{1}) - w_{2}(s^{1/k_1},m_{1})) dm_{1} )\frac{ds}{s} ||_{(\nu,\beta,\mu,k_{1},k_{1})}\\
\leq \frac{\boldsymbol{\varsigma}_{0,0}}{\Gamma(1 + \frac{1}{k_1})(2\pi)^{1/2}}
\frac{C_{8} \boldsymbol{\varsigma}_{0} ||w_{1}(\tau,m) - w_{2}(\tau,m)||_{(\nu,\beta,\mu,k_{1},k_{1})} }{
C_{\mathbf{P}}(r_{\mathbf{Q},\mathbf{R}_{\mathbf{D}}})^{\frac{1}{(\boldsymbol{\delta}_{\mathbf{D}}-1)k_{1}}} }.
\label{fix_point_norm_2_estim_6_shrink_bf}
\end{multline}
Now, we take $\boldsymbol{\varsigma}_{0,0}, \boldsymbol{\varsigma}_{0}, \boldsymbol{\varsigma}_{1}, \boldsymbol{\varsigma}_{1,0}$
and $r_{\mathbf{Q},\mathbf{R}_{\mathbf{D}}}$ such that
\begin{multline}
\sum_{1 \leq p \leq \boldsymbol{\delta}_{\mathbf{D}}-1}
\frac{|A_{\boldsymbol{\delta}_{\mathbf{D}},p}|k_{1}^{p}C_{6}}{\Gamma(\boldsymbol{\delta}_{\mathbf{D}}-p)
C_{\mathbf{P}}(r_{\mathbf{Q},\mathbf{R}_{\mathbf{D}}})^{\frac{1}{(\boldsymbol{\delta}_{\mathbf{D}}-1)k_{1}}}}\\
+ \sum_{1 \leq l \leq \mathbf{D}-1} \frac{k_{1}^{\boldsymbol{\delta}_l}C_{6}}{\Gamma( \frac{\mathbf{d}_{l,k_{1}}}{k_1})
C_{\mathbf{P}}(r_{\mathbf{Q},\mathbf{R}_{\mathbf{D}}})^{\frac{1}{(\boldsymbol{\delta}_{\mathbf{D}}-1)k_{1}}} }
\epsilon_{0}^{\mathbf{\Delta}_{l} - \mathbf{d}_{l} + \boldsymbol{\delta}_{l} -1}
\sup_{m \in \mathbb{R}} \frac{|\mathbf{R}_{l}(im)|}{|\mathbf{R}_{\mathbf{D}}(im)|}\\
+ \sum_{1 \leq p \leq \boldsymbol{\delta}_{l}-1} \frac{|A_{\boldsymbol{\delta}_{l},p}
|k_{1}^{p}C_{6}}{\Gamma( \frac{\mathbf{d}_{l,k_{1}}}{k_{1}} + \boldsymbol{\delta}_{l} - p)
C_{\mathbf{P}}(r_{\mathbf{Q},\mathbf{R}_{\mathbf{D}}})^{\frac{1}{(\boldsymbol{\delta}_{\mathbf{D}}-1)k_{1}}} }
\epsilon_{0}^{\boldsymbol{\Delta}_{l}-\mathbf{d}_{l}+\boldsymbol{\delta}_{l}-1}
\sup_{m \in \mathbb{R}} \frac{|\mathbf{R}_{l}(im)|}{|\mathbf{R}_{\mathbf{D}}(im)|}\\
+ \frac{1}{\Gamma(1 + \frac{1}{k_1})(2\pi)^{1/2}} \frac{\boldsymbol{\varsigma}_{1,0}C_{7}
\boldsymbol{\varsigma}_{1} + \boldsymbol{\varsigma}_{0,0}C_{8} \boldsymbol{\varsigma}_{0}}{
C_{\mathbf{P}}(r_{\mathbf{Q},\mathbf{R}_{\mathbf{D}}})^{\frac{1}{(\boldsymbol{\delta}_{\mathbf{D}}-1)k_{1}}} } \leq \frac{1}{2}
\label{fix_point_sum<1/2_bf}
\end{multline}
Bearing in mind the estimates
(\ref{fix_point_norm_2_estim_2_shrink_bf}), (\ref{fix_point_norm_2_estim_3_shrink_bf}), (\ref{fix_point_norm_2_estim_4_shrink_bf}),
(\ref{fix_point_norm_2_estim_5_shrink_bf}), (\ref{fix_point_norm_2_estim_6_shrink_bf}) under the constraint
(\ref{fix_point_sum<1/2_bf}), one gets (\ref{H_k2_shrink_bf}).

Finally, we choose $\boldsymbol{\upsilon}$ and $r_{\mathbf{Q},\mathbf{R}_{\mathbf{D}}}$ such that both
(\ref{fix_point_sum<upsilon_bf}) and (\ref{fix_point_sum<1/2_bf}) are
fulfilled. This yields our lemma.
\end{proof}
We consider the ball $\bar{B}(0,\boldsymbol{\upsilon}) \subset F_{(\nu,\beta,\mu,k_{1},k_{1})}^{\mathfrak{d}_{p}}$
constructed in Lemma 12 which is a complete
metric space for the norm $||.||_{(\nu,\beta,\mu,k_{1},k_{1})}$. From the lemma above, we get that
$\bold{H}_{\epsilon}^{k_1}$ is a
contractive map from $\bar{B}(0,\boldsymbol{\upsilon})$ into itself. Due to the classical contractive mapping theorem,
we deduce that the map $\bold{H}_{\epsilon}^{k_1}$ has a unique fixed point denoted by
$\psi_{k_1}^{\mathfrak{d}_{p}}(\tau,m,\epsilon)$ (i.e
$\bold{H}_{\epsilon}^{k_1}(\psi_{k_1}^{\mathfrak{d}_{p}}(\tau,m,\epsilon))= \psi_{k_1}^{\mathfrak{d}}(\tau,m,\epsilon)$) in
$\bar{B}(0,\boldsymbol{\upsilon})$, for all $\epsilon \in D(0,\epsilon_{0})$. Moreover, the function
$\psi_{k_1}^{\mathfrak{d}_{p}}(\tau,m,\epsilon)$ depends holomorphically on $\epsilon$ in $D(0,\epsilon_{0})$. By construction,
$\psi_{k_1}^{\mathfrak{d}_{p}}(\tau,m,\epsilon)$ defines a solution of the equation
(\ref{k1_Borel_equation_analytic_bf}). This yields our proposition.
\end{proof}

As a summary of Proposition 18, we get that the $m_{k_1}-$Borel transform
$\psi_{k_1}(\tau,m,\epsilon)$ of the formal series $F(T,m,\epsilon)$ solution of the equation (\ref{SCP_bf}) is convergent
w.r.t $\tau$ on $D(0,\rho)$ as series in coefficients in $E_{(\beta,\mu)}$, for all $\epsilon \in D(0,\epsilon_{0})$, and can
be analytically continued on each unbounded sector $U_{\mathfrak{d}_{p}}$ as a function
$\tau \mapsto \psi_{k_1}^{\mathfrak{d}_{p}}(\tau,m,\epsilon)$ which belongs to the space
$F^{\mathfrak{d}_{p}}_{(\nu,\beta,\mu,k_{1},k_{1})}$. In other words, the assumed constraints
(\ref{psi_k1_bounded_norm_k1_k1_main_CP}) are fulfilled.

\subsection{A linear initial value Cauchy problem satisfied by the analytic forcing terms $f^{\mathfrak{d}_{p}}(t,z,\epsilon)$}

We keep the notations and the assumptions made in the previous subsection. From the assumption (\ref{norm_beta_mu_F_n_bf}), we deduce
that the functions
\begin{multline}
\check{\mathbf{C}}_{0}(T,z,\epsilon) = \mathbf{c}_{0,0}(\epsilon) \mathcal{F}^{-1}(m \mapsto \mathbf{C}_{0,0}(m,\epsilon) )(z)
+ \sum_{n \geq 1} \mathbf{c}_{0}(\epsilon) \mathcal{F}^{-1}(m \mapsto \mathbf{C}_{0,n}(m,\epsilon))(z)T^{n},\\
\check{\mathbf{F}}(T,z,\epsilon) = \sum_{n \geq 1} \mathcal{F}^{-1}(m \mapsto \mathbf{F}_{n}(m,\epsilon) )(z)T^{n}
\end{multline}
represent bounded holomorphic functions on $D(0,\mathbf{T}_{0}/2) \times H_{\beta'} \times D(0,\epsilon_{0})$ for any
$0 < \beta' < \beta$ (where $\mathcal{F}^{-1}$ denotes the inverse Fourier transform defined in Proposition 9). We define the
coefficients
\begin{equation}
\mathbf{c}_{0}(t,z,\epsilon) = \check{\mathbf{C}}_{0}(\epsilon t,z,\epsilon) \ \ , \ \ 
\mathbf{f}(t,z,\epsilon) = \check{\mathbf{F}}(\epsilon t,z,\epsilon)
\end{equation}
which are holomorphic and bounded on $D(0,r) \times H_{\beta'} \times D(0,\epsilon_{0})$ where
$r \epsilon_{0} \leq \mathbf{T}_{0}/2$.
\begin{prop} Under the constraints (\ref{assum_delta_l_bf}), (\ref{assum_dl_delta_l_Delta_l_bf}),
(\ref{assum_Q_Rl_bf}), (\ref{norm_beta_mu_F_n_bf}) and the assumptions (\ref{quotient_Q_RD_in_S_bf}),
(\ref{root_cond_1_bf}), (\ref{root_cond_2_bf}), (\ref{constraints_k1_Borel_equation_bf}),
(\ref{norm_F_varphi_k1_psi_k_1_small_bf}), the forcing term
$f^{\mathfrak{d}_{p}}(t,z,\epsilon)$ represented by the formula (\ref{forcing_term_frak_d_p}) solves the following linear
Cauchy problem
\begin{multline}
\mathbf{Q}(\partial_{z})(\partial_{t}f^{\mathfrak{d}_{p}}(t,z,\epsilon)) =
\epsilon^{(\boldsymbol{\delta}_{\mathbf{D}}-1)(k_{1}+1) - \boldsymbol{\delta}_{\mathbf{D}} + 1}
t^{(\boldsymbol{\delta}_{\mathbf{D}}-1)(k_{1}+1)}
\partial_{t}^{\boldsymbol{\delta}_{\mathbf{D}}}\mathbf{R}_{\mathbf{D}}(\partial_{z})f^{\mathfrak{d}_{p}}(t,z,\epsilon)\\
+ \sum_{l=1}^{\mathbf{D}-1} \epsilon^{\mathbf{\Delta}_{l}}t^{\mathbf{d}_l}
\partial_{t}^{\boldsymbol{\delta}_l}\mathbf{R}_{l}(\partial_{z})f^{\mathfrak{d}_{p}}(t,z,\epsilon)
+ \mathbf{c}_{0}(t,z,\epsilon)\mathbf{R}_{0}(\partial_{z})f^{\mathfrak{d}_{p}}(t,z,\epsilon) +
\mathbf{c}_{\mathbf{F}}(\epsilon)\mathbf{f}(t,z,\epsilon) \label{ICP_main_fp_bf}
\end{multline}
for given initial data $f^{\mathfrak{d}_{p}}(0,z,\epsilon) \equiv 0$, for all $t \in \mathcal{T}$, $z \in H_{\beta'}$ and
$\epsilon \in \mathcal{E}_{p}$ (provided that the radius $r_{\mathcal{T}}$ of $\mathcal{T}$ fulfills the restriction
$\epsilon_{0}r_{\mathcal{T}} \leq \min(h',T_{0}/2,\mathbf{T}_{0}/2)$).
\end{prop}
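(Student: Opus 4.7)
The plan is to deduce (\ref{ICP_main_fp_bf}) by tracing $f^{\mathfrak{d}_p}$ back through the Borel--Laplace--Fourier machinery already set up for the nonlinear problem. Starting from Proposition 18, the function $\psi_{k_1}^{\mathfrak{d}_p}(\tau,m,\epsilon)$ belongs to $F_{(\nu,\beta,\mu,k_1,k_1)}^{\mathfrak{d}_p}$, so it has at most exponential growth of order $k_1$ along the unbounded sector $U_{\mathfrak{d}_p}$. I would first introduce the $m_{k_1}$-Laplace transform
$$
\widetilde F^{\mathfrak{d}_p}(T,m,\epsilon) := k_1 \int_{L_{\mathfrak{d}_p}} \psi_{k_1}^{\mathfrak{d}_p}(h,m,\epsilon)\, e^{-(h/T)^{k_1}}\,\frac{dh}{h},
$$
which defines an $E_{(\beta,\mu)}$-valued bounded holomorphic function on a sector $S_{\mathfrak{d}_p,\theta_{k_1},h_1}$ of bisecting direction $\mathfrak{d}_p$ and aperture $\theta_{k_1}$ slightly larger than $\pi/k_1$, by the general theory of Section 3.

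Next I would verify that $\widetilde F^{\mathfrak{d}_p}$ solves (\ref{SCP_bf}) on this sector. This follows by reversing the derivation of (\ref{k1_Borel_equation_analytic_bf}): $\psi_{k_1}^{\mathfrak{d}_p}$ satisfies (\ref{k1_Borel_equation_analytic_bf}), and applying $\mathcal{L}_{m_{k_1}}^{\mathfrak{d}_p}$ term-by-term and invoking the computation rules of Proposition 8 in the inverse direction (the $m_{k_1}$-Laplace transform turns $k_1\tau^{k_1}\mathcal{B}(\,\cdot\,)$ into $T^{k_1+1}\partial_T$, the convolution $\tau^{k_1}\Gamma(m/k_1)^{-1}\int_0^{\tau^{k_1}}(\tau^{k_1}-s)^{m/k_1 - 1}(\cdot)ds/s$ into multiplication by $T^m$, and the double convolution product into an ordinary product of Laplace transforms) one recovers (\ref{SCP_irregular_bf}) for $\widetilde F^{\mathfrak{d}_p}$; division by $T^{k_1+1}$ then yields (\ref{SCP_bf}).

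I would then appeal to Lemma 13 (announced in the introduction) to identify $\widetilde F^{\mathfrak{d}_p}(T,m,\epsilon) = F^{\mathfrak{d}_p}(T,m,\epsilon)$ on the smaller sector $S_{\mathfrak{d}_p,\theta_{k_2},h}$ of aperture slightly larger than $\pi/k_2$, where $F^{\mathfrak{d}_p}$ is the $m_{k_2}$-Laplace of the acceleration $\psi_{k_2}^{\mathfrak{d}_p} = \mathcal{A}_{m_{k_2},m_{k_1}}^{\mathfrak{d}_p}(\psi_{k_1}^{\mathfrak{d}_p})$ introduced in (\ref{defin_F_frak_d_p}). Consequently $F^{\mathfrak{d}_p}(T,m,\epsilon)$ itself satisfies (\ref{SCP_bf}) on that smaller sector. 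Applying Fourier inversion in $m$, by (\ref{dz_fourier}) each factor $\mathbf{R}_l(im)$ turns into $\mathbf{R}_l(\partial_z)$, and by (\ref{prod_fourier}) the three convolutions against $\mathbf{C}_0$, $\mathbf{C}_{0,0}$, and $\mathbf{F}$ become pointwise products with their Fourier inverses. Setting $T = \epsilon t$ and using $\partial_T = \epsilon^{-1}\partial_t$, the differential monomial $T^{\mathbf{d}_l}\partial_T^{\boldsymbol{\delta}_l}$ becomes $\epsilon^{\mathbf{d}_l - \boldsymbol{\delta}_l}\, t^{\mathbf{d}_l}\partial_t^{\boldsymbol{\delta}_l}$; after multiplying the whole equation by $\epsilon$ (to absorb the $\epsilon^{-1}$ generated by $\partial_T$ on the left-hand side), the accumulated exponent on the $l$-th term is $(\mathbf{\Delta}_l - \mathbf{d}_l + \boldsymbol{\delta}_l - 1) + (\mathbf{d}_l - \boldsymbol{\delta}_l) + 1 = \mathbf{\Delta}_l$, matching (\ref{ICP_main_fp_bf}); the $l = \mathbf{D}$ term specializes correctly thanks to $\mathbf{d}_{\mathbf{D}} = (\boldsymbol{\delta}_{\mathbf{D}}-1)(k_1+1)$ and $\mathbf{\Delta}_{\mathbf{D}} = \mathbf{d}_{\mathbf{D}} - \boldsymbol{\delta}_{\mathbf{D}}+ 1$. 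The two convolution terms with coefficients $\mathbf{c}_0(\epsilon)/\epsilon$ and $\mathbf{c}_{0,0}(\epsilon)/\epsilon$ coalesce, via the defining expansions of $\check{\mathbf{C}}_0$ and the identity $\mathbf{c}_0(t,z,\epsilon) = \check{\mathbf{C}}_0(\epsilon t,z,\epsilon)$, into the single term $\mathbf{c}_0(t,z,\epsilon)\,\mathbf{R}_0(\partial_z)f^{\mathfrak{d}_p}(t,z,\epsilon)$, while the forcing term reduces to $\mathbf{c}_{\mathbf{F}}(\epsilon)\mathbf{f}(t,z,\epsilon)$ with $\mathbf{f} = \check{\mathbf{F}}(\epsilon t,z,\epsilon)$.

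The principal obstacle lies in the identification step (Lemma 13): showing that the $m_{k_1}$-Laplace transform of $\psi_{k_1}^{\mathfrak{d}_p}$ coincides on the smaller sector $S_{\mathfrak{d}_p,\theta_{k_2},h}$ with the $m_{k_2}$-Laplace of the acceleration $\psi_{k_2}^{\mathfrak{d}_p}$. This requires the full acceleration machinery of Section 4.3 together with a path-deformation argument analogous to the one employed in Proposition 15, where a Hankel-type contour relating the $m_{k_2}$-Borel--Laplace factorization to the $m_{k_1}$-Laplace transform of the same integrand is carefully deformed. Once this identity is established, everything else is a routine bookkeeping of $\epsilon$-powers, the Fourier-inversion dictionary, and the reassembly of the coefficient $\mathbf{c}_0$ and the forcing $\mathbf{f}$ from the pieces defining $\check{\mathbf{C}}_0$ and $\check{\mathbf{F}}$.
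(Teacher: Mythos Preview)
Your proposal is correct and follows essentially the same route as the paper: take the $m_{k_1}$-Laplace transform of $\psi_{k_1}^{\mathfrak{d}_p}$, show it solves (\ref{SCP_bf}) via the algebraic rules for $m_{k_1}$-sums, invoke Lemma 13 to identify it with $F^{\mathfrak{d}_p}$, then apply Fourier inversion and the substitution $T=\epsilon t$. The only point worth noting is that your anticipation of Lemma 13's proof is slightly off: the paper does not use a path-deformation argument in the style of Proposition 15 (which relies on a fixed-point uniqueness argument), but instead applies Fubini to the triple integral defining $F^{\mathfrak{d}_p}$ and recognizes the resulting inner integral as $\mathcal{L}_{m_{k_2}}^{\mathfrak{d}_p}\bigl(\mathcal{B}_{m_{k_2}}^{\mathfrak{d}_p}(v\mapsto e^{-(h/v)^{k_1}})\bigr)(T)$, which collapses to $e^{-(h/T)^{k_1}}$ by the Borel--Laplace inversion formula (\ref{Laplace_Borel_inverse}) of Proposition 12.
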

\begin{proof} From Proposition 18, we know that the formal series $F(T,m,\epsilon) = \sum_{n \geq 1} F_{n}(m,\epsilon)T^{n}$
is $m_{k_1}-$summable w.r.t $T$ in all directions $\mathfrak{d}_{p}$, $0 \leq p \leq \varsigma-1$ (in the sense of
Definition 4). Therefore, from the estimates (\ref{psi_k1_bounded_norm_k1_k1_main_CP}), we deduce that the
$m_{k_1}-$Laplace transform
$$ \mathcal{L}_{m_{k_1}}^{\mathfrak{d}_{p}}(\tau \mapsto \psi_{k_1}^{\mathfrak{d}_{p}}(\tau,m,\epsilon))(T)
= k_{1}\int_{L_{\mathfrak{d}_{p}}}
\psi_{k_1}^{\mathfrak{d}_{p}}(u,m,\epsilon) e^{-(\frac{u}{T})^{k_{1}}} \frac{du}{u} $$
defines a bounded and holomorphic function on any sector $S_{\mathfrak{d}_{p},\theta_{k_1},h'_{k_1}}$ w.r.t $T$, for all
$m \in \mathbb{R}$, all $\epsilon \in D(0,\epsilon_{0})$, where
$S_{\mathfrak{d}_{p},\theta_{k_1},h'_{k_1}}$ is a sector with bisecting direction $\mathfrak{d}_{p}$,
aperture $\frac{\pi}{k_1} < \theta_{k_1} < \frac{\pi}{k_1} + \mathrm{ap}(U_{\mathfrak{d}_{p}})$ and some radius
$h'_{k_1}>0$. Moreover, using the algebraic properties of the $m_{k_1}-$sums in the formula (\ref{sum_prod_deriv_m_k_sum}), we
deduce that $\mathcal{L}_{m_{k_1}}^{\mathfrak{d}_{p}}(\tau \mapsto \psi_{k_1}^{\mathfrak{d}_{p}}(\tau,m,\epsilon))(T)$ solves
the equations (\ref{SCP_irregular_bf}) and then (\ref{SCP_bf}) for all $T \in S_{\mathfrak{d}_{p},\theta_{k_1},h'_{k_1}}$,
all $m \in \mathbb{R}$, all
$\epsilon \in D(0,\epsilon_{0})$ and vanishes at $T=0$. Now, let $F^{\mathfrak{d}_{p}}(T,m,\epsilon)$ defined in
(\ref{defin_F_frak_d_p}).
\begin{lemma} The following identity
$$ F^{\mathfrak{d}_{p}}(T,m,\epsilon) =
\mathcal{L}_{m_{k_1}}^{\mathfrak{d}_{p}}(\tau \mapsto \psi_{k_1}^{\mathfrak{d}_{p}}(\tau,m,\epsilon))(T)
$$
holds, for all $T \in S_{\mathfrak{d}_{p},\theta,h'}$, $m \in \mathbb{R}$, $\epsilon \in D(0,\epsilon_{0})$, as defined just
after the definition (\ref{defin_F_frak_d_p}), for
$\frac{\pi}{k_2} < \theta < \frac{\pi}{k_2} + \mathrm{ap}(S_{\mathfrak{d}_{p}})$ and some radius $h'>0$. 
\end{lemma}
\begin{proof} By construction, we can write
\begin{multline*}
F^{\mathfrak{d}_{p}}(T,m,\epsilon) = k_{2} \int_{L_{\mathfrak{d}_{p}}}
\left( \int_{L_{\mathfrak{d}_{p}}} \psi_{k_1}^{\mathfrak{d}_{p}}(h,m,\epsilon) \right. \\
\left. \times ( -\frac{k_{2}k_{1}}{2i\pi} u^{k_2} \int_{V_{\mathfrak{d}_{p},k_{2},\delta'}}
\exp( -(\frac{h}{v})^{k_1} + (\frac{u}{v})^{k_2} ) \frac{dv}{v^{k_{2}+1}} ) \frac{dh}{h} \right) e^{-(\frac{u}{T})^{k_2}}
\frac{du}{u}
\end{multline*}
for some $0 < \delta' < \frac{\pi}{\kappa}$, where $V_{\mathfrak{d}_{p},k_{2},\delta'}$ is defined in Proposition 13. Using
Fubini's theorem yields
\begin{equation}
F^{\mathfrak{d}_{p}}(T,m,\epsilon) = k_{1}\int_{L_{\mathfrak{d}_{p}}}
\psi_{k_1}^{\mathfrak{d}_{p}}(h,m,\epsilon) A(T,h) \frac{dh}{h} \label{F_d_p_integral_A}
\end{equation}
where
\begin{multline}
 A(T,h) = k_{2} \int_{L_{\mathfrak{d}_{p}}} \frac{-k_{2}}{2i \pi} u^{k_2}
\left(\int_{V_{\mathfrak{d}_{p},k_{2},\delta'}} \exp( -(\frac{h}{v})^{k_1} + (\frac{u}{v})^{k_2}) \frac{dv}{v^{k_{2}+1}} \right)
e^{-(\frac{u}{T})^{k_2}} \frac{du}{u}\\
=\mathcal{L}_{m_{k_2}}^{\mathfrak{d}_{p}}(
u \mapsto (\mathcal{B}_{m_{k_2}}^{\mathfrak{d}_{p}}( v \mapsto e^{-(\frac{h}{v})^{k_1}} ))(u) )(T) \label{defin_A}
\end{multline}
for all $T \in S_{\mathfrak{d}_{p},\theta,h'}$, $m \in \mathbb{R}$, $\epsilon \in D(0,\epsilon_{0})$.
But we observe from the inversion formula (\ref{Laplace_Borel_inverse}) that $A(T,h) = \exp(-(h/T)^{k_1} )$.
Gathering (\ref{F_d_p_integral_A}) and (\ref{defin_A}) yields the lemma 13.
\end{proof}
From Lemma 13, we deduce that $F^{\mathfrak{d}_{p}}(T,m,\epsilon)$ solves the equation
(\ref{SCP_bf}) for all $T \in S_{\mathfrak{d}_{p},\theta,h'}$, all $m \in \mathbb{R}$ and all $\epsilon \in D(0,\epsilon_{0})$.
Hence, using the properties of the Fourier inverse transform from Proposition 9, we deduce that the analytic forcing term
$f^{\mathfrak{d}_{p}}(t,z,\epsilon) = \mathcal{F}^{-1}( m \mapsto F^{\mathfrak{d}_{p}}(\epsilon t,m,\epsilon))(z)$ solves
the linear Cauchy problem (\ref{ICP_main_fp_bf}), for all $t \in \mathcal{T}$, all $z \in H_{\beta'}$ and all
$\epsilon \in \mathcal{E}_{p}$.
\end{proof}
We are in position to state the main result of this section
\begin{theo} We take for granted that the assumptions of Theorem 1 hold. We also make the hypothesis that the constraints
(\ref{assum_delta_l_bf}), (\ref{assum_dl_delta_l_Delta_l_bf}),
(\ref{assum_Q_Rl_bf}), (\ref{norm_beta_mu_F_n_bf}) and the assumptions (\ref{quotient_Q_RD_in_S_bf}),
(\ref{root_cond_1_bf}), (\ref{root_cond_2_bf}), (\ref{constraints_k1_Borel_equation_bf}),
(\ref{norm_F_varphi_k1_psi_k_1_small_bf}) hold. We denote $P(t,z,\epsilon,\partial_{t},\partial_{z})$ and
$\mathbf{P}(t,z,\epsilon,\partial_{t},\partial_{z})$ the linear differential operators
\begin{multline}
P(t,z,\epsilon,\partial_{t},\partial_{z}) = Q(\partial_{z})\partial_{t} -
\epsilon^{(\delta_{D}-1)(k_{2}+1) - \delta_{D}+1} t^{(\delta_{D}-1)(k_{2}+1)} \partial_{t}^{\delta_{D}} R_{D}(\partial_{z})\\
- \sum_{l=1}^{D-1} \epsilon^{\Delta_{l}} t^{d_l} \partial_{t}^{\delta_l} R_{l}(\partial_{z}) -
c_{0}(t,z,\epsilon)R_{0}(\partial_{z}),\\
\mathbf{P}(t,z,\epsilon,\partial_{t},\partial_{z}) = \mathbf{Q}(\partial_{z})\partial_{t} -
\epsilon^{(\boldsymbol{\delta}_{\mathbf{D}}-1)(k_{1}+1) - \boldsymbol{\delta}_{\mathbf{D}}+1}
t^{(\boldsymbol{\delta}_{\mathbf{D}}-1)(k_{1}+1)} \partial_{t}^{\boldsymbol{\delta}_{\mathbf{D}}}
\mathbf{R}_{\mathbf{D}}(\partial_{z})\\
- \sum_{l=1}^{\mathbf{D}-1} \epsilon^{\boldsymbol{\Delta}_{l}} t^{\mathbf{d}_l} \partial_{t}^{\boldsymbol{\delta}_l}
\mathbf{R}_{l}(\partial_{z}) -
\mathbf{c}_{0}(t,z,\epsilon)\mathbf{R}_{0}(\partial_{z}).
\end{multline}
Then, the functions $u^{\mathfrak{d}_{p}}(t,z,\epsilon)$ constructed in Theorem 1 solve the following nonlinear PDE
\begin{multline}
\mathbf{P}(t,z,\epsilon,\partial_{t},\partial_{z})P(t,z,\epsilon,\partial_{t},\partial_{z})u^{\mathfrak{d}_{p}}(t,z,\epsilon)\\
= c_{1,2}(\epsilon)\mathbf{P}(t,z,\epsilon,\partial_{t},\partial_{z})
\left( Q_{1}(\partial_{z})u^{\mathfrak{d}_{p}}(t,z,\epsilon) \times Q_{2}(\partial_{z})u^{\mathfrak{d}_{p}}(t,z,\epsilon) \right)
\\
+ c_{F}(\epsilon)\mathbf{c}_{\mathbf{F}}(\epsilon)\mathbf{f}(t,z,\epsilon) \label{nlpde_holcoef_near_origin_u_dp}
\end{multline}
whose coefficients and forcing term $\mathbf{f}$ are analytic functions on
$D(0,r_{\mathcal{T}}) \times H_{\beta'} \times D(0,\epsilon_{0})$,
with vanishing initial data $u^{\mathfrak{d}_{p}}(0,z,\epsilon) \equiv 0$, for all $t \in \mathcal{T}$, all $z \in H_{\beta'}$
and all $\epsilon \in \mathcal{E}_{p}$. Moreover, the formal power series
$\hat{u}(t,z,\epsilon) = \sum_{m \geq 0} h_{m}(t,z) \epsilon^{m}/m!$ constructed in Theorem 2 formally solves the same
equation (\ref{nlpde_holcoef_near_origin_u_dp}).
\end{theo}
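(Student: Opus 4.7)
The plan is to derive (\ref{nlpde_holcoef_near_origin_u_dp}) by composing the two linear operators $\mathbf{P}$ and $P$ in a suitable way against the nonlinear equation satisfied by $u^{\mathfrak{d}_{p}}$, using Proposition~19 as the crucial input to eliminate the analytic forcing term $f^{\mathfrak{d}_{p}}$. Concretely, Theorem~1 guarantees that $u^{\mathfrak{d}_{p}}(t,z,\epsilon)$ solves (\ref{ICP_main_p}), which, by the very definition of $P(t,z,\epsilon,\partial_{t},\partial_{z})$, can be rewritten in the compact form
\begin{equation*}
P(t,z,\epsilon,\partial_{t},\partial_{z}) u^{\mathfrak{d}_{p}}(t,z,\epsilon) =
c_{1,2}(\epsilon)\bigl(Q_{1}(\partial_{z})u^{\mathfrak{d}_{p}}\bigr)\bigl(Q_{2}(\partial_{z})u^{\mathfrak{d}_{p}}\bigr) +
c_{F}(\epsilon) f^{\mathfrak{d}_{p}}(t,z,\epsilon),
\end{equation*}
valid on $\mathcal{T}\times H_{\beta'}\times\mathcal{E}_{p}$. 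Since both sides of this identity are holomorphic in $(t,z)$ on the domain where $\mathbf{c}_{0}$ is defined, the linear operator $\mathbf{P}(t,z,\epsilon,\partial_{t},\partial_{z})$ may legitimately be applied term by term to it.

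Applying $\mathbf{P}$ to the equality above yields
\begin{equation*}
\mathbf{P}\,P\,u^{\mathfrak{d}_{p}} =
c_{1,2}(\epsilon)\,\mathbf{P}\!\left[\bigl(Q_{1}(\partial_{z})u^{\mathfrak{d}_{p}}\bigr)\bigl(Q_{2}(\partial_{z})u^{\mathfrak{d}_{p}}\bigr)\right] +
c_{F}(\epsilon)\,\mathbf{P}\,f^{\mathfrak{d}_{p}}.
\end{equation*}
At this stage, Proposition~19 enters decisively: under the assumptions of Theorem~3, the forcing term $f^{\mathfrak{d}_{p}}$ solves the linear Cauchy problem (\ref{ICP_main_fp_bf}), which is exactly the statement that $\mathbf{P}\,f^{\mathfrak{d}_{p}}=\mathbf{c}_{\mathbf{F}}(\epsilon)\mathbf{f}(t,z,\epsilon)$. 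Substituting this identity gives precisely (\ref{nlpde_holcoef_near_origin_u_dp}), and the analytic coefficients of $\mathbf{P}\,P$ together with $\mathbf{f}$ are holomorphic on $D(0,r_{\mathcal{T}})\times H_{\beta'}\times D(0,\epsilon_{0})$ by construction of $\mathbf{c}_{0}$ and $\mathbf{f}$ as inverse Fourier transforms of sequences with geometric growth, combined with the sign conditions (\ref{assum_dl_delta_l_Delta_l_bf}) that ensure every power of $\epsilon$ appearing in $\mathbf{P}$ (in particular $\epsilon^{(\boldsymbol{\delta}_{\mathbf{D}}-1)(k_{1}+1)-\boldsymbol{\delta}_{\mathbf{D}}+1}=\epsilon^{(\boldsymbol{\delta}_{\mathbf{D}}-1)k_{1}}$) is nonnegative.

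For the second assertion concerning the formal series $\hat{u}(t,z,\epsilon)$, the plan is to first establish the analogous formal identity $\mathbf{P}\,\hat{f}=\mathbf{c}_{\mathbf{F}}(\epsilon)\mathbf{f}$ in $\mathbb{F}[[\epsilon]]$, by repeating on (\ref{ICP_main_fp_bf}) the argument used at the end of Theorem~2 to derive (\ref{ICP_main_formal_epsilon}): apply $\partial_{\epsilon}^{m}$ to both sides of (\ref{ICP_main_fp_bf}), use the Leibniz rule, let $\epsilon\to 0$ inside $\mathcal{E}_{p}$ and invoke Lemma~11, which provides $\lim_{\epsilon\to 0}\partial_{\epsilon}^{m}f^{\mathfrak{d}_{p}}(t,z,\epsilon)=f_{m}(t,z)$ uniformly on $(\mathcal{T}\cap D(0,h''))\times H_{\beta'}$. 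The resulting recursion for the coefficients $f_{m}$ is exactly what the formal identity $\mathbf{P}\,\hat{f}=\mathbf{c}_{\mathbf{F}}(\epsilon)\mathbf{f}$ encodes, once the Taylor expansions of $\mathbf{c}_{0}$, $\mathbf{c}_{\mathbf{F}}$ and $\mathbf{f}$ at $\epsilon=0$ are inserted. Combining this with the formal equation (\ref{ICP_main_formal_epsilon}) supplied by Theorem~2 and applying $\mathbf{P}$ formally to it produces (\ref{nlpde_holcoef_near_origin_u_dp}) with $u^{\mathfrak{d}_{p}}$ replaced by $\hat{u}$ and $f^{\mathfrak{d}_{p}}$ replaced by $\hat{f}$, establishing the second claim.

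The argument is essentially an algebraic assembly of three results already in hand (Theorem~1, Theorem~2 and Proposition~19), so no genuinely new analytical obstacle appears. The only point requiring care is the verification that applying the differential operator $\mathbf{P}$ commutes termwise with the formal power series manipulations for $\hat{u}$ and $\hat{f}$; this is justified by the Leibniz-rule procedure used in Theorem~2 together with the convergence of the Taylor series of $\mathbf{c}_{0}(t,z,\epsilon)$, $\mathbf{c}_{\mathbf{F}}(\epsilon)$ and $\mathbf{f}(t,z,\epsilon)$ in $\epsilon$ near $0$, which guarantees that all coefficient-wise identities assembled from the recursion for the $h_{m}$ and $f_{m}$ indeed correspond to the claimed formal PDE.
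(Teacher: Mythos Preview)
Your proof is correct and follows essentially the same approach as the paper: you rewrite (\ref{ICP_main_p}) as $P\,u^{\mathfrak{d}_{p}} = c_{1,2}(\epsilon)(Q_{1}(\partial_{z})u^{\mathfrak{d}_{p}})(Q_{2}(\partial_{z})u^{\mathfrak{d}_{p}}) + c_{F}(\epsilon)f^{\mathfrak{d}_{p}}$ via Theorem~1, apply $\mathbf{P}$ and invoke Proposition~19 to replace $\mathbf{P}f^{\mathfrak{d}_{p}}$ by $\mathbf{c}_{\mathbf{F}}(\epsilon)\mathbf{f}$, and for the formal statement you reproduce the Leibniz-rule/limit argument of Theorem~2 (using Lemma~11) to obtain $\mathbf{P}\hat{f}=\mathbf{c}_{\mathbf{F}}(\epsilon)\mathbf{f}$ and then combine with (\ref{ICP_main_formal_epsilon}). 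This matches the paper's proof line by line.
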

\begin{proof} The reason why $u^{\mathfrak{d}_{p}}(t,z,\epsilon)$ solves the equation (\ref{nlpde_holcoef_near_origin_u_dp})
follows directly from the fact that $u^{\mathfrak{d}_{p}}(t,z,\epsilon)$ solves the nonlinear equation
$$
P(t,z,\epsilon,\partial_{t},\partial_{z})u^{\mathfrak{d}_{p}}(t,z,\epsilon) =
c_{1,2}(\epsilon)
(Q_{1}(\partial_{z})u^{\mathfrak{d}_{p}}(t,z,\epsilon) \times Q_{2}(\partial_{z})u^{\mathfrak{d}_{p}}(t,z,\epsilon)) +
c_{F}(\epsilon)f^{\mathfrak{d}_{p}}(t,z,\epsilon)
$$
according to Theorem 1 and from the additional feature that $f^{\mathfrak{d}_{p}}(t,z,\epsilon)$ solves the linear equation
$$
\mathbf{P}(t,z,\epsilon,\partial_{t},\partial_{z})f^{\mathfrak{d}_{p}}(t,z,\epsilon) =
\mathbf{c}_{\mathbf{F}}(\epsilon)\mathbf{f}(t,z,\epsilon)
$$
as shown in Proposition 19. Finally in order to show that $\hat{u}(t,z,\epsilon)$ formally solves
(\ref{nlpde_holcoef_near_origin_u_dp}) we see that with the help of the second equality in
(\ref{limit_deriv_order_m_of_up_fp_epsilon}) and following exactly the same lines of arguments as in the last part
of Theorem 2, one can show that the power series $\hat{f}(t,z,\epsilon) = \sum_{m \geq 0} f_{m}(t,z) \epsilon^{m}/m!$
constructed in Lemma 11 formally solves the linear equation
\begin{equation}
\mathbf{P}(t,z,\epsilon,\partial_{t},\partial_{z})\hat{f}(t,z,\epsilon) =
\mathbf{c}_{\mathbf{F}}(\epsilon)\mathbf{f}(t,z,\epsilon) \label{CP_hat_f}
\end{equation}
Combining the equations (\ref{ICP_main_formal_epsilon}) and (\ref{CP_hat_f}) yields the result.
\end{proof}

\end{document}